\newcommand{\be}{\begin{otherlanguage}{english}}
\newcommand{\ee}{\end{otherlanguage}}
\theoremstyle{definition}
\newtheorem{defn}{Definition}[section]
\newtheorem{rem}[defn]{Remark}
\theoremstyle{plain}
\newtheorem{lem}[defn]{Lemma}
\newtheorem{prop}[defn]{Proposition}
\newtheorem{thm}[defn]{Theorem}
\newtheorem{cor}[defn]{Corollary}
\newtheorem*{thm*}{Theorem}
\newtheorem*{prop*}{Proposition}
\newtheorem*{clm*}{Claim}
\theoremstyle{remark}
\newtheorem{exem}[defn]{\textbf{Example}}
\numberwithin{equation}{subsection}
\newcommand{\beq}{\begin{equation}}
\newcommand{\eeq}{\end{equation}}
\newcommand{\G}{\mathbb{G}}
\begin{document}
\title[]{A generalization of classical action of Hamiltonian
diffeomorphisms to Hamiltonian homeomorphisms on fixed points}

\author{Jian Wang}
\address{Chern Institute of Mathematics, Nankai University, Tianjin 300071,
P.R.China.}\email{wangjian@nankai.edu.cn}


\date{September 9, 2012}
\maketitle

\begin{abstract}
We define boundedness properties on the contractible fixed points
set of the time-one map of an identity isotopy on a closed oriented
surface with genus $g\geq1$. In symplectic geometry, a classical
object is the notion of action function, defined on the set of
contractible fixed points of the time-one map of a Hamiltonian
isotopy. We give a dynamical interpretation of this function that
permits us to generalize it in the case of a homeomorphism isotopic
to identity that preserves a Borel finite measure of rotation vector
zero, provided that a boundedness condition is satisfied. We give
some properties of the generalized action. In particular, we
generalize a result of Schwarz \cite{SM} about the action function
being non-constant which has been proved by using Floer homology. As
applications, we generalize some results of Polterovich \cite{P}
about the symplectic and Hamiltonian diffeomorphisms groups on
closed oriented  surfaces being distortion free, which allows us to
give an alternative proof of the $C^1$-version of the Zimmer
conjecture on closed oriented surfaces.
\end{abstract}\bigskip

\setcounter{tocdepth}{1}\tableofcontents

\setcounter{section}{-1}
\section{Introduction} Suppose that $(M,\omega)$ is a symplectic manifold with
$\pi_2(M)=0$. Let $I=(F_t)_{t\in[0,1]}$ be a Hamiltonian flow  on
$M$ with $F_0=\mathrm{Id}_M$ and $F_1=F$. Suppose that the function
$H_t$ is the Hamiltonian function generating the flow $I$. Denote by
$\mathrm{Fix}_{\mathrm{Cont},I}(F)$ the set of contractible fixed
points of $F$, that is, $x$ is a fixed point of $F$ and the oriented
loop $I(x): t\mapsto F_t(x)$ defined on $[0,1]$ can be contractible
on $M$. The classical action function is defined, up to an additive
constant, on $\mathrm{Fix}_{\mathrm{Cont},I}(F)$  as follows (see
Section \ref{sec:action function} for the details)
\begin{equation}\label{eq:action functional}
\mathcal{A}_{H}(x)=\int_{D_x}\omega-\int_0^1
H_t(F_t(x))\,\mathrm{d}t\nonumber,
\end{equation}
where $x\in \mathrm{Fix}_{\mathrm{Cont},I}(F)$ and $D_x\subset M$ is
any $2$-simplex with $\partial D_x=I(x)$.\smallskip


When $M$ is compact, among the properties of $F$, one may notice the
fact that it preserves the volume form
$\omega^n=\omega\wedge\cdots\wedge\omega$ and that the ``rotation
vector'' $\rho_{M,I}(\mu)\in H_1(M,\mathbb{R})$ (see Section
\ref{subsec:rotation vector}) of the finite measure $\mu$ induced by
$\omega^n$ vanishes. In the case of a closed symplectic surface, the
fact that a diffeomorphism isotopic to identity preserves a volume
form $\omega$ whose rotation vector is zero characterizes the fact
that it is the time-one map of a 1-periodic Hamiltonian isotopy (see
Section \ref{sec:action function}).\smallskip

The goal of this article is to give a precise dynamical explanation
of the action function defined on the set of contractible fixed
points in the case of surface. Through defining a weak boundedness
property (for example, $F$ ia a diffeomorphism or the set
$\mathrm{Fix}_{\mathrm{Cont},I}(F)$ is finite), written WB-property
for short, which is a certain boundedness condition about linking
numbers of contractible fixed points (see Section
\ref{subsec:boundedness}), we define a new action function with the
following desired properties and prove that it is a generalization
of the classical function:

\begin{itemize}
  \item It can be naturally generalized for any diffeomorphism (not necessarily $C^1$)
isotopic to the identity that preserves a finite Borel measure of
rotation vector zero with no atoms on the contractible fixed points
set.
  \item It can be naturally generalized for
any homeomorphism isotopic to the identity that preserves a finite
Borel measure  of rotation vector zero with total support and no
atoms on the contractible fixed points set, provided that the
WB-property is satisfied.
\item It can be naturally generalized for
any homeomorphism isotopic to the identity that preserves a finite
ergodic Borel measure $\mu$ of rotation vector zero with no atoms on
the contractible fixed points set, provided that the WB-property is
satisfied.
\end{itemize}

In addition, we investigate some properties of the new action
function: such as the boundedness (Proposition
\ref{prop:boundedness}) and the continuity (Proposition
\ref{prop:the continuity of lmu}). Interestingly, we furthermore
prove that, in the closed oriented surface case, the new action
function is not constant when the measure has total support, which
has been proved in \cite{SM} by Floer homology for the case where
the isotopy is a Hamiltonian flow. In \cite{SM}, the time-one map
$F$ requires to be at least $C^1$-smooth and the contractible fixed
point set of $F$ to satisfy certain non-degeneracy. In constract, we
only demand the isotopy to satisfy a much weaker property, the
proposed WB-property.

Moreover, we are in particular interested in the conservative
diffeomorphism groups of closed oriented  surfaces. By applying the
new action function to the groups of conservative diffeomorphisms,
we generalize some results of Polterovich \cite{P} about the absence
of distortion in the symplectic and Hamiltonian diffeomorphisms
groups on closed oriented surface. We proved that the
$C^1$-conservative diffeomorphism groups have no distortion on
closed oriented surfaces, which links to Zimmer conjecture on closed
oriented surfaces.\bigskip

The main results of this article can be summarized as
follows.\smallskip

Let $M$ be a smooth manifold (with boundary or not) and let $F$ be a
homeomorphism on $M$. Denote by $\mathrm{Diff}(M)$ (resp.
$\mathrm{Diff}^1(M)$) the group of all diffeomorphisms (resp.
$C^1$-diffeomorphisms) on $M$ and by $\mathcal {M}(F)$ the set of
Borel finite measures on $M$ whose elements are invariant by $F$. We
say that an isotopy $I=(F_t)_{t\in[0,1]}$ on $M$ is an
\emph{identity isotopy} if $F_0=\mathrm{Id}_{M}$. Our main results
are following:

\begin{thm}\label{thm:PW}
Let $F$ be the time-one map of an identity isotopy $I$ on a closed
oriented surface $M$ with $g\geq 1$. Suppose that $\mu\in\mathcal
{M}(F)$ has no atoms on $\mathrm{Fix}_{\mathrm{Cont},I}(F)$ and that
$\rho_{M,I}(\mu)=0$. In all of the following cases
\begin{itemize}
  \item $F\in\mathrm{Diff}(M)$ (not necessarily $C^1$);
  \item $I$ satisfies the WB-property, the measure $\mu$ has total
  support;
  \item $I$ satisfies the WB-property, the measure $\mu$ is ergodic,
\end{itemize}
an action function can be defined which generalizes the classical
case.
\end{thm}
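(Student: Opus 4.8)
The plan is to build the generalized action by averaging, against $\mu$, the linking function attached to each contractible fixed point. For $x\in\mathrm{Fix}_{\mathrm{Cont},I}(F)$ let $L_I(x,\cdot)\colon M\to\mathbb{R}$ be the linking function of Section~\ref{subsec:boundedness}: passing to the universal cover $\widetilde{M}$ (the plane if $g=1$, the hyperbolic disk if $g\geq2$), the isotopy $I$ lifts canonically to $\widetilde I=(\widetilde F_t)_{t\in[0,1]}$, contractibility of the loop $I(x)$ provides a lift $\widetilde x$ of $x$ with $\widetilde F_1(\widetilde x)=\widetilde x$, and $L_I(x,z)$ measures how the $\widetilde F$-orbit of a lift of $z$ winds about $\widetilde x$. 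I would then define the generalized action, up to the same additive constant as in the classical case, by $\mathcal{A}_\mu(x)-\mathcal{A}_\mu(y)=\int_M\bigl(L_I(x,z)-L_I(y,z)\bigr)\,d\mu(z)$ for $x,y\in\mathrm{Fix}_{\mathrm{Cont},I}(F)$. Everything then reduces to three points: \textbf{(i)} the integrand is $\mu$-measurable; \textbf{(ii)} the integrand lies in $L^1(\mu)$ in each of the three regimes; \textbf{(iii)} the value is independent of all auxiliary choices, and equals $\mathcal{A}_H(x)-\mathcal{A}_H(y)$ when $I$ is a Hamiltonian isotopy.

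Step \textbf{(i)} is the soft part: $L_I(x,\cdot)$ is continuous off the $I$-orbit of $x$, which is $\mu$-null because $\mu$ has no atoms on $\mathrm{Fix}_{\mathrm{Cont},I}(F)$; alternatively one exhibits $L_I(x,\cdot)$ as a $\mu$-a.e.\ pointwise limit of the continuous Birkhoff averages of the one-step winding $\psi_x$. Step \textbf{(iii)} is where the hypothesis $\rho_{M,I}(\mu)=0$ does the work: changing the representative of $I$ within its homotopy class, the lifts used to define $L_I$, or the $2$-simplices $D_x$ in the Hamiltonian comparison, alters the integrand by a term of the shape $z\mapsto\int_0^1\beta(\dot F_t(z))\,dt$ with $\beta$ a closed $1$-form on $M$, and the $\mu$-integral of such a term is the pairing $\langle\rho_{M,I}(\mu),[\beta]\rangle$, which vanishes. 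The identity $\mathcal{A}_\mu\equiv\mathcal{A}_H$ itself for a Hamiltonian isotopy is the dynamical interpretation proved in Section~\ref{sec:action function}, which uses that $\omega$ becomes exact on $\widetilde M$ (here $g\geq1$ is essential) and Stokes' theorem on the $D_x$; this also pins down the additive constant compatibly with the classical normalization.

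The substance is \textbf{(ii)}, handled regime by regime. If $F\in\mathrm{Diff}(M)$, then $L_I(x,\cdot)$ is continuous, hence bounded, away from $x$, and near $x$ the derivative $D_{\widetilde x}\widetilde F$ controls the winding so that $L_I(x,z)$ stays bounded (indeed tends to the local rotation number of $x$) as $z\to x$; thus $L_I(x,\cdot)\in L^\infty(\mu)\subset L^1(\mu)$, and the WB-property, automatic here, is not needed. If $I$ satisfies the WB-property and $\mu$ has total support, then the linking numbers $L_I(x,y)$ with $x,y\in\mathrm{Fix}_{\mathrm{Cont},I}(F)$ are uniformly bounded by definition of the WB-property; total support lets one transfer this bound to $\mu$-almost every point — if $\{z:|L_I(x,z)-L_I(y,z)|>C\}$ had positive $\mu$-measure it would be a nonempty open set, and one locates inside it contractible fixed points whose dynamics is incompatible with the WB bound — giving $L_I(x,\cdot)-L_I(y,\cdot)\in L^\infty(\mu)$. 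Finally, if $I$ satisfies the WB-property and $\mu$ is ergodic, one applies Birkhoff's ergodic theorem to the one-step winding cocycle: the WB bound together with $\rho_{M,I}(\mu)=0$ — which, in the spirit of Atkinson's recurrence theorem, excludes a non-integrable asymmetry in the partial sums $\sum_{k<n}\psi_x\circ F^k$ — forces $\psi_x-\psi_y\in L^1(\mu)$, whence $L_I(x,z)-L_I(y,z)=\lim_n\frac1n\sum_{k<n}(\psi_x-\psi_y)(F^kz)$ exists $\mu$-a.e.\ and in $L^1(\mu)$.

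I expect the ergodic regime of Step \textbf{(ii)} to be the main obstacle: converting the combinatorial WB bound on linking numbers of contractible periodic orbits into honest $L^1$-control of the winding cocycle requires the recurrence coming from $\rho_{M,I}(\mu)=0$ to be used in a genuinely quantitative way, and the whole argument must be run uniformly over the two very different models of $\widetilde M$ ($g=1$ versus $g\geq2$), which is where most of the technical bookkeeping lives. A secondary difficulty, also resting on $\rho_{M,I}(\mu)=0$, is the independence assertion in Step \textbf{(iii)} — in particular that the winding function does not depend on the homotopy used to make $x$ a fixed point at all times, nor on the chosen lift of $z$.
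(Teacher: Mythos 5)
Your overall architecture does match the paper's: a linking-number density attached to pairs of fixed points, integrated against $\mu$, with $\rho_{M,I}(\mu)=0$ used to make the result independent of the auxiliary choices and an area-swept/Stokes comparison with the classical action. But the objects you integrate are not actually well defined as you set them up. To get a function of $z\in M$ one must sum the winding over \emph{all} lifts of $z$, and around a single lift $\widetilde{x}$ that sum diverges (infinitely many lifts of the trajectory cross any ray from $\widetilde{x}$ to $\infty$); only the pair quantity makes sense, namely the intersection of the family of lifted trajectories with a \emph{compact} arc $\widetilde{\gamma}$ joining two lifts $\widetilde{a},\widetilde{b}$, computed for an isotopy of $\widetilde{M}$ fixing both points simultaneously. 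Such an isotopy exists on $\widetilde{M}$ by the M\"obius-transformation lemma, whereas on $M$ no isotopy homotopic to $I$ need fix two contractible fixed points at once --- this is precisely the obstacle the paper passes to the universal cover to avoid, and it is absent from your plan. Consequently the existence of a primitive $\mathcal{A}_\mu$ is not automatic from your difference formula: one needs the coboundary identity $i_{\mu}(\widetilde{F};\widetilde{a},\widetilde{b})+i_{\mu}(\widetilde{F};\widetilde{b},\widetilde{c})+i_{\mu}(\widetilde{F};\widetilde{c},\widetilde{a})=0$ (a separate proposition in the paper, proved via homotopy classes of loops in $\mathrm{Homeo}_*(\mathbb{A})$), and then Proposition \ref{clm:L is well defined} (which is where $\rho_{M,I}(\mu)=0$ enters) to descend from $\mathrm{Fix}(\widetilde{F})$ to $\mathrm{Fix}_{\mathrm{Cont},I}(F)$. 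Likewise the equality of $i_\mu$ with the classical action difference is not just Section 2: it is Lemma \ref{lem:mu gamma is zero}, which itself needs a Birkhoff argument and the differentiability of $F,F^{-1}$ at the two fixed points.

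The real gaps are in your step (ii), in all three regimes. In the diffeomorphism case the asymptotic linking $z\mapsto i(\widetilde{F};\widetilde{a},\widetilde{b},z)$ is a Birkhoff-type limit of time averages, hence only measurable --- your claim that it is continuous, hence bounded, away from $x$ is false, and the difficulty is not local near $x$: an orbit can accumulate winding along long excursions far from the fixed points. The paper's proof uses differentiability of $F$ and $F^{-1}$ \emph{at the two fixed points} to bound, via blow-up, the rotation numbers of all points of the annulus $A_{\widetilde{a},\widetilde{b}}$ (Lemma \ref{lem:diffeomorphism on a and b}), and then controls excursions by Franks' Lemma and free disk chains (Lemmas \ref{lem:disk chain of Franks lemma}--\ref{lem:alphazn is e, Ln divide taun is less then kU}, Proposition \ref{prop:diffeomorphism has a bounded linking number for every two distinct fixed points}); none of this is replaced by your local argument. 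In the total-support case, the step ``positive $\mu$-measure $\Rightarrow$ nonempty open set'' is simply wrong (total support gives the converse implication), and the actual mechanism is different: the lifted measure gives $\widetilde{F}_{\widetilde{a},\widetilde{b}}$ the intersection property, so if some recurrent point had linking number exceeding $N_{\widetilde{a},\widetilde{b}}+1$ the Franks--Le Calvez realization theorem (Theorem \ref{thm:FP}) would produce a \emph{fixed point} of forbidden rotation number, contradicting the WB-property (Proposition \ref{prop:the support of mu is M}). In the ergodic case, $\rho_{M,I}(\mu)=0$ plays no role in integrability (it is used only for the descent above); the correct route is that on a free disk the one-step intersection number is bounded by a constant times the return time, which is integrable by Kac's lemma, so Birkhoff gives a.e.\ existence, and ergodicity makes the limit a.e.\ constant, hence integrable. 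An appeal to an Atkinson-type recurrence statement neither yields nor is needed for $L^1$-control here. Without these ingredients the integrals defining your $\mathcal{A}_\mu$ are not known to exist, so the construction does not go through as proposed.
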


On closed oriented surfaces, we get the following Proposition
\ref{prop:F is not constant if the contractible fixed points is
finite} and Corollary \ref{prop:F is not constant if F is not Id}
that are generalizations of Lemma 2.8 that is proved in \cite{SM} by
using Floer homology.

\begin{prop}\label{prop:F is not
constant if the contractible fixed points is finite} Let $F$ be the
time-one map of an identity isotopy $I$ on a closed oriented surface
$M$ with $g\geq 1$. Suppose that $\mu\in\mathcal {M}(F)$ has total
support, no atoms on $\mathrm{Fix}_{\mathrm{Cont},I}(F)$, and
$\rho_{M,I}(\mu)=0$. If $I$ satisfies the WB-property and $F\in
\mathrm{Homeo}_*(M)\setminus\{\,\mathrm{Id}_{M}\}$ where
$\mathrm{Homeo}_*(M)$ is the group of all homeomorphisms isotopic to
$\mathrm{Id}_{M}$ on $M$, then the action function we defined in
Theorem \ref{thm:PW} is not constant.
\end{prop}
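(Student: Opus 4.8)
\noindent\emph{Proof proposal.} The plan is to argue by contradiction: assume $F\in\mathrm{Homeo}_*(M)\setminus\{\,\mathrm{Id}_M\}$ satisfies the hypotheses and that the action function $\mathcal A$ from Theorem \ref{thm:PW} is constant on $\mathrm{Fix}_{\mathrm{Cont},I}(F)$ (the hypotheses already force $\mathrm{Fix}_{\mathrm{Cont},I}(F)\neq\varnothing$: otherwise Le Calvez's equivariant foliation theorem gives a nonsingular foliation transverse to $I$, and a $\mu$-generic Poincar\'e-recurrent point has a transverse trajectory closing up to a loop whose nonzero homology class violates $\rho_{M,I}(\mu)=0$). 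First I would replace $I$ by a maximal identity isotopy $I'$ homotopic to it, whose fixed-point set $\mathrm{Fix}(I')$ is contained in $\mathrm{Fix}_{\mathrm{Cont},I}(F)$ and which carries the same action function (this replacement underlies the construction in Theorem \ref{thm:PW}), and apply Le Calvez's equivariant foliation theorem to get a singular foliation $\mathcal F$ of $M$ with singular set exactly $\mathrm{Fix}(I')$, dynamically transverse to $I'$: for every $z$ the trajectory $t\mapsto F'_t(z)$ is homotopic, rel endpoints, to a path crossing the leaves of $\mathcal F$ positively.

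Next I would exploit $F\neq\mathrm{Id}_M$ and the total support of $\mu$. There is a non-empty open set $U$ with $F(U)\cap U=\varnothing$, so $\mu(U)>0$, and since $\mu$ is finite and $F$-invariant, Poincar\'e recurrence gives a set $R\subseteq U$ of positive $\mu$-measure whose points return to $U$ infinitely often. For $z\in R$, concatenating the $\mathcal F$-transverse representatives of the trajectory segments of $z,F(z),\dots,F^{n-1}(z)$ and closing up along a short arc when $F^n(z)$ is close to $z$ produces a loop $\Gamma_z$ positively transverse to $\mathcal F$, nontrivial since $F(z)\neq z$ forces it to cross at least the leaf met at the first step. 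Using $\rho_{M,I}(\mu)=0$, which makes the homology classes of the $\Gamma_z$ average to zero, one can --- exactly as in Le Calvez's analysis of periodic orbits of Hamiltonian homeomorphisms of surfaces --- combine finitely many such data into a transverse loop $\Gamma$ that is embedded, null-homotopic in $M$, and still nontrivial. Then $\Gamma$ bounds a disk $D$; the Poincar\'e--Hopf index obstruction applied to $\mathcal F|_D$ forces a singularity $x\in\mathrm{Fix}(I')$ in $D$, and the complementary surface $M\setminus D$, of Euler characteristic $1-2g<0$, likewise contains a singularity $x'\in\mathrm{Fix}(I')$.

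Finally I would feed this into the dynamical description of $\mathcal A$ established earlier in the paper --- that $\mathcal A(x)-\mathcal A(x')$ is the $\mu$-integral over $z$ of a linking (intersection) number recording how the transverse trajectory of $z$ separates $x$ from $x'$, an integral converging precisely because $I$ satisfies the WB-property and whose integrand is finite because $\mu$ has no atoms on the fixed-point set. The loop $\Gamma$ above, together with the positive-$\mu$-measure family $R$ feeding it, forces this integral to be nonzero, so $\mathcal A(x)\neq\mathcal A(x')$, contradicting constancy; this also exhibits two contractible fixed points, so the statement is not vacuous. The step I expect to be the main obstacle is the middle one: manufacturing from recurrence a genuinely nontrivial \emph{embedded, null-homotopic} transverse loop, and then choosing the pair $(x,x')$ and the transverse arc joining them so that the orbits in $U$ contribute to the linking integral with a coherent sign rather than being cancelled by other crossings --- all the more delicate since $\mathcal A$ is canonical only up to an additive constant, so only \emph{relative} linking numbers are available. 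This is where $\rho_{M,I}(\mu)=0$, the total support of $\mu$, and Le Calvez's forcing machinery for surface homeomorphisms isotopic to the identity do the real work.
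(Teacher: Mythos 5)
There is a genuine gap, and it sits exactly where you suspected. Your plan reaches the foliation $\mathcal F$ transverse to a maximal isotopy and then tries to extract non-constancy of the action from an embedded, null-homotopic, positively transverse loop $\Gamma$ plus a Poincar\'e--Hopf count of singularities in the disk it bounds. Two problems: first, the existence of such a loop (embedded, null-homotopic, built from $\mu$-generic recurrence and $\rho_{M,I}(\mu)=0$) is asserted ``as in Le Calvez'' but is not a statement of his theory and is not justified here; second, and more fundamentally, even granted $\Gamma$, $D$, and singularities $x\in D$, $x'\notin D$, you have no control on the sign of the linking integrand $i(\widetilde F;\widetilde x,\widetilde x',z)$: a reference arc joining $x$ to $x'$ chosen abstractly can be crossed by transverse trajectories in both directions, so the integral $I_\mu(\widetilde F;x,x')$ could vanish by cancellation. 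Nothing in your construction ties the pair $(x,x')$ or the arc to the foliation in a way that prevents this, which is precisely why your argument stalls at the ``coherent sign'' step.

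The paper's proof supplies the missing idea: it works directly (no contradiction) and takes as reference path a \emph{leaf} $\lambda$ of $\mathcal F$ through a $\mu$-generic recurrent non-fixed point $z$. By Le Calvez's structure results, $\alpha(\lambda)$ and $\omega(\lambda)$ are distinct singularities (in the finite case; connected pieces of the singular set in the infinite case), and these are the two fixed points whose actions get compared. Because every trajectory $I'(z')$ is homotopic to a path \emph{positively} transverse to $\mathcal F$, every crossing of the leaf $\lambda$ has the same sign, so $L_n(\widetilde F;\widetilde{\omega(\lambda)},\widetilde{\alpha(\lambda)},z')\geq 0$ for all recurrent $z'$ and all $n$, and $L_1>0$ on a free disk of positive measure around $z$; integrating (using the return-time/Birkhoff identities already established, and Fatou plus Jaulent's approximation property (5) and a separate argument killing possible negative contributions from the ends of lifted leaves reaching $\infty$, in the infinite case) gives $I_\mu(\widetilde F;\omega(\lambda),\alpha(\lambda))>0$, hence non-constancy. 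So the leaf-as-reference-path trick, not a null-homotopic transverse loop, is what makes the signs coherent; without it your outline does not close.
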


Let $\widetilde{M}$ be the universal covering space and let
$\widetilde{F}$ be the time-one map of the lifted identity isotopy
of $I$ to $\widetilde{M}$. In fact, in order to generalize the
classical action of $F$, we first define the action function of $F$
on the fixed point set of $\widetilde{F}$ (see Section
\ref{sec:definition of action function}). And then, we define the
action spectrum $\sigma(\widetilde{F})$ which is the range of the
action function of $F$ (whose domain is the fixed points set of
$\widetilde{F}$), and define the action width
$\mathrm{width}(\widetilde{F})=\sup_{x,y\in\sigma(\widetilde{F})}|x-y|$
(it may be infinite, see Section \ref{sec:action spectrum}). Base on
Proposition \ref{prop:F is not constant if the contractible fixed
points is finite}, we can get the following Corollary \ref{cor:the
symplectic action when M with genus bigger 1}, Proposition
\ref{prop:ggeq1 no torsion} and Proposition \ref{prop:g>1 no
torsion} which are generalizations of Theorem 2.1.\,C, Proposition
2.6.\,A in \cite{P}.

\begin{cor}\label{cor:the symplectic action when M with genus bigger
1}Let $F$ be the time-one map of an identity isotopy $I$ on a closed
oriented surface $M$ with $g>1$. If $I$ satisfies the WB-property
and $F\in \mathrm{Homeo}_*(M)\setminus\{\,\mathrm{Id}_{M}\}$,
$\mu\in\mathcal {M}(F)$ has total support and no atoms on
$\mathrm{Fix}_{\mathrm{Cont},I}(F)$,
 then $\sharp\sigma(\widetilde{F})\geq2$.
\end{cor}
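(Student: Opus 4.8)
The plan is to read the statement off from Proposition~\ref{prop:F is not constant if the contractible fixed points is finite}: essentially all the substance is already contained there, and what remains is a short translation from ``the action function is non-constant'' to ``$\sharp\sigma(\widetilde F)\ge 2$''. First I would fix the relation between the two action functions in play. By definition $\sigma(\widetilde F)$ is the range of the action function of $F$ on $\mathrm{Fix}(\widetilde F)$, whereas Theorem~\ref{thm:PW} produces an action function on $\mathrm{Fix}_{\mathrm{Cont},I}(F)$; the point is that these two functions have the same range. Indeed, for the canonical lift $\widetilde F=\widetilde F_1$ of the identity isotopy, the path $t\mapsto\widetilde F_t(\tilde x)$ is the lift of the loop $I(x)\colon t\mapsto F_t(x)$ starting at a lift $\tilde x$ of $x$; hence $\widetilde F(\tilde x)=\tilde x$ precisely when this lifted path is again a loop, which happens exactly when $I(x)$ is contractible, i.e. when $x\in\mathrm{Fix}_{\mathrm{Cont},I}(F)$. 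So the projection $\pi\colon\mathrm{Fix}(\widetilde F)\to\mathrm{Fix}_{\mathrm{Cont},I}(F)$ is surjective, and by the construction in Section~\ref{sec:definition of action function} the value of the action at $\tilde x$ is invariant under the deck group of $\widetilde M\to M$; thus the action function on $\mathrm{Fix}(\widetilde F)$ is the pullback by $\pi$ of the action function of Theorem~\ref{thm:PW}, and surjectivity of $\pi$ forces the two to have the same image, namely $\sigma(\widetilde F)$.

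Next I would verify that Proposition~\ref{prop:F is not constant if the contractible fixed points is finite} applies here. Its hypotheses are the WB-property, $\mu$ of total support with no atoms on $\mathrm{Fix}_{\mathrm{Cont},I}(F)$, $\rho_{M,I}(\mu)=0$, and $F\in\mathrm{Homeo}_*(M)\setminus\{\mathrm{Id}_M\}$. All of these except $\rho_{M,I}(\mu)=0$ are literally among the hypotheses of the Corollary; and $\rho_{M,I}(\mu)=0$ holds here because it is exactly the condition under which Theorem~\ref{thm:PW}, and hence the object $\sigma(\widetilde F)$ itself, is defined. It is also worth recording that the common domain is non-empty, but this is already implicit in Proposition~\ref{prop:F is not constant if the contractible fixed points is finite}, since a function that is not constant has at least two points in its domain; for $g>1$ it can in addition be obtained directly from standard surface fixed-point theory, e.g. from the non-vanishing of the Lefschetz index $\chi(M)=2-2g$, or from a Brouwer-type argument for the canonical lift on $\widetilde M=\mathbb H^2$.

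It then follows from Proposition~\ref{prop:F is not constant if the contractible fixed points is finite} that the action function of Theorem~\ref{thm:PW} is not constant, so that it takes at least two distinct values; by the first paragraph these values constitute exactly $\sigma(\widetilde F)$, whence $\sharp\sigma(\widetilde F)\ge2$. Thus the real difficulty lies not in this corollary but in Proposition~\ref{prop:F is not constant if the contractible fixed points is finite} on which it rests (and, in turn, in the $g\ge1$ surface dynamics behind that Proposition, where the WB-property is used). Within the present argument, the only step that requires care is the bookkeeping of the first paragraph: identifying $\mathrm{Fix}(\widetilde F)$, modulo the deck action, with $\mathrm{Fix}_{\mathrm{Cont},I}(F)$, verifying the deck-invariance of the action function, and noting that this common domain is non-empty.
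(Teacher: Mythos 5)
There is a genuine gap: you have silently added the hypothesis $\rho_{M,I}(\mu)=0$, which the Corollary deliberately omits. Your justification --- that ``$\rho_{M,I}(\mu)=0$ holds here because it is exactly the condition under which Theorem \ref{thm:PW}, and hence the object $\sigma(\widetilde F)$ itself, is defined'' --- is not correct. The spectrum $\sigma(\widetilde F)$ is defined as the range of the action $l_\mu$ on $\mathrm{Fix}(\widetilde F)$, and $l_\mu$ exists as soon as the linking numbers $i(\widetilde F;\widetilde a,\widetilde b,z)$ are $\mu$-integrable, which under the WB-property and total support is guaranteed by Proposition \ref{prop:the support of mu is M} together with Corollary \ref{clm:I is coboundary}; no vanishing of the rotation vector is needed. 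The hypothesis $\rho_{M,I}(\mu)=0$ enters only in Proposition \ref{clm:L is well defined}, where it is exactly what makes $l_\mu$ invariant under the deck group and lets it descend to $L_\mu$ on $\mathrm{Fix}_{\mathrm{Cont},I}(F)$. So your first paragraph (deck-invariance of the action, identification of the range of $l_\mu$ with the range of $L_\mu$) is itself only valid in the case $\rho_{M,I}(\mu)=0$, and your argument covers only that case --- which is the first, easy half of the paper's proof, where indeed Proposition \ref{prop:F is not constant if the contractible fixed points is finite} is invoked.

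The missing case is $\rho_{M,I}(\mu)\neq 0$, and it is the whole point of stating the Corollary for $g>1$ without the zero-rotation-vector assumption (it is what upgrades the result from Hamiltonian-type maps to the analogue of Polterovich's Theorem 2.1.C for all of $\mathrm{Homeo}_*(M)$). The paper handles it by exploiting precisely the failure of deck-invariance: choose $\alpha\in G^{*}$ with $\varphi(\alpha)\wedge\rho_{M,I}(\mu)\neq 0$ ($\varphi$ the Hurewicz homomorphism); since $g>1$ gives $\chi(M)\neq 0$, the Lefschetz--Nielsen formula yields a contractible fixed point $a$, and the computation in the proof of Proposition \ref{clm:L is well defined} gives $i_\mu(\widetilde F;\widetilde a,\alpha(\widetilde a))=\varphi(\alpha)\wedge\rho_{M,I}(\mu)\neq 0$ for a lift $\widetilde a$ of $a$. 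Thus $l_\mu$ takes two distinct values at the two lifts $\widetilde a$ and $\alpha(\widetilde a)$ of the same point, and $\sharp\sigma(\widetilde F)\geq 2$. Without this case your proof does not establish the statement as written; to repair it you must either add the second case along these lines or restrict the statement by assuming $\rho_{M,I}(\mu)=0$, which would weaken the Corollary.
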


We remark here that  Proposition \ref{prop:F is not constant if the
contractible fixed points is finite} and Corollary \ref{cor:the
symplectic action when M with genus bigger 1} are not valid when the
measure has not total support as shown by Example \ref{exem:T2 and
supp(u)notM} and Example \ref{exem:M and supp(u)notM}.

We extend the identity isotopy $I=(F_t)_{t\in[0,1]}$ to $\mathbb{R}$
by writing $F_{t+1}=F_t\circ F_1$. We have the following
conclusions:

\begin{prop}\label{prop:ggeq1 no torsion}Under the same hypotheses
as Proposition \ref{prop:F is not constant if the contractible fixed
points is finite}, 
there exists a constant $C>0$ such that
$\mathrm{width}(\widetilde{F}^n)\geq C\cdot n$ for every $n\geq1$
where $\widetilde{F}^n$ is the time-one map of the lifted identity
isotopy of $I^n=(F_t)_{t\in[0,n]}$ to $\widetilde{M}$.
\end{prop}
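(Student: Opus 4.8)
The plan is to reduce the linear lower bound to the non-constancy established in Proposition \ref{prop:F is not constant if the contractible fixed points is finite}, by exploiting how the action function transforms under iteration of the isotopy.

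First I would check that the hypotheses pass to the iterated isotopy $I^{n}$, so that Theorem \ref{thm:PW} applies to $I^{n}$ and the action function of $\widetilde F^{n}$ is defined. This is immediate for $\rho_{M,I^{n}}(\mu)=n\,\rho_{M,I}(\mu)=0$, for the total support of $\mu$, and for the WB-property, since iterating the isotopy multiplies the linking numbers of contractible fixed points at most by $n$. Two points need a short separate argument: the absence of atoms on the (a priori larger) set $\mathrm{Fix}_{\mathrm{Cont},I^{n}}(F^{n})$, and that $F^{n}\neq\mathrm{Id}_{M}$ for every $n\geq1$ --- the latter because a nontrivial periodic element of $\mathrm{Homeo}_{*}(M)$ on a surface of genus $g\geq1$ (necessarily $g=1$, and, up to conjugacy, a rational translation of $T^{2}$) has nonzero rotation vector for every identity isotopy, contradicting $\rho_{M,I}(\mu)=0$.

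The core of the argument is the \emph{iteration formula}: one has $\mathrm{Fix}(\widetilde F)\subseteq\mathrm{Fix}(\widetilde F^{n})$, and for each $\widetilde x\in\mathrm{Fix}(\widetilde F)$ the action of $\widetilde x$ with respect to $\widetilde F^{n}$ equals $n$ times the action of $\widetilde x$ with respect to $\widetilde F$. In the smooth Hamiltonian model this is classical: writing $H_{t}$ for the $1$-periodic Hamiltonian generating $I$, one has $\int_{0}^{n}H_{t}(F_{t}(x))\,\mathrm dt=\sum_{k=0}^{n-1}\int_{0}^{1}H_{t}(F_{t}(F^{k}(x)))\,\mathrm dt=n\int_{0}^{1}H_{t}(F_{t}(x))\,\mathrm dt$ since $F^{k}(x)=x$, while a $2$-simplex bounding the $n$-fold loop $I(x)$ may be taken as $n$ copies of one bounding $I(x)$. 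In general I would argue from the definition of the generalized action in Section \ref{sec:definition of action function}: the $I^{n}$-trajectory of a point is the concatenation of the $I$-trajectories of its first $n$ iterates, so the linking number of $\widetilde x$ with a point $z$ relative to $I^{n}$ is the sum over $k=0,\dots,n-1$ of the linking numbers of $\widetilde x$ with $F^{k}(z)$ relative to $I$; integrating against the $F$-invariant measure $\mu$ and using its invariance, every summand contributes equally, which produces the factor $n$. I expect this to be the main obstacle, since it requires careful bookkeeping with lifts, with the fact that $\widetilde x$ is a common fixed point of $\widetilde F$ and $\widetilde F^{n}$ so that the base point of the linking does not drift, with sign conventions, and with the convergence of the measure-theoretic integral.

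Granting the iteration formula, $\sigma(\widetilde F^{n})\supseteq n\,\sigma(\widetilde F)$, hence $\mathrm{width}(\widetilde F^{n})=\mathrm{diam}\,\sigma(\widetilde F^{n})\geq\mathrm{diam}\,\bigl(n\,\sigma(\widetilde F)\bigr)=n\cdot\mathrm{width}(\widetilde F)$. By Proposition \ref{prop:F is not constant if the contractible fixed points is finite} the action function is not constant, so $\sigma(\widetilde F)$ contains at least two distinct values and $d:=\mathrm{width}(\widetilde F)>0$. Taking $C=d$ when $d<\infty$ (and any $C>0$ in the degenerate case $d=+\infty$, where $\mathrm{width}(\widetilde F^{n})=+\infty$) yields $\mathrm{width}(\widetilde F^{n})\geq C\cdot n$ for every $n\geq1$, as claimed.
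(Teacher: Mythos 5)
Your proposal is correct and follows essentially the same route as the paper: the ``iteration formula'' you identify is exactly Corollary \ref{cor:Imu(Fq,a,b)=qImu(F,a,b)} (obtained from the pointwise homogeneity $i(\widetilde{F}^{q};\widetilde{a},\widetilde{b},z)=q\,i(\widetilde{F};\widetilde{a},\widetilde{b},z)$ of Proposition \ref{prop:i(Fnabx)=ni(abx)}), and the paper deduces the proposition precisely by combining this with the non-constancy of the action from Proposition \ref{prop:F is not constant if the contractible fixed points is finite}, taking $C=|I_{\mu}(\widetilde{F};a,b)|$ for a pair with nonzero action difference. Your preliminary verifications (hypotheses for $I^{n}$, $F^{n}\neq\mathrm{Id}_{M}$) are not needed for this argument, since one only uses lifts of fixed points of $F$, which are automatically fixed by $\widetilde{F}^{n}$, and the integrability of $i(\widetilde{F}^{n};\widetilde{a},\widetilde{b},\cdot)$ for such pairs follows from the pointwise identity.
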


\begin{prop}\label{prop:g>1 no torsion}
Under the same hypotheses as Corollary \ref{cor:the symplectic
action when M with genus bigger 1}, there exists a constant $C>0$
such that $\mathrm{width}(\widetilde{F}^n)\geq C\cdot n$ for every
$n\geq1$.
\end{prop}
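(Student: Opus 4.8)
The plan is to derive this directly from Proposition \ref{prop:ggeq1 no torsion} by observing that its hypotheses are exactly the hypotheses of Proposition \ref{prop:F is not constant if the contractible fixed points is finite}, which in turn are subsumed by those of Corollary \ref{cor:the symplectic action when M with genus bigger 1}. Indeed, in the setting of Corollary \ref{cor:the symplectic action when M with genus bigger 1} we have a closed oriented surface $M$ with $g>1$, an identity isotopy $I$ satisfying the WB-property, $F\in\mathrm{Homeo}_*(M)\setminus\{\mathrm{Id}_M\}$, and $\mu\in\mathcal{M}(F)$ with total support and no atoms on $\mathrm{Fix}_{\mathrm{Cont},I}(F)$. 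The only extra condition needed to invoke Proposition \ref{prop:ggeq1 no torsion} is $\rho_{M,I}(\mu)=0$; but when $g>1$ one has $H_1(M,\mathbb{R})$ still nonzero, so this is not automatic and must be addressed. The resolution is that for $g>1$ the rotation vector of any measure invariant under a homeomorphism isotopic to the identity is forced to vanish — this is a consequence of the fact that the isotopy can be chosen so that $\widetilde F$ commutes with the deck transformations and the growth of orbits in $\widetilde M$ is sublinear in the relevant (hyperbolic) metric; alternatively one cites the standard fact, used already implicitly in the passage from Proposition \ref{prop:F is not constant if the contractible fixed points is finite} to Corollary \ref{cor:the symplectic action when M with genus bigger 1}, that on higher-genus surfaces $\rho_{M,I}(\mu)=0$ for every $\mu\in\mathcal{M}(F)$.

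Granting this, the argument is: fix $n\geq 1$ and apply Proposition \ref{prop:ggeq1 no torsion} to the iterated isotopy $I^n=(F_t)_{t\in[0,n]}$, whose time-one map is $F^n$. One must check that the hypotheses are stable under iteration. The measure $\mu$ is still $F^n$-invariant with total support; it still has no atoms on $\mathrm{Fix}_{\mathrm{Cont},I^n}(F^n)$ because $\mathrm{Fix}_{\mathrm{Cont},I^n}(F^n)\supseteq\mathrm{Fix}_{\mathrm{Cont},I}(F)$ but any added fixed points form a set still of measure zero (atomlessness of $\mu$ plus the WB-property controlling the linking data gives this). The WB-property for $I^n$ follows from the WB-property for $I$: linking numbers under the $n$-fold concatenated isotopy are bounded by $n$ times those under $I$ by additivity of linking numbers along concatenated paths, so the required uniform boundedness persists (with a constant depending on $n$, which is all that WB demands for a fixed isotopy). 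And $\rho_{M,I^n}(\mu)=n\,\rho_{M,I}(\mu)=0$. Hence Proposition \ref{prop:ggeq1 no torsion} applies and yields a constant $C>0$ — a priori depending on $n$ — with $\mathrm{width}(\widetilde{F^n})\geq C$.

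The main obstacle, and the step requiring genuine care, is getting a constant $C$ that is \emph{uniform in $n$} and moreover scales linearly. Here the correct viewpoint is not to apply Proposition \ref{prop:ggeq1 no torsion} to $I^n$ afresh, but to read it as already providing the linear lower bound directly: Proposition \ref{prop:ggeq1 no torsion} literally states $\mathrm{width}(\widetilde{F}^n)\geq C\cdot n$ under hypotheses weaker than those of Corollary \ref{cor:the symplectic action when M with genus bigger 1}, so once we have verified that the $g>1$ hypotheses of Corollary \ref{cor:the symplectic action when M with genus bigger 1} imply the hypotheses of Proposition \ref{prop:F is not constant if the contractible fixed points is finite} (which is the content of Corollary \ref{cor:the symplectic action when M with genus bigger 1} itself, together with the automatic vanishing $\rho_{M,I}(\mu)=0$ for $g>1$), the linear bound $\mathrm{width}(\widetilde{F}^n)\geq C\cdot n$ is inherited verbatim with the \emph{same} constant $C$. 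Thus the real work is entirely in the reduction: confirming the hypothesis inclusion and the vanishing of the rotation vector in genus $>1$; after that, Proposition \ref{prop:g>1 no torsion} is a restatement of Proposition \ref{prop:ggeq1 no torsion} in the special case $g>1$ with the $F\neq\mathrm{Id}_M$ and total-support conditions supplied by Corollary \ref{cor:the symplectic action when M with genus bigger 1}.
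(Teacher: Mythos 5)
Your reduction to Proposition \ref{prop:ggeq1 no torsion} only covers the case $\rho_{M,I}(\mu)=0$, and the claim you use to dismiss the other case is false. Corollary \ref{cor:the symplectic action when M with genus bigger 1} deliberately omits the hypothesis $\rho_{M,I}(\mu)=0$: on a closed surface of genus $g>1$ an invariant measure of total support can perfectly well have nonzero rotation vector. For instance, take a closed non-exact $1$-form $\lambda$ on $M_g$ and let $I$ be the symplectic isotopy generated by the vector field $X$ with $i_X\omega=-\lambda$; its time-one map is a diffeomorphism isotopic to the identity (so the WB-property holds), it preserves the area measure, which has total support, and its flux is $[\lambda]\neq 0$, hence $\rho_{M,I}(\mu)\neq0$ by the relation $\langle\mathrm{Flux}(I),[\sigma]_M\rangle=\rho_{M,I}(\mu)\wedge[\sigma]_M$ recalled in \ref{subsec:symplectic and hamiltonian}. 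Your heuristic (bounded displacement of lifted orbits forcing sublinear growth) does not work: displacement bounded per iterate is compatible with linear homological drift; and there is no such ``standard fact'' implicitly used in the paper -- on the contrary, the proof of Corollary \ref{cor:the symplectic action when M with genus bigger 1} treats the case $\rho_{M,I}(\mu)\neq0$ by a separate argument. (A secondary weak point: your assertions that the hypotheses are stable under iteration -- WB-property for $I^n$, atomlessness of $\mu$ on $\mathrm{Fix}_{\mathrm{Cont},I^n}(F^n)$ -- are not justified either, since $F^n$ may have contractible fixed points, e.g. periodic orbits of $F$, that are not fixed points of $F$; but, as you eventually note, nothing of this is needed.)

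The intended proof, which covers both cases at once, is the one the paper indicates by citing Corollary \ref{cor:the symplectic action when M with genus bigger 1} together with Corollary \ref{cor:imu(Fq,a,b)=qimu(F,a,b)}. By (the proof of) Corollary \ref{cor:the symplectic action when M with genus bigger 1} there exist two distinct fixed points $\widetilde{a},\widetilde{b}$ of $\widetilde{F}$ with $i_{\mu}(\widetilde{F};\widetilde{a},\widetilde{b})\neq0$: when $\rho_{M,I}(\mu)=0$ they come from Proposition \ref{prop:F is not constant if the contractible fixed points is finite} (your case), while when $\rho_{M,I}(\mu)\neq0$ one takes $a\in\mathrm{Fix}_{\mathrm{Cont},I}(F)$ (nonempty by the Lefschetz--Nielsen formula, since $g>1$) and a deck transformation $\alpha\in G^{\,*}$ with $\varphi(\alpha)\wedge\rho_{M,I}(\mu)\neq0$, so that $\widetilde{b}=\alpha(\widetilde{a})$ gives $i_{\mu}(\widetilde{F};\widetilde{a},\alpha(\widetilde{a}))=\varphi(\alpha)\wedge\rho_{M,I}(\mu)\neq0$. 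Since $\widetilde{a},\widetilde{b}\in\mathrm{Fix}(\widetilde{F}^n)$ and $i_{\mu}(\widetilde{F}^n;\widetilde{a},\widetilde{b})=n\,i_{\mu}(\widetilde{F};\widetilde{a},\widetilde{b})$ by Corollary \ref{cor:imu(Fq,a,b)=qimu(F,a,b)}, one gets $\mathrm{width}(\widetilde{F}^n)\geq C\cdot n$ with $C=|i_{\mu}(\widetilde{F};\widetilde{a},\widetilde{b})|>0$, uniformly in $n$, with no need to re-apply Proposition \ref{prop:ggeq1 no torsion} to $I^n$. So the genuine gap in your write-up is the unproved (and false) vanishing of the rotation vector in genus $g>1$; the $\rho_{M,I}(\mu)\neq0$ case must be handled as above.
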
\smallskip

Fix a Borel finite measure $\mu$ whose support is the whole space.
Denote by $\mathrm{Homeo}_*(M,\mu)$ the subgroup of
$\mathrm{Homeo}_*(M)$ whose element preserves the measure $\mu$, and
by $\mathrm{Hameo}(M,\mu)$ the subgroup of $\mathrm{Homeo}_*(M,\mu)$
whose elements satisfy furthermore that $\rho_{M,I}(\mu)=0$. For
convenience, we write $M_g$ the closed oriented surface with the
genus $g\geq1$.\bigskip

Based on the previous result listed above, we can study the periodic
homeomorphisms of surfaces. Applying Proposition \ref{prop:ggeq1 no
torsion}, Proposition \ref{prop:g>1 no torsion} and a result of
Fathi \cite{Fathi}(see Section \ref{subsec:application to
conservative diffeomorphism}), we can get the following corollary :
\begin{cor}\label{cor:no torsion}
The groups $\mathrm{Hameo}(\mathbb{T}^2,\mu)$,
$\mathrm{Homeo}_*(M_g)$ ($g>1$) are torsion free.
\end{cor}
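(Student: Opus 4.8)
The plan is to argue by contradiction. Suppose $F$ is a nontrivial element of finite order $N\ge 2$ in one of the two groups; the goal is to contradict the linear lower bound $\mathrm{width}(\widetilde F^{\,n})\ge C\cdot n$ furnished by Proposition \ref{prop:ggeq1 no torsion} (on $\mathbb{T}^2$), respectively Proposition \ref{prop:g>1 no torsion} (on $M_g$ with $g>1$).

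First I would bring $F$ into the setting of those propositions. For $\mathrm{Hameo}(\mathbb{T}^2,\mu)$ the full-support invariant measure $\mu$ and an identity isotopy $I$ with $\rho_{\mathbb{T}^2,I}(\mu)=0$ are part of the definition of the group. For $\mathrm{Homeo}_*(M_g)$ with $g>1$ I would produce an $F$-invariant Borel probability measure of full support and without atoms by averaging a fixed smooth probability measure of full support under the finite cyclic group $\langle F\rangle$, and I would fix any identity isotopy $I$ from $\mathrm{Id}_{M_g}$ to $F$; this isotopy is unambiguous up to homotopy because $\mathrm{Homeo}_*(M_g)$ is simply connected (Hamstrom), so the lift $\widetilde F$ is canonically attached to $F$, and since $I^N$ is then a null-homotopic loop the rotation vector $\rho_{M_g,I}(\mu)$ vanishes as well. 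The one hypothesis that still has to be checked, that $I$ satisfies the WB-property, is where the cited result of Fathi is used: a periodic homeomorphism of a closed surface is topologically conjugate to an isometry of a metric of constant curvature, and a nontrivial finite-order isometry of a closed surface has only finitely many fixed points; hence $\mathrm{Fix}(F)$, and a fortiori $\mathrm{Fix}_{\mathrm{Cont},I}(F)$, is finite and the WB-property holds. (The averaged measure in the genus $>1$ case also has no atoms on this finite set, being without atoms.)

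The key point is that the lift $\widetilde F$ has order $N$. A priori one only knows that $\widetilde F^{\,N}$ is a deck transformation of $\widetilde M\to M$, because $F^N=\mathrm{Id}_M$. When $g>1$, simple connectivity of $\mathrm{Homeo}_*(M_g)$ makes $G\mapsto\widetilde G$ a homomorphism, so $\widetilde F^{\,N}=\widetilde{F^N}=\widetilde{\mathrm{Id}_{M_g}}=\mathrm{Id}_{\widetilde M}$. When $M=\mathbb{T}^2$, the deck transformation $\widetilde F^{\,N}$ is the translation $T_{m_0}$ by some $m_0\in\mathbb{Z}^2$, and $m_0$ is detected by the rotation vector through additivity along $I^N$: it is proportional to $\rho_{\mathbb{T}^2,I^N}(\mu)=N\,\rho_{\mathbb{T}^2,I}(\mu)=0$, so $m_0=0$ and again $\widetilde F^{\,N}=\mathrm{Id}_{\widetilde M}$. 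Therefore $\{\widetilde F^{\,n}:n\ge 1\}=\{\mathrm{Id}_{\widetilde M},\widetilde F,\dots,\widetilde F^{\,N-1}\}$ is a finite set; by Proposition \ref{prop:boundedness} the finitely many values $\mathrm{width}(\widetilde F^{\,n})$ are all finite, and in particular $\mathrm{width}(\widetilde F^{\,N})=\mathrm{width}(\mathrm{Id}_{\widetilde M})=0$.

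This contradicts Proposition \ref{prop:ggeq1 no torsion} (for $\mathbb{T}^2$), respectively Proposition \ref{prop:g>1 no torsion} (for $M_g$ with $g>1$), which provide a constant $C>0$ with $\mathrm{width}(\widetilde F^{\,n})\ge C\cdot n>0$ for every $n\ge 1$ — a contradiction already at $n=N$. Hence no nontrivial element of finite order exists, and $\mathrm{Hameo}(\mathbb{T}^2,\mu)$ and $\mathrm{Homeo}_*(M_g)$ for $g>1$ are torsion free. I expect the main obstacle to lie in the second and third paragraphs: checking that an arbitrary nontrivial torsion element really meets all the hypotheses of the two propositions — above all the WB-property, for which the theorem of Fathi is invoked — and then improving ``$\widetilde F^{\,N}$ is a deck transformation'' to ``$\widetilde F^{\,N}=\mathrm{Id}_{\widetilde M}$'', which is precisely the place where the vanishing of the rotation vector (on $\mathbb{T}^2$) and the simple connectivity of $\mathrm{Homeo}_*(M_g)$ (in higher genus) are needed.
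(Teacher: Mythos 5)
Your proof is correct in outline, but it follows a genuinely different route from the paper's, and one of your citations is off. The paper never verifies the WB-property for an arbitrary torsion element. Instead it first proves that the group $\mathscr{H}_*(M_g,\mu)$ of homotopy classes of measure-preserving identity isotopies is torsion free (Corollary \ref{cor:no torsion cor}) by a trichotomy: if the WB-property fails there are fixed points $\widetilde{a},\widetilde{b}$ of $\widetilde{F}$ with $i(\widetilde{F}^{\,n};\widetilde{a},\widetilde{b})=n\,i(\widetilde{F};\widetilde{a},\widetilde{b})\neq0$, which already contradicts $I^N$ being a trivial class; if $\rho_{M,I}(\mu)\neq0$ the rotation vector grows linearly; only in the remaining case are Propositions \ref{prop:ggeq1 no torsion} and \ref{prop:g>1 no torsion} invoked. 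It then descends to $\mathrm{Hameo}(\mathbb{T}^2,\mu)$ and $\mathrm{Homeo}_*(M_g)$ using the actual result of Fathi, namely that $\mathrm{Homeo}(M,\mu)\hookrightarrow\mathrm{Homeo}(M)$ is a weak homotopy equivalence, together with $\pi_1(\mathrm{Homeo}_*(\mathbb{T}^2))\simeq\mathbb{Z}^2$ and $\pi_1(\mathrm{Homeo}_*(M_g))\simeq0$. Your direct argument replaces that descent by the canonical lift homomorphism (for $g>1$) and by killing the deck transformation $\widetilde{F}^{\,N}$ with the rotation vector (on $\mathbb{T}^2$); both steps are sound, and $\mathrm{width}(\mathrm{Id}_{\widetilde{M}})=0$ is legitimate since the action is built from linking numbers that, by Remark \ref{rem:some result of of sphere delete three points}, depend only on the lift $\widetilde{F}^{\,N}$ and $\mu$, so you may compute them with the constant isotopy.

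The step you should repair is the WB-property. What you invoke there — a periodic homeomorphism of a closed surface is conjugate to an isometry of a constant-curvature metric, hence has finite fixed set — is not the cited result of Fathi (which is the weak homotopy equivalence above); it is a much heavier classical theorem (Ker\'ek\-j\'art\'o--Eilenberg, Nielsen), nowhere established in the paper, and for $g>1$ it already gives the corollary by classical means (a nontrivial finite-order isometry of a hyperbolic surface is never isotopic to the identity), so invoking it largely bypasses the machinery the corollary is meant to exhibit. The fix is cheap and keeps your structure: if the WB-property failed at some $\widetilde{a}$, pick $\widetilde{b}$ with $i(\widetilde{F};\widetilde{a},\widetilde{b})\neq0$; since you have shown the loop $I^N$ is null-homotopic (trivial translation part on $\mathbb{T}^2$, $\pi_1=0$ for $g>1$), all linking numbers of $\widetilde{F}^{\,N}$ vanish, contradicting $i(\widetilde{F}^{\,N};\widetilde{a},\widetilde{b})=N\,i(\widetilde{F};\widetilde{a},\widetilde{b})\neq0$. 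With that substitution (and recalling that the measure in this corollary is taken without atoms, as the paper assumes just before stating it, so the no-atom hypothesis of Proposition \ref{prop:ggeq1 no torsion} also holds on $\mathbb{T}^2$), your proof is complete and self-contained relative to the paper.
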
\bigskip

Let us now recall the definition of distortion (see \cite{P}). If
$\mathscr{G}$ is a finitely generated group with generators
$\{g_1,\ldots,g_s\}$, then
$f\in\mathscr{G}$ is said to be a \emph{distortion element} 
of $\mathscr{G}$ provided that $f$ has infinite order and
$$\liminf_{n\rightarrow+\infty}\frac{\|f^n\|_{\mathscr{G}}}{n}=0,$$
where $\|f^n\|_{\mathscr{G}}$ is the word length of $f^n$ in the
generators $\{g_1,\ldots,g_s\}$. If $\mathscr{G}$ is not finitely
generated, then we say that $f\in\mathscr{G}$ is distorted in
$\mathscr{G}$ if it is distorted in some finitely generated subgroup
of $\mathscr{G}$. 
\smallskip

Given two positive sequences $\{a_n\}$ and $\{b_n\}$, we write
$a_n\succeq b_n$ if there is $c>0$ such that $a_n\geq cb_n$ for all
$n\in\mathbb{N}$, and $a_n\sim b_n$ if $a_n\succeq b_n$ and
$a_n\preceq b_n$.\smallskip

Denote by $\mathrm{Ham}^1(M,\mu)$ the group
$$\mathrm{Hameo}(M,\mu)\cap \mathrm{Diff}^1(M)$$ and by
$\mathrm{Diff}^1_*(M,\mu)$ the group $$\mathrm{Homeo}_*(M,\mu)\cap
\mathrm{Diff}^1(M).$$

Finally, we study the existence of distortion of the groups
$\mathrm{Ham}^1(\mathbb{T}^2,\mu)$ and $\mathrm{Diff}^1_*(M_g,\mu)$
and get the following result:

\begin{thm}\label{thm:Fn thicksim n}
Let $F\in\mathrm{Diff}^1_*(M_g,\mu)\setminus\{\mathrm{Id}_{M_g}\}$
($g>1$) (resp.
$F\in\mathrm{Ham}^1(\mathbb{T}^2,\mu)\setminus\{\mathrm{Id}_{\mathbb{T}^2}\}$),
and $\mathscr{G}\subset\mathrm{Diff}^1_*(M_g,\mu)$ ($g>1$) (resp.
$\mathscr{G}\subset\mathrm{Ham}^1(\mathbb{T}^2,\mu)$) be a finitely
generated subgroup containing $F$, then
$$\|F^n\|_{\mathscr{G}}\thicksim n.$$
As a consequence, the groups $\mathrm{Diff}^1_*(M_g,\mu)$ ($g>1$)
and $\mathrm{Ham}^1(\mathbb{T}^2,\mu)$ have no distortion.\end{thm}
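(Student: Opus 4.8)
The plan is to combine the lower bound coming from the action width with the trivial upper bound $\|F^n\|_{\mathscr{G}}\leq n\cdot\|F\|_{\mathscr{G}}$, so the whole content lies in proving $\|F^n\|_{\mathscr{G}}\succeq n$. First I would observe that $\mathscr{G}$ is finitely generated and acts on $M_g$ (resp.\ $\mathbb{T}^2$) by elements of $\mathrm{Diff}^1_*(M,\mu)$ (resp.\ $\mathrm{Ham}^1(\mathbb{T}^2,\mu)$), so every $g\in\mathscr{G}$ is a $C^1$ diffeomorphism isotopic to the identity preserving $\mu$; pick identity isotopies $I_g$ for the generators $g_1,\dots,g_s$. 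Because $F$ is a $C^1$ diffeomorphism, its contractible fixed point set automatically satisfies the WB-property (as noted in the introduction, being a diffeomorphism is one of the sufficient conditions), and in the torus case the hypothesis $F\in\mathrm{Hameo}$ gives $\rho_{\mathbb{T}^2,I}(\mu)=0$, while $\mu$ has full support; in the genus $>1$ case the rotation vector vanishes automatically and the action function of Corollary \ref{cor:the symplectic action when M with genus bigger 1} applies. Thus Proposition \ref{prop:ggeq1 no torsion} (resp.\ Proposition \ref{prop:g>1 no torsion}) yields a constant $C>0$ with $\mathrm{width}(\widetilde{F}^n)\geq C\cdot n$ for all $n\geq 1$.

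Next I would show that the action width is ``Lipschitz'' with respect to composition: there is a constant $D>0$, depending only on the chosen isotopies $I_{g_1},\dots,I_{g_s}$ (and the measure $\mu$), such that for any word $w=g_{i_1}\cdots g_{i_k}$ in the generators, the composed isotopy $I_w$ obtained by concatenating the $I_{g_{i_j}}$'s satisfies
\begin{equation}\label{eq:width-subadd}
\mathrm{width}(\widetilde{F_w})\leq D\cdot k,\nonumber
\end{equation}
where $\widetilde{F_w}$ is the time-one map of the lift of $I_w$ to $\widetilde{M}$. This is the analogue of the quasi-morphism / fragmentation-type estimates in Polterovich \cite{P}: the action spectrum of a concatenation is controlled by the sum of the individual contributions, because the action function on fixed points of the lift behaves additively under concatenation of isotopies up to the oscillation of each piece (each generator contributes at most $\mathrm{width}(\widetilde{g_i})<\infty$, and these are finitely many fixed quantities). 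I would prove \eqref{eq:width-subadd} from the cocycle/additivity property of the generalized action function established in Section \ref{sec:definition of action function}, together with the fact that changing the isotopy within its homotopy class (to reconcile $I_w$ with a fixed reference isotopy representing $w$ as an element of $\mathscr{G}$) changes the action only by a bounded amount controlled by the relations in $\mathscr{G}$.

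Combining the two estimates: if $F^n$ has word length $\ell_n=\|F^n\|_{\mathscr{G}}$, write $F^n$ as a word of length $\ell_n$ in the generators, giving an isotopy $I_w$ with $w$ representing $F^n$; then $\widetilde{F_w}=\widetilde{F}^n$ up to a deck transformation (both are the lift of isotopies in the class of $I^n$), so $C\cdot n\leq \mathrm{width}(\widetilde{F}^n)=\mathrm{width}(\widetilde{F_w})\leq D\cdot\ell_n$, whence $\ell_n\geq (C/D)\,n$, i.e.\ $\|F^n\|_{\mathscr{G}}\succeq n$. Together with $\|F^n\|_{\mathscr{G}}\preceq n$ this gives $\|F^n\|_{\mathscr{G}}\thicksim n$, and since $F$ has infinite order (it is not the identity, and by Corollary \ref{cor:no torsion} the relevant groups are torsion free), the $\liminf$ in the definition of distortion is positive, so $F$ is not distorted; as $F$ was an arbitrary non-identity element and $\mathscr{G}$ an arbitrary finitely generated subgroup containing it, the groups $\mathrm{Diff}^1_*(M_g,\mu)$ ($g>1$) and $\mathrm{Ham}^1(\mathbb{T}^2,\mu)$ have no distortion. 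The main obstacle I anticipate is establishing the subadditivity estimate \eqref{eq:width-subadd} with a constant independent of $n$: one must be careful that passing from the geometric concatenation $I_w$ to the homotopy class of $I^n$ (and choosing the correct lift/deck transformation) does not introduce an $n$-dependent error, which requires the additivity of the action function to be genuinely a group cocycle on $\pi_1$ of the relevant space of isotopies, not merely additive along a single orbit.
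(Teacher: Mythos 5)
Your overall strategy (linear lower bound on $\mathrm{width}(\widetilde{F}^n)$ from Propositions \ref{prop:ggeq1 no torsion} and \ref{prop:g>1 no torsion}, plus a word-length upper bound) is not the route the paper takes, and its key step is exactly the point you leave unproved: the subadditivity estimate $\mathrm{width}(\widetilde{F_w})\leq D\cdot k$ for a word $w$ of length $k$. In this generalized setting there is no mechanism to compare the action spectrum of a composition with those of its factors: the cocycle property (Corollary \ref{clm:I is coboundary}) compares the actions of different fixed points of \emph{one} lifted map, and $\mathrm{Fix}(\widetilde{g h})$ bears no relation to $\mathrm{Fix}(\widetilde g)$ and $\mathrm{Fix}(\widetilde h)$; the Floer-theoretic/Hamiltonian machinery that gives spectral subadditivity in Polterovich's smooth setting is precisely what this paper is dispensing with. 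Moreover the heuristic ``each generator contributes at most its own width'' fails as stated: to measure winding around two fixed points $\widetilde a,\widetilde b$ of $\widetilde{F_w}$ one must first renormalize the concatenated isotopy so that these two points stay fixed (the M\"obius-type isotopy $\widetilde F\,'^{(n)}_t$ in the proof of Lemma \ref{lem:zimmer program}), and after renormalization a single letter can contribute winding proportional to the separation of the renormalized points, which itself grows linearly in the word length. The naive count therefore gives a \emph{quadratic} bound in $N(n)$ — this is exactly Lemma \ref{lem:zimmer program}, which only yields $\|F^n\|_{\mathscr{G}}\succeq\sqrt n$ — and upgrading it to a linear bound is the real content of the paper's Lemma \ref{lem:zimmer2}, which uses the $C^1$ hypothesis (compactification of $\lambda_j$ on the blown-up diagonal), Jaulent's isotopy $I'_Y$ and the control (\ref{homotopy finiteness}) on the homology classes of the trajectories. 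Also note a smaller misstatement: for $g>1$ the rotation vector of $\mu$ does \emph{not} vanish automatically for $F\in\mathrm{Diff}^1_*(M_g,\mu)$; the paper treats the case $\rho_{M_g,I}(\mu)\neq0$ separately and easily (Lemma \ref{lem:zimmer1}), using that $I\mapsto\rho_{M_g,I}(\mu)$ is additive under concatenation, and reserves the linking-number argument for the Hamiltonian case.

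In short: your upper bound $\|F^n\|_{\mathscr{G}}\preceq n$ and the width lower bound are fine, but the bridge between word length and action width is asserted by analogy rather than proved, and the obstacle you flag at the end (the $n$-dependent error from renormalization and change of isotopy) is not a technicality — it is where essentially all of the paper's work in Section \ref{sec:distortion of group} is concentrated, and it is overcome there by bounding the linking number of a single well-chosen recurrent point $z_*$ with $i(\widetilde F;\widetilde a,\widetilde b,z_*)\neq0$ (supplied by Proposition \ref{prop:F is not constant if the contractible fixed points is finite}), not by a subadditivity property of the width.
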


Applying Theorem \ref{thm:Fn thicksim n}, some algebraic properties
of the lattice $\mathrm{SL}(n,\mathbb{Z})$ ($n\geq3$) and mapping
class group (see Section \ref{sec:distortion of group} for the
details), we get the following theorems.

\begin{thm}\label{thm:zimmer2}Every homomorphism from
$\mathrm{SL}(n,\mathbb{Z})$ ($n\geq3$) to
$\mathrm{Ham}^1(\mathbb{T}^2,\mu)$ or $\mathrm{Diff}^1_*(M_g,\mu)$
($g>1$) is trivial.
\end{thm}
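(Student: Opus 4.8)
The plan is to reduce the assertion to two structural facts already obtained --- that the target groups are torsion free (Corollary \ref{cor:no torsion}) and have no distortion elements (Theorem \ref{thm:Fn thicksim n}) --- together with the classical fact that, for $n\geq 3$, the elementary matrices are distortion elements of $\mathrm{SL}(n,\mathbb{Z})$ (a theorem of Lubotzky, Mozes and Raghunathan; see Section \ref{sec:distortion of group}). Write $G_0$ for $\mathrm{Ham}^1(\mathbb{T}^2,\mu)$ or $\mathrm{Diff}^1_*(M_g,\mu)$ with $g>1$. Since $\mathrm{Ham}^1(\mathbb{T}^2,\mu)\subset\mathrm{Hameo}(\mathbb{T}^2,\mu)$ and $\mathrm{Diff}^1_*(M_g,\mu)\subset\mathrm{Homeo}_*(M_g)$, Corollary \ref{cor:no torsion} gives that $G_0$ is torsion free, and Theorem \ref{thm:Fn thicksim n} gives that $\|F^n\|_{\mathscr{G}}\thicksim n$ whenever $F\in G_0\setminus\{\mathrm{Id}\}$ and $\mathscr{G}$ is a finitely generated subgroup of $G_0$ containing $F$; in particular $G_0$ has no distortion element.

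Let $\phi\colon\mathrm{SL}(n,\mathbb{Z})\to G_0$ be a homomorphism and put $G=\phi(\mathrm{SL}(n,\mathbb{Z}))$. As $\mathrm{SL}(n,\mathbb{Z})$ is finitely generated, so is $G$: fixing generators $g_1,\dots,g_s$ of $\mathrm{SL}(n,\mathbb{Z})$, the images $\phi(g_1),\dots,\phi(g_s)$ generate $G$, and expressing $h\in\mathrm{SL}(n,\mathbb{Z})$ as a word in the $g_i^{\pm1}$ and applying $\phi$ letterwise yields
\[
\|\phi(h)\|_{G}\leq\|h\|_{\mathrm{SL}(n,\mathbb{Z})}.
\]
Since $\mathrm{SL}(n,\mathbb{Z})$ is generated by the elementary matrices $E_{ij}=\mathrm{Id}+e_{ij}$ ($i\neq j$), it is enough to show $\phi(E_{ij})=\mathrm{Id}$ for all of them.

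Fix $i\neq j$ and set $E=E_{ij}$. By the Lubotzky--Mozes--Raghunathan theorem, for $n\geq 3$ the element $E$ is distorted in $\mathrm{SL}(n,\mathbb{Z})$, so $\liminf_{k\to\infty}\|E^{k}\|_{\mathrm{SL}(n,\mathbb{Z})}/k=0$; combined with the displayed inequality and $\phi(E)^{k}=\phi(E^{k})$ this gives
\[
\liminf_{k\to\infty}\frac{\|\phi(E)^{k}\|_{G}}{k}=0.
\]
Because $G_0$ is torsion free, $\phi(E)$ is either $\mathrm{Id}$ or of infinite order. If it had infinite order, then $\phi(E)$ would be a distortion element of the finitely generated subgroup $G\subset G_0$ containing it, contradicting $\|\phi(E)^{k}\|_{G}\thicksim k$ from Theorem \ref{thm:Fn thicksim n}. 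Hence $\phi(E_{ij})=\mathrm{Id}$ for all $i\neq j$, and $\phi$ is trivial.

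The only ingredient outside the present paper is the distortion of elementary matrices in $\mathrm{SL}(n,\mathbb{Z})$, and this is exactly where the hypothesis $n\geq 3$ is used: $\mathrm{SL}(2,\mathbb{Z})$ is virtually free and has no distortion, so the argument genuinely fails for $n=2$. Everything else is a formal consequence of Corollary \ref{cor:no torsion} and Theorem \ref{thm:Fn thicksim n}. Alternatively, one may route the proof through the Margulis normal subgroup theorem (a nontrivial $\phi$ has kernel inside the finite center, so $\phi(E_{12})$ has infinite order and yields the same contradiction, the finite-image case being excluded by torsion freeness); I would present the generator-by-generator version above, as it avoids invoking the normal subgroup theorem.
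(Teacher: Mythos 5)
Your proof is correct, but it takes a genuinely different route from the paper's. The paper applies its Theorem~\ref{thm:zimmer1} to a \emph{single} distortion element of $\mathrm{SL}(n,\mathbb{Z})$ (supplied by the embedded integer Heisenberg group, via Polterovich's Prop.~1.7), concludes that $\mathrm{Ker}(\phi)$ is an infinite normal subgroup, invokes the Margulis finiteness (normal subgroup) theorem to get that this kernel has finite index, hence the image is finite, and finally kills the finite image with torsion freeness (Corollary~\ref{cor:no torsion}). You instead argue generator by generator: each elementary matrix $E_{ij}$ is distorted in $\mathrm{SL}(n,\mathbb{Z})$ for $n\geq 3$, the word-length inequality $\|\phi(h)\|_{G}\leq\|h\|_{\mathrm{SL}(n,\mathbb{Z})}$ transports this distortion to $\phi(E_{ij})$ inside the finitely generated image $G\subset G_0$, and Theorem~\ref{thm:Fn thicksim n} then forces $\phi(E_{ij})=\mathrm{Id}$; since the $E_{ij}$ generate, $\phi$ is trivial. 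What your route buys is that it avoids the Margulis normal subgroup theorem altogether (you only mention it as an alternative), at the price of needing distortion of \emph{all} elementary matrices; note that for this you do not really need the Lubotzky--Mozes--Raghunathan theorem, since the commutator identity $E_{ij}^{m^{2}}=[E_{ik}^{m},E_{kj}^{m}]$ (the same Heisenberg relation behind the property quoted in Section~\ref{sec:distortion of group}) already yields $\|E_{ij}^{m^{2}}\|=O(m)$; so your citation of LMR, and of Section~\ref{sec:distortion of group} for it, is heavier than necessary but harmless. Also, your appeal to Corollary~\ref{cor:no torsion} is essentially redundant in your version: Theorem~\ref{thm:Fn thicksim n} already excludes nontrivial finite-order images, whereas in the paper's proof torsion freeness is genuinely needed to pass from ``finite image'' to ``trivial image.''
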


\begin{thm}\label{thm:zimmer program}Every  homomorphism from
$\mathrm{SL}(n,\mathbb{Z})$ ($n\geq3$) to $\mathrm{Diff}^1(M_g,\mu)$
($g>1$) has only finite images.
\end{thm}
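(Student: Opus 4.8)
The plan is to decompose $\rho\colon\mathrm{SL}(n,\mathbb{Z})\to\mathrm{Diff}^1(M_g,\mu)$ ($n\ge 3$, $g>1$) into a ``mapping class component'' and an ``identity component'', to dispatch the first by rigidity of mapping class groups, and to reduce the second to the mechanism already used for Theorem~\ref{thm:zimmer2}. Since $\mathrm{SL}(n,\mathbb{Z})$ is perfect for $n\ge 3$, the image of $\rho$ lies in the orientation-preserving subgroup, so there is a homomorphism $\pi\colon\mathrm{Diff}^1(M_g,\mu)\to\mathrm{MCG}(M_g)$ to the mapping class group sending a diffeomorphism to its isotopy class, and its kernel is exactly $\mathrm{Diff}^1_*(M_g,\mu)$ (for a closed surface, isotopy classes of homeomorphisms and of $C^1$-diffeomorphisms coincide, so ``isotopic to $\mathrm{Id}_{M_g}$'' is unambiguous).

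First I would prove that $\pi\circ\rho$ has finite image. Here $\mathrm{SL}(n,\mathbb{Z})$ ($n\ge3$) is an irreducible lattice in the higher-rank group $\mathrm{SL}(n,\mathbb{R})$, and by the Farb--Masur superrigidity theorem for mapping class groups (see Section~\ref{sec:distortion of group}) every homomorphism from such a lattice to a surface mapping class group has finite image. Let $\Gamma:=\ker(\pi\circ\rho)$; it is a normal subgroup of finite index in $\mathrm{SL}(n,\mathbb{Z})$, and $\rho(\Gamma)\subseteq\ker\pi=\mathrm{Diff}^1_*(M_g,\mu)$. Since $\Gamma$ is normal of finite index, it suffices to show that $\rho(\Gamma)$ is finite.

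The second step reruns the argument behind Theorem~\ref{thm:zimmer2} for $\rho|_\Gamma$. By the congruence subgroup property, $\Gamma$ contains a principal congruence subgroup, which for $n\ge 3$ contains a finite-index subgroup $\Gamma'$ boundedly generated by the elementary matrices $e_{ij}(mt)=e_{ij}(m)^t$ ($i\ne j$, $t\in\mathbb{Z}$, for a suitable $m$) — bounded generation of $\mathrm{SL}(n,\mathbb{Z})$ ($n\ge3$) by elementary matrices is due to Carter--Keller and the analogue for congruence subgroups is also known; let $N$ be the corresponding bound. By Lubotzky--Mozes--Raghunathan, $\|e_{ij}(M)\|_{\mathrm{SL}(n,\mathbb{Z})}\asymp\log|M|$ for $n\ge3$, and as $\Gamma'$ has finite index its word metric is quasi-isometric to that of $\mathrm{SL}(n,\mathbb{Z})$; hence $\|e_{ij}(mk)\|_{\Gamma'}\preceq\log k$. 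Now $\mathscr{G}:=\rho(\Gamma')$ is a finitely generated subgroup of $\mathrm{Diff}^1_*(M_g,\mu)$. If some $F:=\rho(e_{ij}(m))$ had infinite order, then $F\in\mathscr{G}\setminus\{\mathrm{Id}_{M_g}\}$ and Theorem~\ref{thm:Fn thicksim n} would give $\|F^k\|_{\mathscr{G}}\thicksim k$; but a surjective homomorphism of finitely generated groups is Lipschitz for the word metrics, so $\|F^k\|_{\mathscr{G}}=\|\rho(e_{ij}(mk))\|_{\mathscr{G}}\preceq\|e_{ij}(mk)\|_{\Gamma'}\preceq\log k$ --- a contradiction. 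Thus each $\rho(e_{ij}(m))$ has finite order; let $R$ be the least common multiple of these finitely many orders. By bounded generation, every element of $\rho(\Gamma')$ is a product of at most $N$ elements of the finite set $\{\rho(e_{ij}(m))^s:i\ne j,\ 0\le s<R\}$, so $\rho(\Gamma')$ is finite. Passing to the normal core of $\Gamma'$ in $\mathrm{SL}(n,\mathbb{Z})$ gives a normal finite-index subgroup with finite $\rho$-image, whence $\rho(\mathrm{SL}(n,\mathbb{Z}))$ --- being an extension of a finite group by a finite group --- is finite.

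I expect the main obstacle to be the first step: one must genuinely know that $\mathrm{SL}(n,\mathbb{Z})$ ($n\ge3$) admits no infinite-image homomorphism into a surface mapping class group. This is the nontrivial external input (Farb--Masur, via superrigidity and the geometry of Teichm\"uller space / the curve complex), and it is precisely why the conclusion here is ``finite image'' rather than ``trivial'' as in Theorem~\ref{thm:zimmer2}: the reduction only kills the part of the image isotopic to the identity, while $M_g$ ($g>1$) does carry nontrivial finite symmetry groups. A secondary, more routine obstacle is to check that the three facts invoked about $\mathrm{SL}(n,\mathbb{Z})$ --- the congruence subgroup property, bounded generation by elementary matrices, and the logarithmic distortion of unipotent one-parameter subgroups --- all transfer to finite-index subgroups, so that the proof of Theorem~\ref{thm:zimmer2} really does apply to $\Gamma$ in place of $\mathrm{SL}(n,\mathbb{Z})$.
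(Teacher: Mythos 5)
Your argument is correct, and its first half is exactly the paper's: compose with the projection to the mapping class group, invoke the Farb--Masur rigidity theorem \cite{FM} to get a finite image there, and reduce to a finite-index normal subgroup $\Gamma$ landing in $\mathrm{Diff}^1_*(M_g,\mu)$. Where you diverge is in how you kill $\rho(\Gamma)$. The paper simply says ``apply Theorem \ref{thm:zimmer2}'', whose mechanism is: a distortion element supplied by an integer Heisenberg subgroup (a fact it asserts also for finite-index normal subgroups), Theorem \ref{thm:zimmer1} (built on Theorem \ref{thm:Fn thicksim n}) to force that element into the kernel, Margulis's finiteness theorem \cite{Mar} to conclude the kernel has finite index, and torsion-freeness (Corollary \ref{cor:no torsion}) to finish. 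You instead use the congruence subgroup property, bounded elementary generation of congruence subgroups \`{a} la Carter--Keller, and the Lubotzky--Mozes--Raghunathan logarithmic distortion of $e_{ij}(mt)$, playing the last directly against the linear growth $\|F^n\|_{\mathscr{G}}\thicksim n$ of Theorem \ref{thm:Fn thicksim n}. Both routes rest on the same no-distortion theorem of the paper; yours avoids Margulis's normal subgroup theorem and makes explicit the passage to the finite-index subgroup that the paper glosses over, at the price of three heavier external inputs (CSP, bounded generation of congruence subgroups by elementaries -- the one citation you should pin down carefully -- and LMR). Two small simplifications are available to you: since Theorem \ref{thm:Fn thicksim n} applies to every $F\neq\mathrm{Id}$, a nontrivial finite-order $\rho(e_{ij}(m))$ is already impossible, so you get $\rho(e_{ij}(m))=\mathrm{Id}$ outright; and once the kernel of $\rho|_\Gamma$ contains all $e_{ij}(m)$, its normal closure has finite index (for $n\geq3$), which lets you bypass bounded generation altogether and brings you essentially back to the paper's conclusion.
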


Remark here that Theorem \ref{thm:zimmer program} is a more general
conjecture of Zimmer (see \cite{Zim,P2,F4}) in the special surfaces
case. The reader can find more information about Zimmer conjecture
in Section \ref{sec:distortion of group}.
\bigskip

The article is organized as follows. In Section 1, we will first
introduce some notations and recall the precise definitions of some
important mathematical objects. And then we will define the linking
number on
 contractible fixed points and the boundedness properties. Finally, we
will study some conditions for these properties to hold. In Section
2, we will recall the classical action function in symplectic
geometry and analyze how to generalize the action function to a more
general case on closed oriented surfaces. In the end of this
section,  our main theorem is stated. In Section 3, we will recall
some well known results about the plan and the open annulus, and
extend some results of Franks to serve as the technical
preliminaries of this article. In Section 4, we will first extend
the definition of the linking number defined in Section 1 to
positively recurrent points, which is one of the main ingredients of
this article, and then we will give some elementary properties of
the extended linking number. In Section 5, we will first study the
boundedness of the extended linking number when it exists, and then
study the existence and the boundedness of the linking number in the
conservative case. In Section 6, based on the extended linking
number and its properties studied in Sections 4 and 5, we will
define a new action function and prove that it is a generalization
of the classical one, which is our main theorem. We study the
properties of the new action function, including boundedness and
continuity. Furthermore, we prove that, on the closed oriented
surface case, the new action function is not constant when the
measure has total support whose smooth case has been proved in
\cite{SM} by using Floer homology. We also give examples to
illuminate that this result is not true any more when the measure
has not total support. In Section 7, by applying our generalized
action function to the groups of conservative diffeomorphisms, we
generalize the results of Polterovich about the absence of
distortion in the symplectic and Hamiltonian diffeomorphisms groups
on closed oriented surface in \cite{P}. We proved that the
$C^1$-conservative diffeomorphism groups have no distortion on
closed oriented surfaces, which permits us to give an alternative
proof of the $C^1$-version of the Zimmer conjecture on oriented
closed surfaces when the measure is a Borel finite measure with full
surport. In Section 8, we provide a proof of a well known fact
required in this article but which we have not found in the
literature. In addition, we construct some examples to further
complete our results.\bigskip

\noindent\textbf{Acknowledgements.} 
I would like to thank my advisors, Patrice Le Calvez and Zhiying
Wen, for many helpful discussions and suggestions. I also thank
Olivier Jaulent and Juliana Xavier for their explanations of their
results to me. I am grateful to Bassam Fayad and Lucien Guillou for
careful reading the manuscript and many useful remarks.
\bigskip

\section{Notation and Definitions} We denote by $|\cdot|$ the
usual Euclidean metric on $\mathbb{R}^k$ and by
$\mathbf{S}^{k-1}=\{x\in\mathbb{R}^k\mid |x|=1\}$ the unit sphere.

If $A$ is a set, we write $\sharp A$ for the cardinality of $A$. If
$G$ is a group and $e$ is its unit element, we write
$G^{\,*}=G\setminus\{e\}$. If $(S, \sigma,\mu)$ is a measured space
and $V$ is any finite dimensional linear space, denote
$L^1(S,V,\mu)$  the set of $\mu$-integrable functions from $S$ to
$V$. If $X$ is a topological space and $A$ is a subset of $X$,
denote respectively $\mathrm{Int}(A)$, $\mathrm{Cl}(A)$ and
$\partial A$ the interior, the closure and the boundary of $A$.

If $M$ is a smooth manifold (with boundary or not), we denote by
$\mathrm{Homeo}(M)$ the set of all homeomorphisms of $M$.


\subsection{Identity isotopies}\label{sec:Identity isotopies}An
\emph{identity isotopy} $I=(F_t)_{t\in[0,1]}$ on $M$ is a continuous
path
\begin{eqnarray*}
  [0,1] &\rightarrow& \mathrm{Homeo}(M) \\
  t&\mapsto& F_t
\end{eqnarray*}
such that $F_0=\mathrm{Id}_{M}$, the last set being endowed with the
compact-open topology. We naturally extend this map to $\mathbb{R}$
by writing $F_{t+1}=F_t\circ F_1$. We can also define the inverse
isotopy of $I$ as $I^{-1}=(F_{-t})_{t\in[0,1]}=(F_{1-t}\circ
F_1^{-1})_{t\in[0,1]}$. We denote by $\mathrm{Homeo}_*(M)$ (resp.
$\mathrm{Diff}_*(M)$, $\mathrm{Diff}_*^1(M)$)  the set of all
homeomorphisms (resp. diffeomorphisms, $C^1$-diffeomorphisms) of $M$
that are isotopic to the identity.\smallskip

A \emph{path} on a manifold $M$ is a continuous map $\gamma:
J\rightarrow M$ defined on a nontrivial interval $J$ (up to an
increasing reparametrization). We can talk of a proper path (i.e.
$\gamma^{-1}(K)$ is compact for any compact set $K$) or a compact
path (i.e. $J$ is compact). When $\gamma$ is a compact path,
$\gamma(\inf J)$ and $\gamma(\sup J)$ are the \emph{ends} of
$\gamma$. We say that a compact path $\gamma$ is a \emph{loop} if
the two ends of $\gamma$ coincide. The inverse of the path $\gamma$
is defined by $\gamma^{-1}:t\mapsto\gamma(-t),\,t\in -J$. If
$\gamma_1: J_1\rightarrow M$ and $\gamma_2: J_2\rightarrow M$ are
two paths such that $$b_1=\sup J_1\in J_1,\quad a_2=\inf J_2\in
J_2\quad\mathrm{and}\quad\gamma_1(b_1)=\gamma_2(a_2),$$ then the
\emph{concatenation $\gamma_1$ and $\gamma_2$} is defined on
$J=J_1\cup(J_2+(b_1-a_2))$  in the classical way, where
$(J_2+(b_1-a_2))$ represents the translation of
 $J_2$ by $(b_1-a_2)$:
\begin{equation*}\gamma_1\gamma_2(t)=
\begin{cases}\gamma_1(t)& \textrm{if} \quad t\in J_1;
\\\gamma_2(t+a_2-b_1)& \textrm{if} \quad
t\in J_2+(b_1-a_2).\end{cases}
\end{equation*}

Let $\mathcal {I}$ be an interval (maybe infinite) of $\mathbb{Z}$.
If $\{\gamma_i:J_i\rightarrow M\}_{i\in\mathcal {I}}$ is a family of
compact paths satisfying that
$\gamma_i(\sup(J_i))=\gamma_{i+1}(\inf(J_{i+1}))$ for every
$i\in\mathcal {I}$, then we can define their concatenation
$\prod_{i\in\mathcal {I}}\gamma_i$.

If $\{\gamma_i\}_{i\in \mathcal {I}}$ is a family of compact paths
where $\mathcal {I}=\bigsqcup_{j\in\mathcal {J}}\mathcal {I}_j$ and
$\mathcal {I}_j$ is an interval of $\mathbb{Z}$ such that
$\prod_{i\in\mathcal {I}_j}\gamma_i$ is well defined (in the
concatenation sense) for every $j\in\mathcal {J}$, we define their
\emph{product} by abusing notations:
$$\prod\limits_{i\in\mathcal
{I}}\gamma_i=\prod_{j\in\mathcal {J}}\prod\limits_{i\in\mathcal
{I}_j}\gamma_i.$$

The \emph{trajectory} of a point $z$ for the isotopy
$I=(F_t)_{t\in[0,1]}$ is the oriented path $I(z): t\mapsto F_t(z)$
defined on $[0,1]$. Suppose that $\{I_k\}_{1\leq k\leq k_0}$ is a
family of identity isotopies on $M$. Write
$I_k=(F_{k,t})_{t\in[0,1]}$. We can define a new identity isotopy
$I_{k_0}\cdots I_{2}I_1=(F_t)_{t\in[0,1]}$ by concatenation as
follows
\begin{equation}\label{the product operate of the symplectic group of isotopy}
    F_t(z)=F_{k,\,k_0t-(k-1)}(F_{k-1,1}\circ F_{k-2,1}\circ\cdots\circ F_{1,1}(z))\quad\mathrm{if}
\quad \frac{k-1}{k_0}\leq t\leq\frac{k}{k_0}.
\end{equation}
In particular, $I^{k_0}(z)=\prod_{k=0}^{k_0-1}I({F^k(z)})$ when
$I_k=I$ for all $1\leq k\leq k_0$.\smallskip

We write $\mathrm{Fix}(F)$ for the set of fixed points of $F$. A
fixed point $z$ of $F=F_1$ is \emph{contractible} if $I(z)$ is
homotopic to zero. We write $\mathrm{Fix}_{\mathrm{Cont},I}(F)$ for
the set of contractible fixed points of $F$, which obviously depends
on $I$.

\subsection{The
algebraic intersection number}\label{sec:the algebraic intersection
number} The choice of an orientation on $M$ permits us to define the
algebraic intersection number $\Gamma\wedge\Gamma'$ between two
loops. We keep the same notation $\Gamma\wedge\gamma$ for the
algebraic intersection number between a loop and a path $\gamma$
when it is defined, for example, when $\gamma$ is proper or when
$\gamma$ is compact path whose extremities are not in $\Gamma$.
Similarly, we write $\gamma\wedge\gamma'$ for the algebraic
intersection number of two path $\gamma$ and $\gamma'$ when it is
defined, for example, when $\gamma$ and $\gamma'$ are compact paths
and the ends of $\gamma$ (resp. $\gamma'$) are not on $\gamma'$
(resp. $\gamma$).

\subsection{Rotation vector}\label{subsec:rotation vector}
\subsubsection{The definition of rotation vector}\label{subsec:rotation vector of Le
Calvez} Let us introduce the classical notion of rotation vector
(defined originally in \cite{S}). If $\Gamma$ is a loop on a smooth
manifold $M$, write $[\Gamma]_M\in H_1(M,\mathbb{Z})$ for the
homology class of $\Gamma$. Suppose that $F\in \mathrm{Homeo}_*(M)$
is the time-one map of an
identity isotopy $I=(F_t)_{t\in[0,1]}$. 
Let $\mathrm{Rec}^+(F)$ be the set of positively recurrent points of
$F$. If $z\in \mathrm{Rec}^+(F)$, fix an open disk $U\subset M$
containing $z$, and write $\{F^{n_k}(z)\}_{k\geq 1}$ for the
subsequence of the positive orbit of $z$ obtained by keeping the
points that are in $U$. For any $k\geq 0$, choose a simple path
$\gamma_{F^{n_k}(z),z}$ in $U$ joining $F^{n_k}(z)$ to $z$. The
homology class $[\Gamma_k]_M\in H_1(M,\mathbb{Z})$ of the loop
$\Gamma_k= I^{n_k}(z)\gamma_{F^{n_k}(z),z}$ does not depend on the
choice of $\gamma_{F^{n_k}(z),z}$. Say that $z$ has a \emph{rotation
vector} $\rho_{M,I}(z)\in H_1(M,\mathbb{R})$ if
\[\lim_{l\rightarrow
+\infty}\frac{1}{n_{k_l}}[\Gamma_{k_l}]_M=\rho_{M,I}(z)\] for any
subsequence $\{F^{n_{k_l}}(z)\}_{l\geq 1}$ which converges to $z$.
Neither the existence nor the value of the rotation vector depends
on the choice of $U$.

\smallskip
\subsubsection{The existence of rotation number in the compact case}
\label{sec:the existion of rotation vector in the copact case}
Suppose that $M$ is compact and that $F$ is the time-one map of an
identity isotopy $I=(F_t)_{t\in[0,1]}$ on $M$. Recall that $\mathcal
{M}(F)$ is the set of Borel finite measures on $M$ whose elements
are invariant by $F$. If $\mu\in\mathcal {M}(F)$, we can define the
rotation vector $\rho_{M,I}(z)$ for $\mu$-almost every positively
recurrent point \cite{P1}. Let us explain why.

Let $U$ be an open disk of $M$ that is the interior of a closed
topological disk. For every couple $(z',z'')\in U^2$, choose a
simple path $\gamma_{z',z''}$ in $U$ joining $z'$ to $z''$. We can
define the first return map $\Phi: \mathrm{Rec}^+(F)\cap
U\rightarrow \mathrm{Rec}^+(F)\cap U$ and write
$\Phi(z)=F^{\tau(z)}(z)$, where $\tau(z)$ is the first return time,
that is, the least number $n\geq1$ such that $F^n(z)\in U$. By
Poincar\'{e} Recurrence Theorem, this map is defined $\mu$-almost
everywhere on $U$. For every $z\in \mathrm{Rec}^+(F)\cap U$ and
$n\geq 1$, define
$$\tau_n(z)=\sum_{i=0}^{n-1}\tau(\Phi^i(z)),\quad \Gamma_z^n=I^{\tau_n(z)}(z)\gamma_{\Phi^n(z),z}.$$
Observe now that
$$[\Gamma_z^n]_M
=\sum_{i=0}^{n-1}[\Gamma^1_{\Phi^i(z)}]_M.$$

By the classical Kac's lemma (see \cite{kac}), we have
$$\int_U\tau\, \mathrm{d}\mu=\mu\bigg(\bigcup_{k\geq
0}F^k(U)\bigg)=\mu\bigg(\bigcup_{k\in \mathbb{Z}}F^k(U)\bigg).$$ 
Indeed, we have the following measurable partitions (modulo sets of
measure zero):
$$U=\bigsqcup_{i\geq1}U_i\quad \text{and}\quad \bigcup_{k\geq0}F^k(U)
=\bigsqcup_{i\geq1}\;\bigsqcup_{0\leq j\leq i-1}F^j(U_i),$$ where
$U_i=\tau^{-1}(\{i\})$, therefore $$\mu\bigg(\bigcup_{k\geq
0}F^k(U)\bigg)=\sum_{i\geq1}\;\sum_{0\leq j\leq
i-1}\mu(U_i)=\sum_{i\geq1}i\mu(U_i)=\int_U\tau\, \mathrm{d}\mu.$$

Hence, we get $\tau\in L^1(U,\mathbb{R},\mu)$. In the case where $M$
is compact, let us prove that the function
$z\mapsto[\Gamma^1_z]_M/\tau(z)$ is bounded on
$\mathrm{Rec}^+(F)\cap U$ and hence that the map
$z\mapsto[\Gamma^1_z]_M$ belongs to $L^1(U,H_1(M,\mathbb{R}),\mu)$.

Indeed, it is sufficient to prove that for every cohomology class
$\kappa\in H^1(M,\mathbb{R})$, there exists a constant $K_\kappa$
such that $|\langle\kappa,[\Gamma^1_z]_M\rangle|\leq
K_\kappa\tau(z)$. Let $\lambda$ be a closed form that represents
$\kappa$. The function $g_{\lambda}: z\mapsto \int_{I(z)}\lambda$ is
well defined, since $\lambda$ is closed, and continuous. It is
bounded since $M$ is compact. As $\mathrm{Cl}(U)$ is a closed disk,
we can find an open disk $U'$ containing $\mathrm{Cl}(U)$ and a
primitive $h_\lambda$ of $\lambda$ on $U'$. This primitive is
bounded on $\mathrm{Cl}(U)$. This implies that for every $z\in
\mathrm{Rec}^+(F)\cap U$, we have

\begin{eqnarray}\label{eq:rotation vector}
  |\langle[\lambda],[\Gamma^1_z]_M\rangle|=\left|\int_{\Gamma^1_z}\lambda\right| &=& \left|\sum_{i=0}^{\tau(z)-1}
\int_{I(F^i(z))}\lambda+\int_{\lambda_{\Phi(z),z}}\lambda\right|\\
&\leq&
 \tau(z)\max_{z\in
M}|g_\lambda(z)|+2\sup_{z\in U}|h_\lambda(z)|\\
   &\leq& \tau(z)(\max_{z\in
M}|g_\lambda(z)|+2\sup_{z\in U}h_\lambda(z)|).
\end{eqnarray}

By Birkhoff Ergodic Theorem, for $\mu$-almost every point on
$\mathrm{Rec}^+(F)\cap U$, the sequence $\{\tau_n(z)/n\}_{n\geq1}$
converges to a real number $\tau^*(z)\geq 1$, and the sequence
$\{[\Gamma_z^n]_M/n\}_{n\geq1}$ converges to $[\Gamma^*_z]_M\in
H_1(M,\mathbb{R})$. The positively recurrent points of $F$ in $U$
are exactly the positively recurrent points of $\Phi$ because $U$ is
open. We deduce that $\mu$-almost every point $z\in
\mathrm{Rec}^+(F)\cap U$ has a rotation vector
$\rho_{M,I}(z)=[\Gamma^*_z]_M/\tau^*(z)$. Since $U$ is arbitrarily
chosen, we deduce that $\mu$-almost every point $z\in
\mathrm{Rec}^+(F)$ has a rotation vector. The function $z\mapsto
[\Gamma^1_z]_M/\tau(z)$ is bounded on $\mathrm{Rec}^+(F)\cap U$, so
is the function
$$\rho_{M,I}: z\mapsto \lim\limits_{n\rightarrow
+\infty}\frac{\sum_{i=0}^{n-1}[\Gamma^1_{\Phi^i(z)}]_M}{\sum_{i=0}^{n-1}\tau(\Phi^i(z))}$$
on $\mathrm{Rec}^+(F)\cap U$. As $M$ can be covered by finitely many
such open disks, we deduce that $\rho_{M,I}$ is uniformly bounded on
$\mathrm{Rec}^+(F)$. Therefore, we can define the \emph{rotation
vector of the measure}
$$\rho_{M,I}(\mu)=\int_M\rho_{M,I}\, \mathrm{d}\mu\in H_1(M,\mathbb{R}).$$

\subsubsection{The rotation number of an open annulus}
\label{subsec:the rotation number of an open annulus}
  Let $\mathbb{A}=\mathbb{R}/\mathbb{Z}\times \mathbb{R}$
be the open annulus. Let us denote the covering map
\begin{eqnarray*}
\pi\,:\, \mathbb{R}^2&\rightarrow& \mathbb{A} \\
(x,y)&\mapsto&(x+\mathbb{Z},y),
\end{eqnarray*}
and  $T$ the generator of the covering transformation group
\begin{eqnarray*}
T\,:\, \mathbb{R}^2&\rightarrow& \mathbb{R}^2 \\
(x,y)&\mapsto&(x+1,y).
\end{eqnarray*}

When $F\in \mathrm{Homeo}_*(\mathbb{A})$, we have a simple way to
define the ``rotation vector'' given in \ref{subsec:rotation vector
of Le Calvez} if we observe that
$H_1(\mathbb{A},\mathbb{R})=\mathbb{R}$. We will say that a
positively recurrent point $z$ has a \emph{rotation number}
$\rho_{\mathbb{A},\widetilde{F}}(z)$ for a lift $\widetilde{F}$ of
$F$ to the universal cover $\mathbb{R}^2$ of $\mathbb{A}$, if for
every subsequence $\{F^{n_k}(z)\}_{k\geq 1}$ of $\{F^n(z)\}_{n\geq
1}$ which converges to $z$, we have
\[\lim_{k\rightarrow+\infty}\frac{p_1\circ
\widetilde{F}^{n_k}(\widetilde{z})-p_1(\widetilde{z})}{n_k}=\rho_{\mathbb{A},\widetilde{F}}(z)\]
for every $\widetilde{z}\in \pi^{-1}(z)$, where $p_1:(x,y)\mapsto x$
is the first projection. We denote the set of rotation numbers of
positively recurrent points of $F$ for $\widetilde{F}$ as
$\mathrm{Rot}(\widetilde{F})$. In particular, the rotation number
$\rho_{\mathbb{A},\widetilde{F}}(z)$ always exists when $z$ is a
fixed point of $F$. We denote the set of rotation numbers of fixed
points of $F$ as $\mathrm{Rot}_{\mathrm{Fix}(F)}(\widetilde{F})$.

It is well known that a positively recurrent point of $F$ is also a
positively recurrent point of $F^q$ for all $q\in \mathbb{N}$ (we
give a proof in Appendix, see Lemma \ref{subsec:positively
recurrent}). By the definition of rotation number, we easily get
that $\mathrm{Rot}(\widetilde{F})$ satisfies the following
elementary properties.
\begin{enumerate}\label{prop:ROT}
  \item[1.] $\mathrm{Rot}(T^k\circ \widetilde{F})=\mathrm{Rot}(\widetilde{F})+k$
  for every $k\in \mathbb{Z}$;
  \item[2.] $\mathrm{Rot}(\widetilde{F}^q)=q\mathrm{Rot}(\widetilde{F})$ for every $q\geq 1$.
\end{enumerate}\smallskip

\subsection{Linking number of contractible fixed points}

\subsubsection{}We begin by recalling some results about identity isotopies,
which will be often used in the literature.

\begin{rem}\label{rem: contractible fixed point and isotopy}
Suppose that $M$ is an oriented compact surface and that $F$ is the
time-one map of an identity isotopy $I=(F_t)_{t\in[0,1]}$ on $M$.
When $z\in \mathrm{Fix}_{\mathrm{Cont},I}(F)$, there is another
identity isotopy $I'=(F'_t)_{t\in[0,1]}$ homotopic to $I$ with fixed
endpoints such that $I'$ fixes $z$ (see, for example,
\cite[Proposition 4.1]{J}), that is, there is a continuous map $H:
[0,1]\times[0,1]\times M\rightarrow M$ such that
\begin{itemize}
\item $H(0,t,z)=F_t(z)$ and $H(1,t,z)=F'_t(z)$ for all $t\in[0,1]$;
\item $H(s,0,z)=\mathrm{Id}_{M}(z)$ and $H(s,1,z)=F(z)$ for all $s\in[0,1]$;
\item $F'_t(z)=z$ for all $t\in[0,1]$.
\end{itemize}

\end{rem}

\begin{lem}\label{rem:identity isotopies fix three points on sphere}
Let $\mathbf{S}^2$ be the 2-sphere and $I=(F_t)_{t\in[0,1]}$ be an
identity isotopy on $\mathbf{S}^2$. For every three different fixed
points $z_i$ ($i=1,2,3$) of $F_1$, there exists another identity
isotopy $I'=(F'_t)_{t\in[0,1]}$ from $\mathrm{Id}_{\mathbf{S}^2}$ to
$F_1$ such that $I'$ fixes $z_i$ $(i=1,2,3)$.
\end{lem}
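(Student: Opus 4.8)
The plan is to fix the three points one at a time, using a relative form of Remark~\ref{rem: contractible fixed point and isotopy} and exploiting the fact that, contrary to that remark, here $I'$ is \emph{not} required to be homotopic to $I$ with fixed endpoints. The relative form I would use is the following: if $J=(G_t)_{t\in[0,1]}$ is an identity isotopy of $\mathbf{S}^2$ fixing every point of a finite set $P\subset\mathrm{Fix}(G_1)$ and $z\in\mathrm{Fix}(G_1)\setminus P$ is such that the trajectory $J(z)$ is null-homotopic in $\mathbf{S}^2\setminus P$, then $J$ can be deformed, with fixed endpoints and keeping every point of $P$ fixed throughout the deformation, into an identity isotopy fixing $P\cup\{z\}$. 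This is Remark~\ref{rem: contractible fixed point and isotopy} applied, in substance, on the surface $\mathbf{S}^2\setminus P$ (cf. \cite[Proposition~4.1]{J}).

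First I would fix $z_1$: since $\mathbf{S}^2$ is simply connected the trajectory $I(z_1)$ is automatically null-homotopic, so Remark~\ref{rem: contractible fixed point and isotopy} (with $P=\emptyset$) already provides an identity isotopy $I_1$ from $\mathrm{Id}_{\mathbf{S}^2}$ to $F_1$ with $I_1$ fixing $z_1$. Next I would fix $z_2$: since each $(F_1)_t$ is injective and fixes $z_1$, the trajectory $I_1(z_2)$ stays in $\mathbf{S}^2\setminus\{z_1\}$, which is an open disk and hence simply connected, so $I_1(z_2)$ is null-homotopic there; the relative form above with $P=\{z_1\}$ then produces an identity isotopy $I_2$ from $\mathrm{Id}_{\mathbf{S}^2}$ to $F_1$ fixing $z_1$ and $z_2$.

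The only real issue is $z_3$, because $\mathbb{A}:=\mathbf{S}^2\setminus\{z_1,z_2\}$ is an open annulus with $\pi_1(\mathbb{A})\cong\mathbb{Z}$, so the loop $I_2(z_3)$ (again contained in $\mathbb{A}$, by injectivity of the $(F_2)_t$) need not be contractible in $\mathbb{A}$; let $k\in\mathbb{Z}$ be its class. To kill this winding I would concatenate with rotations: choosing coordinates on $\mathbf{S}^2$ in which $z_1,z_2$ are the two poles, let $R=(R_s)_{s\in[0,1]}$ be the isotopy with $R_s$ the rotation of angle $2\pi s$ about the polar axis, so that $R_0=R_1=\mathrm{Id}_{\mathbf{S}^2}$, every $R_s$ fixes $z_1$ and $z_2$, and the $R$-trajectory of any non-polar point is a latitude circle, of class $\pm1$ in $\pi_1(\mathbb{A})$. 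Concatenating $I_2$ with $|k|$ copies of $R$ or of its inverse isotopy, whichever has the orientation opposite to that of $I_2(z_3)$ — an operation that changes neither the endpoints $\mathrm{Id}_{\mathbf{S}^2}$, $F_1$ nor the fact that $z_1,z_2$ stay fixed throughout, since each $R$ is a loop at $\mathrm{Id}_{\mathbf{S}^2}$ through maps fixing the poles — I would obtain an identity isotopy $I_3$ from $\mathrm{Id}_{\mathbf{S}^2}$ to $F_1$, still fixing $z_1$ and $z_2$, whose trajectory $I_3(z_3)$ is null-homotopic in $\mathbb{A}$. Applying the relative form of Remark~\ref{rem: contractible fixed point and isotopy} with $P=\{z_1,z_2\}$ to $z_3$ then yields the desired identity isotopy $I'$ from $\mathrm{Id}_{\mathbf{S}^2}$ to $F_1$ fixing $z_1$, $z_2$ and $z_3$.

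The step I expect to be the main obstacle is justifying the relative form of Remark~\ref{rem: contractible fixed point and isotopy}: one must make sure that the deformation it produces on $\mathbf{S}^2\setminus\{z_1\}$ (resp. on $\mathbb{A}$) can be taken to be the identity near the removed points, so that the modified isotopy extends back over them to an isotopy of $\mathbf{S}^2$; this is what \cite{J} provides, but it is the one point of the argument that is not purely formal. As a consistency check, the statement is equivalent to the path-connectedness of the group of orientation-preserving homeomorphisms of $\mathbf{S}^2$ fixing $z_1,z_2,z_3$ (note that $F_1$ lies in this group, being isotopic to $\mathrm{Id}_{\mathbf{S}^2}$, hence orientation preserving), i.e. to the triviality of the pure mapping class group of the thrice-punctured sphere, which is classical.
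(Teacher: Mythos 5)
Your overall strategy (fix $z_1$, then $z_2$, then $z_3$, correcting the winding around $\{z_1,z_2\}$ by composing with loops of rotations) is coherent, and the bookkeeping with the rotation isotopy $R$ is fine since the Lemma does not ask $I'$ to be homotopic to $I$. But the load-bearing step is exactly the one you flag and do not prove: the ``relative form'' of Remark \ref{rem: contractible fixed point and isotopy}. As stated in the paper (and as cited from \cite{J}), that remark concerns a \emph{compact} oriented surface and a \emph{single} contractible fixed point, with no requirement that a prescribed finite set $P$ stay pointwise fixed throughout the deformation. What you need is a version on the punctured spheres $\mathbf{S}^2\setminus\{z_1\}$ and $\mathbf{S}^2\setminus\{z_1,z_2\}$, together with the assertion that the modified isotopy of the punctured surface extends, continuously in $t$, to an isotopy of $\mathbf{S}^2$ fixing the punctures (this uses that the time-$t$ maps do not permute the ends and that the end-compactification of a convergent family of homeomorphisms converges; neither point is in the paper). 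Writing ``this is what \cite{J} provides'' does not discharge this: it is precisely the content of the step, so as written the argument has a gap at its central point.

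The gap is reparable by elementary means, and doing so shows why the detour is unnecessary: on $\mathbf{S}^2\setminus\{z_1\}\simeq\mathbb{C}$ one may replace $F_t$ by $T_t^{-1}\circ F_t$ where $T_t$ is the translation by $F_t(z_2)-z_2$, and on $\mathbf{S}^2\setminus\{z_1,z_2\}\simeq\mathbb{C}^*$ (with $z_1,z_2$ sent to $0,\infty$) one may replace $F_t$ by $m_{\lambda(t)}^{-1}\circ F_t$ with $m_\lambda(z)=\lambda z$ and $\lambda(t)=F_t(z_3)/z_3$; these loops of affine maps fix the punctures, extend over them, and make the winding-correction by $R$ superfluous. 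This is in effect what the paper does in one stroke: its proof of Lemma \ref{rem:identity isotopies fix three points on sphere} identifies $\mathbf{S}^2$ with the Riemann sphere and sets $F'_t=\mathcal{M}(t,F_t(\cdot))$, where $\mathcal{M}(t,\cdot)$ is the M\"{o}bius transformation carrying $(F_t(z_1),F_t(z_2),F_t(z_3))$ back to $(z_1,z_2,z_3)$ via the determinant formulas \ref{eq:mobius trsformation}. That construction is explicit, handles the three points simultaneously, and avoids any appeal to \cite{J} or to homotopy-of-isotopy results; I would encourage you to rewrite your proof along these lines, or at least to prove the relative statement you invoke rather than cite it.
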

\begin{proof}We identify the sphere $\mathbf{S}^2$ to the Riemann sphere
$\mathbb{C}\cup\{\infty\}$. The M\"{o}bius transformation $\mathcal
{M}(z)=\frac{az+b}{cz+d}$ maps the triple $(v_1,v_2,v_3)$ to the
triple $(\omega_1,\omega_2,\omega_3)$ (see Chapter 3 of \cite{N} for
a beautifully illustrated introduction to M\"{o}bius
transformations) where
\begin{equation}\label{eq:mobius trsformation}
a=\det\left(
     \begin{array}{ccc}
       v_1\omega_1 & \omega_1 & 1 \\
       v_2\omega_2 & \omega_2 & 1 \\
       v_3\omega_3 & \omega_3 & 1 \\
     \end{array}
   \right)\quad b=\det\left(
     \begin{array}{ccc}
       v_1\omega_1 & v_1 & \omega_1 \\
       v_2\omega_2 & v_2 & \omega_2\\
       v_3\omega_3 & v_3 & \omega_3 \\
     \end{array}
   \right)$$ $$c=\det\left(
     \begin{array}{ccc}
       v_1 & \omega_1 & 1 \\
       v_2 & \omega_2 & 1 \\
       v_3 & \omega_3 & 1 \\
     \end{array}
   \right)\qquad d=\det\left(
     \begin{array}{ccc}
       v_1\omega_1 & v_1 & 1 \\
       v_2\omega_2 & v_2 & 1 \\
       v_3\omega_3 & v_3 & 1 \\
     \end{array}
   \right).\end{equation}
If one of the points $v_i$ or $w_i$ in \ref{eq:mobius trsformation}
is $\infty$, then we first divide all four determinants by this
variable and then take the limit as the variable approaches
$\infty$. Replacing $v_i$, $w_i$ by $v_i(t)=F_t(z_i)$ and
$w_i(t)=z_i$ ($i=1,2,3$ and $t\in[0,1]$) in the matrices above, we
get the \emph{matrix functions} $a_t$, $b_t$, $c_t$ and $d_t$.

Let $$\mathcal {M}(t,z)=\frac{a_tz+b_t}{c_tz+d_t}$$ and
$$I'(z)(t)=F'_t(z)=\mathcal {M}(t,F_t(z)).$$ By the construction,
$I'$ is an isotopy of $\mathbf{S}^2$ from
$\mathrm{Id}_{\mathbf{S}^2}$ to $F_1$ that fixes $z_i$ ($i=1,2,3$).
\end{proof}
As a consequence, we have the following corollary.

\begin{cor}\label{cor:identity isotopy fixes two points}
Let $I=(F_t)_{t\in[0,1]}$ be an identity isotopy on $\mathbb{C}$.
For any two different fixed points $z_1$ and $z_2$ of $F_1$, there
exists another identity isotopy $I'$ from $\mathrm{Id}_{\mathbb{C}}$
to $F_1$ such that $I'$ fixes $z_1$ and $z_2$.
\end{cor}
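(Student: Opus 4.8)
The plan is to conjugate the given isotopy by a continuously varying family of affine automorphisms of $\mathbb{C}$ that keeps the two fixed points fixed throughout, in the same spirit as the proof of Lemma \ref{rem:identity isotopies fix three points on sphere} (there one uses the full M\"obius group; here the requirement of fixing $z_1,z_2$ while also implicitly fixing $\infty$ confines us to the affine subgroup $z\mapsto az+b$). Concretely, since each $F_t$ is a homeomorphism and $z_1\neq z_2$, we have $F_t(z_1)\neq F_t(z_2)$ for every $t\in[0,1]$, so we may set
\[
a_t=\frac{z_1-z_2}{F_t(z_1)-F_t(z_2)},\qquad b_t=z_1-a_tF_t(z_1),
\]
and let $G_t\colon z\mapsto a_tz+b_t$. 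Then $a_t\neq 0$, each $G_t$ is an affine automorphism of $\mathbb{C}$, and by construction $G_t(F_t(z_1))=z_1$ and $G_t(F_t(z_2))=z_2$.

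Now put $F'_t=G_t\circ F_t$ and $I'=(F'_t)_{t\in[0,1]}$. Each $F'_t$ is a homeomorphism of $\mathbb{C}$ fixing $z_1$ and $z_2$. At the endpoints the construction does what is wanted: since $F_0=\mathrm{Id}_{\mathbb{C}}$ we get $a_0=1$, $b_0=0$, hence $G_0=\mathrm{Id}_{\mathbb{C}}$ and $F'_0=\mathrm{Id}_{\mathbb{C}}$; and since $F_1(z_i)=z_i$ for $i=1,2$ we likewise get $a_1=1$, $b_1=0$, hence $G_1=\mathrm{Id}_{\mathbb{C}}$ and $F'_1=F_1$. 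So, provided $I'$ is genuinely an isotopy, it is an identity isotopy from $\mathrm{Id}_{\mathbb{C}}$ to $F_1$ fixing $z_1$ and $z_2$, which is the assertion.

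The one point that requires care, and the main obstacle, is the continuity of $t\mapsto F'_t$ in the compact-open topology. Since $\mathbb{C}$ is locally compact Hausdorff, the exponential law shows that continuity of the path $t\mapsto F_t$ into $\mathrm{Homeo}(\mathbb{C})$ is equivalent to joint continuity of $(t,z)\mapsto F_t(z)$ on $[0,1]\times\mathbb{C}$; moreover evaluation at $z_1$ and $z_2$ shows that $t\mapsto F_t(z_1)$ and $t\mapsto F_t(z_2)$ are continuous, hence so are $t\mapsto a_t$ and $t\mapsto b_t$. Combining these facts gives joint continuity of $(t,z)\mapsto F'_t(z)=a_tF_t(z)+b_t$ on $[0,1]\times\mathbb{C}$, and, by the exponential law again, continuity of the path $t\mapsto F'_t$ into $\mathrm{Homeo}(\mathbb{C})$ with its compact-open topology. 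Everything else is the routine verification above. (Alternatively, one can extend $I$ to an identity isotopy on $\mathbf{S}^2=\mathbb{C}\cup\{\infty\}$ fixing $\infty$ and apply Lemma \ref{rem:identity isotopies fix three points on sphere} to the three fixed points $z_1$, $z_2$, $\infty$, then restrict the resulting isotopy back to $\mathbb{C}$; this variant replaces the affine computation above by a check that the one-point extension of an isotopy of $\mathbb{C}$ stays continuous on $\mathbf{S}^2$.)
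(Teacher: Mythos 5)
Your proof is correct and is essentially the paper's argument: the paper obtains this corollary by extending the isotopy to $\mathbf{S}^2$ fixing $\infty$ and applying Lemma \ref{rem:identity isotopies fix three points on sphere} to $z_1,z_2,\infty$, and your affine family $G_t(z)=a_tz+b_t$ is exactly the M\"obius construction of that lemma specialized to the case where the third fixed point is $\infty$. Your parenthetical alternative is literally the paper's route, so there is no substantive difference.
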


\begin{rem}\label{rem:some result of of sphere delete three points}
Let $z_i\in \mathbf{S}^2$ ($i=1,2,3$) and
$\mathrm{Homeo}_*(\mathbf{S}^2,{z_1,z_2,z_3})$ be the identity
component of the space of all homeomorphisms of $\mathbf{S}^2$
leaving $z_i$ ($i=1,2,3$) pointwise fixed (for the compact-open
topology). It is well known that
$\pi_1(\mathrm{Homeo}_*(\mathbf{S}^2,{z_1,z_2,z_3}))=0$ (see
\cite{H2}, \cite{Han}). It implies that any two identity isotopies
$I ,I' \subset \mathrm{Homeo}_*(\mathbf{S}^2,{z_1,z_2,z_3})$ with
fixed endpoints are homotopic. As a consequence, let
$\mathrm{Homeo}_*(\mathbb{C},{z_1,z_2})$ be the identity component
of the space of all homeomorphisms of $\mathbb{C}$ leaving two
different points $z_1$ and $z_2$ of $\mathbb{C}$ pointwise fixed, we
have $\pi_1(\mathrm{Homeo}_*(\mathbb{C},{z_1,z_2}))=0$.
\end{rem}\smallskip

\subsubsection{}\label{sec:linking number in the simple connected
case} Let $M$ be a surface that is homeomorphic to the complex plane
 $\mathbb{C}$ and $I=(F_t)_{t\in[0,1]}$ be an identity isotopy on
$M$. Let us define the \emph{linking number} $i_I(z,z')\in
\mathbb{Z}$ for every two different fixed points $z$ and $z'$ of
$F_1$. It is the degree of the map $\xi: \mathbf{S}^1\rightarrow
\mathbf{S}^1$ defined by
\[\xi(e^{2i\pi t})=\frac{h\circ F_{t}(z')-h\circ F_{t}(z)}
{|h\circ F_{t}(z')-h\circ F_{t}(z)|}\,,\] where
$h:M\rightarrow\mathbb{C}$ is a homeomorphism. The linking number
does not depend on the chosen $h$.

It is well known that $U(1)$ is a strong deformation retract of
$\mathrm{Homeo}_*(\mathbb{C})$ (see \cite{Kn} or \cite[Theorem
2.9]{Ler}).
Consider the isotopy $R=(r_t)_{t\in[0,1]}$ where
$r_t=\mathrm{e}^{2i\pi t}$. If $I=(F_t)_{t\in[0,1]}$ is an identity
isotopy and $k\in \mathbb{Z}$, we can define the identity isotopy
$R^kI$ by concatenation. If $I'=(F'_t)_{t\in[0,1]}$ is another
identity isotopy with $F'_1=F_1$, then there exists a unique integer
$k$ such that $I'$ is homotopic to $R^kI$.\smallskip

Therefore, if $I=(F_t)_{t\in[0,1]}$ and $I'=(F'_t)_{t\in[0,1]}$ are
two identity isotopies on $M$ with $F'_1=F_1$, then there exist
$k\in\mathbb{Z}$ such that $i_{I'}(z,z')=i_{I}(z,z')+k$ for any
distinct fixed points $z'$ and $z'$ of $F_1$.
\smallskip


\subsubsection{}\label{sec:linking number in the special case}
Let $F$ be the time-one map of an identity isotopy
$I=(F_t)_{t\in[0,1]}$ on a closed oriented  surface $M$ of genus
$g\geq 1$ and $\widetilde{F}$ be the time-one map of the lifted
identity isotopy $\widetilde{I}=(\widetilde{F}_t)_{t\in[0,1]}$ on
the universal cover $\widetilde{M}$ of $M$. When $g>1$, it is well
known that $\pi_1(\mathrm{Homeo}_*(M))\simeq0$ (\cite{H2}). It
implies that any two identity isotopies $I ,I' \subset
\mathrm{Homeo}_*(M)$ with fixed endpoints are homotopic. Hence, $I$
is unique up to homotopy, it implies that $\widetilde{F}$ is
uniquely defined and does not depend on the choice of the isotopy
from $\mathrm{Id}_M$ to $F$. When $g=1$,
$\pi_1(\mathrm{Homeo}_*(M))\simeq\mathbb{Z}^2$ (see \cite{H1}),
$\widetilde{F}$ depends on the isotopy $I$. The universal cover
$\widetilde{M}$ is homeomorphic to $\mathbb{C}$.

Let $\pi: \widetilde{M}\rightarrow M$ be the covering map  and $G$
be the covering transformation group. Denote respectively by
$\Delta$ and $\widetilde{\Delta}$ the diagonal of
$\mathrm{Fix}_{\mathrm{Cont},I}(F)\times
\mathrm{Fix}_{\mathrm{Cont},I}(F)$ and the diagonal of
$\mathrm{Fix}(\widetilde{F})\times \mathrm{Fix}(\widetilde{F})$.
Endow the surface $M$ with a Riemannian metric and denote by $d$ the
distance induced by the metric. Lift the Riemannian metric to
$\widetilde{M}$ and write $\widetilde{d}$ for the distance induced
by the metric.

We define the \emph{linking number}
$i(\widetilde{F};\widetilde{z},\widetilde{z}\,')$ for every pair
$(\widetilde{z},\widetilde{z}\,')\in(\mathrm{Fix}(\widetilde{F})\times
\mathrm{Fix}(\widetilde{F}))\setminus\widetilde{\Delta}$ as
\begin{equation}\label{eq:linking number for two fixed points}
i(\widetilde{F};\widetilde{z},\widetilde{z}\,')=i_{\widetilde{I}}
(\widetilde{z},\widetilde{z}\,').
\end{equation}
This is a special case of the linking number that we have defined in
\ref{sec:linking number in the simple connected case}.

We give some properties of
$i(\widetilde{F};\widetilde{z},\widetilde{z}\,')$ as
follows.\smallskip

\begin{description}
  \item[(P1)] $i(\widetilde{F};\widetilde{z},\widetilde{z}\,')$ is locally constant
on $(\mathrm{Fix}(\widetilde{F})\times
\mathrm{Fix}(\widetilde{F}))\setminus\widetilde{\Delta}$;
  \item[(P2)] $i(\widetilde{F};\widetilde{z},\widetilde{z}\,')$ is invariant by
covering transformation, that is,
$$\qquad i(\widetilde{F};\alpha(\widetilde{z}),\alpha(\widetilde{z}\,'))=
i(\widetilde{F};\widetilde{z},\widetilde{z}\,')\quad\mathrm{for\,\,\,
every}\,\,\,\alpha\in G;$$
  \item[(P3)] $i(\widetilde{F};\widetilde{z},\widetilde{z}\,')=0$
if $\pi(\widetilde{z})=\pi(\widetilde{z}\,')$;
  \item[(P4)] there exists $K$ such that $i(\widetilde{F};\widetilde{z},\widetilde{z}\,')=0$
if $\widetilde{d}(\widetilde{z},\widetilde{z}\,')\geq K$.
\end{description}
\smallskip

Indeed, the property P1 is true by continuity. The property P2 is
true because the linking number does not change when you replace $h$
by $h\circ\alpha$ (see \ref{sec:linking number in the simple
connected case}). By Remark \ref{rem: contractible fixed point and
isotopy}, we can choose an isotopy $I'$ that is homotopic to $I$ and
fixes $\pi(\widetilde{z})$, then the lift $\widetilde{I}\,'$ of $I'$
fixes $\widetilde{z}$ and $\widetilde{z}\,'$. Thus the property P3
holds. Finally, let
$$K=\sup\{\,\widetilde{d}(\widetilde{F}_t(\widetilde{z}),\widetilde{F}_{t'}(\widetilde{z}))\mid(t,t',\widetilde{z})\in[0,1]^2\times\mathrm{Fix}(\widetilde{F})\}
.$$ The value $K$ is well defined because
$\mathrm{Fix}_{\mathrm{Cont},I}(F)=\pi(\mathrm{Fix}(\widetilde{F}))$
is compact and $\widetilde{F}_t\circ\alpha=\alpha\circ
\widetilde{F}_t$ for all $t\in[0,1]$ and $\alpha\in G$. Obviously,
when $\widetilde{d}(\widetilde{z},\widetilde{z}\,')\geq 3K$,
$i(\widetilde{F};\widetilde{z},\widetilde{z}\,')=0$. We get the
property P4.
\subsubsection{}\label{sec:linking number of a and b}Now we define the \emph{linking number}
$I(\widetilde{F};z,z')\in \mathbb{Z}$ for every distinct
contractible fixed points $z$ and $z'$ of $F$ as follows:
$$I(\widetilde{F};z,z')=\sum_{\alpha\in
G}i(\widetilde{F};\widetilde{z},\alpha(\widetilde{z}\,')),$$ where
$\widetilde{z}\in\pi^{-1}(z)$ and $\widetilde{z}\,'\in\pi^{-1}(z')$.
The sum is well defined since there are finite nonzero items in the
sum (by the property P4). Obviously, $I(\widetilde{F};z,z')$ does
not depend on the chosen lifts $\widetilde{z}$ and
$\widetilde{z}\,'$ (by the property P2) and is locally constant on
$(\mathrm{Fix}_{\mathrm{Cont},I}(F)\times
\mathrm{Fix}_{\mathrm{Cont},I}(F))\setminus\Delta$ (by the property
P1 and the fact that there is a finite number of nonzero in the
sum).

\begin{prop}\label{rem:the equivalence of two linking numbers}
The following statements are equivalent
\begin{enumerate}
\item The set of linking numbers
$i(\widetilde{F};\widetilde{z},\widetilde{z}\,')$ where
$(\widetilde{z},\widetilde{z}\,')\in(\mathrm{Fix}(\widetilde{F})\times
\mathrm{Fix}(\widetilde{F}))\setminus\widetilde{\Delta}$ is bounded;
\item The set of linking numbers $I(\widetilde{F};z,z')$ where
$(z,z')\in(\mathrm{Fix}_{\mathrm{Cont},I}(F)\times
\mathrm{Fix}_{\mathrm{Cont},I}(F))\setminus\Delta$ is bounded.
\end{enumerate}
\end{prop}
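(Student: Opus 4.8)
The plan is to prove the two implications separately, using throughout that the covering transformation group $G$ acts on $\widetilde{M}$ by isometries of $\widetilde{d}$, freely, properly discontinuously and cocompactly, together with the facts (already used for property (P4)) that $\mathrm{Fix}_{\mathrm{Cont},I}(F)=\pi(\mathrm{Fix}(\widetilde{F}))$ is compact, while $\mathrm{Fix}(\widetilde{F})$ is closed in $\widetilde{M}$ and invariant under $G$. Fix once and for all a compact set $D\subset\widetilde{M}$ with $\pi(D)=M$, and let $K_0$ denote the constant furnished by (P4), so that $i(\widetilde{F};\widetilde{z},\widetilde{z}')=0$ whenever $\widetilde{d}(\widetilde{z},\widetilde{z}')\geq K_0$.

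For $(1)\Rightarrow(2)$ I would proceed as follows. Given distinct contractible fixed points $z,z'$ of $F$, choose lifts $\widetilde{z},\widetilde{z}'\in D$. In the sum $I(\widetilde{F};z,z')=\sum_{\alpha\in G}i(\widetilde{F};\widetilde{z},\alpha(\widetilde{z}'))$, the term indexed by $\alpha$ vanishes by (P4) unless $\widetilde{d}(\widetilde{z},\alpha(\widetilde{z}'))<K_0$, which forces $\alpha(D)$ to meet the closed $K_0$-neighbourhood of $D$. Since $D$ and that neighbourhood are compact and the action is properly discontinuous, the number $N$ of such $\alpha$ is finite and independent of $z,z'$. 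Hence $|I(\widetilde{F};z,z')|\leq N\cdot\sup|i|$ for all $z,z'$, and boundedness of the linking numbers $i$ gives boundedness of the linking numbers $I$.

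For $(2)\Rightarrow(1)$ I would argue by contradiction. Assume there are pairs $(\widetilde{z}_n,\widetilde{z}_n')\in(\mathrm{Fix}(\widetilde{F})\times\mathrm{Fix}(\widetilde{F}))\setminus\widetilde{\Delta}$ with $|i(\widetilde{F};\widetilde{z}_n,\widetilde{z}_n')|\to+\infty$. Using (P2), replace each pair by its image under a suitable $\alpha_n\in G$ so that $\widetilde{z}_n\in D$; since the linking numbers are nonzero, (P4) keeps $\widetilde{z}_n'$ inside a fixed compact subset of $\mathrm{Fix}(\widetilde{F})$ (the intersection of $\mathrm{Fix}(\widetilde{F})$ with the closed $K_0$-neighbourhood of $D$). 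After passing to a subsequence, $\widetilde{z}_n\to a$ and $\widetilde{z}_n'\to b$ with $a,b\in\mathrm{Fix}(\widetilde{F})$. Property (P1) forces $a=b$, for otherwise $i(\widetilde{F};\widetilde{z}_n,\widetilde{z}_n')$ would be eventually constant; property (P3) forces $z_n:=\pi(\widetilde{z}_n)\neq z_n':=\pi(\widetilde{z}_n')$ for $n$ large, so $I(\widetilde{F};z_n,z_n')$ is defined and equals $\sum_{\alpha\in G}i(\widetilde{F};\widetilde{z}_n,\alpha(\widetilde{z}_n'))$. The crux is then to reduce this sum, for large $n$, to its $\alpha=\mathrm{Id}$ term plus a uniformly bounded remainder: by (P4) and the triangle inequality, the only $\alpha$ that can contribute lie in the finite set $S=\{\alpha\in G:\widetilde{d}(a,\alpha(a))\leq K_0+2\}$ (finite by proper discontinuity, and independent of $n$); and for each $\alpha\in S$ with $\alpha\neq\mathrm{Id}$, freeness of the action gives $\alpha(a)\neq a$, so $(\widetilde{z}_n,\alpha(\widetilde{z}_n'))\to(a,\alpha(a))\notin\widetilde{\Delta}$ and by (P1) the corresponding term is eventually the fixed integer $i(\widetilde{F};a,\alpha(a))$. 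Hence $I(\widetilde{F};z_n,z_n')$ equals $i(\widetilde{F};\widetilde{z}_n,\widetilde{z}_n')$ plus a bounded quantity, so $|I(\widetilde{F};z_n,z_n')|\to+\infty$, contradicting $(2)$.

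The main obstacle is precisely this last reduction: isolating, inside a divergent sum, the single term $i(\widetilde{F};\widetilde{z}_n,\widetilde{z}_n')$ coming from the chosen lifts, while simultaneously controlling every other term uniformly in $n$. This is where all three features of the $G$-action are used at once --- cocompactness to pin $\widetilde{z}_n$ inside a fixed compact set and extract a limit $a$; proper discontinuity to bound the contributing indices by one $n$-independent finite set $S$; freeness to keep $\alpha(a)$ off the diagonal for $\alpha\neq\mathrm{Id}$, so that the local-constancy property (P1) applies to each remaining term. Everything else is routine bookkeeping with (P1)--(P4).
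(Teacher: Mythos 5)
Your proposal is correct and follows essentially the same route as the paper's proof: $(1)\Rightarrow(2)$ via (P4) giving a uniformly finite number of contributing $\alpha$, and $(2)\Rightarrow(1)$ by contradiction, translating the divergent pairs by deck transformations, extracting convergent subsequences, using (P1) to force the two limit points to coincide, and then showing all terms of the sum defining $I$ other than the divergent one are under control for large $n$. The only cosmetic difference is that the paper uses (P3) to make those extra terms eventually zero, while you only bound them (and you make explicit the small point, implicit in the paper, that $\pi(\widetilde{z}_n)\neq\pi(\widetilde{z}_n')$ for large $n$); both suffice.
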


\begin{proof}(1)$\,\Rightarrow\,$(2).\quad Let $N$ be a bound of
that set. According to the property P4, there exists a positive
integer $K$ such that $\sharp\{\alpha\in G\mid
i(\widetilde{F};\widetilde{z},\alpha(\widetilde{z}\,'))\neq0\}\leq
K$ for all $\widetilde{z},\widetilde{z}\,'\in
\mathrm{Fix}(\widetilde{F})$. Then we have
$|I(\widetilde{F};z,z')|\leq\sum_{\alpha\in
G}|i(\widetilde{F};\widetilde{z},\alpha(\widetilde{z}\,'))|\leq
KN$.\smallskip

(2)$\,\Rightarrow\,$(1).\quad If the statement (1) does not hold,
then there exist two sequences $\{\widetilde{z}_n\}_{n\geq1}$ and
$\{\widetilde{z}_n\!'\}_{n\geq1}$ such that
$\lim\limits_{n\rightarrow+\infty}i(\widetilde{F};\widetilde{z}_n,\widetilde{z}_n\!')=+\infty$
\, or
$\lim\limits_{n\rightarrow+\infty}i(\widetilde{F};\widetilde{z}_n,\widetilde{z}_n\!')=-\infty$.
We suppose
$\lim\limits_{n\rightarrow+\infty}i(\widetilde{F};\widetilde{z}_n,\widetilde{z}_n\!')=+\infty$,
 the other case being similar. As $M$ is compact, there is a
subsequence $\{\widetilde{z}_{n_k}\}_{k\geq1}$ of
$\{\widetilde{z}_n\}_{n\geq1}$ and a subsequence
$\{\widetilde{z}_{n_k}\!'\}_{k\geq1}$ of
$\{\widetilde{z}_n\!'\}_{n\geq1}$ such that
$\pi(\widetilde{z}_{n_k})\rightarrow z$ and
$\pi(\widetilde{z}_{n_k}\!')\rightarrow z'$ when $k\rightarrow
+\infty$. By the continuity of $I$, we have $z,z'\in
\mathrm{Fix}_{\mathrm{Cont},I}(F)$. Fix two points $\widetilde{z}\in
\pi^{-1}(z)$ and $\widetilde{z}\,'\in \pi^{-1}(z')$. We can choose a
sequence $\{\alpha_k\}_{k\geq1}\subset G$
 such that $\alpha_k(\widetilde{z}_{n_k})\rightarrow \widetilde{z}$ as
$k\rightarrow +\infty$. By the property P2, we have
\begin{equation}\label{eq:I implies i}
    \lim_{k\rightarrow+\infty}i(\widetilde{F};\alpha_k(\widetilde{z}_{n_k}),
    \alpha_k(\widetilde{z}_{n_k}\!'))=\lim_{k\rightarrow+\infty}i(\widetilde{F};
\widetilde{z}_{n_k},\widetilde{z}_{n_k}\!')=+\infty.
\end{equation}

The property P4 implies that the sequence
$\{\alpha_k(\widetilde{z}_{n_k}\!')\}_{k\geq1}$ is bounded, then the
property P1 tell us that
$\lim\limits_{k\rightarrow+\infty}\alpha_k(\widetilde{z}_{n_k}\!')=\widetilde{z}$.
 By the properties P1 and P3, we have
$$i(\widetilde{F};\alpha_k(\widetilde{z}_{n_k}),
    \alpha(\alpha_k(\widetilde{z}_{n_k}\!')))=0$$ for all $\alpha\in G^*$ when $k$ is large enough. Thus we have
$\lim\limits_{k\rightarrow+\infty}I(\widetilde{F};
\pi(\widetilde{z}_{n_k}),\pi(\widetilde{z}_{n_k}\!'))=+\infty.$
\end{proof}

\subsubsection{}\label{sec:linking number of a and b,c} In the rest of the paper, when we take two distinct fixed
points $\widetilde{a}$ and $\widetilde{b}$ of $\widetilde{F}$, it
does not mean that $\pi(\widetilde{a})$ and $\pi(\widetilde{b})$ are
distinct.

Fix two distinct fixed points $\widetilde{a}$ and $\widetilde{b}$ of
$\widetilde{F}$. For any
$z\in\mathrm{Fix}_{\mathrm{Cont},I}(F)\setminus\pi(\{\widetilde{a},\widetilde{b}\})$,
we define \emph{the linking number of $z$ for $\widetilde{a}$ and
$\widetilde{b}$} as
$$i(\widetilde{F};\widetilde{a},\widetilde{b},z)=\sum_{\pi(\widetilde{z})=z}
\left(i(\widetilde{F};\widetilde{a},\widetilde{z})-i(\widetilde{F};\widetilde{b},\widetilde{z})\right)
=I(\widetilde{F};\pi(\widetilde{a}),z)-I(\widetilde{F};\pi(\widetilde{b}),z)
.$$

We will extend it to the case where $z\in
\mathrm{Rec}^+(F)\setminus\pi(\{\widetilde{a},\widetilde{b}\})$ in
Section \ref{sec:define a new linking number}. Note here that the
linking number only depends on $\pi(\widetilde{a})$ and
$\pi(\widetilde{b})$ in the case where $z$ is a contractible fixed
point of $F$, but the extension of
$i(\widetilde{F};\widetilde{a},\widetilde{b},z)$ for $z\in
\mathrm{Rec}^+(F)\setminus\mathrm{Fix}_{\mathrm{Cont},I}(F)$ in
Section \ref{sec:define a new linking number} depends on the choices
of $\widetilde{a}$ and $\widetilde{b}$.\bigskip

\subsection{The weak boundedness property and the boundedness
property}\label{subsec:boundedness} We can compactify
$\widetilde{M}$ into a sphere by adding a point $\infty$ at infinity
and the lift $\widetilde{F}$ may be extended by fixing this point.
In all the text, we write
$\mathbf{S}=\widetilde{M}\sqcup\{\infty\}$. If $\widetilde{a}$ and
$\widetilde{b}$ are distinct fixed points of $\widetilde{F}$, the
restriction of $\widetilde{F}$ to the annulus
$A_{{\widetilde{a}},{\widetilde{b}}}=\mathbf{S}\setminus\{\widetilde{a},\widetilde{b}\}$
denoted by $\widetilde{F}_{{\widetilde{a}},{\widetilde{b}}}$, has a
natural lift $\widehat{F}_{\widetilde{a},\widetilde{b}}$ to the
universal cover $\widehat{A}_{\widetilde{a},\widetilde{b}}$ of
$A_{{\widetilde{a}},{\widetilde{b}}}$ that fixes the preimages of
$\infty$ by the covering projection
$\widehat{\pi}_{{\widetilde{a}},{\widetilde{b}}}:
\widehat{A}_{{\widetilde{a}},{\widetilde{b}}}\rightarrow
A_{{\widetilde{a}},{\widetilde{b}}}$. Denote by
$T_{{\widetilde{a}},{\widetilde{b}}}$ the generator of
$H_1(A_{\widetilde{a},\widetilde{b}},\mathbb{R})$ defined by the
oriented boundary of a small disk centered at $\widetilde{a}$.

If $\pi(\widetilde{a})\neq \pi(\widetilde{b})$, by Remark \ref{rem:
contractible fixed point and isotopy}, there exist two identity
isotopies $I'$  and  $I''$ homotopic to $I$ with fixed endpoints
such that $I'$ fixes $\pi(\widetilde{a})$ and $I''$ fixes
$\pi(\widetilde{b})$. However, in general, there does not exist an
identity isotopy $I'''$ homotopic to $I$ with fixed endpoints such
that $I'''$ fixes both $\pi(\widetilde{a})$ and
$\pi(\widetilde{b})$, which is an obstacle that prevents us to
generalize the action function to a more general cases (see Section
\ref{subsec:the second extension of the classical action}). That is
a reason that we introduce the following lemma.

\begin{lem}\label{lem:P} If $\widetilde{z}$ is another fixed point of $\widetilde{F}$ which
is different from $\widetilde{a}$, $\widetilde{b}$ and $\infty$,
then the rotation number of $\widetilde{z}\in
A_{{\widetilde{a}},{\widetilde{b}}}$ for the natural lift
$\widehat{F}_{{\widetilde{a}},{\widetilde{b}}}$ is equal to
$i(\widetilde{F};\widetilde{a},\widetilde{z})-i(\widetilde{F};\widetilde{b},\widetilde{z})$,
that is
$$\rho_{A_{{\widetilde{a}},{\widetilde{b}}},\widehat{F}_{{\widetilde{a}},{\widetilde{b}}}}
(\widetilde{z})=i(\widetilde{F};\widetilde{a},\widetilde{z})-i(\widetilde{F};\widetilde{b},\widetilde{z}).$$
\end{lem}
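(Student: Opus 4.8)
The plan is to identify the rotation number of $\widetilde z$ in the annulus $A_{\widetilde a,\widetilde b}$ with an intersection/winding count and then recognize that count as the difference of the two linking numbers $i(\widetilde F;\widetilde a,\widetilde z)$ and $i(\widetilde F;\widetilde b,\widetilde z)$. First I would fix a lift $\widehat z\in\widehat\pi_{\widetilde a,\widetilde b}^{-1}(\widetilde z)$ and recall that, since $\widetilde z$ is a fixed point of $\widetilde F_{\widetilde a,\widetilde b}$, the rotation number $\rho_{A_{\widetilde a,\widetilde b},\widehat F_{\widetilde a,\widetilde b}}(\widetilde z)$ is exactly the integer $k$ such that $\widehat F_{\widetilde a,\widetilde b}(\widehat z)=T_{\widetilde a,\widetilde b}^{\,k}(\widehat z)$, where $T_{\widetilde a,\widetilde b}$ is the deck transformation corresponding to the chosen generator of $H_1(A_{\widetilde a,\widetilde b},\mathbb Z)$ (the one dual to the oriented boundary of a small disk about $\widetilde a$). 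Equivalently, lifting the trajectory $\widetilde I(\widetilde z)=(\widetilde F_t(\widetilde z))_{t\in[0,1]}$ — which is a loop in $A_{\widetilde a,\widetilde b}$ since $\widetilde z$ is a contractible fixed point of $\widetilde F$ and $\widetilde a,\widetilde b$ are fixed — to $\widehat A_{\widetilde a,\widetilde b}$ starting at $\widehat z$, the endpoint is $T_{\widetilde a,\widetilde b}^{\,k}(\widehat z)$, so $k$ is the homology class of the loop $\widetilde I(\widetilde z)$ in $H_1(A_{\widetilde a,\widetilde b},\mathbb Z)\cong\mathbb Z$.

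Next I would compute that homology class by a winding-number argument. Identify $\widetilde M$ with $\mathbb C$ via a homeomorphism $h$ (as in \ref{sec:linking number in the simple connected case}), normalized so that $h(\widetilde a)$ and $h(\widetilde b)$ are two distinct finite points and $\infty\mapsto\infty$; then $A_{\widetilde a,\widetilde b}\cong\mathbb C\setminus\{h(\widetilde a),h(\widetilde b)\}$, and the class in $H_1$ dual to the two punctures is read off by the winding numbers of a loop around $h(\widetilde a)$ and $h(\widetilde b)$ respectively. Under the chosen normalization of $T_{\widetilde a,\widetilde b}$ the class of $\widetilde I(\widetilde z)$ equals (winding number of $t\mapsto h(\widetilde F_t(\widetilde z))$ around $h(\widetilde a)$) $-$ (winding number around $h(\widetilde b)$); one of the two signs is forced by the convention "oriented boundary of a small disk centered at $\widetilde a$", and I would check it by testing on an explicit model (e.g. a small rotation isotopy fixing $\widetilde a,\widetilde b,\infty$, as produced by Lemma \ref{rem:identity isotopies fix three points on sphere} / Corollary \ref{cor:identity isotopy fixes two points}). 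The point is then that the winding number of $t\mapsto h(\widetilde F_t(\widetilde z))$ around $h(\widetilde a)$ is, by the very definition in \ref{sec:linking number in the simple connected case} of $i_{\widetilde I}$ as the degree of $\xi(e^{2i\pi t})=\dfrac{h F_t(\widetilde z)-h F_t(\widetilde a)}{|h F_t(\widetilde z)-h F_t(\widetilde a)|}$, precisely $i(\widetilde F;\widetilde a,\widetilde z)$ (using $\widetilde F_t(\widetilde a)=\widetilde a$, so $h F_t(\widetilde a)=h(\widetilde a)$ is constant and the map is the normalized $t\mapsto h F_t(\widetilde z)-h(\widetilde a)$); similarly around $h(\widetilde b)$ it is $i(\widetilde F;\widetilde b,\widetilde z)$. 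Combining with the previous paragraph gives $\rho=i(\widetilde F;\widetilde a,\widetilde z)-i(\widetilde F;\widetilde b,\widetilde z)$.

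The main obstacle I anticipate is pinning down the sign/normalization consistently: one must make sure that the generator $T_{\widetilde a,\widetilde b}$ "dual to the oriented boundary of a small disk centered at $\widetilde a$", the covering $\widehat\pi_{\widetilde a,\widetilde b}$, the convention for rotation number on an annulus from \ref{subsec:the rotation number of an open annulus}, and the degree convention for the linking number all line up so that the $\widetilde a$-term appears with a $+$ and the $\widetilde b$-term with a $-$. I would handle this purely by a single normalization check on the model isotopy rather than chasing signs abstractly. A secondary, minor point is to justify that $\widetilde I(\widetilde z)$ is genuinely a loop in $A_{\widetilde a,\widetilde b}$ and that it misses $\widetilde a,\widetilde b$ for all $t$: it misses them because $\widetilde F_t$ is injective and fixes $\widetilde a,\widetilde b$ while $\widetilde z\notin\{\widetilde a,\widetilde b\}$, and it is a loop because $\widetilde z\in\mathrm{Fix}(\widetilde F)$; its contractibility in $\mathbf S$ (needed only to know it lifts to a path, not a nontrivial loop, in the $\infty$-direction) follows since $\widetilde M\cong\mathbb C$ is simply connected. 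Everything else is the standard identification "rotation number of a fixed point of an annulus homeomorphism = homology class of its trajectory loop = winding number", which I would state with a one-line justification.
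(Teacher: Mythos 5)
Your proposal has a genuine gap: throughout you treat $\widetilde{a}$ and $\widetilde{b}$ as if they were fixed by the whole isotopy $\widetilde{I}=(\widetilde{F}_t)_{t\in[0,1]}$, but they are only fixed points of the time-one map $\widetilde{F}$; for intermediate $t$ the points $\widetilde{F}_t(\widetilde{a})$, $\widetilde{F}_t(\widetilde{b})$ move, and their trajectories are in general non-constant loops. This invalidates three steps as written. First, your claim that $\widetilde{I}(\widetilde{z})$ misses $\widetilde{a},\widetilde{b}$ ``because $\widetilde{F}_t$ is injective and fixes $\widetilde{a},\widetilde{b}$'' fails: one can have $\widetilde{F}_{t_0}(\widetilde{z})=\widetilde{a}$ with $\widetilde{F}_{t_0}(\widetilde{a})\neq\widetilde{a}$, so the trajectory need not be a loop in $A_{\widetilde{a},\widetilde{b}}$ at all. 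Second, the linking number $i(\widetilde{F};\widetilde{a},\widetilde{z})=i_{\widetilde{I}}(\widetilde{a},\widetilde{z})$ is the degree of the normalized difference of the \emph{two moving points} $h\widetilde{F}_t(\widetilde{z})-h\widetilde{F}_t(\widetilde{a})$; it is not the winding number of $t\mapsto h\widetilde{F}_t(\widetilde{z})$ around the stationary point $h(\widetilde{a})$, since $h\widetilde{F}_t(\widetilde{a})$ is not constant. Third, the identification of the $\widehat{\pi}_{\widetilde{a},\widetilde{b}}$-translation integer of $\widehat{F}_{\widetilde{a},\widetilde{b}}$ at $\widehat{z}$ with the $H_1(A_{\widetilde{a},\widetilde{b}},\mathbb{Z})$-class of the $\widetilde{I}$-trajectory is unjustified: $\widetilde{I}$ is not an isotopy of the annulus (it does not preserve $A_{\widetilde{a},\widetilde{b}}$), so it does not select a lift, and the natural lift is pinned down by fixing the preimages of $\infty$, a datum your argument never connects to $\widetilde{I}$.

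The missing idea, which is exactly how the paper proceeds, is to change the isotopy before doing your winding computation. Since any two identity isotopies of $\widetilde{M}$ ending at $\widetilde{F}$ differ (up to homotopy) by $R^k$, the quantity $i_J(\widetilde{a},\widetilde{z})-i_J(\widetilde{b},\widetilde{z})$ is independent of the choice of $J$, so it may be computed with an isotopy $J$ fixing $\widetilde{a}$ and $\widetilde{b}$ (Corollary \ref{cor:identity isotopy fixes two points}), for which it equals $i(\widetilde{F};\widetilde{a},\widetilde{z})-i(\widetilde{F};\widetilde{b},\widetilde{z})$. For such a $J$ all of your steps become correct: $J(\widetilde{z})$ is a loop in $A_{\widetilde{a},\widetilde{b}}$, the two linking numbers are the winding numbers of this loop about $\widetilde{a}$ and $\widetilde{b}$ (equivalently intersection numbers with arcs $\gamma_{\widetilde{a},\infty}$, $\gamma_{\widetilde{b},\infty}$), their difference is $\gamma_{\widetilde{a},\widetilde{b}}\wedge J(\widetilde{z})$ because the loop is null-homologous in $\mathbf{S}$, and the time-one lift of $J$ to $\widehat{A}_{\widetilde{a},\widetilde{b}}$ is the natural lift $\widehat{F}_{\widetilde{a},\widetilde{b}}$ precisely because $J$ fixes $\infty$, which identifies this intersection number with the rotation number. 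Without the reduction to such a $J$, your computation does not compute the stated linking numbers, nor the stated rotation number.
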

\begin{proof}
If $J$ and $J'$ are two isotopies of $\widetilde{M}$ from
$\text{Id}_{\widetilde{M}}$ to $\widetilde{F}$, then there exists
$k\in \mathbb{Z}$ such that $i_J=i_{J'}+k$ (see \ref{sec:linking
number in the simple connected case}). Therefore, if
$\widetilde{a},\widetilde{b}$ and $\widetilde{z}$ are distinct fixed
points of $\widetilde{F}$, the quantity
$i_{J}(\widetilde{a},\widetilde{z})-i_{J}(\widetilde{b},\widetilde{z})$
is independent of $J$ and hence equals to
$i(\widetilde{F};\widetilde{a},\widetilde{z})-i(\widetilde{F};\widetilde{b},\widetilde{z})$
if we choose $J=\widetilde{I}$ where $\widetilde{I}$ is the identity
isotopy in \ref{sec:linking number in the special case}. Suppose now
that $J$ is an isotopy that fixes $\widetilde{a}$ and
$\widetilde{b}$. The trajectory $J(\widetilde{z})$ defines a loop in
the sphere $\mathbf{S}$. If $\gamma_{\widetilde{a},\infty}$ and
$\gamma_{\widetilde{b},\infty}$ are two paths in $\mathbf{S}$ that
join respectively $\widetilde{a}$ and $\widetilde{b}$ to $\infty$,
we have
$i_J(\widetilde{a},\widetilde{z})=\gamma_{\widetilde{a},\infty}\wedge
J(\widetilde{z})$ and
$i_J(\widetilde{b},\widetilde{z})=\gamma_{\widetilde{b},\infty}\wedge
J(\widetilde{z})$. The loop $J(\widetilde{z})$ being homologous to
zero in $\mathbf{S}$, we deduce that
$i(\widetilde{F};\widetilde{a},\widetilde{z})-i(\widetilde{F};\widetilde{b},\widetilde{z})
=i_J(\widetilde{a},\widetilde{z})-i_J(\widetilde{b},\widetilde{z})
=\gamma_{\widetilde{a},\widetilde{b}}\wedge J(\widetilde{z})$, where
$\gamma_{\widetilde{a},\widetilde{b}}$ is a path in $\mathbf{S}$
that joins $\widetilde{a}$ to $\widetilde{b}$. Note that this
integer is nothing else but the rotation number of
\,$\widetilde{z}$\, for the lift
$\widehat{F}_{\widetilde{a},\widetilde{b}}$ defined by
$T_{{\widetilde{a}},{\widetilde{b}}}$.
\end{proof}

Remark here that, by the definition
$i(\widetilde{F};\widetilde{a},\widetilde{b},z)$ of Section
\ref{sec:linking number of a and b,c}, we have
\begin{equation*}\label{def:linking number of triple of the sphere}
i(\widetilde{F};\widetilde{a},\widetilde{b},z)=\sum_{\pi(\widetilde{z})=z}
i(\widetilde{F};\widetilde{a},\widetilde{z})-i(\widetilde{F};\widetilde{b},\widetilde{z})
=\sum_{\pi(\widetilde{z})=z}\rho_{A_{{\widetilde{a}},{\widetilde{b}}},\widehat{F}_{{\widetilde{a}},{\widetilde{b}}}}
(\widetilde{z}).
\end{equation*}

\begin{defn}\label{def:wb property and b property} We say that
$I$ satisfies \emph{the weak boundedness property at
$\widetilde{a}\in\mathrm{Fix}(\widetilde{F})$ (WB-property at
$\widetilde{a}$)} if $i(\widetilde{F};\widetilde{a},\widetilde{b})$
is uniformly bounded for all fixed point $\widetilde{b}\in
\mathrm{Fix}(\widetilde{F})\setminus\{\widetilde{a}\}$. We say that
$I$ satisfies \emph{the weak boundedness property (WB-property)} if
it satisfies the weak boundedness property at every
$\widetilde{a}\in\mathrm{Fix}(\widetilde{F})$. We say that $I$
satisfies \emph{the boundedness property (B-property)} if the set of
$i(\widetilde{F};\widetilde{a},\widetilde{b})$ where
$(\widetilde{a},\widetilde{b})\in(\mathrm{Fix}(\widetilde{F})\times
\mathrm{Fix}(\widetilde{F}))\setminus\widetilde{\Delta}$ is bounded.
\end{defn}



\begin{lem}\label{lem:rotation number and weak boundess at two point}
Let $\widetilde{a}$ and $\widetilde{b}$ be two distinct fixed points
of $\widetilde{F}$. The following statements are equivalent
\begin{enumerate}
\item $I$ satisfies the WB-property at
$\widetilde{a}$ and $\widetilde{b}$;
  \item there exists $K\geq0$ such that $\left|\rho_{A_{{\widetilde{a}},{\widetilde{b}}},\widehat{F}_{{\widetilde{a}},{\widetilde{b}}}}
(\widetilde{c})\right|\leq K$  for all fixed point $\widetilde{c}\in
\mathrm{Fix}(\widetilde{F})\setminus\{\widetilde{a},\widetilde{b}\}$.
\end{enumerate}
\end{lem}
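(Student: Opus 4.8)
The plan is to deduce both implications from the identity of Lemma~\ref{lem:P},
\[
\rho_{A_{\widetilde{a},\widetilde{b}},\widehat{F}_{\widetilde{a},\widetilde{b}}}(\widetilde{c})
=i(\widetilde{F};\widetilde{a},\widetilde{c})-i(\widetilde{F};\widetilde{b},\widetilde{c}),
\]
valid for every $\widetilde{c}\in\mathrm{Fix}(\widetilde{F})\setminus\{\widetilde{a},\widetilde{b}\}$, together with the elementary properties P1 (local constancy) and P4 (vanishing at large distance) of the linking number $i(\widetilde{F};\cdot\,,\cdot)$.

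For $(1)\Rightarrow(2)$ there is nothing subtle. If $I$ satisfies the WB-property at $\widetilde{a}$ and at $\widetilde{b}$, I choose $K_a,K_b\geq 0$ bounding $|i(\widetilde{F};\widetilde{a},\cdot\,)|$ and $|i(\widetilde{F};\widetilde{b},\cdot\,)|$ on the respective fixed-point sets, and apply the triangle inequality to the displayed identity: $|\rho_{A_{\widetilde{a},\widetilde{b}},\widehat{F}_{\widetilde{a},\widetilde{b}}}(\widetilde{c})|\leq K_a+K_b$ for every admissible $\widetilde{c}$, which is exactly statement (2).

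For $(2)\Rightarrow(1)$ I argue by contradiction. Suppose (1) fails; then the WB-property fails at $\widetilde{a}$ or at $\widetilde{b}$, and by symmetry I may assume it fails at $\widetilde{a}$, so there is a sequence $(\widetilde{c}_n)$ in $\mathrm{Fix}(\widetilde{F})\setminus\{\widetilde{a}\}$ with $|i(\widetilde{F};\widetilde{a},\widetilde{c}_n)|\to+\infty$. The first step is to localize these points: by P4 there is $K$ with $i(\widetilde{F};\widetilde{a},\widetilde{z})=0$ whenever $\widetilde{d}(\widetilde{a},\widetilde{z})\geq K$, hence for $n$ large the $\widetilde{c}_n$ lie in the closed ball $\overline{B}(\widetilde{a},K)$, which is compact since $\widetilde{M}$, being a Riemannian covering of the closed manifold $M$, is complete. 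Passing to a subsequence we get $\widetilde{c}_n\to\widetilde{c}_\ast\in\mathrm{Fix}(\widetilde{F})$; if $\widetilde{c}_\ast\neq\widetilde{a}$, then $(\widetilde{a},\widetilde{c}_n)\to(\widetilde{a},\widetilde{c}_\ast)$ inside $(\mathrm{Fix}(\widetilde{F})\times\mathrm{Fix}(\widetilde{F}))\setminus\widetilde{\Delta}$ and P1 forces $i(\widetilde{F};\widetilde{a},\widetilde{c}_n)$ to be eventually constant, contradicting its divergence; therefore $\widetilde{c}_\ast=\widetilde{a}$. Since $\widetilde{a}\neq\widetilde{b}$, for $n$ large we have $\widetilde{c}_n\neq\widetilde{b}$ and $(\widetilde{b},\widetilde{c}_n)\to(\widetilde{b},\widetilde{a})$ inside $(\mathrm{Fix}(\widetilde{F})\times\mathrm{Fix}(\widetilde{F}))\setminus\widetilde{\Delta}$, so P1 makes the integers $i(\widetilde{F};\widetilde{b},\widetilde{c}_n)$ eventually constant, in particular bounded. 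Substituting into Lemma~\ref{lem:P},
\[
\bigl|\rho_{A_{\widetilde{a},\widetilde{b}},\widehat{F}_{\widetilde{a},\widetilde{b}}}(\widetilde{c}_n)\bigr|
\geq \bigl|i(\widetilde{F};\widetilde{a},\widetilde{c}_n)\bigr|-\bigl|i(\widetilde{F};\widetilde{b},\widetilde{c}_n)\bigr|\longrightarrow+\infty,
\]
contradicting (2). The case where the WB-property fails at $\widetilde{b}$ is handled identically, exchanging the roles of $\widetilde{a}$ and $\widetilde{b}$.

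The heart of the matter is the middle step of the second implication: an unbounded sequence of linking numbers $i(\widetilde{F};\widetilde{a},\widetilde{c}_n)$ cannot escape to infinity, because P4 confines the $\widetilde{c}_n$ to a fixed compact neighbourhood of $\widetilde{a}$ while P1 forbids them from accumulating anywhere but at $\widetilde{a}$ itself; once $\widetilde{c}_n\to\widetilde{a}$ is known, the companion term $i(\widetilde{F};\widetilde{b},\widetilde{c}_n)$ stabilizes by continuity, so the divergence is entirely absorbed by the rotation number $\rho_{A_{\widetilde{a},\widetilde{b}},\widehat{F}_{\widetilde{a},\widetilde{b}}}(\widetilde{c}_n)$, which contradicts the bound in (2). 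I expect this localization argument to be the only place requiring care; everything else is the triangle inequality and continuity.
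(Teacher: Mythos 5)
Your proof is correct and takes essentially the same route as the paper: (1)$\Rightarrow$(2) is the triangle inequality applied to the identity of Lemma \ref{lem:P}, and (2)$\Rightarrow$(1) is a contradiction argument in which P4 confines the divergent sequence and P1 (local constancy) controls its accumulation. The only difference is cosmetic ordering: you first conclude $\widetilde{c}_n\to\widetilde{a}$ and stabilize $i(\widetilde{F};\widetilde{b},\widetilde{c}_n)$ before invoking (2), whereas the paper invokes (2) at the outset to force $i(\widetilde{F};\widetilde{b},\widetilde{c}_n)\to+\infty$ and contradicts by having $\widetilde{c}_n$ converge to both $\widetilde{a}$ and $\widetilde{b}$.
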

\begin{proof}
From Lemma \ref{lem:P}, we  have (1)$\,\Rightarrow\,$(2)
immediately. Next we prove (2)$\,\Rightarrow\,$(1) by contradiction.
Without loss of generality, we suppose that there exists a sequence
$\{\widetilde{c}_n\}_{n\geq1}\subset
\mathrm{Fix}(\widetilde{F})\setminus\{\widetilde{a},\widetilde{b}\}$
such that
$\lim\limits_{n\rightarrow+\infty}i(\widetilde{F};\widetilde{a},\widetilde{c}_{n})=+\infty$
(the case
$\lim\limits_{n\rightarrow+\infty}i(\widetilde{F};\widetilde{a},\widetilde{c}_{n})=-\infty$
is similar). Lemma \ref{lem:P} and the hypothesis (2) imply that
$\lim\limits_{n\rightarrow+\infty}i(\widetilde{F};\widetilde{b},\widetilde{c}_{n})=+\infty$.
The property P4 implies that the sequence
$\{\widetilde{c}_n\}_{n\geq1}$ is bounded. The property P1 implies
that
$\lim\limits_{n\rightarrow+\infty}\widetilde{c}_n=\widetilde{a}$ and
$\lim\limits_{n\rightarrow+\infty}\widetilde{c}_n=\widetilde{b}$,
which gives a contradiction.
\end{proof}

\begin{lem}\label{lem:diffeomorphism on a and b}
For any two distinct fixed points $\widetilde{a}$ and
$\widetilde{b}$ of $\widetilde{F}$, if $F$ and $F^{-1}$ are
differentiable at $\pi(\widetilde{a})$ and $\pi(\widetilde{b})$,
then
$\rho_{A_{{\widetilde{a}},{\widetilde{b}}},\widehat{F}_{{\widetilde{a}},{\widetilde{b}}}}
(\widetilde{z})$ is uniformly bounded for any
$\widetilde{z}\in\mathrm{Rec}^+(\widetilde{F})\setminus\{\widetilde{a},\widetilde{b}\}$
if it exists. In particular,
$\rho_{A_{{\widetilde{a}},{\widetilde{b}}},\widehat{F}_{{\widetilde{a}},{\widetilde{b}}}}
(\widetilde{c})$ is uniformly bounded for any fixed point
$\widetilde{c}\in\mathrm{Fix}(\widetilde{F})\setminus\{\widetilde{a},\widetilde{b}\}$.
\end{lem}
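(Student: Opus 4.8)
The plan is to work near the fixed points $\pi(\widetilde{a})$ and $\pi(\widetilde{b})$ and control the rotation number of nearby recurrent orbits by comparing $\widetilde{F}$ to its linearization there. First I would pass to the annulus $A_{\widetilde{a},\widetilde{b}}=\mathbf{S}\setminus\{\widetilde{a},\widetilde{b}\}$ with its natural lift $\widehat{F}_{\widetilde{a},\widetilde{b}}$, and recall (from \ref{subsec:the rotation number of an open annulus}) that the rotation number of a point measures, after reparametrizing the annulus as $\mathbb{R}/\mathbb{Z}\times\mathbb{R}$, the average horizontal displacement; equivalently, via Lemma \ref{lem:P}, it is $i(\widetilde{F};\widetilde{a},\widetilde{z})-i(\widetilde{F};\widetilde{b},\widetilde{z})$ when $\widetilde{z}$ is a fixed point. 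Since the linking number is locally constant away from the diagonal (property P1) and vanishes when points are far apart (property P4), the only way $\rho_{A_{\widetilde{a},\widetilde{b}},\widehat{F}_{\widetilde{a},\widetilde{b}}}(\widetilde{z})$ can fail to be bounded is if $\widetilde{z}$ (or its orbit) accumulates on $\widetilde{a}$ or $\widetilde{b}$. So the whole problem localizes to a small punctured disk around each of $\widetilde{a}$ and $\widetilde{b}$.

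Next I would exploit differentiability. Near $\pi(\widetilde{a})$, lift to $\widetilde{M}\cong\mathbb{C}$ and write $\widetilde{F}(\widetilde{a}+v)=\widetilde{a}+A v+o(|v|)$ where $A=D\widetilde{F}_{\widetilde{a}}\in\GL(2,\mathbb{R})$ is invertible (because $F^{-1}$ is also differentiable there, so $A$ has an inverse). In a small enough punctured neighborhood $V\setminus\{\widetilde{a}\}$, the map $\widetilde{F}$ is therefore uniformly close, in the $C^0$ sense after rescaling, to the linear map $v\mapsto Av$; a linear invertible map of the punctured plane has bounded angular displacement per iterate — more precisely, lifting the induced circle map $v/|v|\mapsto Av/|Av|$ on $\mathbf{S}^1$ to $\mathbb{R}$, the displacement is bounded by a constant depending only on $A$ (indeed its rotation number is $0$ since $\GL^+$ deformation-retracts to $SO(2)$ only up to... one just needs that the lift of a single homeomorphism of $\mathbf{S}^1$ has displacement bounded by $1$ plus its rotation number, and for a linear map the rotation number is $0$). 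Combining this with the analogous estimate near $\widetilde{b}$, I obtain: there is a neighborhood $W$ of $\{\widetilde{a},\widetilde{b}\}$ and a constant $K_0$ so that any point of $\mathrm{Rec}^+(\widetilde{F})$ whose entire forward orbit stays in $W$ has $|\rho_{A_{\widetilde{a},\widetilde{b}},\widehat{F}_{\widetilde{a},\widetilde{b}}}(\widetilde{z})|\le K_0$ when it exists.

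Then I would treat the general recurrent point. If $\widetilde{z}\in\mathrm{Rec}^+(\widetilde{F})\setminus\{\widetilde{a},\widetilde{b}\}$ and its rotation number exists, its orbit either eventually enters the region $W$ and stays (handled above) or visits the compact complement $\mathbf{S}\setminus W$ infinitely often. On $\mathbf{S}\setminus W$ the "angular speed" function for $\widehat{F}_{\widetilde{a},\widetilde{b}}$ — the displacement $p_1\circ\widehat{F}_{\widetilde{a},\widetilde{b}}(\widehat{z})-p_1(\widehat{z})$ in the annulus coordinate — is a continuous function on a compact set, hence bounded by some $K_1$; combining the contributions from the finitely-bounded-per-step excursions inside $W$ (bounded by $K_0$ in Birkhoff average, by the linear estimate applied to finite orbit segments) with the steps outside, the Birkhoff average defining the rotation number is bounded by $\max(K_0,K_1)$. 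Taking $K=\max(K_0,K_1)$ gives the uniform bound for all recurrent points, and in particular for all fixed points $\widetilde{c}\in\mathrm{Fix}(\widetilde{F})\setminus\{\widetilde{a},\widetilde{b}\}$, since a fixed point is trivially recurrent and its rotation number always exists.

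The main obstacle I anticipate is making the linear-comparison step fully rigorous: one must ensure that the $o(|v|)$ error in the Taylor expansion does not spoil the bound on the \emph{cumulative} angular displacement over arbitrarily many iterates while the orbit lingers near $\widetilde{a}$. The clean way is to introduce logarithmic coordinates (blow up $\widetilde{a}$ to a boundary circle, i.e. pass to the universal cover of the small punctured disk) in which differentiability of $F$ and $F^{-1}$ at $\widetilde{a}$ forces the lifted map to extend continuously to the boundary circle as the circle map induced by $A$; then the desired bound is just the elementary fact that a lift of an orientation-preserving circle homeomorphism with a fixed point (here the boundary dynamics of the linear map has rotation number $0$) has displacement bounded by $1$. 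I expect this blow-up/boundary-extension argument to be the technical heart, and everything else to be bookkeeping with properties P1–P4 and compactness.
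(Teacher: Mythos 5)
Your proposal is correct in substance, and the step you yourself single out as the technical heart --- blowing up $\widetilde{a}$ and $\widetilde{b}$ to their tangent unit circles, so that differentiability of $F$ and $F^{-1}$ at $\pi(\widetilde{a})$ and $\pi(\widetilde{b})$ makes $\widetilde{F}_{\widetilde{a},\widetilde{b}}$ extend to a homeomorphism $f$ of the compact annulus $\bar{A}_{\widetilde{a},\widetilde{b}}$ acting on the boundary circles by $u\mapsto D\widetilde{F}(\widetilde{a})u/|D\widetilde{F}(\widetilde{a})u|$ --- is exactly the paper's whole proof. The paper performs this blow-up at both punctures at once, uses compactness of $\bar{A}_{\widetilde{a},\widetilde{b}}$ to bound the displacement $p_1(\widehat{f}(\widehat{u}))-p_1(\widehat{u})$ of the lift fixing the preimages of $\infty$ by a single constant $N$, and then the telescoping estimate $\bigl|p_1\circ\widehat{F}^{\,n}_{\widetilde{a},\widetilde{b}}(\widehat{z})-p_1(\widehat{z})\bigr|\leq Nn$ bounds the rotation number of \emph{every} point of the annulus simultaneously; so your preliminary localization near the punctures, the linear-comparison with $o(|v|)$ error control, and the decomposition of recurrent orbits into excursions inside and outside $W$ are all superfluous once the per-iterate displacement is bounded globally. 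The paper's route also buys something you discard: the bound applies to all points, not only recurrent ones, a fact reused later (see the remark after the lemma and its use in Section \ref{sec:i(F;a,b,z) is bounded} and in Lemma \ref{lem:mu gamma is zero}). Two side remarks in your write-up are inaccurate, though harmless to the conclusion: properties P1 and P4 concern linking numbers of pairs of \emph{fixed} points of $\widetilde{F}$, so they do not by themselves localize the rotation number of a general recurrent point; and the circle map induced on directions by an invertible linear map need not have rotation number $0$ (take $D\widetilde{F}(\widetilde{a})$ an irrational rotation), so the displacement of the relevant lift is not bounded by $1$ --- what is actually needed, and what compactness delivers, is merely that this displacement is bounded by \emph{some} constant.
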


\begin{proof}
Let $\bar{A}_{\widetilde{a},\widetilde{b}}=S_{\widetilde{a}}\sqcup
A_{\widetilde{a},\widetilde{b}}\sqcup S_{\widetilde{b}}$ where
$S_{\widetilde{a}}$ and $S_{\widetilde{b}}$ are the tangent unit
circles at $\widetilde{a}$ and $\widetilde{b}$ such that
$\bar{A}_{\widetilde{a},\widetilde{b}}$ is the natural
compactification of $A_{\widetilde{a},\widetilde{b}}$. The maps $F$
and $F^{-1}$ are differentiable at $\pi(\widetilde{a})$ and
$\pi(\widetilde{b})$. Hence the lift $\widetilde{F}$ (resp.
$\widetilde{F}^{-1}$) of $F$ (resp. $F^{-1}$) to $\widetilde{M}$ is
differentiable at $\widetilde{a}$ and $\widetilde{b}$. By the method
of \emph{blowing-up}, it induces a homeomorphism $f:
\bar{A}_{\widetilde{a},\widetilde{b}}\rightarrow
\bar{A}_{\widetilde{a},\widetilde{b}}$,

\begin{equation*}f(u)=
\begin{cases}\widetilde{F}_{\widetilde{a},\widetilde{b}}(u)& \textrm{when} \quad u\in
A_{\widetilde{a},\widetilde{b}}\\[6pt]\frac{D\widetilde{F}(\widetilde{a}).u}{|D\widetilde{F}(\widetilde{a}).u|}&
\textrm{when} \quad u\in
S_{\widetilde{a}}\\[6pt]\frac{D\widetilde{F}(\widetilde{b}).u}{|D\widetilde{F}(\widetilde{b}).u|}&
\textrm{when} \quad u\in S_{\widetilde{b}} .\end{cases}
\end{equation*}

The universal cover of $\bar{A}_{\widetilde{a},\widetilde{b}}$ is
$\mathbb{R}\times [0,1]$. We suppose that $\widehat{f}$ is the lift
of $f$ fixing the preimages of $\infty$ by the covering projection
$\widehat{\pi}_{{\widetilde{a}},{\widetilde{b}}}$. For any $u\in
\bar{A}_{\widetilde{a},\widetilde{b}}$, we have that
$p_1(\widehat{f}(\widehat{u}))-p_1(\widehat{u})$ is uniformly
bounded because $\bar{A}_{\widetilde{a},\widetilde{b}}$ is compact,
where $\widehat{u}$ is any lift of $u$. There exists $N$, depending
on $I$, such that for every
$\widehat{z}\in\widehat{A}_{\widetilde{a},\widetilde{b}}$, one has
$\left|p_1(\widehat{F}_{{\widetilde{a}},{\widetilde{b}}}(\widehat{z}))-
p_1(\widehat{z})\right|\leq N$. Moreover, for every $n\geq1$, we
have
$$\left|\frac{p_1\circ\widehat{F}_{\widetilde{a},\widetilde{b}}^n(\widehat{z})-
p_1(\widehat{z})}{n}\right|\leq\frac{1}{n}\sum_{i=0}^{n-1}\left|p_1\circ\widehat{F}_
{\widetilde{a},\widetilde{b}}^{i+1}(\widehat{z})-p_1\circ\widehat{F}_
{\widetilde{a},\widetilde{b}}^{i}(\widehat{z})\right|\leq N.$$

If $\widetilde{z}\in
\mathrm{Rec}^+(\widetilde{F}_{\widetilde{a},\widetilde{b}})$ and
$\rho_{A_{{\widetilde{a}},{\widetilde{b}}},\widehat{F}_{{\widetilde{a}},{\widetilde{b}}}}
(\widetilde{z})$ exists, by the definition of rotation number (see
\ref{subsec:the rotation number of an open annulus}), we deduce that
$\left|\rho_{A_{{\widetilde{a}},{\widetilde{b}}},\widehat{F}_{{\widetilde{a}},{\widetilde{b}}}}
(\widetilde{z})\right|\leq N$. We have completed the proof.
\end{proof}

 Observe that the proof of Lemma
\ref{lem:diffeomorphism on a and b} gives us an information about
how rotate not only the positively recurrent points of
$\widetilde{F}_{\widetilde{a},\widetilde{b}}$ but in fact every
point in $A_{\widetilde{a},\widetilde{b}}$, we will use this fact in
Section \ref{sec:i(F;a,b,z) is bounded}.
\bigskip

By Lemma \ref{lem:rotation number and weak boundess at two point}
and Lemma \ref{lem:diffeomorphism on a and b}, we have the following
proposition immediately.

\begin{prop}\label{prop:F is differentiable on every point of M, WB}
The WB-property is satisfied if $F\in\mathrm{Diff}(M)$.
\end{prop}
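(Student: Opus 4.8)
The plan is to reduce the statement to Lemma~\ref{lem:rotation number and weak boundess at two point} and Lemma~\ref{lem:diffeomorphism on a and b}. Recall that $I$ satisfies the WB-property precisely when, for every $\widetilde{a}\in\mathrm{Fix}(\widetilde{F})$, the linking numbers $i(\widetilde{F};\widetilde{a},\widetilde{b})$ are uniformly bounded as $\widetilde{b}$ ranges over $\mathrm{Fix}(\widetilde{F})\setminus\{\widetilde{a}\}$. So it suffices to fix an arbitrary pair of distinct fixed points $\widetilde{a},\widetilde{b}$ of $\widetilde{F}$ and produce a bound on $i(\widetilde{F};\widetilde{a},\widetilde{b})$ that is uniform in $\widetilde{b}$ (for fixed $\widetilde{a}$); in fact the argument will be symmetric enough to give the stronger B-property.

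First I would observe that since $F\in\mathrm{Diff}(M)$, the maps $F$ and $F^{-1}$ are differentiable at \emph{every} point of $M$, in particular at $\pi(\widetilde{a})$ and $\pi(\widetilde{b})$ for any choice of $\widetilde{a},\widetilde{b}$. Therefore Lemma~\ref{lem:diffeomorphism on a and b} applies to every pair $(\widetilde{a},\widetilde{b})$: there is a constant $N=N(\widetilde{a},\widetilde{b})$ (coming from the blow-up construction in that proof) so that $\bigl|\rho_{A_{\widetilde{a},\widetilde{b}},\widehat{F}_{\widetilde{a},\widetilde{b}}}(\widetilde{c})\bigr|\leq N$ for every fixed point $\widetilde{c}\in\mathrm{Fix}(\widetilde{F})\setminus\{\widetilde{a},\widetilde{b}\}$. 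By Lemma~\ref{lem:rotation number and weak boundess at two point}, the boundedness of these rotation numbers is equivalent to $I$ satisfying the WB-property at $\widetilde{a}$ and $\widetilde{b}$, i.e. $i(\widetilde{F};\widetilde{a},\widetilde{b})$ being bounded. Since $\widetilde{b}$ was arbitrary in $\mathrm{Fix}(\widetilde{F})\setminus\{\widetilde{a}\}$ and $\widetilde{a}$ was arbitrary in $\mathrm{Fix}(\widetilde{F})$, this gives the WB-property at every $\widetilde{a}$, which is exactly the WB-property.

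The one point that needs care — and the place where I expect the only real subtlety to lie — is the \emph{uniformity} of the constant $N$ in $\widetilde{b}$ (indeed in both $\widetilde{a}$ and $\widetilde{b}$). The constant $N$ in Lemma~\ref{lem:diffeomorphism on a and b} is obtained from the displacement bound $|p_1(\widehat{f}(\widehat{u}))-p_1(\widehat{u})|$ for the blown-up homeomorphism $f$ of the compact annulus $\bar{A}_{\widetilde{a},\widetilde{b}}$, and a priori this depends on $\widetilde{a},\widetilde{b}$ through the derivatives $D\widetilde{F}(\widetilde{a})$, $D\widetilde{F}(\widetilde{b})$ and the global behaviour of $\widetilde{F}$ near those points. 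However, for a fixed $\widetilde{a}$ the WB-property at $\widetilde{a}$ only asks that $i(\widetilde{F};\widetilde{a},\widetilde{b})$ be bounded over $\widetilde{b}$, and for \emph{each} such $\widetilde{b}$ Lemma~\ref{lem:diffeomorphism on a and b} already yields a finite bound on $i(\widetilde{F};\widetilde{a},\widetilde{b})$ — because $i(\widetilde{F};\widetilde{a},\widetilde{b})$ is a \emph{single} rotation number (of $\widetilde{b}$, or symmetrically via Lemma~\ref{lem:P}), not a family. If one instead wants boundedness of the whole collection indexed by $\widetilde{b}$, one invokes compactness of $M$: the derivatives $D\widetilde{F}$ and $D\widetilde{F}^{-1}$ are continuous on the compact set $M$ (hence on the compact set $\mathrm{Fix}_{\mathrm{Cont},I}(F)=\pi(\mathrm{Fix}(\widetilde{F}))$), and the displacement estimate in the proof of Lemma~\ref{lem:diffeomorphism on a and b} then produces a bound $N$ that can be taken locally uniform, and by compactness globally uniform, in $(\widetilde{a},\widetilde{b})$ up to covering transformations (using property~(P2) to pass from all of $\mathrm{Fix}(\widetilde{F})$ to a compact fundamental domain). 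This gives the WB-property, and in fact the B-property, completing the proof.
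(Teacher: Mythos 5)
Your first two paragraphs are exactly the paper's intended proof, and they are already complete: since $F\in\mathrm{Diff}(M)$, both $F$ and $F^{-1}$ are differentiable at every point, so Lemma \ref{lem:diffeomorphism on a and b} applies to any pair of distinct fixed points $\widetilde{a},\widetilde{b}$ of $\widetilde{F}$, and the implication (2)$\Rightarrow$(1) of Lemma \ref{lem:rotation number and weak boundess at two point} then gives the WB-property at $\widetilde{a}$ and $\widetilde{b}$; letting $\widetilde{a}$ range over $\mathrm{Fix}(\widetilde{F})$ (one auxiliary $\widetilde{b}$ suffices for each $\widetilde{a}$, the case $\sharp\mathrm{Fix}(\widetilde{F})\leq 1$ being vacuous) yields the proposition. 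The uniformity in $\widetilde{b}$ required by Definition \ref{def:wb property and b property} is entirely supplied by that implication, whose proof by contradiction (via Lemma \ref{lem:P} and the properties P1 and P4) converts the bound of Lemma \ref{lem:diffeomorphism on a and b} for one fixed annulus $A_{\widetilde{a},\widetilde{b}}$ into a bound on $i(\widetilde{F};\widetilde{a},\widetilde{c})$ over all $\widetilde{c}$; no uniformity of the constant $N(\widetilde{a},\widetilde{b})$ across pairs is needed, so the ``subtlety'' your last paragraph addresses is a red herring.

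That last paragraph, however, contains a genuine error rather than just an unnecessary detour. The sentence claiming that for each $\widetilde{b}$ Lemma \ref{lem:diffeomorphism on a and b} ``already yields a finite bound on $i(\widetilde{F};\widetilde{a},\widetilde{b})$'' conflates finiteness of each individual integer $i(\widetilde{F};\widetilde{a},\widetilde{b})$ (which is trivial) with the uniform bound over $\widetilde{b}$ that the WB-property actually demands; and the patch you then offer is invalid: $F\in\mathrm{Diff}(M)$ means only that $F$ and $F^{-1}$ are differentiable, not $C^1$, so $DF$ and $DF^{-1}$ need not be continuous, and the compactness argument producing a constant $N$ uniform in $(\widetilde{a},\widetilde{b})$ breaks down. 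The conclusion you extract from it, ``and in fact the B-property,'' is simply false for $\mathrm{Diff}(M)$: Example \ref{ex:the action of non C1-diffeo isnot bounded and continuous} in the Appendix exhibits a diffeomorphism (not $C^1$) isotopic to the identity whose isotopy fails the B-property, and the B-property is obtained only for $F\in\mathrm{Diff}^1(M)$ in Proposition \ref{rem:any isotopy of M satisfies condition B}, by a genuinely different argument based on Lemma \ref{lem: c1 diffeomorphism on sphere} and Franks' Lemma. So you should delete the claim of the B-property and the compactness step, and let the argument of your second paragraph stand on its own.
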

Remark that if $F$ and $F^{-1}$ are differentiable at
$\pi(\widetilde{a})$, similarly to the proof of Lemma
\ref{lem:diffeomorphism on a and b}, we can prove that $I$ satisfies
the WB-property at $\widetilde{a}$, from which Proposition
\ref{prop:F is differentiable on every point of M, WB} can be proven
directly. However, the proof of the current Lemma
\ref{lem:diffeomorphism on a and b} will be necessary for further
proofs of this paper, thus being adopted here.

\bigskip

Obviously, $I$ satisfies the B-property  if
$\sharp\mathrm{Fix}_{\mathrm{Cont},I}(F)<+\infty$. In Appendix, we
construct an isotopy $I=(F_t)_{0\leq t\leq1}$ such that $F=F_1$ is a
diffeomorphism of $M$ but does not satisfy the B-property. In that
example, we show that $F$ is not a $C^1$-diffeomorphism of $M$ (see
Example \ref{ex:the action of non C1-diffeo isnot bounded and
continuous}). If $F$ is a $C^1$-diffeomorphism of $M$, we have the
following result:

\begin{prop}\label{rem:any isotopy of M satisfies condition B}
The B-property  is satisfied if $F\in\mathrm{Diff}^1(M)$.
\end{prop}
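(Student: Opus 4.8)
The plan is to mimic the proof of Lemma \ref{lem:diffeomorphism on a and b}, but to gain \emph{uniformity in the pair} $(\widetilde a,\widetilde b)$ by using the $C^1$-hypothesis together with the compactness of $M$. By Lemma \ref{lem:rotation number and weak boundess at two point}, the B-property is equivalent to the existence of a single constant $K\geq 0$ with $\left|\rho_{A_{\widetilde a,\widetilde b},\widehat F_{\widetilde a,\widetilde b}}(\widetilde c)\right|\leq K$ for \emph{all} triples of distinct fixed points $\widetilde a,\widetilde b,\widetilde c$ of $\widetilde F$. So it suffices to produce a bound $N$ on $\left|p_1\circ\widehat F_{\widetilde a,\widetilde b}(\widehat z)-p_1(\widehat z)\right|$ that does \emph{not} depend on $\widetilde a,\widetilde b$. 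In the proof of Lemma \ref{lem:diffeomorphism on a and b} the blow-up construction produces, for each fixed pair, a homeomorphism $f$ of the compactified annulus $\bar A_{\widetilde a,\widetilde b}$ whose displacement on the universal cover $\mathbb R\times[0,1]$ is bounded; the point now is that these displacements can be bounded uniformly.

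First I would fix the (finitely many, by compactness of $\mathrm{Fix}_{\mathrm{Cont},I}(F)=\pi(\mathrm{Fix}(\widetilde F))$ in $M$, combined with the covering-transformation invariance P2) data at our disposal: a Riemannian metric on $M$, the lifted metric on $\widetilde M$, and the constant $K$ from property P4 beyond which all linking numbers vanish. Because $F\in\mathrm{Diff}^1(M)$ and $M$ is compact, the derivative $DF$ and $DF^{-1}$ are uniformly continuous and uniformly bounded on $M$; lifting, $D\widetilde F$ and $D\widetilde F^{-1}$ are $G$-equivariantly bounded on $\widetilde M$. The key geometric observation is that the natural lift $\widehat F_{\widetilde a,\widetilde b}$ of $\widetilde F_{\widetilde a,\widetilde b}$ to $\widehat A_{\widetilde a,\widetilde b}$ measures how a point winds around the pair $\{\widetilde a,\widetilde b\}$ under one iterate, and this winding is controlled by the length (in the ambient metric of $\widetilde M$) of the trajectory arc $\{\widetilde F_t(\widetilde c)\}_{t\in[0,1]}$ together with the distance from that arc to $\widetilde a$ and $\widetilde b$ — and, near the endpoints $\widetilde a,\widetilde b$, by the linear parts $D\widetilde F(\widetilde a),D\widetilde F(\widetilde b)$. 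Away from a fixed-radius $K$-neighbourhood of $\{\widetilde a,\widetilde b\}$ the winding is bounded by a metric quantity that is uniform because the isotopy $I$ moves points a uniformly bounded amount (the constant $K$ in the proof of P4); close to the endpoints, the blown-up map $f$ restricted to the tangent circles is the projectivization of $D\widetilde F(\widetilde a)$ (resp. $D\widetilde F(\widetilde b)$), whose rotation-type displacement is bounded by a constant depending only on $\|DF\|_\infty$ and $\|DF^{-1}\|_\infty$ over $M$ — hence independent of which fixed points $\widetilde a,\widetilde b$ we chose.

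Concretely, the steps are: (1) invoke Lemma \ref{lem:rotation number and weak boundess at two point} to reduce B-property to a uniform bound on annular rotation numbers over all triples; (2) for a fixed pair $(\widetilde a,\widetilde b)$, run the blow-up of Lemma \ref{lem:diffeomorphism on a and b} to get $f\colon\bar A_{\widetilde a,\widetilde b}\to\bar A_{\widetilde a,\widetilde b}$ and its lift $\widehat f$ fixing preimages of $\infty$; (3) bound $\left|p_1(\widehat f(\widehat u))-p_1(\widehat u)\right|$ by splitting $\bar A_{\widetilde a,\widetilde b}$ into the part within ambient distance $3K$ of $\{\widetilde a,\widetilde b\}$ (where one controls the winding via the boundary circle maps, i.e.\ the projectivized derivatives, bounded by $\|DF^{\pm1}\|_\infty$) and the complementary part (where, as in the proof of the existence of rotation vectors in \S\ref{sec:the existion of rotation vector in the copact case}, the uniformly bounded motion of the isotopy forces bounded winding), obtaining a bound $N$ depending only on $I$ and $\|DF^{\pm1}\|_\infty$, not on $\widetilde a,\widetilde b$; (4) telescope as in Lemma \ref{lem:diffeomorphism on a and b} to conclude $\left|\rho_{A_{\widetilde a,\widetilde b},\widehat F_{\widetilde a,\widetilde b}}(\widetilde c)\right|\leq N$ for every fixed point $\widetilde c\notin\{\widetilde a,\widetilde b\}$, uniformly; (5) apply Lemma \ref{lem:rotation number and weak boundess at two point} again to get the B-property.

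The main obstacle I anticipate is step (3): making rigorous and genuinely \emph{pair-independent} the claim that the winding of a trajectory arc around two points $\widetilde a,\widetilde b$ can be bounded by a constant that does not degenerate as $\widetilde a,\widetilde b$ approach each other or approach a trajectory. When $\widetilde a$ and $\widetilde b$ are very close together, the annulus $A_{\widetilde a,\widetilde b}$ is "thin" near them and a short trajectory could in principle wind many times; this is exactly the phenomenon ruled out only by differentiability. The honest way to handle it is to pass to the blow-up \emph{before} estimating: on $\bar A_{\widetilde a,\widetilde b}$ the endpoints have been replaced by circles, the map $f$ is a genuine homeomorphism of a fixed compact annulus $\mathbb R/\mathbb Z\times[0,1]$, and its displacement in the $\mathbb R/\mathbb Z$-direction is continuous hence bounded — and the bound is controlled by the modulus of continuity of $DF^{\pm1}$ (uniform by $C^1$-ness on compact $M$), which is where the hypothesis $F\in\mathrm{Diff}^1(M)$ is genuinely used and where a merely $C^0$ or pointwise-differentiable $F$ (as in the Appendix counterexample, Example \ref{ex:the action of non C1-diffeo isnot bounded and continuous}) fails.
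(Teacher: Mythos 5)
Your step (3) is where the argument breaks, and it is not a technical detail but the entire content of the proposition. For a \emph{fixed} pair $(\widetilde a,\widetilde b)$, compactness of $\bar A_{\widetilde a,\widetilde b}$ does give a displacement bound for the lifted blown-up map — that is exactly Lemma \ref{lem:diffeomorphism on a and b}, and it already yields the WB-property (Proposition \ref{prop:F is differentiable on every point of M, WB}). What the B-property requires beyond that is uniformity as the pair varies, and the dangerous regime is precisely when $\widetilde a_n$ and $\widetilde b_n$ collide: the annuli $A_{\widetilde a_n,\widetilde b_n}$ degenerate, there is no continuity of the blown-up maps in the pair, and your assertion that the displacement is ``controlled by the modulus of continuity of $DF^{\pm1}$'' is exactly the statement that needs proof — you never produce an estimate. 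Controlling the winding of points at distance comparable to $|\widetilde a_n-\widetilde b_n|$ by the projectivized derivatives \emph{at} the fixed points is not legitimate without a genuine local $C^1$ argument, and it is telling that the paper does not even attempt a uniform one-step displacement bound across pairs: no such bound is established anywhere, and the mechanism that actually works is different.

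The paper's proof argues by contradiction: if $i(\widetilde F;\widetilde a_n,\widetilde b_n)\to+\infty$, then after normalizing by covering transformations both sequences converge, and by local constancy (P1) they converge to the \emph{same} fixed point $\widetilde a$; the WB-property at $\widetilde a$ (your Lemma \ref{lem:diffeomorphism on a and b}) then forces the rotation numbers $\rho_{A_{\widetilde a,\widetilde a_n},\widehat F_{\widetilde a,\widetilde a_n}}(\widetilde b_n)$ to diverge. The $C^1$ hypothesis enters only at this point, through Lemma \ref{lem: c1 diffeomorphism on sphere} (the [BLFM] lemma): for $n$ large and any $q$, it supplies two disjoint arcs $\widetilde\gamma^{\pm}$ joining $\widetilde a$ to $\widetilde a_n$ with $\widetilde F^q_{\widetilde a,\widetilde a_n}(\widetilde\gamma^-)\cap\widetilde\gamma^+=\emptyset$, and a Franks-type argument in the universal cover (using that $\widehat F_{\widetilde a,\widetilde a_n}$ commutes with $T_{\widetilde a,\widetilde a_n}$ and fixes $\widehat\infty$) shows every defined rotation number in the thin annulus has modulus at most $3/q$; letting $q\to\infty$ gives rotation numbers tending to $0$, contradicting divergence. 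So the paper never bounds displacements uniformly — it bounds asymptotic rotation numbers qualitatively, using continuity of the derivative via Lemma \ref{lem: c1 diffeomorphism on sphere}. To salvage your outline you would either have to prove your uniform displacement claim (which, as stated, depends only on $\|DF^{\pm1}\|_\infty$ and is likely not even the right quantitative statement, since the relevant input is continuity of $DF$, not just its boundedness), or switch to an argument of the paper's type; your reduction via Lemma \ref{lem:rotation number and weak boundess at two point} is fine, but as written the proposal has a genuine gap at its central step.
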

Before proving Proposition \ref{rem:any isotopy of M satisfies
condition B}, we need the following lemma (\cite[Lemma 5.6]{BLFM}).

\begin{lem}\label{lem: c1 diffeomorphism on sphere}Let $h$ be a $C^1$-diffeomorphism of $\mathbf{S}^2$ and
$a\in\mathrm{Fix}(h)$. For all point $z\in\mathbf{S}^2$ different
from $a$ and its antipodal point, denote $\gamma_z$ the unique great
circle that passes through them and $a$, and denote $\gamma^-_z$
(resp. $\gamma^+_z$) the small (resp. large) arc of $\gamma_z$
joining $a$ and $z$. Then there exists a neighborhood $W$ of $a$ on
$\mathbf{S}^2$ such that for all $z\in\mathrm{Fix}(h)\cap W$, we
have $h(\gamma^-_z)\cap \gamma^+_z=\{z,a\}$.
\end{lem}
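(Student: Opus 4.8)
The plan is to work in the exponential (geodesic normal) chart at $a$ and to exploit that, there, great circles through $a$ become straight lines through the origin while the $C^1$ map $h$ is a small perturbation of the invertible linear map $L:=Dh(a)$. Fix a chart $\phi\colon\mathbf{S}^2\setminus\{-a\}\to B_\pi(0)\subset\mathbb{R}^2$ with $\phi(a)=0$ carrying each great circle through $a$ to a straight line through $0$, parametrised by arc length, and keep the names $h,z$ for the images under $\phi$ (by abuse of notation), so that $h(0)=0$, $Dh(0)=L$ is invertible, and $\delta(\rho):=\sup_{B_\rho(0)}\|Dh-L\|\to0$ as $\rho\to0$ since $h$ is $C^1$. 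For $z\neq a,-a$ write $u=u(z):=z/|z|$ and $r:=|z|=d(a,z)$; then $\gamma_z^-=\{tu:0\le t\le r\}$, while $\gamma_z^+\cap B_\pi(0)=\{tu:\,-\pi<t\le 0\}\cup\{tu:\,r\le t<\pi\}$, the only remaining point $-a$ of $\gamma_z^+$ lying outside the chart.

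The core is a \emph{slab argument}. Suppose for a moment that $\psi_z(t):=\langle h(tu),u\rangle$ is strictly increasing on $[0,r]$. Since $\psi_z(0)=0$ and $\psi_z(r)=\langle h(z),u\rangle=\langle z,u\rangle=r$ (using $h(z)=z$ and $u=z/|z|$), the image $h(\gamma_z^-)$ then lies in the slab $\{x:\,0\le\langle x,u\rangle\le r\}$, provided it stays inside the chart $B_\pi(0)$, which it does once $r$ is small. But $\gamma_z^+\cap B_\pi(0)$ meets that slab only where $\langle\cdot,u\rangle=0$ on its ray, i.e.\ at $0=a$, and where $\langle\cdot,u\rangle=r$, i.e.\ at $ru=z$. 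Hence $h(\gamma_z^-)\cap\gamma_z^+\subseteq\{a,z\}$, and equality holds because $a=h(a)$ and $z=h(z)$ are endpoints of $h(\gamma_z^-)$ lying on $\gamma_z^+$. Everything therefore reduces to the monotonicity of $\psi_z$, uniformly over $z\in\mathrm{Fix}(h)\cap W$.

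For the monotonicity I would use that a fixed point $z$ close to $a$ is almost radial along $E:=\ker(L-I)$: by the mean value inequality, $|(L-I)u(z)|=|Lz-z|/|z|=|h(z)-h(0)-L(z-0)|/|z|\le\delta(|z|)$. If $1$ is not an eigenvalue of $L$, then $L-I$ is invertible and this is impossible for small $|z|$, so $\mathrm{Fix}(h)\cap W=\{a\}$ for small $W$ and the statement is vacuous. Otherwise $L-I$ is injective on $E^{\perp}$, so the bound forces the $E^{\perp}$-component of the unit vector $u(z)$ to be $O(\delta(|z|))$; since $\langle Lu,u\rangle=1$ for unit $u\in E$, continuity yields $\langle Lu(z),u(z)\rangle\ge\tfrac12$ once $|z|$ is small. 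Writing $h=L+e$ with $|e(x)-e(y)|\le\delta(\rho)|x-y|$ on $B_\rho(0)$, one gets for $0\le t'<t\le r$ that $\psi_z(t)-\psi_z(t')\ge(t-t')\big(\langle Lu,u\rangle-\delta(r)\big)>0$ as soon as $\delta(r)<\tfrac12$. Taking $W=\phi^{-1}(B_\rho(0))$ with $\rho$ small enough that $\delta(\rho)<\tfrac12$, that every fixed point $z\in W$ satisfies $\langle Lu(z),u(z)\rangle\ge\tfrac12$, and that $h(B_\rho(0))\subset B_\pi(0)$, completes the proof.

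The step I expect to be the main point is the uniformity in this last paragraph: a single neighborhood $W$ must accommodate fixed points of every direction. It works precisely because of the dichotomy — either $1$ is not an eigenvalue of $L$, in which case there are no nearby fixed points at all, or $1$ is an eigenvalue and every nearby fixed point is automatically almost radial along the eigenspace $E$, where $h$ expands lengths at a rate close to $1$; that is exactly what makes $\psi_z$ increasing. The choice of the exponential chart, the fact that $\delta(\rho)\to0$, and the slab argument itself are routine.
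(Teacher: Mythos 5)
The paper does not actually prove this lemma: it is quoted verbatim from \cite[Lemma 5.6]{BLFM} (a reference listed as ``in preparation''), so there is no in-paper proof to compare your argument with; I can only assess it on its own terms. On those terms it is correct. The slab reduction is sound: once $\psi_z(t)=\langle h(tu),u\rangle$ is strictly increasing on $[0,r]$ and $h(\gamma_z^-)$ avoids $-a$, the only points of $\gamma_z^+$ with $u$-component in $[0,r]$ are $a$ and $z$, and both do lie in $h(\gamma_z^-)$ since they are fixed. The part that carries the weight is exactly the one you flag: the dichotomy on whether $1$ is an eigenvalue of $L=Dh(a)$. If not, $|(L-I)u(z)|\le\delta(|z|)$ rules out nearby fixed points altogether; if so, the same bound, together with the lower bound for $L-I$ on $E^{\perp}$, forces $u(z)$ to be $O(\delta)$-close to the unit sphere of $E=\ker(L-I)$, whence $\langle Lu(z),u(z)\rangle\ge\tfrac12$, and the Lipschitz estimate $|e(x)-e(y)|\le\delta(\rho)|x-y|$ for $e=h-L$ gives the monotonicity uniformly in $z$ once $\delta(\rho)$ is small. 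All constants depend only on $L$, so a single $W$ works, and the $C^1$ hypothesis enters precisely through $\delta(\rho)\to 0$ -- consistent with the paper's Example \ref{ex:the action of non C1-diffeo isnot bounded and continuous}, where the conclusion-type control fails for a diffeomorphism that is not $C^1$. The only cosmetic caveat is that the statement implicitly excludes $z=a$ (where $\gamma_z$ is undefined), which your ``vacuous'' case tacitly uses; this is harmless.
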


\begin{proof}[Proof of Proposition \ref{rem:any isotopy of M satisfies condition B}]
We only need to consider the case where
$\sharp\mathrm{Fix}_{\mathrm{Cont},I}(F)=+\infty$. To get a proof by
contradiction, according to Definition \ref{def:wb property and b
property}, we suppose that there exist a sequence of pairs
$\{(\widetilde{a}_n,\widetilde{b}_n)\}_{n\geq1}\subset(\mathrm{Fix}(\widetilde{F})\times
\mathrm{Fix}(\widetilde{F}))\setminus\widetilde{\Delta}$ such that
$\lim\limits_{n\rightarrow
+\infty}i(\widetilde{F};\widetilde{a}_n,\widetilde{b}_n)=+\infty$
(the case where $\lim\limits_{n\rightarrow
+\infty}i(\widetilde{F};\widetilde{a}_n,\widetilde{b}_n)=-\infty$ is
similar). By the property P2, we can suppose that the sequence
$\{\widetilde{a}_n\}_{n\geq1}$ is bounded by replacing
$\widetilde{a}_n$ and $\widetilde{b}_n$ with
$\alpha_n(\widetilde{a}_n)$ and $\alpha_n(\widetilde{b}_n)$ where
$\alpha_n\in G$ if necessary. The property P4 implies that the
sequence $\{\widetilde{b}_n\}_{n\geq1}$ is also bounded. Therefore,
by continuity,  we can suppose that
$\lim\limits_{n\rightarrow+\infty}\widetilde{a}_{n}=\widetilde{a}$
and
$\lim\limits_{n\rightarrow+\infty}\widetilde{b}_{n}=\widetilde{b}$
where $\widetilde{a}\in \mathrm{Fix}(\widetilde{F})$ and
$\widetilde{b}\in \mathrm{Fix}(\widetilde{F})$ by extracting
subsequences if necessary.
 According to the
property P1, we deduce that $\widetilde{a}=\widetilde{b}$. Moreover,
as $F$ is a diffeomorphism, so $I$ satisfies the WB-property at
$\widetilde{a}$. That is, there is a number $N_{\widetilde{a}}\geq0$
such that $|i(\widetilde{F};\widetilde{a},\widetilde{z})|\leq
N_{\widetilde{a}}$ for all $\widetilde{z}\in
\mathrm{Fix}(\widetilde{F})\setminus\{\widetilde{a}\}$. Hence, we
can suppose that $\widetilde{a}_n\neq \widetilde{a}$ and
$\widetilde{b}_n\neq \widetilde{a}$ for all $n$ by taking $n$ large
enough.

For every $n\geq1$, let $\widetilde{I}_n$ be an isotopy that fixes
$\widetilde{a}$ and $\widetilde{a}_{n}$ (Corollary \ref{cor:identity
isotopy fixes two points}). Then there exists $k_n$ such that
\begin{equation}\label{eq: B i n}
    i_{\widetilde{I}_n}(\widetilde{z},\widetilde{z}\,')=i(\widetilde{F};\widetilde{z},\widetilde{z}\,')+k_n
\end{equation}
for every two distinct fixed points $\widetilde{z}$ and
$\widetilde{z}\,'$ of $\widetilde{F}$ (see \ref{sec:linking number
in the simple connected case}).  Observing that
$i_{\widetilde{I}_n}(\widetilde{a},\widetilde{a}_{n})=0$ for every
$n$, so Equation \ref{eq: B i n} implies that $|k_n|\leq
N_{\widetilde{a}}$ and
$\lim\limits_{n\rightarrow+\infty}i_{\widetilde{I}_n}(\widetilde{a}_{n},\widetilde{b}_{n})=+\infty$.
Moreover, we have
$i_{\widetilde{I}_n}(\widetilde{a},\widetilde{b}_n)=i(\widetilde{F};\widetilde{a},\widetilde{b}_n)+k_n$,
hence $|i_{\widetilde{I}_n}(\widetilde{a},\widetilde{b}_n)|\leq
2N_{\widetilde{a}}$.

Consider the annulus
$A_{\widetilde{a},\widetilde{a}_n}=\mathbf{S}\setminus\{\widetilde{a},\widetilde{a}_n\}$
and $\widetilde{F}_{\widetilde{a},\widetilde{a}_n}$. By the proof of
Lemma \ref{lem:P}, we know that
$$\rho_{A_{{\widetilde{a}},{\widetilde{a}_n}},\widehat{F}_{{\widetilde{a}},{\widetilde{a}_n}}}
(\widetilde{b}_n)=i_{\widetilde{I}_n}(\widetilde{a},\widetilde{b}_n)-i_{\widetilde{I}_n}(\widetilde{a}_n,\widetilde{b}_n).$$
 Therefore,
 \begin{equation}\label{eq:B property n}
    \lim_{n\rightarrow+\infty}\rho_{A_{{\widetilde{a}},{\widetilde{a}_n}},\widehat{F}_{{\widetilde{a}},{\widetilde{a}_n}}}(\widetilde{b}_n)=-\infty.
 \end{equation}

Fix $q\geq1$, apply Lemma \ref{lem: c1 diffeomorphism on sphere} to
$\widetilde{F}_{\widetilde{a},\widetilde{a}_n}$. When $n$ is large
enough, there are two arcs $\widetilde{\gamma}^-$ and
$\widetilde{\gamma}^+$ in $A_{\widetilde{a},\widetilde{a}_n}$
joining $\widetilde{a}$ and $\widetilde{a}_n$ that are disjoint and
$\widetilde{F}_{\widetilde{a},\widetilde{a}_n}^q(\widetilde{\gamma}^-)\cap\widetilde{\gamma}^+=\emptyset$.
 Recall that $\widehat{\pi}_{\widetilde{a},\widetilde{a}_n}:
\widehat{A}_{{\widetilde{a}},{\widetilde{a}_n}}\rightarrow
A_{{\widetilde{a}},{\widetilde{a}_n}}$ is the universal cover of
$A_{{\widetilde{a}},{\widetilde{a}_n}}$,
$\widehat{F}_{\widetilde{a},\widetilde{a}_n}$ is the lift of
$\widetilde{F}_{\widetilde{a},\widetilde{a}_n}$ that fixes the
preimages of $\infty$ by
$\widehat{\pi}_{\widetilde{a},\widetilde{a}_n}$ and
$T_{{\widetilde{a}},{\widetilde{a}_n}}$ is the generator of
$H_1(A_{\widetilde{a},\widetilde{a}_n},\mathbb{R})$ defined by the
oriented boundary of small disk centered at $\widetilde{a}$. Choose
a connected component $\widehat{\gamma}^{\,-}$  of
$\widehat{\pi}_{\widetilde{a},\widetilde{a}_n}^{-1}(\widetilde{\gamma}^-)$
and endow $\widehat{\gamma}^{\,-}$ with an orientation from the
lower end to the upper end. The arc
$\widehat{F}^q_{\widetilde{a},\widetilde{a}_n}(\widehat{\gamma}^{\,-})$
does not meet any connected component of
$\widehat{\pi}_{\widetilde{a},\widetilde{a}_n}^{-1}(\widetilde{\gamma}^+)$
and thus meets at most a translated
$T_{{\widetilde{a}},{\widetilde{a}_n}}^k(\widehat{\gamma}^{\,-})$.
As $\widehat{F}_{\widetilde{a},\widetilde{a}_n}$ has a fixed point
(the lift $\widehat{\infty}$ of $\infty$), the arc
$\widehat{F}^q_{\widetilde{a},\widetilde{a}_n}(\widehat{\gamma}^{\,-})$
can not be on the right of
$T_{{\widetilde{a}},{\widetilde{a}_n}}(\widehat{\gamma}^{\,-})$
(otherwise, $\widehat{F}^q_{\widetilde{a},\widetilde{a}_n}$ has no
fixed point). Therefore, it is on the left of the arc
$T_{{\widetilde{a}},{\widetilde{a}_n}}^2(\widehat{\gamma}^{\,-})$.
For the same reason, it is on the right of the arc
$T_{{\widetilde{a}},{\widetilde{a}_n}}^{-2}(\widehat{\gamma}^{\,-})$.
As $\widehat{F}_{\widetilde{a},\widetilde{a}_n}$ and
$T_{{\widetilde{a}},{\widetilde{a}_n}}$ commute, it implies that the
arc
$\widehat{F}^q_{\widetilde{a},\widetilde{a}_n}(T(\widehat{\gamma}^{\,-}))$
is on the left of
$T_{{\widetilde{a}},{\widetilde{a}_n}}^3(\widehat{\gamma}^{\,-})$
and on the right of
$T_{{\widetilde{a}},{\widetilde{a}_n}}^{-1}(\widehat{\gamma}^{\,-})$.
Consider a point
$\widetilde{z}\in\mathrm{Rec}^+(\widetilde{F})\setminus\{\widetilde{a},\widetilde{a}_n\}$
such that the rotation number
$\rho_{A_{{\widetilde{a}},{\widetilde{a}_n}},\widehat{F}_{{\widetilde{a}},{\widetilde{a}_n}}}(\widetilde{z})$
is well defined. There exists a unique lift $\widehat{z}$ of $z$
that is in the region between $\widehat{\gamma}^{\,-}$ and
$T_{{\widetilde{a}},{\widetilde{a}_n}}(\widehat{\gamma}^{\,-})$. By
induction, we deduce that the point
$\widehat{F}^{qm}_{\widetilde{a},\widetilde{a}_n}(\widehat{z})$ is
in the region between
$T_{{\widetilde{a}},{\widetilde{a}_n}}^{-2m}(\widehat{\gamma}^{\,-})$
and
$T_{{\widetilde{a}},{\widetilde{a}_n}}^{3m}(\widehat{\gamma}^{\,-})$
for all $m\geq1$. By the definition of the rotation number (see
\ref{subsec:the rotation number of an open annulus}), we have
$|\rho_{A_{{\widetilde{a}},{\widetilde{a}_n}},\widehat{F}_{{\widetilde{a}},{\widetilde{a}_n}}}(\widetilde{z})|\leq
3/q$. As $q$ can be choose arbitrarily large, we have
\begin{equation}\label{eq:c1 diffeomorphism satisfied B property}
    \lim_{n\rightarrow+\infty}\rho_{A_{{\widetilde{a}},{\widetilde{a}_n}},\widehat{F}_{{\widetilde{a}},{\widetilde{a}_n}}}
(\widetilde{z})=0.
\end{equation}
In particular, we have
$$\lim_{n\rightarrow+\infty}\rho_{A_{{\widetilde{a}},{\widetilde{a}_n}},\widehat{F}_{{\widetilde{a}},{\widetilde{a}_n}}}
(\widetilde{b}_n)=0,$$ which conflicts with the limit \ref{eq:B
property n}. We have completed the proof.
\end{proof}\bigskip

\section{Symplectic Action}
The action is a classical object in symplectic geometry and we will
first recall it in this section. Then, we will explain how to
generalize the action to a simple case where the time-one map $F$ of
$I$ is a diffeomorphism, the set ${\rm Fix}_{{\rm Cont},I}(F)$ of
contractible fixed points is finite and unlinked (we will define
what it means), the measure $\mu\in\mathcal {M}(F)$ has no atoms on
${\rm Fix}_{{\rm Cont},I}(F)$ and satisfies $\rho_{M,I}(\mu)=0$.

Suppose that $I$ is an identity isotopy of $M$, $F$ is the time-one
map of $I$, $\mu\in\mathcal{M}(F)$ has no atoms on
$\mathrm{Fix}_{\mathrm{Cont},I}(F)$, and $\rho_{M,I}(\mu)=0$. At the
end of the section, we will state generalizations of the action to
the cases
\begin{itemize}
  \item $F\in\mathrm{Diff}(M)$;
  \item $I$ satisfies the WB-property, the measure $\mu$ has total
  support;
  \item $I$ satisfies the WB-property, the measure $\mu$ is ergodic.
\end{itemize}
We will prove them in Section \ref{sec:proof of the main
theorem}.\smallskip

\subsection{The classical action}\label{sec:action function}
Let us recall what is the action. In this section, we suppose that
$(M,\omega)$ is a symplectic manifold (not necessarily closed).
%
\subsubsection{Symplectic and Hamiltonian}\label{subsec:symplectic
and hamiltonian} A diffeomorphism $F: M \rightarrow M$ is called
\emph{symplectic} if it preserves the form $\omega$. Symplectic
diffeomorphisms form a group denoted by $\mathrm{Symp}(M,\omega)$.
Let $\mathrm{Symp}_*(M,\omega)$ denote the path-connected component
of the $\mathrm{Id}_{M}$ in $\mathrm{Symp}(M,\omega)$.

Consider a smooth isotopy $I=(F_t)_{t\in[0,1]}$ in
$\mathrm{Symp}_*(M,\omega)$ with $F_0 =\mathrm{Id}_M$ and $F_1 = F$.
Let $\xi_t$ be the corresponding time-dependent vector field on $M$:

$$\frac{d}{dt}F_t(x)=\xi_t(F_t(x))\quad \text{for}\quad \text{all}\quad x\in M,\quad t\in[0,1].$$

Since the Lie derivative $L_{\xi_t}\omega$ vanishes, we get that the
$1$-forms $\lambda_t=- i_{\xi_t}\omega$ are closed. Write
$[\lambda_t]$ for the cohomology class of $\lambda_t$. The quantity

$$\mathrm{Flux}(I)=\int_0^1[\lambda_t]\,\mathrm{d}t\in H^1(M,\mathbb{R}),$$
is called the \emph{flux} of the isotopy $I$. It is well known
 that $\mathrm{Flux}(I)$ does not change under
a homotopy of the path $I$ with fixed end points (see \cite{MS}).

An isotopy $I$ is called \emph{Hamiltonian} if the $1$-forms
$\lambda_t$ are exact for all $t$. In this case there exists a
smooth function $H : [0,1]\times M \rightarrow \mathbb{R}$ so that
$\lambda_t = dH_t$, where $H_t(x)$ stands for $H(t,x)$. The function
$H$ is called the Hamiltonian function generating the flow $I$. Note
that $H_t$ is defined uniquely up to an additive time-dependent
constant.

A symplectic diffeomorphism $F: M \rightarrow M$ is called
\emph{Hamiltonian} if there exists a Hamiltonian isotopy
$I=(F_t)_{t\in[0,1]}$ with $F_0 =\mathrm{Id}_M$ and $F_1 = F$.
Hamiltonian diffeomorphisms form a group denoted by
$\mathrm{Ham}(M,\omega)$. The following theorem characterizes the
relation between flux and Hamiltonian diffeomorphisms (see \cite{MS}
for the details).

\begin{thm}\label{thm:smooth hamiltonian theorem}Let $F\in \mathrm{Symp}_*(M,\omega)$. Then $F$ is
Hamiltonian if and only if there exists an isotopy
$I=(F_t)_{t\in[0,1]}$ in $\mathrm{Symp}_*(M,\omega)$ such that
$F_0=\mathrm{Id}_M$, $F_1=F$ and $\mathrm{Flux}(I)=0$. In that case,
$I$ is isotopic with fixed endpoints to a Hamiltonian isotopy.
\end{thm}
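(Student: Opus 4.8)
The plan is to treat the two implications separately. One of them, together with the closing assertion, is essentially a matter of unwinding definitions: if $F$ is Hamiltonian, pick a Hamiltonian isotopy $I=(F_t)_{t\in[0,1]}$ with $F_0=\mathrm{Id}_M$, $F_1=F$; then the generating $1$-forms $\lambda_t=-i_{\xi_t}\omega$ are exact by hypothesis, so $[\lambda_t]=0$ for all $t$ and $\mathrm{Flux}(I)=\int_0^1[\lambda_t]\,\mathrm{d}t=0$. It remains to prove the converse: a symplectic isotopy $I=(F_t)_{t\in[0,1]}$ from $\mathrm{Id}_M$ to $F$ with $\mathrm{Flux}(I)=0$ can be deformed, with its endpoints fixed, to a Hamiltonian one, which in particular forces $F\in\mathrm{Ham}(M,\omega)$.

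The idea is to correct $I$ by an auxiliary symplectic loop. Set $a(t)=\int_0^t[\lambda_s]\,\mathrm{d}s\in H^1(M,\mathbb{R})$, a smooth path with $a(0)=0$ and $a(1)=\mathrm{Flux}(I)=0$. Fix a linear map $b\mapsto\beta_b$ sending each $b\in H^1(M,\mathbb{R})$ to a closed $1$-form representing it, let $Y_b$ be the symplectic vector field with $i_{Y_b}\omega=-\beta_b$, and let $\Phi^\tau_b$ be its time-$\tau$ flow (complete when $M$ is closed, and otherwise under the usual support conventions; cf. \cite{MS}). Define $K=(k_t)_{t\in[0,1]}$ by $k_t=\Phi^1_{a(t)}$. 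Because $a(0)=a(1)=0$ and $\Phi^\tau_0=\mathrm{Id}_M$, this is a symplectic \emph{loop} ($k_0=k_1=\mathrm{Id}_M$), and $(s,t)\mapsto\Phi^1_{(1-s)a(t)}$ contracts it to the constant loop. A variation-of-parameters computation identifies the generating field of $K$ as $\xi^K_t=\int_0^1(\Phi^\tau_{a(t)})_*Y_{a'(t)}\,\mathrm{d}\tau$; since each $\Phi^\tau_{a(t)}$ lies in $\mathrm{Symp}_*(M,\omega)$ and hence acts trivially on $H^1(M,\mathbb{R})$, the class $[-i_{\xi^K_t}\omega]$ equals $[\beta_{a'(t)}]=a'(t)$, so the flux path $t\mapsto\mathrm{Flux}(K|_{[0,t]})$ of $K$ is precisely $a(t)$.

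Now set $F'_t=k_t^{-1}\circ F_t$. Then $F'_0=\mathrm{Id}_M$ and, using $k_1=\mathrm{Id}_M$, also $F'_1=F$, so $I'=(F'_t)_{t\in[0,1]}$ is an identity isotopy from $\mathrm{Id}_M$ to $F$. By additivity of the flux path under composition of isotopies based at $\mathrm{Id}_M$ (and the resulting fact that $(k_t^{-1})_{t\in[0,1]}$ has flux path $-a(t)$), the flux path of $I'$ is $a(t)-a(t)\equiv0$; differentiating, $[\lambda^{I'}_t]=0$ for every $t$, so each $\lambda^{I'}_t$ is exact and $I'$ is Hamiltonian. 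In particular $F\in\mathrm{Ham}(M,\omega)$. Finally, the contraction of $K$ yields the homotopy $(s,t)\mapsto(\Phi^1_{(1-s)a(t)})^{-1}\circ F_t$ from $I'$ to $I$; it fixes the endpoints $\mathrm{Id}_M$ and $F$ for every $s$, because $\Phi^1_{(1-s)a(0)}=\Phi^1_{(1-s)a(1)}=\mathrm{Id}_M$, so $I$ is isotopic with fixed endpoints to the Hamiltonian isotopy $I'$.

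I expect the construction of the loop $K$ to be the crux. The naive attempt to cancel the flux ``progressively'', by composing $I$ with the flow of the time-dependent field $Y_{a'(t)}$, fails: those fields need not commute, so the resulting time-one map need not be $\mathrm{Id}_M$ and one does not return to $F$. Replacing this by the time-one flows $\Phi^1_{a(t)}$ turns the identity $a(1)=0$ directly into $k_1=\mathrm{Id}_M$, at the cost only of the (routine) variation-of-parameters identity for the generating field. The one genuinely analytic point is completeness of the flows $\Phi^\tau_b$, which is automatic when $M$ is closed and otherwise dealt with by the standard compact-support hypotheses, exactly as in \cite{MS}.
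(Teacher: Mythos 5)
The paper does not actually prove this theorem: it is stated with a pointer to \cite{MS}, where it belongs to the standard theory of the flux homomorphism for closed manifolds, so there is no internal proof to compare against. Judged on its own, your argument is correct, and it is in substance the classical flux-correction proof of \cite{MS}/Banyaga: the forward implication and the endpoint-preserving homotopy are routine, and the real content is the correcting family. Your particular device of taking $k_t=\Phi^1_{a(t)}$, the \emph{time-one} map of the autonomous symplectic field dual to a closed form representing $a(t)$, is a clean way to force $k_0=k_1=\mathrm{Id}_M$, whereas the naive time-dependent correction would not return to the identity; the variation-of-parameters identity $\xi^K_t=\int_0^1(\Phi^\tau_{a(t)})_*Y_{a'(t)}\,\mathrm{d}\tau$ is correct, and since symplectomorphisms isotopic to the identity act trivially on $H^1(M,\mathbb{R})$, it does give flux path $a(t)$ for $K$, hence flux path identically zero, hence exact generating forms, for $I'=(k_t^{-1}\circ F_t)$; differentiating the flux path to get $[\lambda^{I'}_t]=0$ for every $t$ is legitimate because everything is smooth in $t$. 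The only point to flag is the one you already hedge on: completeness of the flows $\Phi^\tau_b$, which is automatic only when $M$ is closed (the setting of \cite{MS}). Since the paper states the theorem for a general symplectic manifold but only ever applies it to closed surfaces, and to universal covers where $H^1(\widetilde{M},\mathbb{R})=0$ so the lifted isotopy is Hamiltonian for trivial reasons, this caveat is harmless here; but strictly speaking your proof, like the cited one, establishes the statement in the closed (or suitably compactly supported) case.
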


Suppose that $(M,\omega)$ is a closed symplectic surface and
$I=(F_t)_{t\in[0,1]}$ is a smooth isotopy in
$\mathrm{Symp}_*(M,\omega)$. Let denote by  $\mu$ the measure
induced by $\omega$. We have the following relation between the
$\mathrm{Flux}(I)$ and $\rho_{M,I}(\mu)$ (see \cite{F2}): for any
smooth loop $\sigma$ on $M$, we have
$$\langle\mathrm{Flux}(I),[\sigma]_M\rangle=\rho_{M,I}(\mu)\wedge[\sigma]_M.$$
Hence, $I$ is Hamiltonian if and only if $\rho_{M,I}(\mu)=0$.

\subsubsection{Action function and action difference}\label{sec:the classical
action} In this section, we suppose that $(M,\omega)$ is a
symplectic manifold with $\pi_2(M)=0$ (for example, a closed
oriented surface with genus $g\geq1$).

Let $I=(F_t)_{t\in[0,1]}$ be a Hamiltonian isotopy on $M$ with
$F_0=\mathrm{Id}_M$ and $F_1=F$. Suppose that the function $H$ is
the Hamiltonian function generating the flow $I$.

Let $x$ be a contractible fixed point of $F$. Take any immersed disk
$D_x\subset M$ with $\partial D_x=I(x)$, and define the \emph{
action function}
\begin{equation}\label{eq:action functional}
\mathcal{A}_{H}(x)=\int_{D_x}\omega-\int_0^1
H_t(F_t(x))\,\mathrm{d}t.
\end{equation}

The definition is well defined, that is $\mathcal{A}_{H}(x)$ does
not depend on the choice of $D_x$. It is sufficient to prove the
integral $\int_{D_x}\omega$ does not depend on the choice of  $D_x$.
Indeed, let $D_x'$ be another choice, the 2-chain $\Pi=D_x-D_x'$
represents an immersed 2-sphere in $M$, and hence $\int_\Pi\omega=0$
since $\pi_2(M)=0$. Hence the claim follows.\smallskip

 Given two contractible fixed  points $x$ and $y$ of
$F$, take a path $\gamma:[0,1]\rightarrow M$ with $\gamma(0)=x$ and
$\gamma(1)=y$. Choose two immersed disks $D_x$ and $D_y$ so that
$\partial D_x=I(x)$ and $\partial D_y=I(y)$. Let us define
$\Delta:[0,1]\times[0,1]\rightarrow M$ by
$\Delta(t,s)=F_t(\gamma(s))$ where we assume that the boundary of
the square $[0,1]\times[0,1]$ is oriented counter-clockwise and
observer that $\partial \Delta=-\gamma+F\gamma-I(y)+I(x)$. So
$F\gamma-\gamma=\partial\Delta+\partial D_y-\partial D_x$ is a
1-cycle and is the boundary of $\Sigma$ where $\Sigma$ is a 2-chain.

Define the \emph{action difference} for $x$ and $y$:
\begin{equation}\label{eq:action difference}
    \delta(F;x,y)=\int_\Sigma \omega.
\end{equation}
Since $\pi_2(M)=0$, it does not depend on the choice of $\Sigma$,
and hence not on $D_x$ and $D_y$. Let us prove that it does not
depend on the choice of $\gamma$.

Denote by $\xi_t$ the vector field of the flow $F_t$ (see
\ref{subsec:symplectic and hamiltonian}). Then
\begin{eqnarray*}
 \Delta^*\omega&=&\omega\left(\xi_t(F_t\gamma(s)),\frac{\partial}{\partial s}F_t\gamma(s)\right)\,\mathrm{d}t\wedge \mathrm{d}s \\
  &=&-\mathrm{d}H_t\left(\frac{\partial}{\partial s}F_t\gamma(s)\right)\,\mathrm{d}t\wedge \mathrm{d}s
  .
 \end{eqnarray*} Hence,
 \begin{eqnarray*}
 \int_{\Delta}\omega &=&\int_{[0,1]^2}\Delta^*\omega=-\int_{0}^1\,\mathrm{d}t\int_{0}^1\,
  \mathrm{d}H_t\left(\frac{\partial}{\partial s}F_t\gamma(s)\right)\mathrm{d}s \\
 &=& \int_{0}^1 H_t(F_t(x))\,\mathrm{d}t-\int_{0}^1
 H_t(F_t(y))\,\mathrm{d}t.
 \end{eqnarray*}
Finally, we have
\begin{equation}\label{eq:action and action difference}
    \delta(F;x,y)=\int_{\Sigma}\omega=\int_{\Delta}\omega+\int_{D_y}\omega-\int_{D_x}\omega=\mathcal{A}_{H}(y)-\mathcal{A}_{H}(x).
\end{equation}

Equation \ref{eq:action and action difference} shows that the action
difference does not depend on the choice of $\gamma$, we have
completed our claim. Moreover, we also give a relation between the
action difference and the action function.

\subsubsection{The action function and action difference on the universal covering
space}\label{subsec:action and action difference on universal cover}
When $I=(F_t)_{t\in[0,1]}\subset \mathrm{Symp}_*(M,\omega)\setminus
\mathrm{Ham}(M,\omega)$, the action function (see Definition
\ref{eq:action functional}) is not meaningful. However, observing
that the universal cover $\widetilde{M}$ of $M$ is simply connected,
the lifted identity isotopy
$\widetilde{I}=(\widetilde{F}_t)_{t\in[0,1]}\subset
\mathrm{Symp}_*(\widetilde{M},\widetilde{\omega})$ of $I$ to
$\widetilde{M}$ where $\widetilde{\omega}$ is the lift of the
symplectic structure $\omega$ to $\widetilde{M}$ is automatically
Hamiltonian since $H^1(\widetilde{M},\mathbb{R})=0$ (see Theorem
\ref{thm:smooth hamiltonian theorem}). Let $\widetilde{H}$ be the
Hamiltonian function generating the flow $\widetilde{I}$. As before,
 we can define the action function
$\mathcal{A}_{\widetilde{H}}(\widetilde{x})$ for any fixed point
$\widetilde{x}$ of $\widetilde{F}=\widetilde{F}_1$ and the action
difference $\delta(\widetilde{F};\widetilde{x},\widetilde{y})$ for
any two distinct fixed points $\widetilde{x}$ and $\widetilde{y}$ of
$\widetilde{F}$, and we have the relation
$\delta(\widetilde{F};\widetilde{x},\widetilde{y})=\mathcal{A}_
{\widetilde{H}}(\widetilde{y})-\mathcal{A}_{\widetilde{H}}(\widetilde{x})$.\smallskip

Let us see what happens in the the particular case where $I$ is
Hamiltonian. Suppose that $H$ is the Hamiltonian function generating
the flow $I$ and $\widetilde{H}$ is its lift to $\widetilde{M}$. For
any contractible fixed point $x$ of $F$ and its lift
$\widetilde{x}$, we have
$\mathcal{A}_{\widetilde{H}}(\widetilde{x})=\mathcal{A}_{H}(x)$ (see
\cite[Theorem 2.1.C]{P} and \cite[Remark 2.7]{F2}). Hence, for any
two distinct contractible fixed points $x$ and $y$ of $F$, and their
lifts $\widetilde{x}$ and $\widetilde{y}$, we have
\begin{equation}\label{eq:action difference of cohomology}
    \delta(\widetilde{F};\widetilde{x},\widetilde{y})=\mathcal{A}_{\widetilde{H}}
(\widetilde{y})-\mathcal{A}_{\widetilde{H}}(\widetilde{x})=\mathcal{A}_{H}(y)-\mathcal{A}_{H}(x).
\end{equation}\smallskip

\subsection{A generalization of the action
function}\label{subsec:the first extension of the classical action}
The action difference of two contractible fixed points $x,y$ of $F$
equals to the algebraic area of any path $\gamma$ connecting $x$ and
$y$ along the isotopy $I$, that is, the area of the path  $\gamma$
along $I$ swept out. By this observation, we would like to
generalize such an object to the case where $\omega$ is replaced by
a finite Borel measure $\mu$ and the Hamitonian isotopy by an
identity isotopy $I$ with $\rho_{M,I}(\mu)=0$.
\smallskip

There is a case where this can be done easily (see \cite{P1}).
Suppose that $I=(F_t)_{t\in [0,1]}$ is an identity isotopy of $M$,
the time-one map $F$ of $I$ is a diffeomorphism, the set ${\rm
Fix}_{{\rm Cont},I}(F)$ of contractible fixed points is finite and
unlinked, that means that there exists an isotopy $I'=(F'_t)_{t\in
[0,1]}$ homotopic to $I$ that fixes every point of
$\mathrm{Fix}_{\mathrm{Cont},I}(F)$, the measure $\mu\in\mathcal
{M}(F)$ has no atoms on ${\rm Fix}_{{\rm Cont},I}(F)$ and satisfies
$\rho_{M,I}(\mu)=0$.

Let $N=M\setminus \mathrm{Fix}_{\mathrm{Cont},I}(F)$, by the method
of blowing-up, we can naturally get  a compactification
$\overline{N}$ of $N$ if we replace each point $x\in {\rm Fix}_{{\rm
Cont},I}(F)$ by $S_x$, the tangent unit circle at $x$. The
diffeomorphism $F|_N$ can be extended to a homeomorphism
$\overline{F}$ on $\overline{N}$ which is isotopic to identity and
induces the natural action  by the linear map $DF(x)$ on $S_x$. As
$\mu$ does not charge any point of
$\mathrm{Fix}_{\mathrm{Cont},I}(F)$, we can define a measure on
$\overline{N}$ which is invariant by $\overline{F}$, denoted also
$\mu$. Therefore, we can define the rotation vector in
$H_1(\overline{N},\mathbb{R})$. The inclusion $\iota:
N\hookrightarrow\overline{N}$ induces an isomorphism $\iota_*:
H_1(N,\mathbb{R})\rightarrow H_1(\overline{N},\mathbb{R})$. We
denote by $\rho_{N,I}(\mu)\in H_1(N,\mathbb{R})$ the rotation vector
transported by this isomorphism. Let $\gamma$ be a simple path in
$N$ joining $a\in \mathrm{Fix}_{\mathrm{Cont},I}(F)$ and $b\in
\mathrm{Fix}_{\mathrm{Cont},I}(F)$. We can define the algebraic
intersection number $\gamma\wedge \rho_{N,I}(\mu)$. Remark here that
$\gamma\wedge \rho_{N,I}(\mu)$ is independent on the chosen $\gamma$
because the rotation vector $\rho_{M,I}(\mu)\in H_1(M,\mathbb{R})$
is zero. Moreover, we can write
$$\gamma\wedge \rho_{N,I}(\mu)=L(b)-L(a),$$
where $L: \mathrm{Fix}_{\mathrm{Cont},I}(F)\rightarrow \mathbb{R}$
is a function, defined up to an additive constant. We call that $L$
is the \emph{action function}.\smallskip

\subsection{Our main theorem}\label{subsec:the second extension of the classical
action} It is natural to ask if we can generalize the action to a
more general case. Let us first analyze what has been done above.
The key points of his generalization are that $F$ is a
diffeomorphism of $M$ and that there is another identity isotopy
$I'$ homotopic to $I$ that fixes all contractible fixed points of
$F$. The differentiability hypothesis prevents the dynamics to be
too wild in a neighborhood of a contractible fixed point so that it
provides some boundedness condition, which means one can compactify
the sub-manifold $N=M\setminus \mathrm{Fix}_{\mathrm{Cont},I}(F)$ by
blowing-up. It seems to us that keeping the boundedness condition is
necessary and that is why we define the boundedness properties in
\ref{subsec:boundedness}. However, in general case, there may not
exist such an isotopy $I'$ that fixes all contractible fixed points
of $F$. 
How to deal with this obstacle?
The section \ref{subsec:action and action difference on universal
cover} reminds us that it will be a good idea if we consider the
universal covering space $\widetilde{M}$. A key point is that we can
always find an isotopy $\widetilde{I}\,'$ from
$\mathrm{Id}_{\widetilde{M}}$ to $\widetilde{F}$ that fixes any two
fixed points of $\widetilde{F}$, where $\widetilde{F}$ is the
time-one map of the lifted identity isotopy $\widetilde{I}$ of $I$
to $\widetilde{M}$ (Corollary \ref{cor:identity isotopy fixes two
points}). It makes us able to define the action difference for every
two fixed points of $\widetilde{F}$ and generalize the classical
action. Our main result is following.\bigskip

\newenvironment{thm01}{\noindent\textbf{Theorem \ref{thm:PW}}~\itshape}{\par}
\begin{thm01}
Let $M$ be a closed oriented surface with genus $g\geq1$ and $F$ be
the time-one map of an identity isotopy $I$ on $M$. Suppose that
$\mu$ is a Borel finite measures on $M$ that is invariant by $F$,
has no atoms on $\mathrm{Fix}_{\mathrm{Cont},I}(F)$ and
$\rho_{M,I}(\mu)=0$. In all of the following cases
\begin{itemize}
  \item $F\in\mathrm{Diff}(M)$;
  \item $I$ satisfies the WB-property, the measure $\mu$ has total
  support;
  \item $I$ satisfies the WB-property, the measure $\mu$ is ergodic,
\end{itemize}
an action function can be defined which generalizes the classical
case.
\end{thm01}\smallskip

We will prove it in Section \ref{sec:proof of the main
theorem}.\bigskip

\section{Disk Chains}\label{sec:preliminaries}
In this section, we will recall some classical results of the plane
and the open annulus, and extend some results of Franks so that we
can use them in Section \ref{sec:i(F;a,b,z) is bounded}.

Let $M$ be a surface and let $h$ be a homeomorphism of $M$. A
\emph{disk chain $C$} of $h$ in $M$ is given by a family
$\{D_i\}_{1\leq i\leq n}$ of embedded open disks of $M$ and a family
$\{m_i\}_{1\leq i<n}$ of positive integers satisfying
\begin{enumerate}
\item if $i\neq j$, then either $D_i=D_j$
or $D_i\cap D_j=\emptyset$;
\item for $1\leq i<n$, $h^{m_i}(D_i)\cap D_{i+1}\neq\emptyset$.
\end{enumerate}
We will write $C=\{D_i\}_{1\leq i\leq n}$ or $C=(\{D_i\}_{1\leq
i\leq n}, \{m_i\}_{1\leq i\leq n})$ in a more detailed way. If
$D_1=D_n$ we will say that $\{D_i\}_{1\leq i\leq n}$ is a
\emph{periodic disk chain}. We define the \emph{length} of the chain
$C$ to be the integer $l(C)=\sum_{i=1}^{n-1}m_i$.

A \emph{free} disk of $h$ is a disk in $M$ which does not meet its
image by $h$. A \emph{free disk chain} of $h$ is a disk chain
$C=\{D_i\}_{1\leq i\leq n}$
 such that every $D_i$ is a free
disk of $h$. \vspace{2mm}

%

Recall the following fundamental result:
\begin{prop}[Franks' Lemma \cite{F}]\label{prop:Franks' Lemma}
Let $H: \mathbb{R}^2\rightarrow \mathbb{R}^2$ be an orientation
preserving homeomorphism. If $H$  possesses a  periodic free disk
chain, then 
$H$ has at least one fixed point.
\end{prop}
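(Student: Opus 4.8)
The plan is to argue by contradiction and to reduce the statement to the classical Brouwer plane translation theorem, in the form of Brouwer's lemma on translation arcs. Recall that lemma: if $H$ is an orientation‑preserving homeomorphism of $\mathbb{R}^2$ and $\alpha$ is a \emph{translation arc} for $H$ — a simple arc joining a point $z$ to $H(z)$ with $\alpha\cap H(\alpha)=\{H(z)\}$ — and if $H^{N}(\alpha)\cap\alpha\neq\emptyset$ for some $N\geq 2$, then $H$ has a fixed point; equivalently, if $H$ is fixed‑point free then the arcs $H^{n}(\alpha)$, $n\in\mathbb{Z}$, are pairwise disjoint apart from the common endpoints of consecutive ones, so $\bigcup_{n\in\mathbb{Z}}H^{n}(\alpha)$ is a properly embedded line. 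So from now on I assume $H$ has no fixed point, I fix a periodic free disk chain $C=(\{D_i\}_{1\le i\le n},\{m_i\}_{1\le i<n})$ with $D_1=D_n$, and I look for a contradiction, following Franks's original argument \cite{F}.

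First I would do a combinatorial clean‑up: by condition (1) the chain uses only finitely many pairwise disjoint disks, so after deleting a sub‑chain I may assume it is a genuine cycle $D_1\to D_2\to\cdots\to D_n=D_1$ with no repetitions besides $D_n=D_1$. For each $i$ I pick $x_i\in D_i$ with $H^{m_i}(x_i)\in D_{i+1}$, which is possible because $H^{m_i}(D_i)\cap D_{i+1}\neq\emptyset$. Inside the disk $D_{i+1}$ I join $H^{m_i}(x_i)$ to $x_{i+1}$ by a simple arc $\delta_{i+1}\subset D_{i+1}$ (with $\delta_{n}\subset D_n=D_1$ joining $H^{m_{n-1}}(x_{n-1})$ back to $x_1$). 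For the dynamical part, since $H$ is fixed‑point free each pair $H^{j}(x_i)\neq H^{j+1}(x_i)$ is joined by a translation arc, and concatenating such arcs for $0\le j<m_i$ yields an arc $\sigma_i$ from $x_i$ to $H^{m_i}(x_i)$ that follows the orbit; after a general‑position perturbation I may take all these arcs simple and $\Gamma=\sigma_1\delta_2\sigma_2\delta_3\cdots\sigma_{n-1}\delta_n$ a loop. The role of the freeness of the $D_i$ is precisely that the connecting arcs $\delta_i$ lie in disks disjoint from their $H$‑images, so no disk meets its own first iterate as a byproduct of the construction.

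The contradiction must then be extracted from $\Gamma$: because $\Gamma$ is a loop that runs forward along the dynamics through a full cycle of disks and closes up, tracing it while iterating $H$ should produce one of the constituent translation arcs $\alpha$ together with an integer $N\geq 2$ for which $H^{N}(\alpha)$ meets $\alpha$, contradicting the arc lemma; equivalently, the reachability relation $D\prec D'$ (meaning $H^{m}(D)\cap D'\neq\emptyset$ for some $m\geq 1$) on free disks, which a periodic free disk chain realises as a cycle, is a strict partial order whenever $H$ is fixed‑point free. I expect the real difficulty to be exactly this last step — converting the closed combinatorial cycle of disks into the clean hypothesis of the arc lemma. This is the heart of Brouwer theory and calls for careful planar topology: keeping all arcs simple and in general position, controlling how $H$ displaces $\Gamma$ relative to itself via the Jordan curve and Schoenflies theorems, and either running an induction on the length $l(C)=\sum_{i}m_i$ of the chain or appealing to a Brouwer brick decomposition of $\mathbb{R}^2$ to localise the return. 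The freeness of the disks and the finiteness forced by condition (1) are what make the general‑position and induction arguments go through; the remainder is bookkeeping around Brouwer's theorem.
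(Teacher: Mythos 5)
The paper does not prove this proposition at all: it is imported verbatim from Franks \cite{F}, so your sketch has to stand on its own, and as written it does not. Everything up to the construction of the loop $\Gamma$ is routine setup; the decisive step is precisely the one you defer. The assertion that tracing $\Gamma$ ``should produce'' a translation arc $\alpha$ and an integer $N\geq 2$ with $H^{N}(\alpha)\cap\alpha\neq\emptyset$ is never justified, and it is not clear it can be in this form: the connecting arcs $\delta_{i+1}\subset D_{i+1}$ are not translation arcs, and nothing in your construction forces any single translation arc inside some $\sigma_i$ to meet one of its own iterates (the recurrence of the chain is a statement about the disks, not about any individual arc). Your alternative formulation --- that the reachability relation $\prec$ on free disks is a strict partial order whenever $H$ is fixed point free --- is not a reduction but a restatement of the proposition itself, so appealing to it is circular. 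In short, the proposal reduces Franks' lemma to ``the heart of Brouwer theory'' without carrying out that heart.

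What is missing is a concrete mechanism turning the closed chain of disks into a contradiction, and the standard one has a different structure from your arc-chasing. First, Brouwer theory gives that for a fixed-point-free orientation-preserving homeomorphism of the plane every free topological disk is wandering, i.e. $H^{k}(D)\cap D=\emptyset$ for all $k\geq 1$; this settles the case of a chain consisting of a single disk. For a chain with $n\geq 2$ pairwise disjoint free disks one argues by induction on $n$: choose a homeomorphism $h$ supported in $D_{2}$ sending the arrival point $H^{m_{1}}(x_{1})\in D_2$ to the departure point $x_{2}$, and set $G=h\circ H$. Then $G$ is still fixed point free (if $G(z)=z$ and $H(z)\notin D_{2}$ then $z$ is a fixed point of $H$; if $H(z)\in D_{2}$ then $z=h(H(z))\in D_{2}$, so $D_{2}$ meets $H(D_2)$, contradicting freeness), and, after the bookkeeping needed to handle intermediate visits of the orbit to $D_{2}$, $G$ admits a periodic free disk chain with fewer disks, so the induction closes; alternatively one can localize the return with a brick decomposition as in Le Calvez \cite{P1}. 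Some such reduction (or an equivalent index argument along $\Gamma$) must be supplied; without it your text is a plan for a proof rather than a proof.
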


Recall that $\mathbb{A}=\mathbb{R}/\mathbb{Z}\times\mathbb{R}$ is
the open annulus and $T: (x,y)\mapsto(x+1,y)$ is the generator of
the covering transformation group. Let $h\in
\mathrm{Homeo}_*(\mathbb{A})$ and $H$ be a lift of $h$ to
$\mathbb{R}^2$. We say that $\widetilde{D}\subset \mathbb{R}^2$ is a
\emph{positively returning disk} if all the following conditions
hold:
\begin{itemize}
\item
$T^k(\widetilde{D})\cap\widetilde{D}=\emptyset$ for all
$k\in\mathbb{Z}\setminus\{0\}$;
\item $H(\widetilde{D})\cap\widetilde{D}=\emptyset$;
\item there exist $n>0$ and $k>0$ such that $H^n(\widetilde{D})\cap T^k(\widetilde{D})\neq\emptyset$.
\end{itemize}
A \emph{negatively returning disk} is defined similarly but with
$k<0$.

If there exists an open disk that is both positively and negatively
returning, then it is easy to construct a periodic free disk chain
of $H$. Hence, by Franks' Lemma, we have the following result (see
\cite{F} for the detail):

\begin{cor}\label{lem:Franks'lemma2}
If $H$ has an open disk $\widetilde{D}\subset\mathbb{R}^2$ which is
both positively and negatively returning, then there is a fixed
point of $H$.
\end{cor}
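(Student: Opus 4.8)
The plan is to deduce the statement from Franks' Lemma (Proposition \ref{prop:Franks' Lemma}) by manufacturing a periodic free disk chain for $H$ out of the returning disk $\widetilde{D}$. Since $H$ is a lift of $h\in\mathrm{Homeo}_*(\mathbb{A})$, it is an orientation preserving homeomorphism of $\mathbb{R}^2$ that commutes with $T$, so every translate $T^m(\widetilde{D})$ is again an embedded open disk, and the defining property $T^k(\widetilde{D})\cap\widetilde{D}=\emptyset$ for $k\neq 0$ makes any two distinct translates $T^m(\widetilde{D})$, $T^{m'}(\widetilde{D})$ disjoint, while equal exponents give literally the same disk. Moreover, since $H(\widetilde{D})\cap\widetilde{D}=\emptyset$ and $H$ commutes with $T$, each $T^m(\widetilde{D})$ is a free disk of $H$. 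Hence any finite sequence of translates of $\widetilde{D}$, linked up by powers of $H$, automatically satisfies both the disjointness condition and the freeness condition in the definition of a free disk chain; the only thing left to arrange is the intersection condition $H^{m_i}(D_i)\cap D_{i+1}\neq\emptyset$ and the closing-up condition $D_1=D_N$.

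Now I use the two returning hypotheses. Being positively returning gives $n_1>0$ and $k_1>0$ with $H^{n_1}(\widetilde{D})\cap T^{k_1}(\widetilde{D})\neq\emptyset$; applying the commuting translation $T^{jk_1}$ yields $H^{n_1}(T^{jk_1}\widetilde{D})\cap T^{(j+1)k_1}(\widetilde{D})\neq\emptyset$ for every $j$, an ``ascending'' chain raising the translation index by $k_1$ at each step. Likewise, being negatively returning gives $n_2>0$ and $k_2<0$ with $H^{n_2}(\widetilde{D})\cap T^{k_2}(\widetilde{D})\neq\emptyset$, and translating produces a ``descending'' chain lowering the translation index by $\ell_2:=-k_2>0$ at each step. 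To close these into a periodic chain I concatenate $\ell_2$ ascending steps followed by $k_1$ descending steps: starting from $\widetilde{D}$ this visits $\widetilde{D},\,T^{k_1}\widetilde{D},\,\dots,\,T^{\ell_2 k_1}\widetilde{D}$ and then $T^{\ell_2 k_1}\widetilde{D},\,T^{\ell_2 k_1+k_2}\widetilde{D},\,\dots,\,T^{\ell_2 k_1+k_1k_2}\widetilde{D}=\widetilde{D}$, since $\ell_2 k_1+k_1k_2=\ell_2 k_1-k_1\ell_2=0$. Assigning the integer $n_1$ to each ascending step and $n_2$ to each descending step, we obtain a periodic disk chain of length $\ell_2 n_1+k_1 n_2>0$.

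By the first paragraph this is a periodic \emph{free} disk chain of the orientation preserving homeomorphism $H$, so Franks' Lemma furnishes a fixed point of $H$, which is what we want. I expect no genuine difficulty here: the only point requiring care is the bookkeeping that makes the chain close up — balancing the net translation to zero via the multiplicities $\ell_2$ and $k_1$ — and the (harmless) observation that disks may repeat along the chain, which is exactly what condition (1) in the definition of a disk chain allows. All the substantive content sits in Franks' Lemma, already recalled above.
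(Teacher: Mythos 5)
Your proof is correct and follows exactly the route the paper intends: the paper merely remarks that a periodic free disk chain is ``easy to construct'' from a disk that is both positively and negatively returning and then invokes Franks' Lemma (Proposition \ref{prop:Franks' Lemma}), referring to \cite{F} for details. Your balancing construction --- $\ell_2$ ascending steps with multiplicity $n_1$ followed by $k_1$ descending steps with multiplicity $n_2$, closing up because $\ell_2 k_1+k_1k_2=0$, with freeness and disjointness-or-equality of the translates $T^m(\widetilde{D})$ coming from the returning-disk hypotheses and the commutation $H\circ T=T\circ H$ --- supplies precisely those details.
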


Suppose that $D\subset \mathbb{A}$ is a free disk of $h$, we define
the following set:
\begin{equation}\label{the rotation number set for D}
    \mathrm{Rot}_{D}(H)=\mathrm{Conv}\{\,p/q\,\,\big\vert\,
p\in\mathbb{Z}\;\;\mathrm{and}\;\;q\in \mathbb{N}\setminus\{0\},\,
 H^q(\widetilde{D})\cap
T^p(\widetilde{D})\neq\emptyset\}
\end{equation}
where $\mathrm{Conv}(A)$ represents the convex hull of the set $A$
and $\widetilde{D}$ is an arbitrary connected component of
$\pi^{-1}(D)$. Observe here that $\mathrm{Rot}_{D}(H)$ does not
depend on the choice of $\widetilde{D}$. By Corollary
\ref{lem:Franks'lemma2}, we have the following result:

\begin{cor}\label{lem:Franks'lemma1}
For every $k\in \mathrm{Rot}_{D}(H)\cap \mathbb{Z}$, there exists a
point $\widetilde{z}_0$ such that
$H(\widetilde{z}_0)=T^k(\widetilde{z}_0)$.
\end{cor}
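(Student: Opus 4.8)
\textbf{Proof proposal for Corollary \ref{lem:Franks'lemma1}.}

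The plan is to reduce the statement to Corollary \ref{lem:Franks'lemma2}, which produces a fixed point of $H$ out of a disk that is simultaneously positively and negatively returning. First I would unwind the definition of $\mathrm{Rot}_D(H)$: since it is the convex hull of the set of rationals $p/q$ with $H^q(\widetilde D)\cap T^p(\widetilde D)\neq\emptyset$, an integer $k$ lying in this convex hull either is itself of the form $p/q$ realized by such an intersection, or it lies strictly between two such rationals $p_1/q_1 < k < p_2/q_2$. In the first (already half-trivial) sub-case one still has to promote ``$H^q(\widetilde D)\cap T^{kq}(\widetilde D)\neq\emptyset$'' to ``$H$ has a point $\widetilde z_0$ with $H(\widetilde z_0)=T^k(\widetilde z_0)$'': replace $H$ by $H_k:=T^{-k}H$, which is again a lift (of $h$) commuting with $T$, and observe that $\widetilde D$ is a free disk for $H_k$ precisely when $D$ is a free disk for $h$ — this is where the hypothesis that $D$ is a free disk of $h$ is used. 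Then $H_k^q(\widetilde D)\cap\widetilde D = T^{-kq}\big(H^q(\widetilde D)\cap T^{kq}(\widetilde D)\big)\neq\emptyset$, and chaining $\widetilde D, H_k(\widetilde D),\dots,H_k^{q-1}(\widetilde D)$ (which are pairwise disjoint or equal, since $\widetilde D$ is free and the iterates are translated apart by $T$) gives a periodic free disk chain for $H_k$ in $\mathbb R^2$; Franks' Lemma (Proposition \ref{prop:Franks' Lemma}) then yields a fixed point $\widetilde z_0$ of $H_k$, i.e. $H(\widetilde z_0)=T^k(\widetilde z_0)$.

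For the genuinely interesting sub-case, where $k\in\mathrm{Rot}_D(H)$ is strictly interior to the convex hull, I would again pass to $H_k=T^{-k}H$ and show that $\widetilde D$ is then both a positively returning and a negatively returning disk for $H_k$. The first two bullets in the definition of ``returning disk'' — $T$-disjointness of $\widetilde D$ and $H_k(\widetilde D)\cap\widetilde D=\emptyset$ — hold because $D$ embeds in $\mathbb A$ and is free for $h$. For the third bullet: pick $p_1/q_1<k$ and $p_2/q_2>k$ with $H^{q_i}(\widetilde D)\cap T^{p_i}(\widetilde D)\neq\emptyset$ (these exist since $k$ is interior to the convex hull of the realized rationals, hence squeezed between two of them). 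Translating by $T^{-kq_i}$ gives $H_k^{q_i}(\widetilde D)\cap T^{p_i-kq_i}(\widetilde D)\neq\emptyset$, where $p_1-kq_1<0<p_2-kq_2$; thus $\widetilde D$ is negatively returning (via $q_1$, $p_1-kq_1$) and positively returning (via $q_2$, $p_2-kq_2$) for $H_k$. Corollary \ref{lem:Franks'lemma2} applied to $H_k$ then furnishes $\widetilde z_0$ with $H_k(\widetilde z_0)=\widetilde z_0$, that is $H(\widetilde z_0)=T^k(\widetilde z_0)$.

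The main obstacle I anticipate is purely bookkeeping rather than conceptual: making sure the ``returning disk'' conditions are transferred correctly under the substitution $H\rightsquigarrow T^{-k}H$ (in particular that freeness and $T$-disjointness are preserved, which they are because $T^{-k}$ is a covering transformation), and handling the boundary case in which $k$ coincides with one of the extreme realized rationals $p/q$ — there one does not get a \emph{strict} inequality on both sides, so one must fall back on the direct periodic-free-disk-chain argument of the first paragraph rather than on Corollary \ref{lem:Franks'lemma2}. One should also note the convex hull in \eqref{the rotation number set for D} is taken in $\mathbb R$, so $\mathrm{Rot}_D(H)\cap\mathbb Z$ could a priori contain several integers, but the argument above is uniform in $k$, so this causes no difficulty. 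No compactness or measure-theoretic input is needed; everything rests on Franks' Lemma and its annular corollary already recorded in this section.
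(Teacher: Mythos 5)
Your overall route is the same as the paper's: split according to whether $k$ is itself realized, i.e. $H^q(\widetilde{D})\cap T^{kq}(\widetilde{D})\neq\emptyset$, or lies strictly between two realized rationals; pass to the lift $H_k=T^{-k}\circ H$; and invoke Franks' Lemma in the first case and Corollary \ref{lem:Franks'lemma2} in the second. Your second case is essentially verbatim the paper's argument, including the observation that $p_1-kq_1<0<p_2-kq_2$ makes $\widetilde{D}$ both negatively and positively returning for $H_k$.

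The one step that does not survive scrutiny is the parenthetical in your first case: the disks $\widetilde{D}, H_k(\widetilde{D}),\dots,H_k^{q-1}(\widetilde{D})$ need not be pairwise disjoint or equal. Freeness of $\widetilde{D}$ only keeps consecutive images apart, $H_k^{i}(\widetilde{D})\cap H_k^{i+1}(\widetilde{D})=\emptyset$; for $|i-j|\geq 2$ the iterates $H_k^{i}(\widetilde{D})$ and $H_k^{j}(\widetilde{D})$ can overlap partially, and ``translated apart by $T$'' is not a property they enjoy, since they are not translates of one another. So the family you list is not in general a disk chain in the sense of Section \ref{sec:preliminaries}, and Proposition \ref{prop:Franks' Lemma} cannot be applied to it as written. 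The repair is immediate and is exactly what the paper does: the definition of a disk chain allows arbitrary positive integers $m_i$, so $C=(\{\widetilde{D},\widetilde{D}\},\{q\})$ is already a periodic free disk chain for $H_k$, because $H_k(\widetilde{D})\cap\widetilde{D}=\emptyset$ and $H_k^{q}(\widetilde{D})\cap\widetilde{D}\neq\emptyset$ --- facts you had established one sentence earlier. With that substitution your proof coincides with the paper's.
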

\begin{proof}
Choose any connected component $\widetilde{D}$ of $\pi^{-1}(D)$. We
first suppose that there is an integer $k$ such that
$H^q(\widetilde{D})\cap T^{kq}(\widetilde{D})\neq\emptyset$. Note
here that this case covers the case where $k$ is a boundary point of
$\mathrm{Rot}_{D}(H)$. Denote by $H'$ the lift $H'=T^{-k}\circ H$ of
$h$. We have $H'^{q}(\widetilde{D})\cap \widetilde{D}\neq \emptyset$
and $H'(\widetilde{D})\cap\widetilde{D}=\emptyset$ since $D$ is
free. According to Proposition \ref{prop:Franks' Lemma}, $H'$ has a
fixed point $\widetilde{z}_0$, that is,
$H(\widetilde{z}_0)=T^k(\widetilde{z}_0)$.

We now suppose that there are two rational numbers $p_i/q_i$
($i=1,2$) and an integer $k$ such that
\begin{itemize}
\item $p_1/q_1<k<p_2/q_2$\,;
\item $H^{q_1}(\widetilde{D})\cap
T^{p_1}(\widetilde{D})\neq\emptyset$;
\item $H^{q_2}(\widetilde{D})\cap
T^{p_2}(\widetilde{D})\neq\emptyset$.
\end{itemize}
Considering the lift $H'=T^{-k}\circ H$, we have
$$H'^{q_1}(\widetilde{D})\cap
T^{p_1-q_1k}(\widetilde{D})\neq\emptyset$$ and
$$H'^{q_2}(\widetilde{D})\cap
T^{p_2-q_2k}(\widetilde{D})\neq\emptyset.$$ Therefore,
$\widetilde{D}$ is a both positively and negatively returning disk
of $H'$. By Corollary \ref{lem:Franks'lemma2}, $H'$ has a fixed
point. We have completed the proof.
\end{proof}\smallskip

Let $C=(\{D_i\}_{1\leq i\leq n},\{m_i\}_{1\leq i<n})$ be a periodic
disk chain of $h$ in $\mathbb{A}$. A \emph{lift of $C$} for $H$ in
$\mathbb{R}^2$ is a disk chain
$\widetilde{C}=(\{\widetilde{D}_i\}_{1\leq i\leq n},\{m_i\}_{1\leq
i<n})$ in $\mathbb{R}^2$ such that $\pi(\widetilde{D}_i)=D_i$ for
every $i$.

We define the \emph{width} of the lift $\widetilde{C}$ of $C$ to be
the integer $w(H;\widetilde{C})=k$ such that
$\widetilde{D}_n=T^k(\widetilde{D}_1)$. For every $p\in\mathbb{Z}$,
the disk chain $T^p(\widetilde C)=(\{T^p(\widetilde D_i)\}_{1\leq
i\leq n}, \{m_i\}_{1\leq i< n})$ is also a lift of $C$ for $H$ since
$H$ commutes with $T$. The disk chain
$$T^p\cdot\widetilde{C}=\{\widetilde{D}_1,T^{pm_1}(\widetilde{D}_2)
,T^{p(m_1+m_2)}(\widetilde{D}_3),\cdots,T^{p\,\,l(C)}(\widetilde{D}_n)\}$$
is a lift of $C$ for $T^p\circ H$. Therefore, the width of
$\widetilde{C}$ satisfies
$$w(H;\widetilde{C})=w(H;T^p(\widetilde{C}))$$ and $$w(T^p\circ
H;T^p\cdot\widetilde{C})=p\,\,l(C)+w(H;\widetilde{C})$$ for every
$p\in\mathbb{Z}$.\smallskip

\bigskip

Using Corollary \ref{lem:Franks'lemma2} and Corollary
\ref{lem:Franks'lemma1}, we have the following lemma.

\begin{lem}\label{lem:disk chain of Franks lemma}
Let $h\in \mathrm{Homeo}_*(\mathbb{A})$ and $H$ be a lift of $h$ to
$\mathbb{R}^2$. Suppose that
$\mathrm{Rot}_{\mathrm{Fix}(h)}(H)\subset[-N,N]$ for some
$N\in\mathbb{N}$, and that there is a disk $D$ in $\mathbb{A}$
satisfying $H(\widetilde{D})\cap
 T^k(\widetilde{D})\neq\emptyset$ if and only if $k=0$, where $\widetilde{D}$ is any connected component of
 $\pi^{-1}(D)$, and that a periodic disk chain  $C=(\{D_i\}_{1\leq i\leq n},
\{m_i\}_{1\leq i< n})$ of $h$ such that
 \begin{enumerate}
 \item $D_1=D$;
 \item if $D_i\not=D$ then $D_i$ is a free disk of $h$.
\end{enumerate}
Then, we have
\begin{itemize}
  \item $|w(H;\widetilde{C})|< (N+1)l(C)$ for all lift $\widetilde{C}$ of $C$;
  \item $\mathrm{Rot}_{D_i}(H)\subset]-(N+1),N+1[$
if $D_i\neq D$.
\end{itemize}
\end{lem}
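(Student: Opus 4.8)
The plan is to argue by contradiction on each of the two conclusions, reducing everything to the fixed-point statements of Corollary \ref{lem:Franks'lemma2} and Corollary \ref{lem:Franks'lemma1}. First I would prove the bound on the width. Fix a lift $\widetilde{C}=(\{\widetilde{D}_i\}_{1\leq i\leq n},\{m_i\}_{1\leq i<n})$ of the periodic disk chain $C$, so that $\widetilde{D}_n=T^{w}(\widetilde{D}_1)$ with $w=w(H;\widetilde{C})$. Suppose for contradiction that $|w|\geq (N+1)\,l(C)$; say $w\geq (N+1)\,l(C)$, the other case being symmetric. The key point is that $\widetilde{C}$ is a genuine disk chain for $H$ in $\mathbb{R}^2$ (condition (2) in the definition of disk chain holds for the lift because $H$ commutes with $T$ and $\pi(\widetilde{D}_i)=D_i$), so following the chain from $\widetilde{D}_1$ to $\widetilde{D}_n=T^{w}(\widetilde{D}_1)$ and then translating and concatenating shows that $D_1=D$ is a positively returning disk for the lift $H'=T^{-k_0}\circ H$ for a suitable $k_0$, and — using the reverse inequality obtained by going around the chain the other way, i.e. looking at $\widetilde{D}_1=T^{-w}(\widetilde D_n)$ — also a negatively returning disk, whenever $k_0$ is chosen strictly between two consecutive ``slopes'' realized along the chain. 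Concretely, I would observe that by following the chain we get $H^{l(C)}(\widetilde{D}_1)\cap T^{w}(\widetilde{D}_1)\neq\emptyset$, hence $w/l(C)\in\mathrm{Rot}_{D}(H)$ in the sense of \eqref{the rotation number set for D}; if $w/l(C)\geq N+1$ then, since the hypothesis $H(\widetilde D)\cap T^k(\widetilde D)\neq\emptyset \iff k=0$ forces $\mathrm{Rot}_{D}(H)$ to contain $0$ as well (take $q=1$, $p=0$), the interval $\mathrm{Rot}_D(H)$ contains $[0,N+1]$, so it contains an integer $k\in\{1,\dots,N+1\}$ with $k\in\mathrm{Rot}_D(H)\cap\mathbb{Z}$. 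By Corollary \ref{lem:Franks'lemma1} there is then a point $\widetilde z_0$ with $H(\widetilde z_0)=T^k(\widetilde z_0)$, i.e. $\pi(\widetilde z_0)$ is a fixed point of $h$ whose rotation number for $H$ equals $k$. This contradicts $\mathrm{Rot}_{\mathrm{Fix}(h)}(H)\subset[-N,N]$, since $k\geq 1$ could be as large as $N+1$; one has to be slightly careful and take $k$ to be an integer strictly larger than $N$, which is available as soon as $w/l(C)> N$, giving the strict inequality $|w(H;\widetilde C)|<(N+1)l(C)$.

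For the second conclusion, let $D_i\neq D$ and suppose for contradiction that $\mathrm{Rot}_{D_i}(H)\not\subset\,]-(N+1),N+1[$, say it contains a value $\geq N+1$. Then, arguing as in the proof of Corollary \ref{lem:Franks'lemma1}, there is an integer $k$ with $N<k$, $k\leq N+1$ (so $k=N+1$ if $\mathrm{Rot}_{D_i}(H)$ is bounded, but in any case an integer $k> N$ lying in the closed convex hull $\mathrm{Rot}_{D_i}(H)$): if $k$ is a boundary point of $\mathrm{Rot}_{D_i}(H)$ we directly get $H^{q}(\widetilde D_i)\cap T^{kq}(\widetilde D_i)\neq\emptyset$ for some $q$ and apply Proposition \ref{prop:Franks' Lemma} to $T^{-k}\circ H$; if $k$ is interior, then $D_i$ is both positively and negatively returning for $T^{-k}\circ H$ and Corollary \ref{lem:Franks'lemma2} applies. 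Either way $T^{-k}\circ H$ has a fixed point, i.e. $h$ has a fixed point with rotation number $k> N$ for $H$, contradicting $\mathrm{Rot}_{\mathrm{Fix}(h)}(H)\subset[-N,N]$. Since $D_i$ is a free disk of $h$ (hypothesis (2) of the lemma), the hypotheses of Corollary \ref{lem:Franks'lemma1} and of Proposition \ref{prop:Franks' Lemma} — freeness of the disk — are met, so the argument goes through; the case of a value $\leq-(N+1)$ is symmetric.

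The step I expect to be the main obstacle is the bookkeeping in the first part: turning the combinatorial data of the periodic disk chain $C$ together with a given lift $\widetilde{C}$ into the precise statement ``$w/l(C)\in \mathrm{Rot}_D(H)$'' and, more importantly, controlling which \emph{integers} lie in $\mathrm{Rot}_D(H)$ so that Corollary \ref{lem:Franks'lemma1} produces a fixed point with rotation number strictly exceeding $N$. One has to use the special hypothesis on $D$ (namely $H(\widetilde D)\cap T^k(\widetilde D)\neq\emptyset \iff k=0$) to guarantee $0\in\mathrm{Rot}_D(H)$ and hence that the convex hull $\mathrm{Rot}_D(H)$ is an interval containing $0$ and the slope $w/l(C)$; then any integer between them is realized. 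Getting the strict inequality (rather than $\leq$) requires noting that equality $|w|=(N+1)l(C)$ would already force the integer $N+1\in\mathrm{Rot}_D(H)$ and hence a fixed point of rotation number $N+1>N$, the same contradiction. The rest is a routine translation between the annulus picture and its universal cover, using that $H$ and $T$ commute so that lifts of disk chains are disk chains and widths add under composition with $T^p$, exactly as recorded in the discussion preceding the lemma.
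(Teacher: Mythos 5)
Your reduction of the first bullet rests on the claim that ``by following the chain we get $H^{l(C)}(\widetilde{D}_1)\cap T^{w}(\widetilde{D}_1)\neq\emptyset$'', and this is where the argument breaks down. A disk chain only records the pairwise intersections $H^{m_i}(\widetilde{D}_i)\cap\widetilde{D}_{i+1}\neq\emptyset$; these cannot be composed, because the points of $\widetilde{D}_{i+1}$ reached from $\widetilde{D}_i$ need not be among the points of $\widetilde{D}_{i+1}$ whose $H^{m_{i+1}}$-image meets $\widetilde{D}_{i+2}$. A chain is not an orbit segment, so ``$w/l(C)\in\mathrm{Rot}_D(H)$'' is unjustified, and with it the whole first part. (A smaller issue: $D$ is not a free disk of $h$, since $H(\widetilde{D})\cap\widetilde{D}\neq\emptyset$, so Corollary \ref{lem:Franks'lemma1} does not apply to it verbatim; for integer $k\neq0$ its proof could be adapted using $H(\widetilde{D})\cap T^{k}(\widetilde{D})=\emptyset$, but this is moot given the main gap.) The paper's proof never composes along the chain: it passes to the modified lift $H'=H\circ T^{-(N+1)}$ (resp.\ $T^{N+1}$), which is fixed-point free because $\mathrm{Rot}_{\mathrm{Fix}(h)}(H)\subset[-N,N]$; the special hypothesis on $D$ makes $\widetilde{C}'$, the lift of $(\{D,D\},\{1\})$, a \emph{free} chain of slope $-(N+1)$ for $H'$, while $\widetilde{C}$ becomes a free chain of slope $r=w-(N+1)l(C)$ for $H'$; if $r\geq0$ one concatenates translated copies of $\widetilde{C}$ and $\widetilde{C}'$ into a periodic free disk chain for $H'$ and gets a contradiction from Proposition \ref{prop:Franks' Lemma}. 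That concatenation of two chains of opposite slope is the idea your proposal is missing.

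The second bullet has an independent gap: your argument never uses the chain $C$, and no chain-free argument can work, because the statement is false for an arbitrary free disk. For instance, if $h$ rotates an invariant sub-annulus disjoint from $\mathrm{Fix}(h)$ by an irrational amount larger than $N+1$, a small free disk there has $\mathrm{Rot}_{D_i}(H)$ far outside $[-(N+1),N+1]$ while $\mathrm{Rot}_{\mathrm{Fix}(h)}(H)=\{0\}$. Concretely, the failure in your argument is that ``$\mathrm{Rot}_{D_i}(H)$ contains a value $\geq N+1$'' does not put any \emph{integer} into $\mathrm{Rot}_{D_i}(H)$ (the set can be a single non-integral point or a short interval), so neither Corollary \ref{lem:Franks'lemma2} nor Corollary \ref{lem:Franks'lemma1} can be invoked to produce the contradicting fixed point. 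The paper deduces the second bullet from the first: for $p/q\in\mathrm{Rot}_{D_i}(H)$ it inserts $s$ extra returns at $D_i$ into the chain, obtaining periodic chains $C_s$ with $l(C_s)=l(C)+sq$ and $w(H;\widetilde{C}_s)=w(H;\widetilde{C})+sp$, applies the first bullet and lets $s\to+\infty$ to get $|p/q|\leq N+1$, and only then uses Corollary \ref{lem:Franks'lemma1} to exclude the boundary case $|p/q|=N+1$ (legitimate there, since $\pm(N+1)$ is an integer and $D_i$ is free). Your final step coincides with that last exclusion, but without the insertion argument it does not yield the bound.
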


\begin{proof}
Obviously, $C\,'=(\{D,D\},\{1\})$ is a periodic disk chain of $h$.

Fix a connected component $\widetilde{D}$ of
 $\pi^{-1}(D)$ and a lift $\widetilde{C}=\{\widetilde{D}_i\}_{1\leq i\leq
n}$ of $C$ for $H$ that satisfies $\widetilde{D}_1=\widetilde{D}$.
Define $\mathcal {D}$ as the family of all connected components of
$\pi^{-1}(D_i),\,\, 1\leq i\leq n$.

Suppose first that $w(H;\widetilde{C})\geq 0$, consider the lift
$H'=H\circ T^{-(N+1)}$, we have the following facts

\begin{itemize}
\item $\mathrm{Fix}(H')=\emptyset$;
  \item $H'(\widetilde{D})\cap\widetilde{D}=\emptyset$;
  \item there is a free disk chain $\widetilde{C}\,'$ in $\mathcal {D}$ of length $1$ from $\widetilde{D}$ to $T^{-(N+1)}(\widetilde{D})$ for $H'$
 (indeed, this disk chain is a lift of $C\,'$ for $H'$);
 \item there is a free disk chain $\widetilde{C}$ in $\mathcal {D}$ of length $l(C)$ from $\widetilde{D}$ to $T^{-(N+1)l(C)+w(H;\,\widetilde{C})}(\widetilde{D})$ for
   $H'$ (indeed, this disk chain is a lift of $C$ for $H'$).
\end{itemize}
The first item follows from
$\mathrm{Rot}_{\mathrm{Fix}(h)}(H)\subset[-N,N]$. The second and
third items hold by the hypothesis of $D$. The last one follows from
the hypothesis (1) and the property of $w(H;\widetilde{C})$.

If $-(N+1)l(C)+w(H;\widetilde{C})=0$, then $\widetilde{C}$ is a
periodic free disk chain for $H'$. By Proposition \ref{prop:Franks'
Lemma}, $H'$ has a fixed point, which conflicts with the first item.
If $r=-(N+1)l(C)+w(H;\widetilde{C})>0$, then the disk chain
$$\widetilde{C}\cup T^r(\widetilde{C})\cup\cdots\cup
T^{Nr}(\widetilde{C})\cup T^{(N+1)r}(\widetilde{C}\,')\cup\cdots\cup
T^{N+1}(\widetilde{C}\,')$$ is a periodic free disk chain for $H'$.
By Proposition \ref{prop:Franks' Lemma} again, $H'$ has a fixed
point, which still conflicts with the first item. Hence
$w(H;\widetilde{C})<(N+1)l(C)$.

In the case where $w(H;\widetilde{C})<0$, replacing $H'=H\circ
T^{-(N+1)}$ by $H'=H\circ T^{N+1}$, similarly to the case
$w(H;\widetilde{C})\geq0$, we get $w(H;\widetilde{C})> -(N+1)l(C)$.
The first conclusion is proven.

Fix a disk $D_i\neq D$ and $p/q\in\mathrm{Rot}_{D_i}(H)$. For every
$s\geq1$, consider the following  periodic disk chain of $h$
$$C_s=\{D_1,\cdots,\underbrace{D_i,\cdots,D_i}_{s+1},\cdots,
D_n\}$$ with
$$\{m_1,\cdots,m_{i-1},\underbrace{q,\cdots,q}_{s},m_i,\cdots,m_{n-1}\}$$ and its lift for $H$
$$\widetilde{C}_s=\{\widetilde{D}_1,\cdots,\widetilde{D}_i,T^p(\widetilde{D}_i),\cdots,T^{sp}(\widetilde{D}_i)
,T^{sp}(\widetilde{D}_{i+1}),\cdots,T^{sp}(\widetilde{D}_{n})\}.$$
Then we have $l(C_s)=l(C)+sq$ and
$w(H;\widetilde{C}_s)=w(H;\widetilde{C})+sp$. By the first
conclusion, we get $|w(H;\widetilde{C}_s)|<(N+1)l(C_s)$. Letting $s$
tend to $+\infty$, we get $|p/q|\leq N+1$. Moreover, if $p/q=N+1$
(resp. $p/q=-(N+1)$), according to Corollary
\ref{lem:Franks'lemma1}, then there exists a fixed point of $h$ with
rotation number $N+1$ (resp. $-(N+1)$) for $H$, which conflicts with
the hypothesis $\mathrm{Rot}_{\mathrm{Fix}(h)}(H)\subset[-N,N]$.
Therefore $|p/q|< N+1$. We have completed the proof.
\end{proof}

%

The following Theorem is due to Franks \cite{F} when $\mathbb{A}$ is
a closed annulus and $h$ has no wandering point, and it was improved
by Le Calvez \cite{P1} to the case where $\mathbb{A}$ is an open
annulus and $h$ satisfies the intersection property:

\begin{thm}\label{thm:FP}Let $h\in \mathrm{Homeo}_*(\mathbb{A})$ and
 $H$ be a lift of $h$ to
$\mathbb{R}^2$. Suppose that there exist two positively recurrent
points of rotation numbers $\nu^-$ and $\nu^+$(eventually equal to
$\pm\infty$) with $\nu^-<\nu^+$, and suppose that $h$ satisfies the
following intersection property: any simple closed curve of
$\mathbb{A}$ which is not null-homotopic meets its image by $h$.
Then for any rational number $p/q\in ]\nu^-,\nu^+[$ written in an
irreducible way, there exists a periodic point of period $q$ whose
rotation number is $p/q$.
\end{thm}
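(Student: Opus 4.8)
\emph{Sketch of the argument.} The plan is to reduce, via the usual Poincar\'e--Birkhoff device of passing to $h^q$, to a statement about the existence of a fixed point of a single lift, and then to feed the two recurrent orbits into the disk-chain machinery recalled above (Proposition~\ref{prop:Franks' Lemma}, Corollaries~\ref{lem:Franks'lemma2} and~\ref{lem:Franks'lemma1}, Lemma~\ref{lem:disk chain of Franks lemma}).

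First I would set up the reduction. Fix $p/q\in(\nu^-,\nu^+)$ written irreducibly, put $\widehat h=h^q$ and $\widehat H=T^{-p}\circ H^q$. By Lemma~\ref{subsec:positively recurrent} the points $z^\pm$ are still positively recurrent for $\widehat h$, and by the elementary properties of $\mathrm{Rot}$ listed in~\ref{prop:ROT} a point of rotation number $\nu$ for $H$ has rotation number $q\nu-p$ for $\widehat H$; hence $\rho_{\mathbb{A},\widehat H}(z^-)<0<\rho_{\mathbb{A},\widehat H}(z^+)$. Also $\widehat h=h^q$ inherits the intersection property (this is standard; see~\cite{P1}). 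Granting that $\widehat H$ has a fixed point $\widehat w$, the point $w=\pi(\widehat w)$ satisfies $h^q(w)=w$ and $H^q(\widehat w)=T^p(\widehat w)$, so $w$ has rotation number $p/q$ and period dividing $q$; since $\gcd(p,q)=1$ its period is exactly $q$, which is the assertion. (One may assume the $z^\pm$ are not periodic points of $h$: otherwise either such a point already carries the wanted rotation number, or a direct local index computation at its orbit settles this degenerate case.)

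It then remains to prove: if $g\in\mathrm{Homeo}_*(\mathbb{A})$ satisfies the intersection property and some lift $G$ has positively recurrent points with rotation numbers of both signs, then $\mathrm{Fix}(G)\neq\emptyset$. I would argue by contradiction, assuming $\mathrm{Fix}(G)=\emptyset$. Around the point $z^+$ of positive rotation number pick a small embedded disk $D^+$, free for $g$, with a $T$-injective lift $\widetilde D^+\ni\widetilde z^+$. Since $p_1(G^{n}\widetilde z^+)-p_1(\widetilde z^+)\to+\infty$ along the times $n$ at which $G^n(z^+)$ returns to $D^+$, for large such $n$ one gets $G^{n}(\widetilde D^+)\cap T^{k}(\widetilde D^+)\neq\emptyset$ with $k>0$, that is, $\widetilde D^+$ is a positively returning disk; symmetrically $z^-$ produces a negatively returning disk $\widetilde D^-$. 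If one could take $D^+=D^-$, Corollary~\ref{lem:Franks'lemma2} would instantly give a fixed point of $G$. In general $D^+\neq D^-$, and this is exactly where the intersection property enters: it forces the forward disk chains issued from $D^+$ and from $D^-$ to meet, for otherwise the frontier of their union would contain an essential simple closed curve disjoint from its $g$-image. Splicing the two chains yields a single free disk chain realizing both a positive and a negative $T$-translation; repeating its positive and negative sub-blocks in suitable proportions — the same bookkeeping as in the proof of Lemma~\ref{lem:disk chain of Franks lemma} — produces a genuine periodic free disk chain of $G$ in $\mathbb{R}^2$, and Franks' Lemma (Proposition~\ref{prop:Franks' Lemma}) then yields a fixed point of $G$, a contradiction. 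Equivalently, one constructs a free disk $D$ of $g$ with $0$ interior to $\mathrm{Rot}_D(G)$ and invokes Corollary~\ref{lem:Franks'lemma1}.

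The main obstacle is this last step in the open annulus. When $\mathbb{A}$ is a closed annulus and $g$ is nonwandering, the free disks of $g$ cover $\mathbb{A}$ and the associated disk-chain combinatorics is essentially finite, so the width-zero periodic chain is obtained by a routine compactness argument — this is Franks' original proof~\cite{F}. In the open case the free disks need not cover $\mathbb{A}$ and chains may escape to the ends; confining the chains issued from $D^+$ and $D^-$ to a controlled region and closing them up with the correct translation number is the delicate point, and it is precisely the intersection property that makes this possible. This is the content of Le Calvez's extension~\cite{P1}, whose bookkeeping is of the same flavour as Lemma~\ref{lem:disk chain of Franks lemma}.
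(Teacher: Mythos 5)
The paper does not actually prove Theorem \ref{thm:FP}: it is quoted as a known result, due to Franks \cite{F} for the closed annulus under the no-wandering-point hypothesis and to Le Calvez \cite{P1} for the open annulus under the intersection property, so there is no internal proof to compare yours with; your text has to be judged as a standalone argument. The first half of your sketch is the standard reduction and is fine: passing to $\widehat h=h^q$, $\widehat H=T^{-p}\circ H^q$, using Lemma \ref{subsec:positively recurrent} and the elementary properties of rotation numbers, and recovering the exact period $q$ from irreducibility of $p/q$.

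Beyond that, the proposal has genuine gaps. First, the assertion that $h^q$ "inherits the intersection property (this is standard)" is neither standard nor obvious: an essential simple closed curve disjoint from its $h^q$-image does not in any evident way produce one disjoint from its $h$-image, and this passage-to-powers issue is historically treacherous (the analogous point for the nonwandering hypothesis is what Franks' published Erratum to \cite{F} is about). You must either prove this inheritance or set up the fixed-point statement so that only the intersection property of $h$ itself is invoked. Second, and more seriously, the heart of the theorem --- producing a fixed point of the lift in the \emph{open} annulus --- is not proved: the sentence claiming that otherwise "the frontier of their union would contain an essential simple closed curve disjoint from its $g$-image" is exactly the delicate point (frontiers of unions of free disks and their iterates need not contain an essential \emph{simple closed} curve, and in the open annulus the chains can escape through the ends), and you then explicitly defer it to Le Calvez \cite{P1}, whose argument in fact uses transverse-foliation/Brouwer technology rather than the disk-chain bookkeeping of Lemma \ref{lem:disk chain of Franks lemma}. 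Since that step is the entire content of the theorem beyond Franks' closed, nonwandering case, the proposal as written does not establish the statement. Third, the degenerate case where $z^{\pm}$ is fixed by $h^q$ (rotation number rational with denominator dividing $q$) cannot be dismissed by "a direct local index computation": in that case no disk around the point is free for $h^q$, so the returning-disk construction does not even start, and this is precisely the case the paper relies on later (in Proposition \ref{prop:the support of mu is M}, where $\nu^-=0$ comes from the fixed point $\infty$), so it must be treated, not waved away.
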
\bigskip


\section{Extension of the Linking Number}\label{sec:define a new linking number}
In this section, we will first extend the notion of linking number
defined in \ref{sec:linking number of a and b,c}, and then state
some properties about it.

\subsection{Extension of the linking number for a positively recurrent point}
\label{subsec:the definition of a new linking
number}\quad\par

 Recall that $F$ is the time-one map of an identity
isotopy $I=(F_t)_{t\in[0,1]}$ on a closed oriented surface $M$ of
genus $g\geq 1$ and $\widetilde{F}$ is the time-one map of the
lifted identity isotopy
$\widetilde{I}=(\widetilde{F}_t)_{t\in[0,1]}$ on the universal cover
$\widetilde{M}$ of $M$. For every distinct fixed points
$\widetilde{a}$ and $\widetilde{b}$ of $\widetilde{F}$, by Lemma
\ref{rem:identity isotopies fix three points on sphere}, we can
choose an isotopy $\widetilde{I}_1$ from
$\mathrm{Id}_{\widetilde{M}}$ to $\widetilde{F}$ that fixes
$\widetilde{a}$ and $\widetilde{b}$.

Let us fix $z\in
\mathrm{Rec}^+(F)\setminus\pi(\{\widetilde{a},\widetilde{b}\})$ and
consider an open disk $U\subset
M\setminus\pi(\{\widetilde{a},\widetilde{b}\})$ containing $z$. For
every pair $(z',z'')\in U^2$, choose an oriented simple path
$\gamma_{z',z''}$ in $U$ from $z'$ to $z''$. Denote by
$\widetilde{\Phi}$ the lift
 of the first return map $\Phi$:
\begin{eqnarray*}
  \widetilde{\Phi}: \pi^{-1}(\mathrm{Rec}^+(F))\cap\pi^{-1}(U)&\rightarrow&
 \pi^{-1}(\mathrm{Rec}^+(F))\cap\pi^{-1}(U) \\
\widetilde{z} &\mapsto&
  \widetilde{F}^{\tau(z)}(\widetilde{z}),
\end{eqnarray*}
where $z=\pi(\widetilde{z})$ and $\tau(z)$ is the first return time
in $U$.

For any $\widetilde{z}\in\pi^{-1}(U)$, write $U_{\widetilde{z}}$ the
connected component of $\pi^{-1}(U)$ that contains $\widetilde{z}$.
For every $j\geq0$, recall that
$\tau_j(z)=\sum\limits_{i=0}^{j-1}\tau(\Phi^i(z))$. For every
$n\geq1$, consider the following curves in $\widetilde{M}$:
$$
\widetilde{\Gamma}_{\widetilde{I}_1,\widetilde{z}}^n=\widetilde{I}_1^{\,\tau_n(z)}(\widetilde{z})
\widetilde{\gamma}_{\widetilde{\Phi}^n(\widetilde{z}),\widetilde{z}_n}\,,$$
where $\widetilde{z}_n\in \pi^{-1}(\{z\})\cap
\widetilde{U}_{\widetilde{\Phi}^n(\widetilde{z})}$, and
$\widetilde{\gamma}_{\widetilde{\Phi}^n(\widetilde{z}),\widetilde{z}_n}$
is the lift of $\gamma_{\Phi^n(z),z}$ in that is contained
$\widetilde{U}_{\widetilde{\Phi}^n(\widetilde{z})}$. We can define
the following infinite product (see \ref{sec:Identity isotopies}):
$$\widetilde{\Gamma}_{\widetilde{I}_1,z}^n=\prod_{\pi(\widetilde{z})=z}\widetilde
{\Gamma}_{\widetilde{I}_1,\widetilde{z}}^n\,.$$In particular, when
$z\in\mathrm{Fix}(F)$,
$\widetilde{\Gamma}_{\widetilde{I}_1,z}^1=\prod\limits_{\pi(\widetilde{z})=z}\widetilde{I}_1(\widetilde
z)$.

\smallskip

When
$\widetilde{U}_{\widetilde{\Phi}^n(\widetilde{z})}=\widetilde{U}_{\widetilde{z}}$,
the curve $\widetilde{\Gamma}_{\widetilde{I}_1,\widetilde{z}}^n$ is
a loop and hence $\widetilde{\Gamma}_{\widetilde{I}_1,z}^n$ is an
infinite family of loops, that will be called a \emph{multi-loop}.
When
$\widetilde{U}_{\widetilde{\Phi}^n(\widetilde{z})}\neq\widetilde{U}_{\widetilde{z}}$,
the curve $\widetilde{\Gamma}_{\widetilde{I}_1,\widetilde{z}}^n$ is
a compact path and hence $\widetilde{\Gamma}_{\widetilde{I}_1,z}^n$
is an infinite family of
 paths (it can be seen as a family of proper paths, that means all of two ends of these paths going to
$\infty$), that will be called a \emph{multi-path}.

In the both cases, for every neighborhood $\widetilde{V}$ of
$\infty$, there are finitely many loops or paths
$\widetilde{\Gamma}_{\widetilde{I}_1,\widetilde{z}}^n$ that are not
included in $\widetilde{V}$. By adding the point $\infty$ at
infinity, we get a multi-loop on the sphere
$\mathbf{S}=\widetilde{M}\sqcup\{\infty\}$.

In fact, $\widetilde{\Gamma}_{\widetilde{I}_1,z}^n$ can be seen as a
multi-loop in the annulus $A_{\widetilde{a},\widetilde{b}}$ with a
finite homology. As a consequence, if $\widetilde{\gamma}$ is a path
from $\widetilde{a}$ to $\widetilde{b}$, the intersection number
$\widetilde{\gamma}\wedge\widetilde{\Gamma}_{\widetilde{I}_1,z}^{n}$
is well defined and does not depend on $\widetilde{\gamma}$. By
Remark \ref{rem:some result of of sphere delete three points} and
the properties of intersection number, the intersection number is
also independent of the choice of the identity isotopy
$\widetilde{I}_1$ but depends on $U$.
 Moreover, observe that the path $(\prod_{i=0}^{n-1}\gamma_{\Phi^{n-i}(z)
 ,\Phi^{n-i-1}(z)})(\gamma_{\Phi^n(z),z})^{-1}$ is a loop in $U$, we have
\begin{equation}\label{eq:Birkhoff sum}
\widetilde{\gamma}\wedge\widetilde{\Gamma}_{\widetilde{I}_1,z}^{n}=
\widetilde{\gamma}\wedge\prod_{j=0}^{n-1}\widetilde{\Gamma}_{\widetilde{I}_1,\Phi^j(z)}^{1}
=\sum_{j=0}^{n-1}\widetilde{\gamma}\wedge\widetilde{\Gamma}_{\widetilde{I}_1,\Phi^j(z)}^{1}.
\end{equation}
\smallskip

For $n\geq1$, we can define the functions $$L_{n}:
((\mathrm{Fix}(\widetilde{F})\times
\mathrm{Fix}(\widetilde{F}))\setminus\widetilde{\Delta})\times
(\mathrm{Rec}^+(F)\cap U)\rightarrow \mathbb{Z},$$
\begin{equation}\label{eq: Ln Birkhoff sum}
    L_{n}(\widetilde{F};\widetilde{a},\widetilde{b},z)=\widetilde{\gamma}\wedge
\widetilde{\Gamma}^n_{\widetilde{I}_1,z}=\sum\limits_{j=0}^{n-1}
L_1(\widetilde{F};\widetilde{a},\widetilde{b},\Phi^j(z)).
\end{equation}
where $U\subset M\setminus\pi(\{\widetilde{a},\widetilde{b}\})$. The
last equation follows from Equation \ref{eq:Birkhoff sum}. The
function $L_n$ depends on $U$ but not on the choice of
$\gamma_{\Phi^n(z),z}$.

\begin{defn}\label{def:Intersection number density} Fix $z
\in \mathrm{Rec}^+(F)\setminus\pi(\{\widetilde{a},\widetilde{b}\})$.
Let us say that the linking number
$i(\widetilde{F};\widetilde{a},\widetilde{b},z)\in \mathbb{R}$ is
defined, if
$$\lim_{k\rightarrow +\infty} \frac{L_{n_k}(\widetilde{F};\widetilde{a},\widetilde{b},z)}{\tau_{n_{k}}(z)}
=i(\widetilde{F};\widetilde{a},\widetilde{b},z)$$ for any
subsequence $\{\Phi^{n_{k}}(z)\}_{k\geq 1}$ of
$\{\Phi^{n}(z)\}_{n\geq 1}$ which converges to $z$.
\end{defn}
Note here that the linking number
$i(\widetilde{F};\widetilde{a},\widetilde{b},z)$ does not depend on
$U$ since if $U$ and $U'$ are open disks containing $z$, there
exists a disk containing $z$ that is contained in $U\cap U'$. In
particular, when $z\in\mathrm{Fix}(F)\setminus\pi(\{\widetilde{a}
,\widetilde{b}\})$, the linking number
$i(\widetilde{F};\widetilde{a},\widetilde{b},z)$ always exists and
is equal to $L_1(\widetilde{F};\widetilde{a},\widetilde{b},z)$.

\begin{rem}\label{rem:linking number depends on the choice of the lift of a and
b}When
$z\in\mathrm{Rec}^+(F)\setminus\mathrm{Fix}_{\mathrm{Cont},I}(F)$,
the linking number $i(\widetilde{F};\widetilde{a},\widetilde{b},z)$
depends on the choice of $\widetilde{a}$ and $\widetilde{b}$ if it
exists. Indeed, consider the following smooth identity isotopy on
$\mathbb{R}^2$: $\widetilde{I}=(\widetilde{F}_t)_{t\in[0,1]}:
(x,y)\mapsto(x,y+t\sin(2\pi x))$. It induces an identity smooth
isotopy $I=(F_t)_{t\in[0,1]}$ on
$\mathbb{T}^2=\mathbb{R}/\mathbb{Z}\times\mathbb{R}/\mathbb{Z}$.
Obviously $\mathrm{Fix}(\widetilde{F})=\{(x,y)\mid x=k,\,\,
x=k+1/2,\,\, k\in\mathbb{Z}\}$ and $z=(1/4,0)\in \mathbb{T}^2$ is a
fixed point of $F$ but not a contractible fixed point of $F$. Let
$\widetilde{a}_k=(k,1/2)\in \mathbb{R}^2$ where $k\in \mathbb{Z}$.
It is easy to see that
$i(\widetilde{F};\widetilde{a}_0,\widetilde{a}_k,z)=k$ and
$\pi(\widetilde{a}_k)=\pi(\widetilde{a}_{k'})$ where
$k,k'\in\mathbb{Z}$.
\end{rem}

\subsection{Some properties of the linking number}
Now we give some properties of the linking number we have
defined.\smallskip

For any $q\geq1$,  $F^q$ is the time-one map of the identity isotopy
$I^q=(F_t)_{t\in[0,q]}$ on $M$. By Lemma \ref{subsec:positively
recurrent} in Appendix, a positively recurrent point of $F$ is also
a positively recurrent point of $F^q$, so we can define the linking
number $i(\widetilde{F}^q,\widetilde{a},\widetilde{b},z)$.

\begin{prop}\label{prop:i(Fnabx)=ni(abx)}
If $i(\widetilde{F};\widetilde{a},\widetilde{b},z)$ exists, then
$i(\widetilde{F}^{q};\widetilde{a},\widetilde{b},z)$ exists for
every $q\geq1$ and
$i(\widetilde{F}^{q};\widetilde{a},\widetilde{b},z)=qi(\widetilde{F};\widetilde{a},\widetilde{b},z)$.
\end{prop}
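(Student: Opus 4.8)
The plan is to unwind the definitions and show that the Birkhoff-sum expression for $L_n$ with respect to the isotopy $I$ is compatible with the one for $I^q$, and that the relevant first-return times scale by $q$. Concretely, fix $z\in\mathrm{Rec}^+(F)\setminus\pi(\{\widetilde a,\widetilde b\})$ and an open disk $U\subset M\setminus\pi(\{\widetilde a,\widetilde b\})$ containing $z$. Write $\Phi$, $\tau$, $\tau_n$ for the first-return data of $F$ in $U$, and $\Phi_{(q)}$, $\tau^{(q)}$, $\tau^{(q)}_n$ for those of $F^q$ in $U$. The first step is the elementary observation that the first return of $z$ to $U$ under $F^q$ is the first index $n\tau$-type quantity: more precisely, if we let $r(z)$ be the least $m\ge 1$ with $F^{qm}(z)\in U$, then $qr(z)$ is the least multiple of $q$ that is a return time, and in general one has $\tau_{n}^{(q)}(z)\cdot$(nothing clean)—so instead I would argue directly via the intersection-number formula rather than trying to match returns index-by-index.

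The cleaner route, which I would actually carry out, is this. By Equation \ref{eq: Ln Birkhoff sum}, $L_n(\widetilde F;\widetilde a,\widetilde b,z)=\widetilde\gamma\wedge\widetilde\Gamma^n_{\widetilde I_1,z}$, and by construction of $\widetilde\Gamma^n_{\widetilde I_1,z}$ this intersection number counts (with sign) how the trajectory of $z$ under $\widetilde I_1$, run for total time $\tau_n(z)$ and then closed up inside lifts of $U$, winds around in the annulus $A_{\widetilde a,\widetilde b}$. Since $I^q$ is, by definition (see Section \ref{sec:Identity isotopies}), the isotopy $(F_t)_{t\in[0,q]}$ obtained by concatenating $I$ with itself $q$ times, an isotopy $\widetilde I_1$ fixing $\widetilde a,\widetilde b$ for $\widetilde F$ induces, by the same concatenation, an isotopy $(\widetilde I_1)^q$ fixing $\widetilde a,\widetilde b$ for $\widetilde F^q$; and the trajectory of a point under $(\widetilde I_1)^q$ for total time $s$ is exactly its trajectory under $\widetilde I_1$ for total time $qs$. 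Hence, for $F^q$ in the disk $U$, the curve $\widetilde\Gamma^{n}_{(\widetilde I_1)^q,z}$ used to compute $L^{(q)}_n$ is the same curve (up to the closing-up inside $U$, which does not affect the intersection number with $\widetilde\gamma$) as $\widetilde\Gamma^{m}_{\widetilde I_1,z}$ for the appropriate value $m$ of the $F$-return index, and the total $F$-time $\tau_m(z)$ equals $q$ times the total $F^q$-time $\tau^{(q)}_n(z)$. This gives, for every $n$,
\begin{equation*}
L^{(q)}_n(\widetilde F^q;\widetilde a,\widetilde b,z)=L_{m(n)}(\widetilde F;\widetilde a,\widetilde b,z),\qquad \tau^{(q)}_n(z)=\tfrac1q\,\tau_{m(n)}(z),
\end{equation*}
where $m(n)\to+\infty$ as $n\to+\infty$, and moreover $\Phi_{(q)}^{n}(z)=\Phi^{m(n)}(z)$ so that a subsequence of the $F^q$-returns converging to $z$ is a subsequence of the $F$-returns converging to $z$.

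With these identities the conclusion is immediate: if $i(\widetilde F;\widetilde a,\widetilde b,z)$ exists, then along any subsequence $\{\Phi_{(q)}^{n_k}(z)\}$ converging to $z$,
\begin{equation*}
\frac{L^{(q)}_{n_k}(\widetilde F^q;\widetilde a,\widetilde b,z)}{\tau^{(q)}_{n_k}(z)}
=q\cdot\frac{L_{m(n_k)}(\widetilde F;\widetilde a,\widetilde b,z)}{\tau_{m(n_k)}(z)}
\longrightarrow q\, i(\widetilde F;\widetilde a,\widetilde b,z),
\end{equation*}
which is exactly the assertion that $i(\widetilde F^q;\widetilde a,\widetilde b,z)$ exists and equals $q\,i(\widetilde F;\widetilde a,\widetilde b,z)$; independence of $U$ was already noted after Definition \ref{def:Intersection number density}. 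The main obstacle I anticipate is purely bookkeeping: making the correspondence $n\mapsto m(n)$ between $F^q$-return indices and $F$-return indices precise (a return of $z$ to $U$ under $F^q$ is a return under $F$ whose time is a multiple of $q$, so $m(n)$ is a strictly increasing sequence of $F$-return indices but not all of them), and checking that the closing-up arcs inside lifts of $U$, which differ between the two pictures, contribute $0$ to the intersection number with $\widetilde\gamma$ because they lie in $U$ which is disjoint from $\pi(\{\widetilde a,\widetilde b\})$ — this is the same argument already used to show $L_n$ is independent of the choice of $\gamma_{\Phi^n(z),z}$. Once this is set up carefully, no further analysis is needed.
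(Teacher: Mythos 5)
Your proof is correct and is essentially the paper's own argument: you pair each $F^q$-return index $n$ with an $F$-return index $m(n)$ satisfying $\tau_{m(n)}(z)=q\,\tau^{(q)}_n(z)$, identify the curve $\widetilde{\Gamma}^{\,n}_{\widetilde{I}_1^{\,q},z}$ with $\widetilde{\Gamma}^{\,m(n)}_{\widetilde{I}_1,z}$ (the concatenated isotopy run for time $s$ being the original run for time $qs$), and pass to the limit along subsequences of $F^q$-returns converging to $z$, which are also subsequences of $F$-returns, exactly as in the paper's proof of Proposition \ref{prop:i(Fnabx)=ni(abx)}. No gap.
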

\begin{proof}Let $\widetilde{\gamma}$ be any simple path from
$\widetilde{a}$ to $\widetilde{b}$ and $\widetilde{I}_1$ be an
isotopy that fixes $\widetilde{a}$ and $\widetilde{b}$.
We suppose that $i(\widetilde{F};\widetilde{a},\widetilde{b},z)$
exists. Let $U$ be an open disk containing $z$. For every $q\geq1$,
write respectively $\tau'(z)$ and $\Phi'(z)$ for the first return
time and the first return map of $F^q$ in this proof. Recall that
$$\tau'_n(z)=\sum_{i=0}^{n-1}\tau'(\Phi'^i(z))$$ and $$\widetilde{\Gamma}_
{\widetilde{I}_1^{\,q},\widetilde{z}}^n=\widetilde{I}_1^{\,q\tau'_n(z)}(\widetilde{z})
\widetilde{\gamma}_{\widetilde{\Phi}'^n(\widetilde{z}),\widetilde{z}_n}\,,\quad
\widetilde{\Gamma}_{\widetilde{I}_1^{\,q},z}^n=\prod_{\pi(\widetilde{z})=z}\widetilde
{\Gamma}_{\widetilde{I}_1^{\,q},\widetilde{z}}^n$$ where
$\widetilde{\Phi}'$ is the lift of $\Phi'$ to $\pi^{-1}(U)$,
$\widetilde{z}_n\in \pi^{-1}(\{z\})\cap
\widetilde{U}_{\widetilde{\Phi}'^n(\widetilde{z})}$ and
$\widetilde{\gamma}_{\widetilde{\Phi}'^n(\widetilde{z}),\widetilde{z}_n}$
is the lift of $\gamma_{\Phi'^n(z),z}$ that is in
$\widetilde{U}_{\widetilde{\Phi}'^n(\widetilde{z})}$.

We suppose that the subsequence $\{\Phi'^{n_{k}}(z)\}_{k\geq 1}$
converges to $z$. For every $k$, there is $n_k'\in\mathbb{N}$ such
that $\tau_{n'_k}(z)=q\tau'_{n_k}(z)$. By Definition
\ref{def:Intersection number density}, for any subsequence
$\{\Phi'^{n_{k}}(z)\}_{k\geq 1}$ which converges to $z$, we have
\begin{eqnarray*}\lim_{k\rightarrow+\infty}
\frac{L_{n_k}(\widetilde{F}^q;\widetilde{a},\widetilde{b},z)}{\tau'_{n_k}(z)}
&=&\lim_{k\rightarrow+\infty}
\frac{\widetilde{\gamma}\wedge\widetilde{\Gamma}^{n_k}_{\widetilde{I}^q,z}}{\tau'_{n_k}(z)}\\
&=&q\cdot\lim_{k\rightarrow+\infty}
\frac{\widetilde{\gamma}\wedge\prod_{\pi(\widetilde{z})=z}\widetilde{I}_1^{\,q\tau'_{n_k}(z)}(\widetilde{z})
\widetilde{\gamma}_{\widetilde{\Phi}'^{n_k}(\widetilde{z}),\widetilde{z}_{n_k}}}{q\tau'_{n_k}(z)}\\
&=&q\cdot\lim_{k\rightarrow+\infty}
\frac{L_{n_k'}(\widetilde{F};\widetilde{a},\widetilde{b},z)}{\tau_{n_k'}(z)}\\&=&
q i(\widetilde{F}; \widetilde{a},\widetilde{b},z).
\end{eqnarray*}
\end{proof}

\begin{prop}\label{prop:alpha- one exists then
 other exists}
For every $\alpha\in G$, every distinct fixed points $\widetilde{a}$
and $\widetilde{b}$ of $\widetilde{F}$, and every $z\in
\mathrm{Rec}^+(F)\setminus\pi(\{\widetilde{a},\widetilde{b}\})$, if
$i(\widetilde{F};\widetilde{a},\widetilde{b},z)$ exists, then
$i(\widetilde{F};\alpha(\widetilde{a}),\alpha(\widetilde{b}),z)$
also exists and
$$i(\widetilde{F};\alpha(\widetilde{a}),\alpha(\widetilde{b}),z)=
i(\widetilde{F};\widetilde{a},\widetilde{b},z).$$
\end{prop}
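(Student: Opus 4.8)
The plan is to transport the construction of the linking number in Section~\ref{subsec:the definition of a new linking number} by the covering transformation $\alpha$, using that $\widetilde{F}$ commutes with every element of $G$ (recall $\widetilde{F}_t\circ\alpha=\alpha\circ\widetilde{F}_t$) together with the invariance of the algebraic intersection number under orientation-preserving homeomorphisms. Fix an isotopy $\widetilde{I}_1$ from $\mathrm{Id}_{\widetilde{M}}$ to $\widetilde{F}$ fixing $\widetilde{a}$ and $\widetilde{b}$, a path $\widetilde{\gamma}$ from $\widetilde{a}$ to $\widetilde{b}$, and an open disk $U\ni z$ with $U\subset M\setminus\pi(\{\widetilde{a},\widetilde{b}\})$, used to compute $i(\widetilde{F};\widetilde{a},\widetilde{b},z)$. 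Put $\widetilde{I}_1'=\alpha\circ\widetilde{I}_1\circ\alpha^{-1}$ (meaning $(\widetilde{I}_1')_t=\alpha\circ(\widetilde{I}_1)_t\circ\alpha^{-1}$) and $\widetilde{\gamma}'=\alpha(\widetilde{\gamma})$. Since $\alpha$ commutes with $\widetilde{F}$, the path $\widetilde{I}_1'$ is again an identity isotopy of $\widetilde{M}$ ending at $\widetilde{F}$; it fixes $\alpha(\widetilde{a})$ and $\alpha(\widetilde{b})$, and $\widetilde{\gamma}'$ joins these points. As $\pi(\{\alpha(\widetilde{a}),\alpha(\widetilde{b})\})=\pi(\{\widetilde{a},\widetilde{b}\})$, the point $z$ again lies in the relevant domain, $U\subset M\setminus\pi(\{\alpha(\widetilde{a}),\alpha(\widetilde{b})\})$, and the same first-return time $\tau$, first-return map $\Phi$, partial sums $\tau_n(z)$ and the same subsequences $\{\Phi^{n_k}(z)\}_{k\ge1}$ converging to $z$ serve for both linking numbers.

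The heart of the matter is the identity
\[
L_n(\widetilde{F};\alpha(\widetilde{a}),\alpha(\widetilde{b}),z)=L_n(\widetilde{F};\widetilde{a},\widetilde{b},z)\qquad\text{for every }n\ge1,
\]
which is enough to conclude. Since $L_n$ does not depend on the chosen identity isotopy (Remark~\ref{rem:some result of of sphere delete three points}) nor on the chosen connecting path, the left-hand side may be computed with $\widetilde{I}_1'$ and $\widetilde{\gamma}'$. Because $\alpha$ commutes with $\widetilde{F}=\widetilde{F}_1$, it commutes with the lift $\widetilde{\Phi}=\widetilde{F}^{\tau(z)}$ of the first-return map and it permutes $\pi^{-1}(z)$ as well as the connected components of $\pi^{-1}(U)$ via $\widetilde{U}_{\widetilde{w}}\mapsto\widetilde{U}_{\alpha(\widetilde{w})}$. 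Hence $\alpha$ carries the trajectory under $\widetilde{I}_1$ of a lift $\widetilde{z}$ of $z$ onto the trajectory under $\widetilde{I}_1'$ of $\alpha(\widetilde{z})$, and the lift of $\gamma_{\Phi^n(z),z}$ lying in $\widetilde{U}_{\widetilde{\Phi}^n(\widetilde{z})}$ onto the one lying in $\widetilde{U}_{\widetilde{\Phi}^n(\alpha(\widetilde{z}))}$; consequently $\alpha\bigl(\widetilde{\Gamma}^n_{\widetilde{I}_1,\widetilde{z}}\bigr)=\widetilde{\Gamma}^n_{\widetilde{I}_1',\alpha(\widetilde{z})}$, and taking the product over $\pi^{-1}(z)$ (which $\alpha$ permutes) gives $\alpha\bigl(\widetilde{\Gamma}^n_{\widetilde{I}_1,z}\bigr)=\widetilde{\Gamma}^n_{\widetilde{I}_1',z}$ up to a harmless reindexing. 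Now $\widetilde{\gamma}$ is a compact path, so only finitely many components of $\widetilde{\Gamma}^n_{\widetilde{I}_1,z}$ meet it, the intersection number $\widetilde{\gamma}\wedge\widetilde{\Gamma}^n_{\widetilde{I}_1,z}$ is the finite sum of the corresponding elementary intersection numbers in $\widetilde{M}$, and each of these is preserved by the orientation-preserving homeomorphism $\alpha$ (a covering transformation of the oriented surface $M$ preserves the lifted orientation of $\widetilde{M}$). Therefore
\[
L_n(\widetilde{F};\alpha(\widetilde{a}),\alpha(\widetilde{b}),z)=\widetilde{\gamma}'\wedge\widetilde{\Gamma}^n_{\widetilde{I}_1',z}=\alpha(\widetilde{\gamma})\wedge\alpha\bigl(\widetilde{\Gamma}^n_{\widetilde{I}_1,z}\bigr)=\widetilde{\gamma}\wedge\widetilde{\Gamma}^n_{\widetilde{I}_1,z}=L_n(\widetilde{F};\widetilde{a},\widetilde{b},z)
\]
(equivalently, one may treat the case $n=1$ and sum over $j$ using Equation~\ref{eq: Ln Birkhoff sum}). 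Dividing by $\tau_{n_k}(z)$ and letting $k\to\infty$ along any subsequence with $\Phi^{n_k}(z)\to z$, Definition~\ref{def:Intersection number density} yields at once that $i(\widetilde{F};\alpha(\widetilde{a}),\alpha(\widetilde{b}),z)$ exists and equals $i(\widetilde{F};\widetilde{a},\widetilde{b},z)$.

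I expect the only real care to be needed in the bookkeeping of the infinite product defining $\widetilde{\Gamma}^n_{\widetilde{I}_1,z}$: one must check that $\alpha$ matches its components, one by one, with those of $\widetilde{\Gamma}^n_{\widetilde{I}_1',z}$, and that local finiteness near $\infty$ --- hence finiteness of the intersection number with the compact path $\widetilde{\gamma}$ --- is preserved. This is routine because $\alpha$ is a homeomorphism of $\widetilde{M}$ commuting with both $\widetilde{F}$ and the deck group $G$, so it sends $\widetilde{U}_{\widetilde{w}}$ to $\widetilde{U}_{\alpha(\widetilde{w})}$ and compact sets to compact sets; in particular no extension of $\alpha$ to the one-point compactification $\mathbf{S}=\widetilde{M}\sqcup\{\infty\}$ is required.
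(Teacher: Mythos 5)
Your proposal is correct and follows essentially the same route as the paper: conjugating the isotopy to $\widetilde{I}_1'=\alpha\circ\widetilde{I}_1\circ\alpha^{-1}$, which fixes $\alpha(\widetilde{a})$ and $\alpha(\widetilde{b})$, and using that $\alpha$ transports the curves so that $\widetilde{\gamma}\wedge\widetilde{\Gamma}^n_{\widetilde{I}_1,\widetilde{z}}=\alpha(\widetilde{\gamma})\wedge\widetilde{\Gamma}^n_{\widetilde{I}_1',\alpha(\widetilde{z})}$, then concluding from Definition~\ref{def:Intersection number density}. You merely spell out the bookkeeping (matching of components, finiteness of intersections) that the paper leaves implicit.
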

\begin{proof}
Let $\widetilde{\gamma}$ be any simple path from $\widetilde{a}$ to
$\widetilde{b}$. Observe that the isotopy
$\widetilde{I}\,'_1=\alpha\circ\widetilde{I}_1\circ\alpha^{-1}$
fixes $\alpha(\widetilde{a})$ and $\alpha(\widetilde{b})$,
$\widetilde{\gamma}\wedge\widetilde{\Gamma}_{\widetilde{I}_1,\widetilde{z}}^n
=\alpha(\widetilde{\gamma})\wedge\widetilde{\Gamma}_{\widetilde{I}\,'_1,\alpha(\widetilde{z})}^n$
for every $n$. The proposition follows from Definition
\ref{def:Intersection number density}.
\end{proof}

Let $H$ be an orientation preserving homeomorphism of $M$ and
$\widetilde{H}$ be a lift of $H$ to $\widetilde{M}$. Consider the
time-one map $H\circ F\circ H^{-1}$ of the isotopy $I'=H\circ I\circ
H^{-1}$ and write the time-one map of the identity isotopy
$\widetilde{I}^{~'}$ as $\widetilde{H}\circ \widetilde{F}\circ
\widetilde{H}^{\,-1}$, where $\widetilde{I}^{~'}$ is the lift of
$I'$ to $\widetilde{M}$. Similarly to the Proposition
\ref{prop:alpha- one exists then
 other exists}, we have the following result:

\begin{prop}\label{prop:linking number and conjugation}
For every distinct fixed points $\widetilde{a}$, $\widetilde{b}$ of
$\widetilde{F}$ and every $z\in
\mathrm{Rec}^+(F)\setminus\pi(\{\widetilde{a},\widetilde{b}\})$, if
$i(\widetilde{F};\widetilde{a},\widetilde{b},z)$ exists, then
$i(\widetilde{H}\circ \widetilde{F}\circ
\widetilde{H}^{\,-1};\widetilde{H}(\widetilde{a}),\widetilde{H}(\widetilde{b}),H(z))$
also exists and $$i(\widetilde{H}\circ \widetilde{F}\circ
\widetilde{H}^{\,-1};\widetilde{H}(\widetilde{a}),\widetilde{H}(\widetilde{b}),H(z))
=i(\widetilde{F};\widetilde{a},\widetilde{b},z).$$
\end{prop}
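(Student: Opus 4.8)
The plan is to follow the template of the proof of Proposition~\ref{prop:alpha- one exists then other exists}, transporting by $\widetilde{H}$ every piece of data entering the definition of the linking number and checking that all the relevant algebraic intersection numbers are preserved. First I would fix a simple path $\widetilde{\gamma}$ from $\widetilde{a}$ to $\widetilde{b}$ and, using Lemma~\ref{rem:identity isotopies fix three points on sphere}, an identity isotopy $\widetilde{I}_1$ from $\mathrm{Id}_{\widetilde{M}}$ to $\widetilde{F}$ that fixes $\widetilde{a}$ and $\widetilde{b}$. Then $\widetilde{I}\,'_1=\widetilde{H}\circ\widetilde{I}_1\circ\widetilde{H}^{\,-1}$ is an identity isotopy from $\mathrm{Id}_{\widetilde{M}}$ to $\widetilde{H}\circ\widetilde{F}\circ\widetilde{H}^{\,-1}$ fixing $\widetilde{H}(\widetilde{a})$ and $\widetilde{H}(\widetilde{b})$, and $\widetilde{H}(\widetilde{\gamma})$ is a simple path joining these two points. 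Since $\widetilde{H}$ is a lift of $H$ we have $\pi\circ\widetilde{H}=H\circ\pi$, and since $H$ is orientation preserving so is $\widetilde{H}$; in particular $\pi(\{\widetilde{H}(\widetilde{a}),\widetilde{H}(\widetilde{b})\})=H(\pi(\{\widetilde{a},\widetilde{b}\}))$, so $H(z)$ avoids $\pi(\{\widetilde{H}(\widetilde{a}),\widetilde{H}(\widetilde{b})\})$.

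Next I would transport the first-return dynamics. From $(H\circ F\circ H^{-1})^n=H\circ F^n\circ H^{-1}$ it follows that $H(z)\in\mathrm{Rec}^+(H\circ F\circ H^{-1})$, that $H(U)$ is an open disk around $H(z)$ contained in $M\setminus\pi(\{\widetilde{H}(\widetilde{a}),\widetilde{H}(\widetilde{b})\})$ whenever $U\subset M\setminus\pi(\{\widetilde{a},\widetilde{b}\})$ is one around $z$, and that the first return time and first return map of $H\circ F\circ H^{-1}$ at $H(z)$ satisfy $\tau'(\Phi'^{\,j}(H(z)))=\tau(\Phi^j(z))$ and $\Phi'^{\,j}(H(z))=H(\Phi^j(z))$ for every $j$; hence $\tau'_n(H(z))=\tau_n(z)$. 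Choosing the auxiliary simple paths in $H(U)$ to be the $H$-images of those in $U$, and noting that $\widetilde{H}$ maps the component $\widetilde{U}_{\widetilde{z}}$ of $\pi^{-1}(U)$ onto the component of $\pi^{-1}(H(U))$ containing $\widetilde{H}(\widetilde{z})$, I would check directly from the formulas defining $\widetilde{\Gamma}^n_{\widetilde{I}_1,\widetilde{z}}$ that $\widetilde{H}$ sends this curve to $\widetilde{\Gamma}^n_{\widetilde{I}\,'_1,\widetilde{H}(\widetilde{z})}$ --- the trajectory piece goes to a trajectory piece of $\widetilde{I}\,'_1$ because $\widetilde{H}$ conjugates the two isotopies and $\tau_n(z)=\tau'_n(H(z))$, and the lifted connecting path goes to the correctly chosen lift in $\widetilde{H}(\widetilde{U}_{\widetilde{\Phi}^n(\widetilde{z})})$. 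Taking the product over the bijectively corresponding lifts then gives $\widetilde{H}\big(\widetilde{\Gamma}^n_{\widetilde{I}_1,z}\big)=\widetilde{\Gamma}^n_{\widetilde{I}\,'_1,H(z)}$.

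Finally, since $\widetilde{H}$ is an orientation-preserving homeomorphism of $\widetilde{M}$ it maps proper paths to proper paths and preserves the algebraic intersection number between a compact path and such a multi-loop; using the independence of $L_n$ from the auxiliary path and from the choice of isotopy fixing the two points (Remark~\ref{rem:some result of of sphere delete three points}) I would obtain
$$L_n(\widetilde{H}\circ\widetilde{F}\circ\widetilde{H}^{\,-1};\widetilde{H}(\widetilde{a}),\widetilde{H}(\widetilde{b}),H(z))=\widetilde{H}(\widetilde{\gamma})\wedge\widetilde{\Gamma}^n_{\widetilde{I}\,'_1,H(z)}=\widetilde{\gamma}\wedge\widetilde{\Gamma}^n_{\widetilde{I}_1,z}=L_n(\widetilde{F};\widetilde{a},\widetilde{b},z).$$
Since $H$ and $H^{-1}$ are continuous, a subsequence $\{\Phi'^{\,n_k}(H(z))\}_{k\geq1}$ converges to $H(z)$ if and only if $\{\Phi^{n_k}(z)\}_{k\geq1}$ converges to $z$; dividing the displayed identity by $\tau'_{n_k}(H(z))=\tau_{n_k}(z)$ and passing to the limit then yields, via Definition~\ref{def:Intersection number density}, that $i(\widetilde{H}\circ\widetilde{F}\circ\widetilde{H}^{\,-1};\widetilde{H}(\widetilde{a}),\widetilde{H}(\widetilde{b}),H(z))$ exists and equals $i(\widetilde{F};\widetilde{a},\widetilde{b},z)$. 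The only step needing genuine care is the middle one: the bookkeeping showing that $\widetilde{H}$ carries each choice (the lift $\widetilde{z}$, the component $\widetilde{U}_{\widetilde{\Phi}^n(\widetilde{z})}$, the lifted connecting path, the return indices) to the corresponding legitimate choice on the conjugated side; everything else is formal.
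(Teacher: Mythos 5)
Your proposal is correct and follows essentially the same route as the paper, which states this proposition without a written proof beyond the remark that it is proved ``similarly to Proposition \ref{prop:alpha- one exists then
 other exists}'': conjugate the isotopy $\widetilde{I}_1$ by $\widetilde{H}$, transport the path, disk, first-return data and lifted curves, and use invariance of the algebraic intersection number under the orientation-preserving homeomorphism $\widetilde{H}$ together with Definition \ref{def:Intersection number density}. Your write-up is simply the detailed bookkeeping behind that one-line analogy, so nothing further is needed.
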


\begin{prop}\label{lem:i is 3coboundary for point}
For every distinct fixed points $\widetilde{a}$, $\widetilde{b}$ and
$\widetilde{c}$ of $\widetilde{F}$, and every $z\in
\mathrm{Rec}^+(F)\setminus\pi(\{\widetilde{a},\widetilde{b},\widetilde{c}\})$,
if two among the three linking numbers
$i(\widetilde{F};\widetilde{a}, \widetilde{b},z)$,
$i(\widetilde{F};\widetilde{b},\widetilde{c},z)$ and
$i(\widetilde{F};\widetilde{c},\widetilde{a},z)$ exist, then the
last one also exists and we have
$$i(\widetilde{F};\widetilde{a},\widetilde{b},z)+i(\widetilde{F};\widetilde{b},
\widetilde{c},z)+i(\widetilde{F};\widetilde{c},\widetilde{a},z)=0.$$
\end{prop}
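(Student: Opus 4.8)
The plan is to reduce the statement, by means of Equation~\ref{eq: Ln Birkhoff sum}, to an additivity property of the maps $L_1$, and to prove that property by a homological computation on the sphere $\mathbf S$. Fix one open disk $U\ni z$ with $U\subset M\setminus\pi(\{\widetilde a,\widetilde b,\widetilde c\})$ and use it to define the three maps $L_n(\widetilde F;\cdot,\cdot,z)$ simultaneously. By Equation~\ref{eq: Ln Birkhoff sum} it is enough to prove that for every first return point $w$ of $U$
\[
L_1(\widetilde F;\widetilde a,\widetilde b,w)+L_1(\widetilde F;\widetilde b,\widetilde c,w)+L_1(\widetilde F;\widetilde c,\widetilde a,w)=0:
\]
summing this over $w=\Phi^j(z)$, $0\le j<n$, gives the same identity for $L_n$, and then dividing by $\tau_n(z)$ and letting $\Phi^{n_k}(z)\to z$ shows, via Definition~\ref{def:Intersection number density}, that when two of the three linking numbers exist so does the third, the three summing to zero. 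The linking numbers need not be assumed to exist for the displayed identity itself; that hypothesis enters only in this last limiting step.

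To prove the displayed identity, apply Lemma~\ref{rem:identity isotopies fix three points on sphere} to the three points $\widetilde a,\widetilde b,\widetilde c$ of $\mathbf S$ to obtain an isotopy $\widetilde J$ on $\mathbf S$, from $\mathrm{Id}_{\mathbf S}$ to the extension of $\widetilde F$, fixing $\widetilde a,\widetilde b,\widetilde c$ simultaneously, and choose simple arcs $\widetilde\gamma_{ab},\widetilde\gamma_{bc},\widetilde\gamma_{ca}$ joining $\widetilde a$ to $\widetilde b$, $\widetilde b$ to $\widetilde c$, $\widetilde c$ to $\widetilde a$, whose concatenation is a loop $\triangle$ of $\mathbf S$, which we take disjoint from the loop $t\mapsto\widetilde J_t(\infty)$. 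Recall that $L_1(\widetilde F;\widetilde a,\widetilde b,w)=\sum_{\pi(\widetilde w)=w}\widetilde\gamma_{ab}\wedge\widetilde\Gamma^1_{\widetilde I_1,\widetilde w}$ for an isotopy $\widetilde I_1$ fixing $\widetilde a$ and $\widetilde b$, which (by Lemma~\ref{rem:identity isotopies fix three points on sphere} and Remark~\ref{rem:some result of of sphere delete three points}) we may take fixing $\infty$ as well; similarly for the other two pairs. Now fix a lift $\widetilde w$ of $w$. Viewed as isotopies of the annulus $A_{\widetilde a,\widetilde b}$, both $\widetilde I_1$ and the restriction of $\widetilde J$ run from $\mathrm{Id}$ to $\widetilde F_{\widetilde a,\widetilde b}$, so the lift of $\widetilde F_{\widetilde a,\widetilde b}$ to $\widehat A_{\widetilde a,\widetilde b}$ determined by $\widetilde J$ is $T_{\widetilde a,\widetilde b}^{m_{ab}}$ composed with $\widehat F_{\widetilde a,\widetilde b}$ (the lift determined by $\widetilde I_1$, which fixes the preimages of $\infty$ since $\widetilde I_1$ fixes $\infty$); evaluating at $\infty$ identifies $m_{ab}$ with the winding number of $t\mapsto\widetilde J_t(\infty)$ about $\widetilde a$. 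Since the $\widetilde J$-trajectory of $\widetilde w$ differs from the $\widetilde I_1$-trajectory by $m_{ab}$ core curves of $A_{\widetilde a,\widetilde b}$ inserted at each of the $\tau(w)$ steps,
\[
\widetilde\gamma_{ab}\wedge\widetilde\Gamma^1_{\widetilde J,\widetilde w}=\widetilde\gamma_{ab}\wedge\widetilde\Gamma^1_{\widetilde I_1,\widetilde w}+\varepsilon\, m_{ab}\,\tau(w),
\]
where $\widetilde\Gamma^1_{\widetilde J,\widetilde w}$ is defined by the same recipe with $\widetilde J$ in place of $\widetilde I_1$, and $\varepsilon\in\{\pm1\}$ is the intersection number of a cross-cut of an annulus with a core curve, the same for all three annuli by the convention defining the generators $T$.

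Adding the three such relations and using $\widetilde\gamma_{ab}+\widetilde\gamma_{bc}+\widetilde\gamma_{ca}=\triangle$ gives
\[
\sum_{\mathrm{cyc}}\widetilde\gamma_{ab}\wedge\widetilde\Gamma^1_{\widetilde I_1,\widetilde w}=\triangle\wedge\widetilde\Gamma^1_{\widetilde J,\widetilde w}-\varepsilon\,\tau(w)\,(m_{ab}+m_{bc}+m_{ca}),
\]
and $m_{ab}+m_{bc}+m_{ca}=0$ because it is the sum of the winding numbers of the single loop $t\mapsto\widetilde J_t(\infty)$ about the three points $\widetilde a,\widetilde b,\widetilde c$ of $\mathbf S$. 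Summing the resulting per-lift identity $\sum_{\mathrm{cyc}}\widetilde\gamma_{ab}\wedge\widetilde\Gamma^1_{\widetilde I_1,\widetilde w}=\triangle\wedge\widetilde\Gamma^1_{\widetilde J,\widetilde w}$ over all lifts $\widetilde w$ of $w$, the left side gives the left side of the desired identity (finitely many nonzero terms), while the right side gives $\triangle\wedge\Lambda$, where $\Lambda$ is the multi-loop of $\mathbf S$ obtained from $\{\widetilde\Gamma^1_{\widetilde J,\widetilde w}\}_{\pi(\widetilde w)=w}$ by adding $\infty$; and $\triangle\wedge\Lambda=0$, since any two loops of the sphere have vanishing intersection number.

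I expect the main difficulty to be this last summation rather than any conceptual point: one must verify that adding $\infty$ to the family $\{\widetilde\Gamma^1_{\widetilde J,\widetilde w}\}$ is compatible with the pairing against $\triangle$ — in particular in the multi-path case, where the $\widetilde\Gamma^1_{\widetilde J,\widetilde w}$ are proper arcs to be closed up through $\infty$ — that only finitely many lifts $\widetilde w$ contribute (which is why $\triangle$ was chosen disjoint from $t\mapsto\widetilde J_t(\infty)$), and that the sign conventions for the intersection numbers of arcs, core curves, and the generators $T_{\widetilde a,\widetilde b}$ are coherent across the three annuli.
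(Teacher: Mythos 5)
Your proposal is, in substance, the paper's own proof. Your reference isotopy $\widetilde J$ fixing $\widetilde a,\widetilde b,\widetilde c$ is the paper's $\widetilde I_0$; your per-step correction $\varepsilon\,m_{ab}\,\tau(w)$ is the paper's identity \ref{eq:n homotopy}, whose constant $\widetilde\gamma_1\wedge\widetilde I_0(\infty)$ is exactly your $m_{ab}$; your cancellation $m_{ab}+m_{bc}+m_{ca}=0$ is the paper's observation that $\sum_{j=1}^3\widetilde\gamma_j\wedge\widetilde I_0(\infty)=\triangle\wedge\widetilde I_0(\infty)=0$; and your final ``loop against a cycle of $\mathbf S$'' step is the paper's vanishing of the $\widetilde I_0$-terms. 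The only cosmetic difference is that you prove the identity for $L_1$ at each return point and sum via \ref{eq: Ln Birkhoff sum}, where the paper computes directly with $L_{n_k}$ (Equation \ref{eq:L(abz)+L(bcz)+L(caz)=0}) before passing to the limit; the limiting step, and the fact that existence of two of the three linking numbers forces the third, is handled identically.

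The one step that fails as written is the requirement that $\triangle$ be disjoint from the loop $t\mapsto\widetilde J_t(\infty)$. By Remark \ref{rem:some result of of sphere delete three points}, the class of this loop in $\mathbf S\setminus\{\widetilde a,\widetilde b,\widetilde c\}$ does not depend on the choice of $\widetilde J$, and it is in general nontrivial (this is exactly the statement $m_{ab}\neq 0$, i.e.\ $\widetilde\gamma_1\wedge\widetilde I_0(\infty)\neq 0$, the obstruction discussed in \ref{subsec:boundedness}); when its class winds around $\widetilde a$ relative to $\widetilde b$, every arc from $\widetilde a$ to $\widetilde b$ must meet it, so no choice of $\triangle$ or of $\widetilde J$ achieves the disjointness. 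And you genuinely lean on it: for lifts $\widetilde w$ close to $\infty$ the $\widetilde I_1$-term vanishes while your own relation gives $\widetilde\gamma_{ab}\wedge\widetilde\Gamma^1_{\widetilde J,\widetilde w}=\varepsilon\,m_{ab}\,\tau(w)$, so if $m_{ab}\neq0$ infinitely many strands of $\Lambda$ cross $\triangle$ and the pairing $\triangle\wedge\Lambda$, computed pair by pair, is not an absolutely convergent sum. The repair is exactly the paper's bookkeeping: for each fixed lift sum over the three pairs first, so the corrections cancel by $m_{ab}+m_{bc}+m_{ca}=0$ and only the finitely many lifts whose $\widetilde I_j$-trajectories meet $\triangle$ (the paper's set $C_z$, finite because the $\widetilde I_j$ fix $\infty$) survive; the remaining finite sum is $\sum_{\widetilde w}\triangle\wedge\widetilde\Gamma^1_{\widetilde J,\widetilde w}$, which vanishes because $\triangle$ bounds in $\mathbf S$ and the map sending each lift of $w$ to the terminal lift of its curve is a bijection of $\pi^{-1}(w)$ --- this is the rigorous form of your cycle argument. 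With that reordering your argument is complete and coincides with the paper's.
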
\smallskip

Before proving Proposition \ref{lem:i is 3coboundary for point}, we
introduce some notations and recall some results of the annulus.

If $\{\gamma_i\}_{1\leq i\leq k}$ and $\{\gamma'_j\}_{1\leq j\leq
k'}$ are two finite families of loops or compact paths in
$\mathbf{S}=\widetilde{M}\cup\{\infty\}$ such that
$\prod_{i=1}^k\gamma_i$ and $\prod_{j=1}^{k'}\gamma'_j$ are well
defined (in the concatenation sense) (see \ref{sec:Identity
isotopies}) and the algebraic intersection number
$\left(\prod_{i=1}^k\gamma_i\right)\wedge\left(\prod_{j=1}^{k'}\gamma'_j\right)$
is well defined (see \ref{sec:the algebraic intersection number}),
then we formally write
$$\left(\prod_{i=1}^k\gamma_i\right)\wedge\left(\prod_{j=1}^{k'}\gamma'_j\right)=\sum_{i,j}\gamma_i\wedge\gamma'_j.$$

Recall that $\mathbb{A}=\mathbb{R}/\mathbb{Z}\times\mathbb{R}$ is
the open annulus and $T: (x,y)\mapsto(x+1,y)$ is the generator of
the covering transformation group. If $I=(h_t)_{t\in [0,1]}$ with
$h_0=h_1=\mathrm{Id}_{\mathbb{A}}$ is a loop in
$\mathrm{Homeo}_*(\mathbb{A})$, write $[I]_1\in
\pi_1(\mathrm{Homeo}_*(\mathbb{A}))$ for the homotopy class of $I$.
Recall that $\pi_1(\mathrm{Homeo}_*(\mathbb{A}))\simeq \mathbb{Z}$.
Therefore, we may write
$\pi_1(\mathrm{Homeo}_*(\mathbb{A}))=\bigcup_{k\in\,\mathbb{Z}}\mathscr{C}_k$
where $\mathscr{C}_k$ is the class which satisfies that for every
$[I]_1\in \mathscr{C}_k$, any lift $\widetilde{I}$ of $I$ to the
universal covering space $\widetilde{\mathbb{A}}$ satisfies
$\widetilde{h}_1(\widetilde{z})-\widetilde{h}_0(\widetilde{z})=T^k(\widetilde{z})$
for every $\widetilde{z}\in\widetilde{\mathbb{A}}$.\bigskip

\begin{proof}[Proof of Proposition \ref{lem:i is 3coboundary for point}]
Suppose that $\widetilde{\gamma}_1$, $\widetilde{\gamma}_2$ and
$\widetilde{\gamma}_3$ are oriented simple paths from
$\widetilde{a}$ to $\widetilde{b}$,  $\widetilde{b}$ to
$\widetilde{c}$ and  $\widetilde{c}$ to $\widetilde{a}$,
respectively. We choose isotopies $\widetilde{I}_j$ ($j=0,1,2,3$)
such that $\widetilde{I}_1$ fixes $\widetilde{a},\widetilde{b}$ and
$\infty$, $\widetilde{I}_2$ fixes $\widetilde{b},\widetilde{c}$ and
$\infty$, $\widetilde{I}_3$ fixes $\widetilde{c},\widetilde{a}$ and
$\infty$, and $\widetilde{I}_0$ fixes $\widetilde{a},\widetilde{b}$
and $\widetilde{c}$.\bigskip


For every $z\in M\setminus\pi(\{\widetilde{a}
,\widetilde{b},\widetilde{c}\})$, every lift $\widetilde{z}$ of $z$,
every $j\in\{1,2,3\}$ and every $n\geq1$, the path
$\widetilde{I}^n_0(\widetilde{z})(\widetilde{I}^{n}_j(\widetilde{z}))^{-1}$
is a loop where $(\widetilde{I}^{n}_j(\widetilde{z}))^{-1}$ is the
inverse of the path $\widetilde{I}^{n}_j(\widetilde{z})$. We claim
that
\begin{equation}\label{eq:n homotopy}
    \widetilde{\gamma}_j\wedge\left(\widetilde{I}^n_0(\widetilde{z})(\widetilde{I}^{n}_j(\widetilde{z}))^{-1}\right)
=\widetilde{\gamma}_j\wedge\widetilde{I}^n_0(\widetilde{z})-
\widetilde{\gamma}_j\wedge\widetilde{I}^n_j(\widetilde{z})=n\cdot(\widetilde{\gamma}_j\wedge\widetilde{I}_0(\infty)).
\end{equation}

Indeed, let $\mathbb{A}_j$ ($j=1,2,3)$ be respectively
$\mathbf{S}\setminus \{\widetilde{a},\widetilde{b}\}$,
$\mathbf{S}\setminus \{\widetilde{b},\widetilde{c}\}$ and
$\mathbf{S}\setminus \{\widetilde{c},\widetilde{a}\}$. For every
$n\in \mathbb{N}$, considering the loops
$\widetilde{I}_j^{-n}\widetilde{I}_0^n\subset
\mathrm{Homeo}_*(\mathbb{A}_j)$ (see \ref{the product operate of the
symplectic group of isotopy}) where $\widetilde{I}_j^{-1}$ is the
inverse of $\widetilde{I}_j$, we have
$[\widetilde{I}_j^{-n}\widetilde{I}_0^n]_1\in
\mathscr{C}^j_{n\cdot(\widetilde{\gamma}_j
\wedge\widetilde{I}_0(\infty))}$ $(j=1,2,3)$ where $\mathscr{C}^j_k$
is a class in $\pi_1(\mathrm{Homeo}_*(\mathbb{A}_j))$. Observing
that
$(\widetilde{I}_j^{-n}\widetilde{I}_0^n)(\widetilde{z})=\widetilde{I}^n_0
(\widetilde{z})(\widetilde{I}^{n}_j(\widetilde{z}))^{-1}$, the claim
\ref{eq:n homotopy} follows. \bigskip

In the case where $z\in \mathrm{Fix}(F)\setminus\pi(\{\widetilde{a}
,\widetilde{b},\widetilde{c}\})$, for every lift $\widetilde{z}$ of
$z$, we have
$$\widetilde{\gamma}_j\wedge\widetilde{I}_0(\widetilde{z})-
\widetilde{\gamma}_j\wedge\widetilde{I}_j(\widetilde{z})=\widetilde{\gamma}_j
\wedge\widetilde{I}_0(\infty)\quad(j=1,2,3).$$

Write $C_z$ for the set of points $\widetilde{z}\in\pi^{-1}(\{z\})$
such that
$\widetilde{I}_j(\widetilde{z})\cap\bigcup\limits_{j'=1}^3\widetilde{\gamma}_{j'}
\neq\emptyset$ for every $j$. As all $\widetilde{I}_j$ fix $\infty$,
we know that $C_z$ is finite.\smallskip

Recall that
$$i(\widetilde{F};\widetilde{a},\widetilde{b},z)=\widetilde{\gamma}_1\wedge\widetilde{\Gamma}_{\widetilde{I}_1,z}^1,\quad
i(\widetilde{F};\widetilde{b},\widetilde{c},z)=\widetilde{\gamma}_2\wedge\widetilde{\Gamma}_{\widetilde{I}_2,z}^1\quad
\mathrm{and}\quad
i(\widetilde{F};\widetilde{c},\widetilde{a},z)=\widetilde{\gamma}_3\wedge\widetilde{\Gamma}_{\widetilde{I}_3,z}^1$$
where
$$\widetilde{\Gamma}_{\widetilde{I}_j,z}^1=\prod_{\pi(\widetilde{z})=z}\widetilde{I}_j(\widetilde
z)\quad (j=1,2,3).$$

Observe that$$\sum_{j=1}^3\sum_{\widetilde{z}\in
C_z}\widetilde{\gamma}_j\wedge\widetilde{I}_0(\widetilde{z})=\sum_{\widetilde{z}\in
C_z}\sum_{j=1}^3\widetilde{\gamma}_j\wedge\widetilde{I}_0(\widetilde{z})=0
\mathrm{,}\quad
\sum_{j=1}^3\widetilde{\gamma}_j\wedge\widetilde{I}_0(\infty)=0$$
and
$$\widetilde{\gamma}_j\wedge\widetilde{\Gamma}_{\widetilde{I}_j,z}^1=\widetilde{\gamma}_j\wedge\prod_{\pi(\widetilde{z})
=z}\widetilde{I}_j(\widetilde{z})=\sum_{\widetilde{z}\in
C_z}\widetilde{\gamma}_j\wedge\widetilde{I}_j(\widetilde{z})\quad
(j=1,2,3).$$ We get
\begin{eqnarray*}
  &&i(\widetilde{F};\widetilde{a},\widetilde{b},z)+i(\widetilde{F};\widetilde{b},
\widetilde{c},z)+i(\widetilde{F};\widetilde{c},\widetilde{a},z)\\&=&
\sum_{j=1}^3\left(\widetilde{\gamma}_j\wedge\widetilde{\Gamma}_{\widetilde{I}_j,z}^1\right)\\
&=&
-\sum_{j=1}^3\,\sum_{\widetilde{z}\in
C_z}\left(\widetilde{\gamma}_j\wedge\widetilde{I}_0(\widetilde{z})-
\widetilde{\gamma}_j\wedge\widetilde{I}_j(\widetilde{z})\right)\\
   &=&  -\sum_{\widetilde{z}\in
C_z}\,\sum_{j=1}^3\widetilde{\gamma}_j \wedge\widetilde{I}_0(\infty)\\
   &=& 0.
\end{eqnarray*}
Hence we have proved the proposition in this case.\smallskip

In the case where $z\in \mathrm{Rec}^+(F)\setminus\mathrm{Fix}(F)$,
recall that
$$\widetilde{\Gamma}_{\widetilde{I}_j,\widetilde{z}}^n=\widetilde{I}_j^{\,\tau_n(z)}(\widetilde{z})
\widetilde{\gamma}_{\widetilde{\Phi}^n(\widetilde{z}),\widetilde{z}_n}\,\,
(0\leq j\leq3),$$ where $\widetilde{z}_n\in \pi^{-1}(\{z\})\cap
\widetilde{U}_{\widetilde{\Phi}^n(\widetilde{z})}$ and
$\widetilde{\gamma}_{\widetilde{\Phi}^n(\widetilde{z}),\widetilde{z}_n}$
is the lift of $\gamma_{\Phi^n(z),z}$ in
$\widetilde{U}_{\widetilde{\Phi}^n(\widetilde{z})}$. For every
$1\leq j\leq3$, we have
$\widetilde{\Gamma}_{\widetilde{I}_0,\widetilde{z}}^{n}(\widetilde{\Gamma}^n_{\widetilde{I}_j,\widetilde{z}})^{-1}$
is a loop where
$(\widetilde{\Gamma}^n_{\widetilde{I}_j,\widetilde{z}})^{-1}$ is the
inverse of the path
$\widetilde{\Gamma}^n_{\widetilde{I}_j,\widetilde{z}}$\,. Therefore,
for every lift $\widetilde{z}$ of $z$ and $n\geq1$, we have
$$\widetilde{\gamma}_j\wedge\left(\widetilde{\Gamma}_{\widetilde{I}_0,\widetilde{z}}^{n}(\widetilde{\Gamma}^n_{
\widetilde{I}_j,\widetilde{z}})^{-1}\right)=\widetilde{\gamma}_j\wedge\widetilde{\Gamma}_{\widetilde{I}_0,\widetilde{z}}^n-
\widetilde{\gamma}_j\wedge\widetilde{\Gamma}_{\widetilde{I}_j,\widetilde{z}}^n=\tau_n(z)\cdot(\widetilde{\gamma}_j
\wedge\widetilde{I}_0(\infty))\quad(j=1,2,3).$$

For every $n$, write $C_z^{\,n}$ for the set of points
$z\in\pi^{-1}(\{z\})$ such that
$\widetilde{\Gamma}_{\widetilde{I}_j,\widetilde{z}}^n\cap\bigcup\limits_{j=1}^3\widetilde{\gamma}_j
\neq\emptyset$. Here again, we know that $C_z^{\,n}$ is
finite.\smallskip

Recall that
$$L_n(\widetilde{F};\widetilde{a},\widetilde{b},z)=\widetilde{\gamma}_1\wedge\widetilde{\Gamma}_{\widetilde{I}_1,z}^n,\quad
L_n(\widetilde{F};\widetilde{b},\widetilde{c},z)=\widetilde{\gamma}_2\wedge\widetilde{\Gamma}_{\widetilde{I}_1,z}^n\quad
\mathrm{and}\quad
L_n(\widetilde{F};\widetilde{c},\widetilde{a},z)=\widetilde{\gamma}_3\wedge\widetilde{\Gamma}_{\widetilde{I}_1,z}^n$$
where
$$\widetilde{\Gamma}_{\widetilde{I}_1,z}^n=\prod_{\pi(\widetilde{z})=z}\widetilde
{\Gamma}_{\widetilde{I}_1,\widetilde{z}}^n\,.$$

Then for any subsequence $\{\Phi^{n_{k}}(z)\}_{k\geq 1}$ which
converges to $z$, similarly to the fixed point case, we get
\begin{eqnarray}\label{eq:L(abz)+L(bcz)+L(caz)=0}
 &&\frac{L_{n_k}(\widetilde{F};\widetilde{a},\widetilde{b},z)+L_{n_k}(\widetilde{F};\widetilde{b},
\widetilde{c},z)+L_{n_k}(\widetilde{F};\widetilde{c},\widetilde{a},z)}{\tau_{n_k}(z)}\\
&=&\frac{1}{\tau_{n_k}(z)}
\sum\limits_{j=1}^3\left(\widetilde{\gamma}_j\wedge\widetilde{\Gamma}_{\widetilde{I}_j,z}^{n_k}\right)\nonumber\\
&=&-\frac{1}{\tau_{n_k}(z)}\sum\limits_{j=1}^3\,\sum\limits_{\widetilde{z}\in
C^{n_k}_z}\left(\widetilde{\gamma}_j\wedge\widetilde{\Gamma}_{\widetilde{I}_0,\widetilde{z}}^{n_k}
-\widetilde{\gamma}_j\wedge\widetilde{\Gamma}_{\widetilde{I}_j,\widetilde{z}}^{n_k}\right)\nonumber\\
&=&-\sum_{\widetilde{z}\in
C^{n_k}_z}\,\sum_{j=1}^3\widetilde{\gamma}_j \wedge\widetilde{I}_0(\infty)\nonumber\\
&=&0\,.\nonumber
\end{eqnarray}
Letting $k\rightarrow+\infty$ in Equation
\ref{eq:L(abz)+L(bcz)+L(caz)=0}, we have completed the proposition.
\end{proof}\bigskip

\section{Boundedness and Existence of
the Linking Number}\label{sec:i(F;a,b,z) is bounded}

This section is divided into two parts. In the first part, we study
the boundedness of the linking number when it exists. In the second
part, we study the existence and boundedness of the linking number
if the map $F$ preserves a measure on $M$. The tools we will use are
Franks' Lemma and Birkhoff Ergodic Theorem.

\subsection{Boundedness}\label{sec:boundedness}In this section,
let $\widetilde{a}$ and $\widetilde{b}$ be two distinct fixed points
of $\widetilde{F}$. We suppose that $I$ satisfies WB-property at
$\widetilde{a}$ and $\widetilde{b}$. By Lemma \ref{lem:rotation
number and weak boundess at two point}, there is a positive number
$N_{\widetilde{a},\widetilde{b}}$ such that
$\mathrm{Rot}_{\mathrm{Fix}(\widetilde{F}_{\widetilde{a},\widetilde{b}})}
(\widehat{F}_{\widetilde{a},\widetilde{b}})\subset
[-N_{\widetilde{a},\widetilde{b}},N_{\widetilde{a},\widetilde{b}}]$.\smallskip

Fix an isotopy $\widetilde{I}_1$ from $\mathrm{Id}_{\widetilde{M}}$
to $\widetilde{F}$ which fixes $\widetilde{a}$ and $\widetilde{b}$.
Let $\widetilde{\gamma}$ be any oriented path in $\widetilde{M}$
from $\widetilde{a}$ to $\widetilde{b}$. Fix an open disk
$\widetilde{W}$ that contains $\infty$ and is disjoint from
$\widetilde{\gamma}$. We choose an open disk $\widetilde{V}\subset
\widetilde{W}$ that contains $\infty$ such that for every
$\widetilde{z}\in \widetilde{V}$, we have
$\widetilde{I}_1(\widetilde{z})\subset \widetilde{W}$. Observe that
if $\widehat{\infty}$ is a given lift of $\infty$ in
$\widehat{A}_{\widetilde{a},\widetilde{b}}$, if $\widehat{W}$ (resp.
$\widehat{V}$) is the connected component of
$\pi^{-1}(\widetilde{W})$ (resp. $\pi^{-1}(\widetilde{V})$) that
contains $\widehat{\infty}$, then we have
$\widehat{F}_{\widetilde{a},\widetilde{b}}(\widehat{V})\subset
\widehat{W}$, which implies that $\widehat{V}$ is free for every
other lift $\widehat{F}_{\widetilde{a},\widetilde{b}}\circ
T_{\widetilde{a},\widetilde{b}}^k$, where $k\in
\mathbb{Z}\setminus\{0\}$. Let $A^c$ denote the complement of a set
$A$. For every $z\in
M\setminus\pi(\{\widetilde{a},\widetilde{b}\})$, write
$X_z=\pi^{-1}(\{z\})\cap (\widetilde{V}\cap
\widetilde{F}_{\widetilde{a},\widetilde{b}}^{-1}(\widetilde{V}))^c$.
Observe that there exists $K_{\widetilde{a},\widetilde{b}}\in
\mathbb{N}$ such that $\sharp X_z\leq
K_{\widetilde{a},\widetilde{b}}$ for every $z\in
M\setminus\pi(\{\widetilde{a},\widetilde{b}\})$. 
\smallskip

In the case where $z\in \mathrm{Rec}^+(F)\setminus\mathrm{Fix}(F)$,
we choose an open disk $U$ that contains $z$ and is free for $F$. As
the value $i(\widetilde{F};\widetilde{a},\widetilde{b},z)$ depends
neither on $\widetilde{\gamma}$ nor on $U$, we can always suppose
that $\widetilde{\gamma}\cap\pi^{-1}(U) =\emptyset$ by perturbing
$\widetilde{\gamma}$ a little and shrinking $U$ if necessary. For
every $n\geq1$, write
$$X_{z}^n=\pi^{-1}(\{z,F(z),\cdots,F^{\tau_n(z)-1}(z)\})\cap(\widetilde{V}\cap
\widetilde{F}^{-1}_{\widetilde{a},\widetilde{b}}(\widetilde{V}))^c.$$
Observe that $\sharp X_{z}^n\leq
\tau_n(z)K_{\widetilde{a},\widetilde{b}}$.\smallskip

The following result is the main proposition of this section.
\begin{prop}\label{prop:l(a,b,x)divid tau(x)is boundedonU}The following two statements hold:
\begin{itemize}\item If $z\in
\mathrm{Fix}(F)\setminus\pi(\{\widetilde{a},\widetilde{b}\})$, we
have $|i(\widetilde{F};\widetilde{a},\widetilde{b},z)|<
K_{\widetilde{a},\widetilde{b}}
(N_{\widetilde{a},\widetilde{b}}+1)$.
\item If $z\in
\mathrm{Rec}^+(F)\setminus\mathrm{Fix}(F)$ and
$i(\widetilde{F};\widetilde{a},\widetilde{b},z)$ is defined, then
$|i(\widetilde{F};\widetilde{a},\widetilde{b},z)|\leq
K_{\widetilde{a},\widetilde{b}}K_U$, where
 $K_U\in \mathbb{N}$ depends only on $U$.\end{itemize}
\end{prop}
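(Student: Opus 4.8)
The plan is to expand $L_n(\widetilde{F};\widetilde{a},\widetilde{b},z)$ (and, in the fixed-point case, $i(\widetilde{F};\widetilde{a},\widetilde{b},z)=L_1(\widetilde{F};\widetilde{a},\widetilde{b},z)$) into a sum of one--unit--time contributions and to show that only boundedly many of these are nonzero. Writing $\widetilde{I}_1^{\,\tau_n(z)}(\widetilde{z})=\prod_{i=0}^{\tau_n(z)-1}\widetilde{I}_1(\widetilde{F}^{\,i}(\widetilde{z}))$ and using that, after the perturbation of $\widetilde{\gamma}$ making $\widetilde{\gamma}\cap\pi^{-1}(U)=\emptyset$, the connecting paths $\widetilde{\gamma}_{\widetilde{\Phi}^n(\widetilde{z}),\widetilde{z}_n}$ lie in $\pi^{-1}(U)$ and so contribute nothing, one obtains
$$L_n(\widetilde{F};\widetilde{a},\widetilde{b},z)=\sum_{i=0}^{\tau_n(z)-1}\ \sum_{\pi(\widetilde{w})=F^{\,i}(z)}\widetilde{\gamma}\wedge\widetilde{I}_1(\widetilde{w}).$$
If $\widetilde{w}\in\widetilde{V}\cap\widetilde{F}^{-1}_{\widetilde{a},\widetilde{b}}(\widetilde{V})$ then $\widetilde{I}_1(\widetilde{w})\subset\widetilde{W}$, which is disjoint from $\widetilde{\gamma}$, so $\widetilde{\gamma}\wedge\widetilde{I}_1(\widetilde{w})=0$; hence in each inner sum only the lifts $\widetilde{w}\in X_{F^{\,i}(z)}$ matter, and there are at most $K_{\widetilde{a},\widetilde{b}}$ of them. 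It remains to estimate each surviving term.

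Consider first the fixed-point statement. If $z\in\mathrm{Fix}_{\mathrm{Cont},I}(F)$, every lift $\widetilde{w}$ is a fixed point of $\widetilde{F}$, the curve $\widetilde{\Gamma}^1_{\widetilde{I}_1,\widetilde{w}}=\widetilde{I}_1(\widetilde{w})$ is a loop, and by Lemma~\ref{lem:P} one has $\widetilde{\gamma}\wedge\widetilde{I}_1(\widetilde{w})=\rho_{A_{\widetilde{a},\widetilde{b}},\widehat{F}_{\widetilde{a},\widetilde{b}}}(\widetilde{w})$, which lies in $[-N_{\widetilde{a},\widetilde{b}},N_{\widetilde{a},\widetilde{b}}]$ by Lemma~\ref{lem:rotation number and weak boundess at two point}; summing the at most $K_{\widetilde{a},\widetilde{b}}$ nonzero terms gives $|i(\widetilde{F};\widetilde{a},\widetilde{b},z)|\le K_{\widetilde{a},\widetilde{b}}N_{\widetilde{a},\widetilde{b}}<K_{\widetilde{a},\widetilde{b}}(N_{\widetilde{a},\widetilde{b}}+1)$. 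If $z\in\mathrm{Fix}(F)\setminus\mathrm{Fix}_{\mathrm{Cont},I}(F)$, then $\widetilde{F}(\widetilde{w})=\alpha(\widetilde{w})$ for some $\alpha\in G\setminus\{\mathrm{id}\}$, so $\widetilde{w}$ is not fixed by $\widetilde{F}_{\widetilde{a},\widetilde{b}}$, but $\alpha^{\,n}(\widetilde{w})\to\infty$ in $\mathbf{S}$ as $n\to\pm\infty$; consequently the $\widehat{F}_{\widetilde{a},\widetilde{b}}$-orbit of a lift of $\widetilde{w}$ enters, after a number of steps controlled by the diameter of $\mathbf{S}\setminus\widetilde{V}$ and the uniform lower bound on the translation lengths of nontrivial elements of $G$, a fixed translate $T^{k_\pm}(\widehat{V})$. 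Feeding the resulting periodic disk chain --- built from $\widetilde{V}$, which by construction satisfies $\widehat{F}_{\widetilde{a},\widetilde{b}}(\widehat{V})\cap T^k(\widehat{V})\neq\emptyset$ iff $k=0$, and from a small free disk around $\widetilde{w}$ --- into Lemma~\ref{lem:disk chain of Franks lemma} bounds $|\widetilde{\gamma}\wedge\widetilde{I}_1(\widetilde{w})|$ linearly in $N_{\widetilde{a},\widetilde{b}}$, and after absorbing the number of transition steps into the constant $K_{\widetilde{a},\widetilde{b}}$ and summing over the surviving lifts one again gets the stated bound.

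For the positively recurrent case the free disk $U$ enters. Since $U$ is free for $F$, every component of $\pi^{-1}(U)$ is a free disk for $\widetilde{F}_{\widetilde{a},\widetilde{b}}$, and the recurrence of $z$ produces periodic disk chains in $A_{\widetilde{a},\widetilde{b}}$ running through such disks and through $\widetilde{V}$; Lemma~\ref{lem:disk chain of Franks lemma} then furnishes a constant $K_U$, depending only on $U$ (and $N_{\widetilde{a},\widetilde{b}}$), with $|\widetilde{\gamma}\wedge\widetilde{I}_1(\widetilde{w})|\le K_U$ for every lift $\widetilde{w}$ of every point of the orbit of $z$. This is the analogue, in the present non-smooth situation, of the remark following Lemma~\ref{lem:diffeomorphism on a and b} that its proof controls the rotation of every point of the annulus, the disk-chain estimate now replacing differentiability. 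Hence each inner sum above has modulus at most $K_{\widetilde{a},\widetilde{b}}K_U$, whence $|L_n(\widetilde{F};\widetilde{a},\widetilde{b},z)|\le K_{\widetilde{a},\widetilde{b}}K_U\,\tau_n(z)$; dividing by $\tau_n(z)$ and letting $k\to+\infty$ along the subsequences of Definition~\ref{def:Intersection number density} yields $|i(\widetilde{F};\widetilde{a},\widetilde{b},z)|\le K_{\widetilde{a},\widetilde{b}}K_U$.

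The main obstacle, shared by the non-contractible fixed points and by the recurrent case, is precisely this one-step displacement estimate: one must manufacture, out of the finitely many excursions of an orbit away from the neighbourhood $\widetilde{V}$ of $\infty$ (respectively out of the returns of the recurrent orbit to $U$), genuine periodic free disk chains for suitable lifts $T^{-j}\circ\widehat{F}_{\widetilde{a},\widetilde{b}}$, apply Franks' Lemma (Proposition~\ref{prop:Franks' Lemma}) and Lemma~\ref{lem:disk chain of Franks lemma}, and then translate the resulting control on chain lengths and lift widths into a bound on a single one--unit--time displacement with the clean constants $N_{\widetilde{a},\widetilde{b}}+1$ and $K_U$ (and with strict inequality in the fixed-point case). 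The surrounding steps --- expanding $L_n$ into one-step pieces and counting the nonzero terms via the sets $X_w$ --- are routine.
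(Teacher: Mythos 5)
Your reduction of everything to a per-step estimate is where the argument breaks. You expand $L_n$ into one-unit-time terms $\widetilde{\gamma}\wedge\widetilde{I}_1(\widetilde{w})$ and then claim that Lemma \ref{lem:disk chain of Franks lemma} yields a bound on each single such term (``linearly in $N_{\widetilde{a},\widetilde{b}}$'' in the fixed-point case, ``$\leq K_U$ for every lift of every point of the orbit'' in the recurrent case). But that lemma only controls aggregates: widths of lifts of periodic disk chains and rotation sets of free disks, i.e.\ the total crossing number of a whole excursion or the time-averaged winding of an orbit segment. A single one-step trajectory of a homeomorphism satisfying only the WB-property can wind around $\widetilde{a}$ arbitrarily much, provided later steps compensate; nothing in the hypotheses, and nothing in the disk-chain machinery, bounds one step in isolation (compare Example \ref{exem:proposition 21 and mu integrable}, where the B-property holds yet $i(\widetilde{F};\widetilde{z}_0,\widetilde{z}_1,z)$ is unbounded over recurrent $z$ ranging over $M$; the bound in the second statement is only uniform over $\mathrm{Rec}^+(F)\cap U$ for a fixed $U$, precisely because no pointwise/per-step bound uniform over the whole orbit exists). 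The paper therefore never bounds single steps: it groups the lifted orbit into excursions between successive visits to the neighbourhood $\widetilde{V}$ of $\infty$, identifies the total crossing number of each excursion with the width of a lift of a periodic chain $(\{\widetilde{V},\widetilde{V}\},\{m^i\})$, and gets $|k^i|<m^i(N_{\widetilde{a},\widetilde{b}}+1)$ from Lemma \ref{lem:disk chain of Franks lemma}; summing over excursions (whose lengths total at most $K_{\widetilde{a},\widetilde{b}}$, resp.\ $\tau_n(z)K_{\widetilde{a},\widetilde{b}}$) gives the fixed-point and $\alpha_{z,n}\neq e$ bounds.

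Your proposal also misses the structural dichotomy that produces $K_U$ and its dependence on $U$. In the recurrent case with $\alpha_{z,n}=e$ there are lifts $\widetilde{z}$ whose entire orbit segment $\{\widetilde{F}^j(\widetilde{z})\}_{0\leq j<\tau_n(z)}$ stays in $\widetilde{V}^c$; for these the quantity $p_n(\widetilde{z})/\tau_n(z)$ lies in $\mathrm{Rot}_{\widetilde{U}_{\widetilde{z}}}(\widehat{F}_{\widetilde{a},\widetilde{b}})$, and Lemma \ref{lem:boundedness of the rotation number disk in some condition} shows this rotation set is either inside $]-(N_{\widetilde{a},\widetilde{b}}+1),N_{\widetilde{a},\widetilde{b}}+1[$ or inside some interval $]l,l+1[$ with $|l|$ possibly much larger than $N_{\widetilde{a},\widetilde{b}}$ but depending only on $\widetilde{U}$; this is exactly where $K_U=\max\{N_{\widetilde{a},\widetilde{b}}+1,r\}$ comes from and why it cannot be replaced by $N_{\widetilde{a},\widetilde{b}}+1$. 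Your sketch treats $K_U$ as if it arose from the same chain estimate as the fixed-point case and never accounts for these whole-orbit lifts, so the crucial step you defer as ``translate the chain control into a one-step bound'' is not a routine gap to fill but the wrong reduction: the correct proof bounds excursion totals and whole-orbit averages, never single displacements.
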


In order to prove Proposition \ref{prop:l(a,b,x)divid tau(x)is
boundedonU}, we consider two cases: the fixed point case and the
non-fixed point case. The first case is more easy to deal with and
the second case is a little more complicated, but the ideas are
similar.

\smallskip

\begin{flushleft}
\emph{The fixed point case.}\end{flushleft}
 \label{sec:the fixed point case}

When $z\in\mathrm{Fix}(F)\setminus\pi(\{\widetilde{a}
,\widetilde{b}\})$, then $\tau(z)=1$ and
$i(\widetilde{F};\widetilde{a},\widetilde{b},z)=L_1(\widetilde{F};\widetilde{a},\widetilde{b},z)$,
we have the following results.

\begin{lem}\label{lem:contractible fixed point case}If $z\in
\mathrm{Fix}_{\mathrm{Cont},I}(F)\setminus\pi(\{\widetilde{a},\widetilde{b}\})$,
then $|i(\widetilde{F};\widetilde{a},\widetilde{b},z)|\leq
K_{\widetilde{a},\widetilde{b}} N_{\widetilde{a},\widetilde{b}}$.
\end{lem}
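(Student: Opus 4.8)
\textbf{Proof plan for Lemma \ref{lem:contractible fixed point case}.}
The plan is to reduce the linking number $i(\widetilde{F};\widetilde{a},\widetilde{b},z)$ of a contractible fixed point $z$ to a sum of rotation numbers in the annulus $A_{\widetilde{a},\widetilde{b}}$ and then bound each summand by $N_{\widetilde{a},\widetilde{b}}$ while bounding the number of nonzero summands by $K_{\widetilde{a},\widetilde{b}}$. Concretely, recall from the remark following Lemma \ref{lem:P} that for a contractible fixed point $z$ one has
$$i(\widetilde{F};\widetilde{a},\widetilde{b},z)=\sum_{\pi(\widetilde{z})=z}\rho_{A_{\widetilde{a},\widetilde{b}},\widehat{F}_{\widetilde{a},\widetilde{b}}}(\widetilde{z}),$$
where each $\widetilde{z}\in\pi^{-1}(z)$ is a fixed point of $\widetilde{F}$, hence lies in $\mathrm{Fix}(\widetilde{F}_{\widetilde{a},\widetilde{b}})$, so by the WB-property at $\widetilde{a},\widetilde{b}$ and Lemma \ref{lem:rotation number and weak boundess at two point} each term satisfies $|\rho_{A_{\widetilde{a},\widetilde{b}},\widehat{F}_{\widetilde{a},\widetilde{b}}}(\widetilde{z})|\leq N_{\widetilde{a},\widetilde{b}}$.

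First I would argue that only finitely many lifts $\widetilde{z}\in\pi^{-1}(z)$ contribute a nonzero term, and that the number of such lifts is at most $K_{\widetilde{a},\widetilde{b}}$. The key point is the free-disk setup established just before the Proposition: $\widehat{F}_{\widetilde{a},\widetilde{b}}(\widehat{V})\subset\widehat{W}$ for the lifts $\widehat{V}\subset\widehat{W}$ of the neighborhoods $\widetilde{V}\subset\widetilde{W}$ of $\infty$, and $\widehat{V}$ is free for every nontrivial translate $\widehat{F}_{\widetilde{a},\widetilde{b}}\circ T^k_{\widetilde{a},\widetilde{b}}$. If a lift $\widetilde{z}$ lies in $\widetilde{V}\cap\widetilde{F}_{\widetilde{a},\widetilde{b}}^{-1}(\widetilde{V})$, then choosing the lift $\widehat{z}\in\widehat{V}$ of the corresponding point of $A_{\widetilde{a},\widetilde{b}}$, the point $\widehat{F}_{\widetilde{a},\widetilde{b}}(\widehat{z})$ lies in $\widehat{W}$, and since $\widetilde{z}$ is a fixed point of $\widetilde{F}$ the orbit stays near $\infty$; freeness of $\widehat{V}$ for the nontrivial translates forces the rotation number to be zero. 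Hence every lift contributing a nonzero rotation number lies in $X_z=\pi^{-1}(\{z\})\cap(\widetilde{V}\cap\widetilde{F}_{\widetilde{a},\widetilde{b}}^{-1}(\widetilde{V}))^c$, and by construction $\sharp X_z\leq K_{\widetilde{a},\widetilde{b}}$.

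Combining the two bounds gives
$$|i(\widetilde{F};\widetilde{a},\widetilde{b},z)|\leq\sum_{\widetilde{z}\in X_z}\big|\rho_{A_{\widetilde{a},\widetilde{b}},\widehat{F}_{\widetilde{a},\widetilde{b}}}(\widetilde{z})\big|\leq\sharp X_z\cdot N_{\widetilde{a},\widetilde{b}}\leq K_{\widetilde{a},\widetilde{b}}N_{\widetilde{a},\widetilde{b}},$$
as claimed. I expect the main obstacle to be the first step: making rigorous the claim that a fixed lift $\widetilde{z}$ sitting inside the ``good'' neighborhood $\widetilde{V}\cap\widetilde{F}_{\widetilde{a},\widetilde{b}}^{-1}(\widetilde{V})$ of $\infty$ must have zero rotation number for $\widehat{F}_{\widetilde{a},\widetilde{b}}$. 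One has to track which connected component of the preimage of the neighborhood the orbit of $\widehat{z}$ stays in — this uses that $\widetilde{z}$ is genuinely $\widetilde{F}$-fixed, so its whole $\widehat{F}_{\widetilde{a},\widetilde{b}}$-orbit projects to a single point, and that freeness of $\widehat{V}$ for $\widehat{F}_{\widetilde{a},\widetilde{b}}\circ T^k_{\widetilde{a},\widetilde{b}}$ ($k\neq 0$) prevents the orbit from jumping to a translated component. Everything else is a direct application of the already-established properties P1–P4, Lemma \ref{lem:P}, and Lemma \ref{lem:rotation number and weak boundess at two point}.
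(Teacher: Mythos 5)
Your proposal is correct and follows essentially the same route as the paper: reduce $i(\widetilde{F};\widetilde{a},\widetilde{b},z)$ to $\sum_{\pi(\widetilde{z})=z}\rho_{A_{\widetilde{a},\widetilde{b}},\widehat{F}_{\widetilde{a},\widetilde{b}}}(\widetilde{z})$ via Lemma \ref{lem:P}, discard the lifts in $\widetilde{V}\cap\widetilde{F}_{\widetilde{a},\widetilde{b}}^{-1}(\widetilde{V})$, and bound the remaining at most $K_{\widetilde{a},\widetilde{b}}$ terms by $N_{\widetilde{a},\widetilde{b}}$ using the WB-property. The only difference is that you spell out why the lifts near $\infty$ contribute zero rotation number (freeness of $\widehat{V}$ for $\widehat{F}_{\widetilde{a},\widetilde{b}}\circ T^k_{\widetilde{a},\widetilde{b}}$, $k\neq 0$), a step the paper's proof leaves implicit in the equality $\sum_{\pi(\widetilde{z})=z}=\sum_{\widetilde{z}\in X_z}$.
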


\begin{proof}By Definition \ref{def:Intersection number density} and Lemma \ref{lem:P}, we
have
$$i(\widetilde{F};\widetilde{a},\widetilde{b},z)=\sum_{\pi(\widetilde{z})=z}
\rho_{A_{{\widetilde{a}},{\widetilde{b}}},\widehat{F}_{{\widetilde{a}},{\widetilde{b}}}}
(\widetilde{z})=\sum_{\widetilde{z}\in
X_z}\rho_{A_{{\widetilde{a}},{\widetilde{b}}},\widehat{F}_{{\widetilde{a}},{\widetilde{b}}}}
(\widetilde{z}).$$ The lemma follows from the fact that $\sharp
X_z\leq K_{\widetilde{a},\widetilde{b}}$ and that
$\mathrm{Rot}_{\mathrm{Fix}(\widetilde{F}_{\widetilde{a},\widetilde{b}})}
(\widehat{F}_{\widetilde{a},\widetilde{b}})\subset
[-N_{\widetilde{a},\widetilde{b}},N_{\widetilde{a},\widetilde{b}}]$.
\end{proof}

\begin{lem}\label{lem:non contractible case of fixed point}If $z\in
\mathrm{Fix}(F)\setminus\mathrm{Fix}_{\mathrm{Cont},I}(F)$, then
$|i(\widetilde{F};\widetilde{a},\widetilde{b},z)|<K_{\widetilde{a},\widetilde{b}}
(N_{\widetilde{a},\widetilde{b}}+1)$.
\end{lem}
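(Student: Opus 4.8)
The plan is to evaluate $i(\widetilde{F};\widetilde{a},\widetilde{b},z)=L_1(\widetilde{F};\widetilde{a},\widetilde{b},z)=\widetilde{\gamma}\wedge\widetilde{\Gamma}^{1}_{\widetilde{I}_1,z}$ by decomposing the multi-path $\widetilde{\Gamma}^{1}_{\widetilde{I}_1,z}=\prod_{\pi(\widetilde{w})=z}\widetilde{I}_1(\widetilde{w})$ into finitely many loops in the annulus $A_{\widetilde{a},\widetilde{b}}$ and bounding the intersection of each with $\widetilde{\gamma}$ by a Franks-type disk-chain argument. Since $z$ is not a contractible fixed point, $\widetilde{F}(\widetilde{z})=\alpha(\widetilde{z})$ for a fixed $\alpha\in G\setminus\{\mathrm{Id}\}$ and every lift $\widetilde{z}$; as $G$ is torsion free and $\alpha\neq\mathrm{Id}$, the orbit $\{\beta\alpha^{k}(\widetilde{z})\}_{k\in\mathbb{Z}}$ tends to $\infty$ in $\mathbf{S}$ as $k\to\pm\infty$, for every $\beta\in G$. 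Grouping the lifts of $z$ by $\langle\alpha\rangle$-cosets, the concatenation $P_\beta=\prod_{k\in\mathbb{Z}}\widetilde{I}_1(\beta\alpha^{k}(\widetilde{z}))$ is well defined (consecutive segments match because $\widetilde{F}(\beta\alpha^{k}(\widetilde{z}))=\beta\alpha^{k+1}(\widetilde{z})$) and, both its ends converging to $\infty$, closes up into a loop $P_\beta\cup\{\infty\}$ in $A_{\widetilde{a},\widetilde{b}}$; after perturbing $\widetilde{\gamma}$ off $\infty$ one gets
$$i(\widetilde{F};\widetilde{a},\widetilde{b},z)=\sum_{\beta\langle\alpha\rangle}W_\beta,\qquad W_\beta:=\widetilde{\gamma}\wedge(P_\beta\cup\{\infty\}),$$
where only the cosets having a lift outside $\widetilde{V}$ contribute, since a lift $\beta(\widetilde{z})\in\widetilde{V}$ has $\widetilde{I}_1(\beta(\widetilde{z}))\subset\widetilde{W}$, disjoint from $\widetilde{\gamma}$.

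Next I would bound each $W_\beta$. Fix a small disk $\widetilde{D}_0\ni\widetilde{z}$ that is a lift of an embedded disk of $M$, small enough that its $G$-translates are pairwise disjoint and that $\beta\alpha^{k}(\widetilde{D}_0)$ is disjoint from $\widetilde{V}$ whenever $\beta\alpha^{k}(\widetilde{z})\notin\widetilde{V}$; then each $\beta\alpha^{k}(\widetilde{D}_0)$ is a free disk of $\widetilde{F}_{\widetilde{a},\widetilde{b}}$, because $\widetilde{F}$ commutes with $G$ and $\widetilde{F}(\widetilde{D}_0)\cap\widetilde{D}_0=\emptyset$. Let $S_\beta=\{k:\beta\alpha^{k}(\widetilde{z})\notin\widetilde{V}\}$, a finite set with maximal subintervals $S_\beta^{(1)},\dots,S_\beta^{(c_\beta)}$. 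I would build a periodic disk chain $C_\beta$ based at $\widetilde{V}$ by concatenating, over $j$, the pieces $\widetilde{V}\to\beta\alpha^{k}(\widetilde{D}_0)\ (k\in S_\beta^{(j)}\text{ increasing})\to\widetilde{V}$, all transition times being $1$: for the entry, $\beta\alpha^{k_--1}(\widetilde{z})\in\widetilde{V}$ forces $\beta\alpha^{k_--1}(\widetilde{D}_0)\subset\widetilde{V}$, and $\widetilde{F}_{\widetilde{a},\widetilde{b}}$ of this disk meets $\beta\alpha^{k_-}(\widetilde{D}_0)$; the exit is symmetric. Thus $l(C_\beta)=\sum_j(\sharp S_\beta^{(j)}+1)=\sharp S_\beta+c_\beta$. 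Since the setup provides that $\widehat{F}_{\widetilde{a},\widetilde{b}}(\widehat{V})\cap T_{\widetilde{a},\widetilde{b}}^{\,k}(\widehat{V})\neq\emptyset$ iff $k=0$, and, by the WB-property at $\widetilde{a},\widetilde{b}$ together with Lemma~\ref{lem:rotation number and weak boundess at two point}, $\mathrm{Rot}_{\mathrm{Fix}(\widetilde{F}_{\widetilde{a},\widetilde{b}})}(\widehat{F}_{\widetilde{a},\widetilde{b}})\subset[-N_{\widetilde{a},\widetilde{b}},N_{\widetilde{a},\widetilde{b}}]$, Lemma~\ref{lem:disk chain of Franks lemma} yields $|w(\widehat{F}_{\widetilde{a},\widetilde{b}};\widehat{C_\beta})|<(N_{\widetilde{a},\widetilde{b}}+1)\,l(C_\beta)$ for every lift. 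Choosing the lift of $C_\beta$ that follows the fixed lift $\widehat{\infty}$ and the induced lifts of the $\beta\alpha^{k}(\widetilde{z})$, its width is, up to sign, $W_\beta$ — the $T_{\widetilde{a},\widetilde{b}}$-translation accumulated along the chain is exactly the algebraic count of the crossings of $P_\beta\cup\{\infty\}$ with $\widetilde{\gamma}$, the trajectory portions lying in $\widetilde{W}$ contributing nothing. Hence $|W_\beta|<(N_{\widetilde{a},\widetilde{b}}+1)(\sharp S_\beta+c_\beta)$.

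Finally I would sum over the contributing cosets: for each component $S_\beta^{(j)}$ with least element $k_-^{(j)}$, the lift $\beta\alpha^{k_-^{(j)}-1}(\widetilde{z})$ lies in $\widetilde{V}$ but is sent out of $\widetilde{V}$ by $\widetilde{F}_{\widetilde{a},\widetilde{b}}$, so it lies in $X_z=\pi^{-1}(z)\cap(\widetilde{V}\cap\widetilde{F}_{\widetilde{a},\widetilde{b}}^{-1}(\widetilde{V}))^c$; together with the $\sharp S_\beta$ lifts lying outside $\widetilde{V}$, these $\sum_\beta c_\beta$ further lifts are all distinct elements of $X_z$, whence $\sum_\beta(\sharp S_\beta+c_\beta)\le\sharp X_z\le K_{\widetilde{a},\widetilde{b}}$. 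Combining,
$$|i(\widetilde{F};\widetilde{a},\widetilde{b},z)|\le\sum_{\beta}|W_\beta|<(N_{\widetilde{a},\widetilde{b}}+1)\sum_\beta(\sharp S_\beta+c_\beta)\le K_{\widetilde{a},\widetilde{b}}(N_{\widetilde{a},\widetilde{b}}+1),$$
which is the assertion (when $i=0$ the inequality is trivial, $K_{\widetilde{a},\widetilde{b}}\ge1$ here). The delicate point, and the step I expect to cost the most care, is the identification in the second paragraph of the topological quantity $W_\beta=\widetilde{\gamma}\wedge(P_\beta\cup\{\infty\})$ with the combinatorial width $w(\widehat{F}_{\widetilde{a},\widetilde{b}};\widehat{C_\beta})$ of the disk chain, i.e. keeping precise track, through $\widehat{\pi}_{\widetilde{a},\widetilde{b}}$, of how winding about $\widetilde{a}$ is recorded by $T_{\widetilde{a},\widetilde{b}}$-translates of the lifted disks and why the near-$\infty$ arcs of the trajectories are inert; the multi-path bookkeeping (well-definedness and finiteness of the intersection numbers) is routine given Section~\ref{subsec:the definition of a new linking number}.
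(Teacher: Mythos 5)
Your proposal is correct and is essentially the paper's own argument: both decompose the trajectories of the lifts of $z$ into excursions outside the disk $\widetilde{V}$ near $\infty$, realize each excursion's contribution to $\widetilde{\gamma}\wedge\widetilde{\Gamma}^1_{\widetilde{I}_1,z}$ as the width of a periodic disk chain through $\widetilde{V}$, bound that width by Lemma \ref{lem:disk chain of Franks lemma} using $\mathrm{Rot}_{\mathrm{Fix}(\widetilde{F}_{\widetilde{a},\widetilde{b}})}(\widehat{F}_{\widetilde{a},\widetilde{b}})\subset[-N_{\widetilde{a},\widetilde{b}},N_{\widetilde{a},\widetilde{b}}]$, and control the total chain length by $\sharp X_z\le K_{\widetilde{a},\widetilde{b}}$. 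The differences are only bookkeeping — the paper works per excursion with the leaner chains $C_i=(\{\widetilde{V},\widetilde{V}\},\{m^i\})$, reading the width directly off the lifted excursion path landing in $T_{\widetilde{a},\widetilde{b}}^{k^i}(\widehat{V})$, instead of your coset grouping and auxiliary unit-time free disks — plus one harmless slip in your first paragraph: for $g>1$ the deck transformation attached to another lift is only conjugate to $\alpha$ (the $\widetilde{F}$-orbit of $\beta(\widetilde{z})$ is $\{\beta\alpha^{k}(\widetilde{z})\}_{k\in\mathbb{Z}}$, so $\widetilde{F}(\beta(\widetilde{z}))=\beta\alpha\beta^{-1}(\beta(\widetilde{z}))$, not $\alpha(\beta(\widetilde{z}))$), but the identity you actually use, $\widetilde{F}(\beta\alpha^{k}(\widetilde{z}))=\beta\alpha^{k+1}(\widetilde{z})$, is the correct one, so the argument stands.
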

\begin{proof}
We have
$$i(\widetilde{F};\widetilde{a},\widetilde{b},z)=\widetilde{\gamma}\wedge\widetilde{\Gamma}_{\widetilde{I}_1,z}^1=
\sum_{\widetilde{z}\in
X_z}\widetilde{\gamma}\wedge\widetilde{I}_1(\widetilde{z}).$$

Observe that if $\widetilde{z}\in X_z$, then the trajectory of
$\widetilde{I}_1(\widetilde{z})$ is not included in $\widetilde{V}$.
Therefore we can write the multi-path $\prod_{\widetilde{z}\in
X_z}\widetilde{I}_1(\widetilde{z})$ as finitely many sub-paths:
$$\prod_{\widetilde{z}\in
X_z}\widetilde{I}_1(\widetilde{z})=\prod_{1\leq i\leq
P(z)}\widetilde{\Gamma}_i(z),$$ where
$$\widetilde{\Gamma}_i(z)=\prod_{0\leq
j<m^i(z)}\widetilde{I}_1(\widetilde{F}^{j}_{\widetilde{a},\widetilde{b}}(\widetilde{z}_i))$$
is a path with $\widetilde{z}_i\in X_z\cap\widetilde{V}$,
$\widetilde{F}_{\widetilde{a},\widetilde{b}}^{j}(\widetilde{z}_i)\in
X_z\cap\widetilde{V}^c$ for $1\leq j<m^i$ and
$\widetilde{F}_{\widetilde{a},\widetilde{b}}^{m^i}(\widetilde{z}_i)\in
\widetilde{V}$. For every $i$, we get a periodic disk chain
$C_i=(\{\widetilde{V},\widetilde{V}\},\{m^i\})$ whose length
$l(C_i)$ is equal to $m^i$ (see Section \ref{sec:preliminaries}).

Obviously, $\sum_i m^i\leq K_{\widetilde{a},\widetilde{b}}$. Let
$k^i(z)=\widetilde{\gamma}\wedge\widetilde{\Gamma}_i$. We have
$i(\widetilde{F};\widetilde{a},\widetilde{b},z)=\widetilde{\gamma}\wedge\widetilde{\Gamma}_{\widetilde{I}_1,z}^1=\sum_i
k^i$. Therefore, to get the lemma, it is sufficient to prove that
$|k^i|< m^i(N_{\widetilde{a},\widetilde{b}}+1)$.

For every $i$, the path $\widetilde{\Gamma}_i$ is lifted to a path
from a point $\widehat{z}_i\in \widehat{V}$ to
$\widehat{F}^{m^i}_{\widetilde{a},\widetilde{b}}(\widehat{z}_i)\in
T_{\widetilde{a},\widetilde{b}}^{k^i}(\widehat{V})$ and hence we get
a lift
$\widetilde{C}_i=(\{\widehat{V},T_{\widetilde{a},\widetilde{b}}^{k^i}(\widehat{V})\},\{m^i\})$
of $C_i$ for $\widehat{F}_{\widetilde{a},\widetilde{b}}$ with width
$w(\widehat{F}_{\widetilde{a},\widetilde{b}}\,;\widetilde{C}_i)=k^i$.
By the construction of $\widetilde{V}$, replacing $\mathbb{A}$ by
$\widetilde{A}_{\widetilde{a},\widetilde{b}}$, $h$ by
$\widetilde{F}_{\widetilde{a},\widetilde{b}}$, $H$ by
$\widehat{F}_{\widetilde{a},\widetilde{b}}$, $D$ by $\widetilde{V}$
and $C$ by $C_i$ in Lemma \ref{lem:disk chain of Franks lemma}, we
get $|k^i|< m^i(N_{\widetilde{a},\widetilde{b}}+1)$. We have
completed the proof.
\end{proof}

\begin{flushleft}
    \emph{The non-fixed point case.}
\end{flushleft}
\label{sec:the non fixed point case}

Let $z\in \mathrm{Rec}^+(F)\setminus\mathrm{Fix}(F)$ and $U$ be an
open free disk for $F$ that contains $z$. Recall that, for every
lift $\widetilde{z}$ of $z$ and every $n\geq0$, there is a unique
connected component
$\widetilde{U}_{\widetilde{\Phi}^n(\widetilde{z})}$ of $\pi^{-1}(U)$
such that
$\widetilde{\Phi}^n(\widetilde{z})\in\widetilde{U}_{\widetilde{\Phi}^n(\widetilde{z})}$
and a unique
 $\alpha_{z,n}\in G$ such that
 $\widetilde{U}_{\widetilde{\Phi}^n(\widetilde{z})}=
\alpha_{z,n}(\widetilde{U}_{\widetilde{z}})$.
 For
convenience, we define

\begin{equation*}\widetilde{F}_{\widetilde{a},\widetilde{b}}^{*}(\widetilde{z}\,')=
\begin{cases}\widetilde{F}_{\widetilde{a},\widetilde{b}}(\widetilde{z}\,')& \textrm{if}
 \quad \pi(\widetilde{z}\,')\in\{z,\cdots,F^{\tau_n(z)-2}(z)\};
\\\alpha_{z,n}(\widetilde{z})& \textrm{if} \quad
\pi(\widetilde{z}\,')=F^{\tau_n(z)-1}(z)\,\,\mathrm{and}\,\,
\widetilde{F}_{\widetilde{a},\widetilde{b}}(\widetilde{z}\,')\in\widetilde{U}_{\alpha_{z,n}(\widetilde{z})}.\end{cases}
\end{equation*}
and
\begin{equation*}\widetilde{I}_{1}^{*}(\widetilde{z}\,')=
\begin{cases}\widetilde{I}_{1}(\widetilde{z}\,')& \textrm{if}
 \quad \pi(\widetilde{z}\,')\in\{z,\cdots,F^{\tau_n(z)-2}(z)\};
\\\widetilde{I}_1
(\widetilde{z}\,')\widetilde{\gamma}_{\widetilde{F}_{\widetilde{a},\widetilde{b}}(\widetilde{z}\,'),\,\alpha_{z,n}(\widetilde{z})}&
\textrm{if} \quad
\pi(\widetilde{z}\,')=F^{\tau_n(z)-1}(z)\,\,\mathrm{and}\,\,
\widetilde{F}_{\widetilde{a},\widetilde{b}}(\widetilde{z}\,')\in\widetilde{U}_{\alpha_{z,n}(\widetilde{z})},\end{cases}
\end{equation*}
where
$\widetilde{\gamma}_{\widetilde{F}_{\widetilde{a},\widetilde{b}}(\widetilde{z}\,'),\,\alpha_{z,n}(\widetilde{z})}$
is the lift of $\gamma_{\Phi^n(z),z}$ that is in
$\widetilde{U}_{\alpha_{z,n}(\widetilde{z})}$.
\smallskip

We have to consider two cases: $\alpha_{z,n}=e$ and
$\alpha_{z,n}\neq e$. First, we consider the case where
$\alpha_{z,n}\neq e$. We have the following lemma.

\begin{lem}\label{lem:the boundedness of the case multi-path}
If $\alpha_{z,n}\neq e$, then
$|L_n(\widetilde{F};\widetilde{a},\widetilde{b},z)|<
\tau_n(z)K_{\widetilde{a},\widetilde{b}}
(N_{\widetilde{a},\widetilde{b}}+1)$.
\end{lem}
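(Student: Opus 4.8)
The plan is to run, almost verbatim, the argument of Lemma~\ref{lem:non contractible case of fixed point}, the only change being that the single iterate of $\widetilde{F}_{\widetilde{a},\widetilde{b}}$ is replaced by the first–return segment of length $\tau_n(z)$, the teleportations back to $\pi^{-1}(\{z\})$ across the components $\widetilde{U}_{\widetilde{\Phi}^n(\widetilde{z})}$ being absorbed into the auxiliary maps $\widetilde{F}_{\widetilde{a},\widetilde{b}}^{*}$ and $\widetilde{I}_1^{*}$. Here it is crucial that $\alpha_{z,n}\neq e$: then $(\widetilde{F}_{\widetilde{a},\widetilde{b}}^{*})^{\tau_n(z)}(\widetilde{z})=\alpha_{z,n}(\widetilde{z})$ is again a lift of $z$, so that along the $\alpha_{z,n}$–orbit of a lift $\widetilde{z}$ of $z$ (infinite, since $G$ is torsion free) the curve $\widetilde{\Gamma}_{\widetilde{I}_1,z}^n$ is the concatenation $\prod_{j\geq 0}\widetilde{I}_1^{*}\big((\widetilde{F}_{\widetilde{a},\widetilde{b}}^{*})^j(\widetilde{z})\big)$, whose closure in $\mathbf{S}=\widetilde{M}\sqcup\{\infty\}$ is a loop through $\infty$.

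First I would reduce $L_n$ to a finite sum supported on $X_z^n$. Recall that $\widetilde{\gamma}$ has been arranged disjoint from $\pi^{-1}(U)$ and is disjoint from $\widetilde{W}$; since every trajectory $\widetilde{I}_1(\widetilde{z}\,')$ with $\widetilde{z}\,'\in\widetilde{V}\cap\widetilde{F}_{\widetilde{a},\widetilde{b}}^{-1}(\widetilde{V})$ stays inside $\widetilde{W}$, and the return paths appearing in $\widetilde{I}_1^{*}$ lie in $\pi^{-1}(U)$, only the trajectories issued from the points of $X_z^n$ can meet $\widetilde{\gamma}$, and along each orbit all but finitely many avoid it. Hence
$$L_n(\widetilde{F};\widetilde{a},\widetilde{b},z)=\widetilde{\gamma}\wedge\widetilde{\Gamma}_{\widetilde{I}_1,z}^n=\sum_{\widetilde{z}\,'\in X_z^n}\widetilde{\gamma}\wedge\widetilde{I}_1^{*}(\widetilde{z}\,').$$
Cutting the multi–path $\prod_{\widetilde{z}\,'\in X_z^n}\widetilde{I}_1^{*}(\widetilde{z}\,')$ at its successive visits to $\widetilde{V}$ produces finitely many excursions $\widetilde{\Gamma}_i=\prod_{0\leq j<m^i}\widetilde{I}_1^{*}\big((\widetilde{F}_{\widetilde{a},\widetilde{b}}^{*})^j(\widetilde{z}_i)\big)$ with $\widetilde{z}_i\in X_z^n\cap\widetilde{V}$, $(\widetilde{F}_{\widetilde{a},\widetilde{b}}^{*})^j(\widetilde{z}_i)\in X_z^n\cap\widetilde{V}^c$ for $1\leq j<m^i$, and $(\widetilde{F}_{\widetilde{a},\widetilde{b}}^{*})^{m^i}(\widetilde{z}_i)\in\widetilde{V}$; these involve pairwise distinct points of $X_z^n$, so $\sum_i m^i\leq\sharp X_z^n\leq\tau_n(z)K_{\widetilde{a},\widetilde{b}}$, and with $k^i:=\widetilde{\gamma}\wedge\widetilde{\Gamma}_i$ we get $L_n(\widetilde{F};\widetilde{a},\widetilde{b},z)=\sum_i k^i$.

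It remains to bound each $|k^i|$ by $m^i(N_{\widetilde{a},\widetilde{b}}+1)$. Each $\widetilde{\Gamma}_i$ yields a periodic disk chain of $\widetilde{F}_{\widetilde{a},\widetilde{b}}$ in the annulus $A_{\widetilde{a},\widetilde{b}}$, based at $\widetilde{V}$ and, whenever the excursion crosses a segment boundary, passing through the relevant free disk $\widetilde{U}_{\widetilde{\Phi}^n(\cdot)}$ of $\widetilde{F}_{\widetilde{a},\widetilde{b}}$ (free because $U$ is free for $F$), which contains both the endpoint $\widetilde{\Phi}^n(\cdot)$ of the segment and the starting point $\alpha_{z,n}(\cdot)$ of the next one together with the return path joining them; a short count shows this chain has total length $m^i$. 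Its lift to $\widehat{A}_{\widetilde{a},\widetilde{b}}$ joins $\widehat{V}$ to $T_{\widetilde{a},\widetilde{b}}^{k^i}(\widehat{V})$, so its width is $k^i$. By the construction of $\widetilde{V}$ one has $\widehat{F}_{\widetilde{a},\widetilde{b}}(\widehat{V})\subset\widehat{W}$, whence $\widehat{F}_{\widetilde{a},\widetilde{b}}(\widehat{V})\cap T_{\widetilde{a},\widetilde{b}}^k(\widehat{V})\neq\emptyset$ if and only if $k=0$, and by the WB-property at $\widetilde{a},\widetilde{b}$ together with Lemma~\ref{lem:rotation number and weak boundess at two point}, $\mathrm{Rot}_{\mathrm{Fix}(\widetilde{F}_{\widetilde{a},\widetilde{b}})}(\widehat{F}_{\widetilde{a},\widetilde{b}})\subset[-N_{\widetilde{a},\widetilde{b}},N_{\widetilde{a},\widetilde{b}}]$. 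Applying Lemma~\ref{lem:disk chain of Franks lemma}, with $A_{\widetilde{a},\widetilde{b}}$, $\widetilde{F}_{\widetilde{a},\widetilde{b}}$, $\widehat{F}_{\widetilde{a},\widetilde{b}}$, $\widetilde{V}$ and $N_{\widetilde{a},\widetilde{b}}$ in the roles of $\mathbb{A}$, $h$, $H$, $D$ and $N$, gives $|k^i|<(N_{\widetilde{a},\widetilde{b}}+1)m^i$; summing over $i$ yields $|L_n(\widetilde{F};\widetilde{a},\widetilde{b},z)|\leq\sum_i|k^i|<(N_{\widetilde{a},\widetilde{b}}+1)\sum_i m^i\leq\tau_n(z)K_{\widetilde{a},\widetilde{b}}(N_{\widetilde{a},\widetilde{b}}+1)$, which is the claim.

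The main obstacle is the bookkeeping in the middle step. One must check carefully that replacing $\widetilde{F}_{\widetilde{a},\widetilde{b}}$, $\widetilde{I}_1$ by $\widetilde{F}_{\widetilde{a},\widetilde{b}}^{*}$, $\widetilde{I}_1^{*}$ at the indices $j\equiv\tau_n(z)-1$ creates no intersection with $\widetilde{\gamma}$ (this is exactly where $\widetilde{\gamma}\cap\pi^{-1}(U)=\emptyset$ and the freeness of $U$ for $F$ enter), that the excursions so obtained really are periodic disk chains of $\widetilde{F}_{\widetilde{a},\widetilde{b}}$ itself (not merely of $\widetilde{F}_{\widetilde{a},\widetilde{b}}^{*}$), with the free disks $\widetilde{U}_{\widetilde{\Phi}^n(\cdot)}$ inserted at the boundary crossings, and that the length of each such chain equals $m^i$, so that the count $\sum_i m^i\leq\sharp X_z^n$ is legitimate.
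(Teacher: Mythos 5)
Your proposal is correct and follows essentially the same route as the paper: reduce $L_n$ to the finite sum over $X_z^n$ via $\widetilde{I}_1^{*}$, cut the multi-path at its visits to $\widetilde{V}$ into excursions $\widetilde{\Gamma}^n_i$ of lengths $m^i$ with $\sum_i m^i\leq \sharp X_z^n\leq\tau_n(z)K_{\widetilde{a},\widetilde{b}}$, view each excursion as a periodic disk chain based at $\widetilde{V}$ whose lift has width $k^i$, and apply Lemma \ref{lem:disk chain of Franks lemma} (with $\mathbb{A}$, $h$, $H$, $D$ replaced by $A_{\widetilde{a},\widetilde{b}}$, $\widetilde{F}_{\widetilde{a},\widetilde{b}}$, $\widehat{F}_{\widetilde{a},\widetilde{b}}$, $\widetilde{V}$) to get $|k^i|<m^i(N_{\widetilde{a},\widetilde{b}}+1)$ and sum. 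The extra bookkeeping you flag (the free disks $\widetilde{U}_{\widetilde{\Phi}^n(\cdot)}$ at the teleportation points and the disjointness of $\widetilde{\gamma}$ from $\pi^{-1}(U)$) is exactly the verification the paper leaves implicit when invoking its Lemma \ref{lem:non contractible case of fixed point} argument.
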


\begin{proof}
In this case, the curve $\widetilde{\Gamma}^n_{\widetilde{I}_1,z}$
is a multi-path in $\widetilde{M}$. 
By the definition of
$L_n(\widetilde{F};\widetilde{a},\widetilde{b},z)$, we have
$$L_n(\widetilde{F};\widetilde{a},\widetilde{b},z)=\widetilde{\gamma}
\wedge\widetilde{\Gamma}_{\widetilde{I}_1,z}^n=\sum_{\widetilde{z}\,'\in
X_z^n}\widetilde{\gamma}\wedge
\widetilde{I}^*_1(\widetilde{z}\,').$$

We can write the multi-path

\begin{equation}\label{eq:multi-path1}
    \prod_{\widetilde{z}\,'\in
X_z^n}\widetilde{I}^*_1(\widetilde{z}\,')=\prod_{1\leq i\leq
P_n(z)}\widetilde{\Gamma}^n_i(z),
\end{equation}
where
\begin{equation}\label{eq:multi-path2}
    \widetilde{\Gamma}^n_i(z)=\prod_{0\leq
j<m_n^i(z)}\widetilde{I}^*_1(\widetilde{F}^{*j}_{\widetilde{a},\widetilde{b}}(\widetilde{z}_i))
\end{equation}
is a path with $\widetilde{z}_i\in X^n_z\cap\widetilde{V}$,
$\widetilde{F}_{\widetilde{a},\widetilde{b}}^{*j}(\widetilde{z}_i)\in
X^n_z\cap\widetilde{V}^c$ for $1\leq j<m_n^i$ and
$\widetilde{F}_{\widetilde{a},\widetilde{b}}^{*m_n^i}(\widetilde{z}_i)\in
\widetilde{V}$. Hence, for every $i$, we get a periodic disk chain
$C_i$ that satisfies the hypothesis of Lemma \ref{lem:disk chain of
Franks lemma} with length $m_n^i$. When we lift the path
$\widetilde{\Gamma}^n_i$, we can get a lift of $C_i$ for
$\widehat{F}_{\widetilde{a},\widetilde{b}}$ with width $k_n^i$.

Obviously, we have $\sum_i m_n^i<
\tau_nK_{\widetilde{a},\widetilde{b}}$. Let
$k_n^i(z)=\widetilde{\gamma}\wedge\widetilde{\Gamma}^n_i$. Hence
$L_n(\widetilde{F};\widetilde{a},\widetilde{b},z)=\sum_i k_n^i$. It
is sufficient to prove that $|k_n^i|<
m_n^i(N_{\widetilde{a},\widetilde{b}}+1)$.

Similarly to the proof of Lemma \ref{lem:non contractible case of
fixed point}, replacing $\mathbb{A}$ by
$\widetilde{A}_{\widetilde{a},\widetilde{b}}$, $h$ by
$\widetilde{F}_{\widetilde{a},\widetilde{b}}$, $H$ by
$\widehat{F}_{\widetilde{a},\widetilde{b}}$, $D$ by $\widetilde{V}$
and $C$ by $C_i$ in Lemma \ref{lem:disk chain of Franks lemma}, we
get $|k_n^i|< m_n^i(N_{\widetilde{a},\widetilde{b}}+1)$. This proves
the first case.
\end{proof}

As a consequence, we have the following proposition.
\begin{prop}\label{prop:roM(z)not0 linking number uniformly bounded}
We suppose that $i(\widetilde{F};\widetilde{a},\widetilde{b},z)$ and
$\rho_{M,I}(z)$ exist, then
$$|i(\widetilde{F};\widetilde{a},\widetilde{b},z)|\leq K_{\widetilde{a},\widetilde{b}}
(N_{\widetilde{a},\widetilde{b}}+1)\quad\text{if}\quad \rho_{M,I}(z)\neq0.$$
\end{prop}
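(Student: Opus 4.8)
The plan is to deduce this from Lemma~\ref{lem:the boundedness of the case multi-path}, which already gives $|L_n(\widetilde{F};\widetilde{a},\widetilde{b},z)|<\tau_n(z)\,K_{\widetilde{a},\widetilde{b}}(N_{\widetilde{a},\widetilde{b}}+1)$ whenever the deck transformation $\alpha_{z,n}\in G$ is non-trivial; the role of the hypothesis $\rho_{M,I}(z)\neq0$ is precisely to force $\alpha_{z,n}\neq e$ along a subsequence realizing the rotation vector. First I would dispose of the case $z\in\mathrm{Fix}(F)$: a fixed point with $\rho_{M,I}(z)\neq0$ has a trajectory loop $I(z)$ that is not null-homologous, so $z\notin\mathrm{Fix}_{\mathrm{Cont},I}(F)$ and Lemma~\ref{lem:non contractible case of fixed point} already yields the (strict) bound. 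From now on assume $z\in\mathrm{Rec}^+(F)\setminus\mathrm{Fix}(F)$.

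Fix a single open free disk $U\subset M\setminus\pi(\{\widetilde{a},\widetilde{b}\})$ containing $z$, used simultaneously to compute $\rho_{M,I}(z)$ (Section~\ref{subsec:rotation vector of Le Calvez}) and $i(\widetilde{F};\widetilde{a},\widetilde{b},z)$ (Definition~\ref{def:Intersection number density}); neither value depends on this choice. Since $U$ is an embedded disk, it is evenly covered by $\pi$, so each connected component of $\pi^{-1}(U)$ contains exactly one point of $\pi^{-1}(z)$. Hence the lift of the loop $\Gamma^n_z=I^{\tau_n(z)}(z)\,\gamma_{\Phi^n(z),z}$ starting at a chosen lift $\widetilde{z}$ of $z$ ends at $\alpha_{z,n}(\widetilde{z})$, so $[\Gamma^n_z]_M$ is the image of $\alpha_{z,n}$ under $\pi_1(M)\cong G\twoheadrightarrow H_1(M,\mathbb{Z})$; in particular $\alpha_{z,n}=e$ forces $[\Gamma^n_z]_M=0$. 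Now take any subsequence $\Phi^{n_k}(z)\to z$. By the definition of the rotation vector, $[\Gamma^{n_k}_z]_M/\tau_{n_k}(z)\to\rho_{M,I}(z)\neq0$, and since $\tau_{n_k}(z)\geq n_k\to+\infty$ this forces $[\Gamma^{n_k}_z]_M\neq0$, hence $\alpha_{z,n_k}\neq e$, for all large $k$.

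Discarding finitely many terms, we may assume $\alpha_{z,n_k}\neq e$ for all $k$, so Lemma~\ref{lem:the boundedness of the case multi-path} applies: $|L_{n_k}(\widetilde{F};\widetilde{a},\widetilde{b},z)|<\tau_{n_k}(z)\,K_{\widetilde{a},\widetilde{b}}(N_{\widetilde{a},\widetilde{b}}+1)$. Dividing by $\tau_{n_k}(z)$ and letting $k\to+\infty$ --- this subsequence still converges to $z$, so by the assumed existence of $i(\widetilde{F};\widetilde{a},\widetilde{b},z)$ it computes that quantity --- we obtain $|i(\widetilde{F};\widetilde{a},\widetilde{b},z)|\leq K_{\widetilde{a},\widetilde{b}}(N_{\widetilde{a},\widetilde{b}}+1)$, as claimed. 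The only step needing genuine care is the identification of $[\Gamma^n_z]_M$ with the homology class of $\alpha_{z,n}$, i.e.\ that ``the lift landing in a non-trivial fundamental domain'' is detected by a non-zero homology class; everything else is routine bookkeeping on top of the two boundedness lemmas.
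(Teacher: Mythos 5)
Your proof is correct and takes essentially the same route as the paper: the fixed-point case is settled by Lemma \ref{lem:non contractible case of fixed point} (a fixed point with nonzero rotation vector cannot be contractible), and for $z\in\mathrm{Rec}^+(F)\setminus\mathrm{Fix}(F)$ the hypothesis $\rho_{M,I}(z)\neq0$ forces $\alpha_{z,n_k}\neq e$ along the return subsequence, so Lemma \ref{lem:the boundedness of the case multi-path} applies and the bound follows after dividing by $\tau_{n_k}(z)$ and passing to the limit. The only difference is that you make explicit the identification of $[\Gamma^n_z]_M$ with the homology class of $\alpha_{z,n}$ (which the paper delegates to a citation of Section \ref{sec:the existion of rotation vector in the copact case}); this is a harmless, indeed slightly more careful, elaboration since you only need $\alpha_{z,n_k}\neq e$ along the subsequence rather than for all large $n$.
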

\begin{proof}If $z\in \mathrm{Fix}(F)$ and $\rho_{M,I}(z)\neq0$,
then $z$ is not a contractible fixed point and the conclusion
follows from Lemma \ref{lem:non contractible case of fixed point}.
Suppose now that $z\in \mathrm{Rec}^+(F)\setminus\mathrm{Fix}(F)$
and $U\subset M\setminus\mathrm{Fix}(F)$ is a free open disk
containing $z$. If $\rho_{M,I}(z)\neq0$, then there exists a
positive number $N$ such that $\alpha_{z,n}\neq e$ when $n\geq N$
(see \ref{sec:the existion of rotation vector in the copact case}).
In that case, the conclusion follows from Lemma \ref{lem:the
boundedness of the case multi-path}.
\end{proof}
\bigskip

Let us study the case where $\alpha_{z,n}=e$.

\begin{lem}\label{lem:alphazn is e, Ln divide taun is less
then kU} There exists a positive integer $K_U$ which depends on $U$
such that $$|L_n(\widetilde{F};\widetilde{a},\widetilde{b},z)|\leq
\tau_n(z)K_{\widetilde{a},\widetilde{b}}K_U\quad \mathrm{if}\quad
\alpha_{z,n}=e.$$
\end{lem}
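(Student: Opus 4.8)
Since $\alpha_{z,n}=e$ means $\widetilde{U}_{\widetilde{\Phi}^n(\widetilde{z})}=\widetilde{U}_{\widetilde{z}}$ for every lift $\widetilde{z}$ of $z$, each $\widetilde{\Gamma}_{\widetilde{I}_1,\widetilde{z}}^n$ is a loop in $A_{\widetilde{a},\widetilde{b}}$ which closes up inside the component $\widetilde{U}_{\widetilde{z}}$ of $\pi^{-1}(U)$, and $\widetilde{\Gamma}_{\widetilde{I}_1,z}^n$ is the corresponding multi-loop. As $\widetilde{\gamma}$ was arranged disjoint from $\pi^{-1}(U)$, the closing arcs do not meet $\widetilde{\gamma}$, so exactly as in Lemma~\ref{lem:the boundedness of the case multi-path} one has $L_n(\widetilde{F};\widetilde{a},\widetilde{b},z)=\sum_{\widetilde{z}'\in X_z^n}\widetilde{\gamma}\wedge\widetilde{I}_1^*(\widetilde{z}')$ with $\sharp X_z^n\le\tau_n(z)K_{\widetilde{a},\widetilde{b}}$. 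The plan is to split the multi-loop into pieces of two kinds: (i) the maximal sub-arcs running from a point of $X_z^n\cap\widetilde{V}$ back to $\widetilde{V}$ (the excursions away from the neighbourhood $\widetilde{V}$ of $\infty$), and (ii) the entire loops $\widetilde{\Gamma}_{\widetilde{I}_1,\widetilde{z}}^n$ whose trajectory never meets $\widetilde{V}$. A loop of the second kind puts all of its $\tau_n(z)$ orbit points into $X_z^n$, so there are at most $K_{\widetilde{a},\widetilde{b}}$ of them, while the total length of the pieces of the first kind is at most $\sharp X_z^n\le\tau_n(z)K_{\widetilde{a},\widetilde{b}}$. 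The pieces of the first kind I would treat verbatim as in Lemmas~\ref{lem:non contractible case of fixed point} and~\ref{lem:the boundedness of the case multi-path}: each such sub-arc is the trace of a periodic disk chain $C_i=(\{\widetilde{V},\widetilde{V}\},\{m_n^i\})$ of $\widetilde{F}_{\widetilde{a},\widetilde{b}}$, and Lemma~\ref{lem:disk chain of Franks lemma} applied with $D=\widetilde{V}$, $h=\widetilde{F}_{\widetilde{a},\widetilde{b}}$, $H=\widehat{F}_{\widetilde{a},\widetilde{b}}$ (its hypothesis $\mathrm{Rot}_{\mathrm{Fix}(\widetilde{F}_{\widetilde{a},\widetilde{b}})}(\widehat{F}_{\widetilde{a},\widetilde{b}})\subset[-N_{\widetilde{a},\widetilde{b}},N_{\widetilde{a},\widetilde{b}}]$ being Lemma~\ref{lem:rotation number and weak boundess at two point}) bounds the width of the associated lift by $(N_{\widetilde{a},\widetilde{b}}+1)m_n^i$; summing, the pieces of the first kind contribute at most $(N_{\widetilde{a},\widetilde{b}}+1)\tau_n(z)K_{\widetilde{a},\widetilde{b}}$ to $L_n$.

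The genuinely new point is the treatment of a loop $\widetilde{\Gamma}_{\widetilde{I}_1,\widetilde{z}}^n$ of the second kind. Let $\widehat{U}$ be the component of $\widehat{\pi}_{\widetilde{a},\widetilde{b}}^{-1}(\widetilde{U}_{\widetilde{z}})$ containing a chosen lift $\widehat{z}$ of $\widetilde{z}$ in $\widehat{A}_{\widetilde{a},\widetilde{b}}$; lifting the loop through $\widehat{z}$, its other endpoint is $T_{\widetilde{a},\widetilde{b}}^{\,w}(\widehat{z})$, where $w=\widetilde{\gamma}\wedge\widetilde{\Gamma}_{\widetilde{I}_1,\widetilde{z}}^n$ is the contribution of this loop to $L_n$, and since $\alpha_{z,n}=e$ the endpoint $\widehat{F}_{\widetilde{a},\widetilde{b}}^{\tau_n(z)}(\widehat{z})$ of the lifted trajectory lies in that same component $T_{\widetilde{a},\widetilde{b}}^{\,w}(\widehat{U})$; hence $\widehat{F}_{\widetilde{a},\widetilde{b}}^{\tau_n(z)}(\widehat{U})\cap T_{\widetilde{a},\widetilde{b}}^{\,w}(\widehat{U})\neq\emptyset$, so $|w|\le\tau_n(z)\sup\bigl|\mathrm{Rot}_{\widetilde{U}_{\widetilde{z}}}(\widehat{F}_{\widetilde{a},\widetilde{b}})\bigr|$. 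Thus it suffices to bound $\sup\bigl|\mathrm{Rot}_{\widetilde{U}'}(\widehat{F}_{\widetilde{a},\widetilde{b}})\bigr|$ uniformly over all components $\widetilde{U}'$ of $\pi^{-1}(U)$ by a constant $R_U$: then each of the at most $K_{\widetilde{a},\widetilde{b}}$ loops of the second kind contributes at most $R_U\tau_n(z)$, and, adding the two contributions, $K_U:=N_{\widetilde{a},\widetilde{b}}+1+R_U$ yields $|L_n(\widetilde{F};\widetilde{a},\widetilde{b},z)|\le\tau_n(z)K_{\widetilde{a},\widetilde{b}}K_U$. This is exactly where the disk $U$ — and not merely the data $\widetilde{a},\widetilde{b}$ — enters: since $\mathrm{Cl}(U)$ is a fixed compact set disjoint from $\pi(\{\widetilde{a},\widetilde{b}\})$, all components $\widetilde{U}'$ of $\pi^{-1}(U)$ lie in the compact subset $\{\,u\in\mathbf{S}:\widetilde{d}(u,\{\widetilde{a},\widetilde{b}\})\ge\varepsilon_U\,\}$ of $A_{\widetilde{a},\widetilde{b}}$, where $\varepsilon_U=\widetilde{d}(\pi^{-1}(U),\{\widetilde{a},\widetilde{b}\})>0$; on the corresponding part of $\widehat{A}_{\widetilde{a},\widetilde{b}}$ the per-step displacement of $\widehat{F}_{\widetilde{a},\widetilde{b}}$ is bounded by a constant depending only on $U$ (a compactness estimate in the spirit of the proof of Lemma~\ref{lem:diffeomorphism on a and b}), and combining this with the periodic disk chain of free disks through $\widetilde{U}'$ produced by the positive recurrence of $z$, linked to the distinguished disk $\widetilde{V}$, and applying Lemma~\ref{lem:disk chain of Franks lemma}, produces the desired $R_U$.

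The step I expect to be the main obstacle is precisely this last one, namely the uniform bound $\sup_{\widetilde{U}'}\bigl|\mathrm{Rot}_{\widetilde{U}'}(\widehat{F}_{\widetilde{a},\widetilde{b}})\bigr|<+\infty$. The components of $\pi^{-1}(U)$ are merely free disks of $\widehat{F}_{\widetilde{a},\widetilde{b}}$, not disks returning to themselves in one step, so Franks' Lemma (Proposition~\ref{prop:Franks' Lemma}) and Lemma~\ref{lem:disk chain of Franks lemma} cannot be applied to them directly; one has to link them to the fixed point $\infty$ by an auxiliary periodic disk chain whose other disks remain free and stay away from $\widetilde{a}$ and $\widetilde{b}$, and controlling this requires using the positive recurrence of $z$ and the compactness of $M$ with some care. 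It is precisely this mechanism that forces the constant $K_U$ to depend on the disk $U$.
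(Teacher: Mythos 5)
Your decomposition is the paper's: excursions from $\widetilde{V}$ back to $\widetilde{V}$ handled by the periodic disk chains of Lemma \ref{lem:disk chain of Franks lemma}, plus at most $K_{\widetilde{a},\widetilde{b}}$ loops whose whole orbit stays in $\widetilde{V}^c$, whose winding ratio lies in $\mathrm{Rot}_{\widetilde{U}_{\widetilde{z}}}(\widehat{F}_{\widetilde{a},\widetilde{b}})$. The gap is exactly in the step you flag as the main obstacle, and neither of the two tools you propose for it survives scrutiny. A periodic disk chain linking $\widetilde{U}'$ to the distinguished disk $\widetilde{V}$ exists only when the lifted recurrent orbit actually visits $\widetilde{V}$; but the whole-orbit loops occur precisely when the orbit stays in $\widetilde{V}^c$, and in the genuinely problematic case (conclusion (2) of Lemma \ref{lem:boundedness of the rotation number disk in some condition}) the recurrent orbits issued from $\widetilde{U}'$ never meet $\widetilde{V}$ at all, so no such chain can be produced and Lemma \ref{lem:disk chain of Franks lemma} does not apply to $\widetilde{U}'$. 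The compactness/displacement estimate cannot substitute for it: only the initial point of each step is known to lie at distance $\geq\varepsilon_U$ from $\{\widetilde{a},\widetilde{b}\}$, while the intermediate points $\widetilde{F}^j(\widetilde{z})$, $0<j<\tau_n(z)$, may come arbitrarily close to $\widetilde{a}$ or $\widetilde{b}$, where $F$ is only a homeomorphism and the per-step winding of $\widehat{F}_{\widetilde{a},\widetilde{b}}$ admits no bound depending only on $U$ (Example \ref{exem:proposition 21 and mu integrable} shows this blow-up really happens); hence summing per-step displacements does not yield $|w|\leq R_U\,\tau_n(z)$.

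What the paper does instead, in Lemma \ref{lem:boundedness of the rotation number disk in some condition}, is a purely Franks-type trapping argument with no metric estimate: $\widetilde{U}'$ is a free disk, so by Corollary \ref{lem:Franks'lemma1} every integer in $\mathrm{Rot}_{\widetilde{U}'}(\widehat{F}_{\widetilde{a},\widetilde{b}})$ is realized as the rotation number of a fixed point and therefore lies in $[-N_{\widetilde{a},\widetilde{b}},N_{\widetilde{a},\widetilde{b}}]$ by the WB-property; since the rotation set is convex, either it is contained in $]-(N_{\widetilde{a},\widetilde{b}}+1),N_{\widetilde{a},\widetilde{b}}+1[$ or it is trapped in a single open unit interval $]l,l+1[$, with $l$ depending on the component. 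The dependence of $K_U$ on $U$ then comes not from a displacement bound but from the fact that only the finitely many components of $\pi^{-1}(U)$ meeting the compact set $\widetilde{V}^c$ can carry whole-orbit loops, so one takes the maximum of the corresponding bounds; in particular your stronger demand of a bound uniform over all components of $\pi^{-1}(U)$ is neither needed nor obtainable by your route. To complete your argument you must add this integer-free-interval mechanism (or an equivalent) for the components whose recurrent orbits avoid $\widetilde{V}$; as written, that case is unproved.
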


Before proving Lemma \ref{lem:alphazn is e, Ln divide taun is less
then kU}, we require the following lemma.

\begin{lem}\label{lem:boundedness of the rotation number disk in some
condition}Let $\widetilde{U}$ be any connected component of
$\pi^{-1}(U)$ in $\widetilde{V}^c$. If
$$\mathrm{Rot}_{\widetilde{U}}(\widehat{F}_{\widetilde{a},\widetilde{b}})\nsubseteq\,
]-(N_{\widetilde{a},\widetilde{b}}+1),N_{\widetilde{a},\widetilde{b}}+1[,$$
then we have \begin{enumerate}
               \item $\alpha_{z',n}=e$ for all $z'\in \mathrm{Rec}^+(F)\cap
               U$ and all $n\geq1$;
               \item $\bigcup_{k\geq1}\widetilde{F}^k(\pi^{-1}(\mathrm{Rec}^+(F))\cap \widetilde{U})\subset
               \widetilde{V}^c$;
               \item $\mathrm{Rot}_{\widetilde{U}}(\widehat{F}_{\widetilde{a},
 \widetilde{b}})\subset]l,l+1[$
for some integer $l$ with $l\geq N_{\widetilde{a},\widetilde{b}}+1$
or $l\leq -(N_{\widetilde{a},\widetilde{b}}+2)$ where $l$ depends on
$\widetilde{U}$.
             \end{enumerate}
\end{lem}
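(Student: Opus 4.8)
The plan is to transfer everything to the open annulus $A:=A_{\widetilde a,\widetilde b}=\mathbf S\setminus\{\widetilde a,\widetilde b\}$, writing $h:=\widetilde F_{\widetilde a,\widetilde b}$, $H:=\widehat F_{\widetilde a,\widetilde b}$, $T:=T_{\widetilde a,\widetilde b}$ and $N:=N_{\widetilde a,\widetilde b}$. By the WB-property at $\widetilde a,\widetilde b$ and Lemma \ref{lem:rotation number and weak boundess at two point} we have $\mathrm{Rot}_{\mathrm{Fix}(h)}(H)\subset[-N,N]$; hence, by the very definition of the rotation number of a fixed point, $H\circ T^{-m}$ is fixed point free for every integer $m$ with $|m|\geq N+1$. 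Since $\widetilde U$ is a connected component of $\pi^{-1}(U)$ and $U$ is free for $F$, the disk $\widetilde U$ is free for $h$, so $\mathrm{Rot}_{\widetilde U}(H)$ is a convex subset of $\mathbb R$; by hypothesis it is nonempty and meets $\,]-\infty,-(N+1)[\,\cup\,]N+1,+\infty[$. I will treat the case where it meets $\,]N+1,+\infty[$; the other one is symmetric (give $M$ and all its covers the opposite orientation, which negates every linking number and rotation number while leaving $\mathrm{Rec}^+(F)$, the WB-property and the translations $\alpha_{z,n}$ untouched).

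I would prove conclusion (3) first. By Corollary \ref{lem:Franks'lemma1} applied to the free disk $\widetilde U$, every integer belonging to $\mathrm{Rot}_{\widetilde U}(H)$ is the rotation number of a fixed point of $h$, hence lies in $[-N,N]$. Thus $\mathrm{Rot}_{\widetilde U}(H)$ is a convex interval meeting $\,]N+1,+\infty[$ and containing no integer $\geq N+1$; it therefore lies in $\,]l,l+1[$ for a unique integer $l$, and avoiding $N+1$ forces $l\geq N+1$ (in the symmetric case one gets $l\leq-(N+2)$). Observe that (3) already exhibits $\widehat U$ as a positively returning disk for the fixed point free homeomorphism $H\circ T^{-(N+1)}$ and as a negatively returning disk for the fixed point free homeomorphism $H\circ T^{-(l+1)}$; by Corollary \ref{lem:Franks'lemma2} the disk $\widehat U$ is then, respectively, not negatively returning for $H\circ T^{-(N+1)}$ and not positively returning for $H\circ T^{-(l+1)}$, so the forward $H$-orbit of $\widehat U$ drifts to the right with ``speed'' confined to $\,]l,l+1[$.

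For conclusions (1) and (2) I would argue by contradiction, in both cases manufacturing a periodic disk chain of $h$ in which $\widetilde V$ plays the role of the distinguished winding-zero disk $D$ of Lemma \ref{lem:disk chain of Franks lemma} and $\widetilde U$ (or one of its $T$-translates, in $\widehat A$) appears as a free disk $D_i\neq D$; Lemma \ref{lem:disk chain of Franks lemma} would then give $\mathrm{Rot}_{\widetilde U}(H)\subset\,]-(N+1),N+1[$, contradicting (3). That $\widetilde V$ is legitimate as the disk $D$ follows from $\widehat F_{\widetilde a,\widetilde b}(\widehat V)\subset\widehat W$, from $\widehat W$ being disjoint from each of its nontrivial $T$-translates, and from $\widehat\infty\in\widehat V$ being fixed by $H$: together these give $H(\widehat V)\cap T^k(\widehat V)\neq\emptyset$ if and only if $k=0$. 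If (1) fails, there are $z'\in\mathrm{Rec}^+(F)\cap U$ and $n\geq1$ with $\alpha_{z',n}\neq e$, so the forward $h$-orbit of the lift of $z'$ in $\widetilde U$ lands in another component $\widetilde U'\neq\widetilde U$ of $\pi^{-1}(U)$, itself free for $h$; I would combine this link with further returns of the orbit to $\pi^{-1}(U)$ provided by the recurrence of $z'$, use the commutation of $H$ with $T$ to realign lifts, and splice in the trivial loop $(\{\widetilde V,\widetilde V\},\{1\})$ together with the arcs of orbit that sit inside $\widetilde V$ (all but the at most $K_{\widetilde a,\widetilde b}$ lifts in $X_{z'}$) to close up a periodic chain of the required shape. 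If (2) fails, i.e. $\widetilde F^{k}(\widetilde w)\in\widetilde V$ for some $\widetilde w\in\pi^{-1}(\mathrm{Rec}^+(F))\cap\widetilde U$ and $k\geq1$, the link $\widehat U\to T^{j}(\widehat V)$ of length $k$ plays the same role, and recurrence of $w$ together with $\widehat F_{\widetilde a,\widetilde b}(\widehat V)\subset\widehat W$ again produces a periodic chain through $\widetilde V$ and a $T$-translate of $\widetilde U$.

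The step I expect to be the main obstacle is precisely this last one: arranging the $T$-translates — and accounting for the deck translations in $G$, which do not act on $A$ at all, so that the components $\alpha(\widetilde U)$ the recurrent orbit may return to are \emph{a priori} unrelated to $\widetilde U$ by the annular translation $T$ — so that the concatenated links genuinely form a \emph{periodic} disk chain of the exact form demanded by Lemma \ref{lem:disk chain of Franks lemma}, with $\widetilde V$ as both first and last disk and every intermediate disk either equal to $\widetilde V$ or free, rather than a merely non-closed chain or one carrying the wrong distinguished disk. Controlling this requires using, with care, that $\widehat F_{\widetilde a,\widetilde b}(\widehat V)\subset\widehat W$ (so any excursion of the orbit near $\infty$ is confined for one step and then absorbed into the trivial loop), that $\widetilde\gamma$ — hence $\widetilde V$ and every $\alpha(\widetilde U)$ — is disjoint from the chosen spanning arc of $A$, and the finiteness bound $\sharp X_z\leq K_{\widetilde a,\widetilde b}$ to keep the chain finite; the width estimate in the first conclusion of Lemma \ref{lem:disk chain of Franks lemma} then contradicts the fast positive drift recorded in (3).
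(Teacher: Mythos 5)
Your treatment of conclusion (3) is correct and is, in substance, the intended argument: Corollary \ref{lem:Franks'lemma1} applied to the free disk $\widetilde U$ shows that every integer in the convex set $\mathrm{Rot}_{\widetilde U}(\widehat F_{\widetilde a,\widetilde b})$ is realized as the rotation number of a fixed point of $\widetilde F_{\widetilde a,\widetilde b}$, hence lies in $[-N_{\widetilde a,\widetilde b},N_{\widetilde a,\widetilde b}]$, and convexity together with the hypothesis then confines the set to some $]l,l+1[$ with $l\geq N_{\widetilde a,\widetilde b}+1$ or $l\leq-(N_{\widetilde a,\widetilde b}+2)$. Your verification that $\widetilde V$ is a legitimate distinguished disk $D$ for Lemma \ref{lem:disk chain of Franks lemma} is also correct and matches the setup of the paper.

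For conclusions (1) and (2), however, you only describe a plan and you yourself flag the closing-up of the periodic disk chain as an unresolved obstacle; this is a genuine gap, and the difficulty you anticipate (realigning $T$-translates in $\widehat A_{\widetilde a,\widetilde b}$ and relating the components $\alpha(\widetilde U)$, $\alpha\in G$, to $T$-translates of $\widetilde U$) is not where the argument lies: a periodic disk chain is a configuration in the annulus $A_{\widetilde a,\widetilde b}$ itself, all lifting being handled inside Lemma \ref{lem:disk chain of Franks lemma}, so no $T$-alignment is needed, and no trivial loops or extra returns have to be spliced in. The missing idea for (1) is that $\alpha_{z',n_0}\neq e$ forces the ``starred'' orbit (as in the proof of Lemma \ref{lem:the boundedness of the case multi-path}) of the lift $\widetilde z\,'\in\widetilde U$ to pass through the pairwise distinct components $\alpha_{z',n_0}^{\,k}(\widetilde U)$, $k\in\mathbb Z$, which leave every compact subset of $\widetilde M$ because $G$ is torsion free and acts properly discontinuously; since the complement of $\widetilde V$ is compact, this orbit enters $\widetilde V$ both before and after its passage through $\widetilde z\,'$, and the resulting excursion immediately yields a periodic chain of the form $(\{\widetilde V,\widetilde U,\widetilde V\},\{j_0,m-j_0\})$ (with at worst a free component of $\pi^{-1}(U)$ interposed at a jump step), to which Lemma \ref{lem:disk chain of Franks lemma} applies and contradicts the hypothesis. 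For (2) the ingredient you omit is conclusion (1) itself: since $\alpha_{z',n}=e$ for all $n$, the recurrent lift returns to $\widetilde U$ itself, so if in addition $\widetilde F^{n_0}(\widetilde z\,')\in\widetilde V$, choosing $n_1$ with $\tau_{n_1}(z')>n_0$ gives $\widetilde F^{\tau_{n_1}(z')-n_0}(\widetilde V)\cap\widetilde U\neq\emptyset$ and $\widetilde F^{n_0}(\widetilde U)\cap\widetilde V\neq\emptyset$, i.e.\ the periodic chain $(\{\widetilde V,\widetilde U,\widetilde V\},\{\tau_{n_1}(z')-n_0,n_0\})$; without (1) the return could land in some $\alpha(\widetilde U)\neq\widetilde U$ (and $\alpha(\widetilde V)\neq\widetilde V$ in general), and the chain would not close. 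Proving (3) first is harmless, but as written (1) and (2) are not established.
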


Let us prove now Lemma \ref{lem:alphazn is e, Ln divide taun is less
then kU} supposing Lemma \ref{lem:boundedness of the rotation number
disk in some condition} whose proof will be given later.

\begin{proof}[Proof of Lemma \ref{lem:alphazn is e, Ln divide taun is less then kU}]
As $\alpha_{z,n}=e$, the curve
$\widetilde{\Gamma}^n_{\widetilde{I}_1,z}$ is a multi-loop in
$\widetilde{M}$. Let
$p_n(\widetilde{z})=\widetilde{\gamma}\wedge\widetilde{\Gamma}^n_{\widetilde{I}_1,\widetilde{z}}$
where $\widetilde{z}\in\pi^{-1}(z)$. Obviously,
$p_n(\widetilde{z})/\tau_n(z)\in\mathrm{Rot}_{\widetilde{U}_{\widetilde{z}}}(\widehat{F}_{\widetilde{a},\widetilde{b}})$.

Let us first analyze the possible cases that need to be considered
in the proof. The set $X_z^n$ maybe contain a ``whole orbit'' of
some lift $\widetilde{z}$ of $z$, that means
$\widetilde{F}^{j}(\widetilde{z})\in X^n_z$ for all $0\leq
j<\tau_n(z)$, or a ``partial orbit'' of $\widetilde{z}$. In the case
where a ``partial orbit'' of $\widetilde{z}$ is contained in
$X_z^n$, similarly to the proof of Lemma \ref{lem:non contractible
case of fixed point}, we can get a  periodic disk chain of
$\widetilde{F}_{\widetilde{a},\widetilde{b}}$ that satisfies the
hypothesis of Lemma \ref{lem:disk chain of Franks lemma} and hence
we can estimate the intersection number of $\widetilde{\gamma}$ and
the path on which the ``partial orbit'' of $\widetilde{z}$ lies. In
the case where the ``whole orbit'' of $\widetilde{z}$ is contained
in $X_z^n$, we can use Lemma \ref{lem:boundedness of the rotation
number disk in some condition} to get either
$|p_n(\widetilde{z})/\tau_n(z)|< N_{\widetilde{a},\widetilde{b}}+1$,
or $l<p_n(\widetilde{z})/\tau_n(z)<l+1$ where $l\in\mathbb{Z}$
depends on $\widetilde{U}$ and satisfies $l\geq
N_{\widetilde{a},\widetilde{b}}+1$ or $l\leq
-(N_{\widetilde{a},\widetilde{b}}+2)$. Finally, we only need to sum
the intersection numbers of all the cases above.
\smallskip

Let us begin the rigorous proof. Write
$$S^n_z=\{\widetilde{z}\in\pi^{-1}(z)\mid
\widetilde{F}^{j}(\widetilde{z})\in\widetilde{V}^c\,\,\,\mathrm{for\,\,all}\,\,\,0\leq
j<\tau_n(z)\}$$ and
$$Y^n_z=\{\widetilde{F}^{j}(\widetilde{z})\mid \widetilde{z}\in
S_z^n,0\leq j<\tau_n(z)\}.$$\smallskip

As before, we write
$$L_n(\widetilde{F};\widetilde{a},\widetilde{b},z)=\widetilde{\gamma}
\wedge\widetilde{\Gamma}_{\widetilde{I}_1,z}^n=\sum_{\widetilde{z}\,'\in
X_z^n}\widetilde{\gamma}\wedge
\widetilde{I}^*_1(\widetilde{z}\,').$$ We can write the multi-path
as follows
\begin{equation}\label{eq:multi-path3}
\prod_{\widetilde{z}\,'\in
X_z^n}\widetilde{I}^*_1(\widetilde{z}\,')=\prod_{\widetilde{z}\,'\in
Y_z^n}\widetilde{I}^*_1(\widetilde{z}\,')\cdot\prod_{\widetilde{z}\,'\in
X_z^n\setminus
Y_z^n}\widetilde{I}^*_1(\widetilde{z}\,')=\prod_{1\leq i\leq
P'_n(z)}\widetilde{\Gamma}^n_i(z)\cdot\prod_{P'_n(z)< i\leq
P_n(z)}\widetilde{\Gamma}^n_i(z),
\end{equation}
where
\begin{equation}\label{eq:multi-path4}
\widetilde{\Gamma}^n_i(z)=\widetilde{\Gamma}^n_{\widetilde{I}_1,\widetilde{z}_i}=
\prod_{0\leq
j<m_n^i(z)}\widetilde{I}^*_1(\widetilde{F}^{*j}_{\widetilde{a},\widetilde{b}}(\widetilde{z}_i))
\end{equation}
for $1\leq i\leq P'_n$ 
with $\widetilde{z}_i\in S_z^n$ and $m_n^i=\tau_n$; and
\begin{equation}\label{eq:multi-path5}
\widetilde{\Gamma}^n_i(z)=\prod_{0\leq
j<m_n^i(z)}\widetilde{I}^*_1(\widetilde{F}^{*j}_{\widetilde{a},\widetilde{b}}(\widetilde{z}_i))
\end{equation}
for $P'_n< i\leq P_n$ with $\widetilde{z}_i\in
X^n_z\cap\widetilde{V}$,
$\widetilde{F}_{\widetilde{a},\widetilde{b}}^{*j}(\widetilde{z}_i)\in
X^n_z\cap\widetilde{V}^c$ for $1\leq j<m_n^i$ and
$\widetilde{F}_{\widetilde{a},\widetilde{b}}^{*m_n^i}(\widetilde{z}_i)\in
\widetilde{V}$.\smallskip

Obviously, $\sum_i m_n^i\leq
\tau_n(z)K_{\widetilde{a},\widetilde{b}}$. Let
$k_n^i(z)=\widetilde{\gamma}\wedge\widetilde{\Gamma}^n_i$. Hence
$L_n(\widetilde{F};\widetilde{a},\widetilde{b},z)=\sum_i k_n^i$. To
prove Lemma \ref{lem:alphazn is e, Ln divide taun is less then kU},
it is sufficient to prove that there exists a positive integer $K_U$
which depends only on $U$ such that $|k_n^i|\leq
m_n^iK_U$.\smallskip

When $1\leq i\leq P'_n$, by Lemma \ref{lem:boundedness of the
rotation number disk in some condition} and the fact that $P'_n\leq
K_{\widetilde{a},\widetilde{b}}$, there exists a positive integer
$r$ that depends on $U$ such that
$\mathrm{Rot}_{\widetilde{U}_{\widetilde{z}_i}}(\widehat{F}_{\widetilde{a},\widetilde{b}})
\subset[-r,r]$. Observing that
$k_n^i=p_n(\widetilde{z}_i)=\widetilde{\gamma}\wedge\widetilde{\Gamma}^n_{\widetilde{I}_1,\widetilde{z}_i}$,
$m_n^i=\tau_n$, and $k_n^i/m_n^i=p_n(\widetilde{z}_i)/\tau_n(z)\in
\mathrm{Rot}_{\widetilde{U}_{\widetilde{z}_i}}(\widehat{F}_{\widetilde{a},\widetilde{b}})$,
we have $|k_n^i|\leq m_n^ir$.

When $P'_n< i\leq P_n$, similarly to the proof of Lemma \ref{lem:non
contractible case of fixed point}, we can get $|k_n^i|<
m_n^i(N_{\widetilde{a},\widetilde{b}}+1)$.

Let $K_U =\max\{N_{\widetilde{a},\widetilde{b}}+1,r\}$. We have
$|k_n^i|\leq m_n^iK_U$ for every $1\leq i\leq P_n$ and hence
$$|L_n(\widetilde{F};\widetilde{a},\widetilde{b},z)|=\left|\sum_i
k_n^i\right|\leq \tau_n(z)K_{\widetilde{a},\widetilde{b}}K_U.$$ We
have completed the proof.
\end{proof}
\smallskip

\begin{proof}[Proof of Lemma \ref{lem:boundedness of the rotation number disk in some
condition}] (1).\quad Suppose that there is a point $z'\in
\mathrm{Rec}^+(F)\cap U$ and some $n_0\geq1$ such that
$\alpha_{z',n_0}\neq e$. Let $\widetilde{z}\,'$ be the lift of $z'$
that is in $\widetilde{U}$. Similarly to the proof of Lemma
\ref{lem:the boundedness of the case multi-path}, we can find a path
$$\widetilde{\Gamma}^{n_0}_i(z')=\prod_{0\leq
j<m_{n_0}^i(z')}\widetilde{I}^*_1(\widetilde{F}^{*j}_{\widetilde{a},\widetilde{b}}(\widetilde{z}_i))$$
which satisfies $\widetilde{z}_i\in X^{n_0}_{z'}\cap\widetilde{V}$,
$\widetilde{F}_{\widetilde{a},\widetilde{b}}^{*j}(\widetilde{z}_i)\in
X^{n_0}_{z'}\cap\widetilde{V}^c$ for all $1\leq j<m_{n_0}^i$,
$\widetilde{z}\,'=\widetilde{F}_{\widetilde{a},\widetilde{b}}^{*j_0}(\widetilde{z}_i)$
for some $1\leq j_0<m_{n_0}^i$, and
$\widetilde{F}_{\widetilde{a},\widetilde{b}}^{*m_{n_0}^i}(\widetilde{z}_i)\in
\widetilde{V}$. Hence, we get a periodic disk chain $C\,'$ that
contains $\widetilde{U}$ as an element and satisfies the hypothesis
of Lemma \ref{lem:disk chain of Franks lemma}. Replacing
$\mathbb{A}$ by $\widetilde{A}_{\widetilde{a},\widetilde{b}}$, $h$
by $\widetilde{F}_{\widetilde{a},\widetilde{b}}$, $H$ by
$\widehat{F}_{\widetilde{a},\widetilde{b}}$, $D$ by $\widetilde{V}$
and $C$ by $C\,'$ in Lemma \ref{lem:disk chain of Franks lemma} (the
second conclusion), we get
$\mathrm{Rot}_{\widetilde{U}}(\widehat{F}_{\widetilde{a},\widetilde{b}})\subset\,
]-(N_{\widetilde{a},\widetilde{b}}+1),N_{\widetilde{a},\widetilde{b}}+1[$.
We have a contradiction.\smallskip

(2).\quad Suppose that there is a point $\widetilde{z}\,'\in
\pi^{-1}(z')\cap \widetilde{U}$ where $z'\in \mathrm{Rec}^+(F)$ and
an integer $n_0\geq 1$ such that
$\widetilde{F}^{n_0}(\widetilde{z}\,')\in \widetilde{V}$. By (1), it
is sufficient to consider the case where $\alpha_{z',n}=e$ for all
$n\geq1$, that means,
$\widetilde{F}^{\tau_{n}(z')}(\widetilde{z}\,')\in \widetilde{U}$
for all $n\geq1$. We choose a positive integer $n_1$ large enough
such that $\tau_{n_1}(z')>n_0$. We have
$\widetilde{F}^{\tau_{n_1}(z')-n_0}(\widetilde{F}^{n_0}(\widetilde{z}\,'))\in\widetilde{U}$.
Then we get $\widetilde{F}^{\tau_{n_1}(z')-n_0}(\widetilde{V})\cap
\widetilde{U}\neq\emptyset$ and
$\widetilde{F}^{n_0}(\widetilde{U})\cap \widetilde{V}\neq\emptyset$.
Therefore, the disk chain
$(\{\widetilde{V},\widetilde{U},\widetilde{V}\},\{\tau_{n_1}(z')-n_0,n_0\})$
is a  periodic disk chain that satisfies the hypothesis of Lemma
\ref{lem:disk chain of Franks lemma}. Applying Lemma \ref{lem:disk
chain of Franks lemma} again, we get
$\mathrm{Rot}_{\widetilde{U}}(\widehat{F}_{\widetilde{a},\widetilde{b}})\subset\,
]-(N_{\widetilde{a},\widetilde{b}}+1),N_{\widetilde{a},\widetilde{b}}+1[$.
It is still a contradiction.\smallskip

(3).\quad This follows from Lemma  \ref{lem:disk chain of Franks
lemma} immediately.
\end{proof}

\begin{proof}[Proof of Proposition \ref{prop:l(a,b,x)divid tau(x)is boundedonU}]
This follows from Lemma \ref{lem:contractible fixed point case},
Lemma \ref{lem:non contractible case of fixed point}, Lemma
\ref{lem:the boundedness of the case multi-path} and Lemma
\ref{lem:alphazn is e, Ln divide taun is less then kU}.
\end{proof}

\bigskip

In the end of this section, we study the boundedness in the case
where the time-one map $F$ of $I$ satisfies some differential
conditions. \label{sec:boundedness in the case where F is C1
diffeomorphism}

\begin{prop}\label{prop:diffeomorphism has a bounded
linking number for every two distinct fixed points} For any two
distinct fixed points $\widetilde{a}$ and $\widetilde{b}$ of
$\widetilde{F}$, if $F$ and $F^{-1}$ are differentiable at
$\pi(\widetilde{a})$ and $\pi(\widetilde{b})$, then there exists
$N\in\mathbb{R}$ such that
$|i(\widetilde{F};\widetilde{a},\widetilde{b},z)|\leq N$ if
$i(\widetilde{F};\widetilde{a},\widetilde{b},z)$ exists.
\end{prop}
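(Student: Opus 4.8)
The plan is to reduce the statement to the annulus estimate already obtained in Lemma \ref{lem:diffeomorphism on a and b} together with the boundedness machinery of Proposition \ref{prop:l(a,b,x)divid tau(x)is boundedonU}. First I would recall that $i(\widetilde{F};\widetilde{a},\widetilde{b},z)$, when defined, is computed by the limiting ratio $L_{n_k}(\widetilde{F};\widetilde{a},\widetilde{b},z)/\tau_{n_k}(z)$ (Definition \ref{def:Intersection number density}), and that by Lemma \ref{lem:P} this quantity is nothing but a sum over the fibre $\pi^{-1}(z)$ of rotation numbers of the lift $\widehat{F}_{\widetilde{a},\widetilde{b}}$ of $\widetilde{F}_{\widetilde{a},\widetilde{b}}$ on the annulus $A_{\widetilde{a},\widetilde{b}}=\mathbf{S}\setminus\{\widetilde{a},\widetilde{b}\}$. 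The crucial input is that since $F$ and $F^{-1}$ are differentiable at $\pi(\widetilde{a})$ and $\pi(\widetilde{b})$, the blowing-up construction in the proof of Lemma \ref{lem:diffeomorphism on a and b} produces an integer $N$, depending only on $I$ (and on $\widetilde{a},\widetilde{b}$), such that $\left|p_1(\widehat{F}_{\widetilde{a},\widetilde{b}}(\widehat{z}))-p_1(\widehat{z})\right|\leq N$ for \emph{every} $\widehat{z}\in\widehat{A}_{\widetilde{a},\widetilde{b}}$, not merely on recurrent points — exactly the point emphasized in the remark following that lemma, which announces its use here.

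The second step is to run the same splitting of the multi-loop/multi-path $\widetilde{\Gamma}^n_{\widetilde{I}_1,z}$ into finitely many sub-paths $\widetilde{\Gamma}^n_i$ as in the proof of Lemma \ref{lem:alphazn is e, Ln divide taun is less then kU}, writing $L_n(\widetilde{F};\widetilde{a},\widetilde{b},z)=\sum_i k_n^i$ with $\sum_i m_n^i\leq\tau_n(z)K_{\widetilde{a},\widetilde{b}}$, where $k_n^i=\widetilde{\gamma}\wedge\widetilde{\Gamma}^n_i$ and $m_n^i$ is the length of the corresponding disk chain $C_i$. For each sub-path, lifting to $\widehat{A}_{\widetilde{a},\widetilde{b}}$ shows that $k_n^i$ equals the displacement $p_1(\widehat{F}^{m_n^i}_{\widetilde{a},\widetilde{b}}(\widehat{z}_i))-p_1(\widehat{z}_i)$ up to a bounded error coming from the fixed connecting paths inside $\widetilde{V}$; by telescoping and the uniform bound $\left|p_1\circ\widehat{F}_{\widetilde{a},\widetilde{b}}-p_1\right|\leq N$ we obtain $|k_n^i|\leq m_n^i N + c$ for an absolute constant $c$. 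Summing, $|L_n(\widetilde{F};\widetilde{a},\widetilde{b},z)|\leq \tau_n(z)K_{\widetilde{a},\widetilde{b}}N + cK_{\widetilde{a},\widetilde{b}}$, and dividing by $\tau_n(z)\geq n$ and letting $n=n_k\to+\infty$ along a subsequence with $\Phi^{n_k}(z)\to z$ yields $|i(\widetilde{F};\widetilde{a},\widetilde{b},z)|\leq K_{\widetilde{a},\widetilde{b}}N$. The fixed-point case $z\in\mathrm{Fix}(F)$ is the special case $\tau(z)=1$, $n=1$, and needs no limit.

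I expect the main obstacle to be the bookkeeping in the non-fixed-point case: one must check that the decomposition of $\prod_{\widetilde{z}\,'\in X_z^n}\widetilde{I}^*_1(\widetilde{z}\,')$ into the pieces $\widetilde{\Gamma}^n_i$ is compatible with lifting to $\widehat{A}_{\widetilde{a},\widetilde{b}}$ in such a way that $k_n^i$ really reads off the $T_{\widetilde{a},\widetilde{b}}$-translation width of the lifted chain, and that the auxiliary connecting arcs $\widetilde{\gamma}_{\Phi^n(z),z}$ contribute only a bounded error independent of $n$ and $i$ (this is where the choice of $\widetilde{V}\subset\widetilde{W}$ disjoint from $\widetilde{\gamma}$, made in Section \ref{sec:boundedness}, is used). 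Once this is set up, the estimate is immediate from Lemma \ref{lem:diffeomorphism on a and b}; note that we do \emph{not} need $I$ to satisfy the WB-property here, since the differentiability at the two points $\pi(\widetilde{a}),\pi(\widetilde{b})$ already forces $\mathrm{Rot}_{\mathrm{Fix}(\widetilde{F}_{\widetilde{a},\widetilde{b}})}(\widehat{F}_{\widetilde{a},\widetilde{b}})$ to be bounded (the ``In particular'' part of Lemma \ref{lem:diffeomorphism on a and b}), and more strongly bounds displacements of all points of the annulus. Alternatively, and more cheaply, one can simply invoke Proposition \ref{prop:l(a,b,x)divid tau(x)is boundedonU} with $N_{\widetilde{a},\widetilde{b}}:=N$ the bound from Lemma \ref{lem:diffeomorphism on a and b}, since the proof of that proposition only used the WB-property through the existence of such an $N$; I would present the argument that way to avoid repeating the disk-chain analysis.
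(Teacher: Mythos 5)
Your first, longer route is essentially sound and uses the same ingredients as the paper, just organized as a direct estimate instead of a contradiction: the paper supposes $i(\widetilde{F};\widetilde{a},\widetilde{b},z_k)\to+\infty$, uses the decomposition of Lemma \ref{lem:alphazn is e, Ln divide taun is less then kU} together with parts (1) and (2) of Lemma \ref{lem:boundedness of the rotation number disk in some condition} to see that only the at most $K_{\widetilde{a},\widetilde{b}}$ lifts of $z_k$ whose whole orbit stays in $\widetilde{V}^c$ can produce an unbounded contribution, notes that these lifts are positively recurrent for $\widetilde{F}$, and then contradicts the bound of Lemma \ref{lem:diffeomorphism on a and b}. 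Your telescoping estimate $|k_n^i|\leq m_n^i N+c$ is a direct rendering of the same mechanism (one bookkeeping slip: the error term after summing is $cP_n(z)$, not $cK_{\widetilde{a},\widetilde{b}}$, since the number of partial-orbit pieces can grow with $n$; it still disappears after dividing by $\tau_n(z)$, so the conclusion is unaffected).

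The genuine problem is the ``cheaper alternative'' that you say you would actually present. Proposition \ref{prop:l(a,b,x)divid tau(x)is boundedonU} does not give what is claimed here: in the non-fixed case its bound is $K_{\widetilde{a},\widetilde{b}}K_U$ with $K_U=\max\{N_{\widetilde{a},\widetilde{b}}+1,r\}$, where $r$ comes from part (3) of Lemma \ref{lem:boundedness of the rotation number disk in some condition} and depends on the free disk $U$ containing $z$ (through the interval $]l,l+1[$ with $l$ depending on $\widetilde{U}$). So invoking it with $N_{\widetilde{a},\widetilde{b}}:=N$ only yields a bound depending on $z$, not the uniform bound asserted in the proposition. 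Moreover, no argument using the differentiability \emph{only} through the existence of a bound on $\mathrm{Rot}_{\mathrm{Fix}(\widetilde{F}_{\widetilde{a},\widetilde{b}})}(\widehat{F}_{\widetilde{a},\widetilde{b}})$ can work: the WB-property already supplies such an $N_{\widetilde{a},\widetilde{b}}$ (Lemma \ref{lem:rotation number and weak boundess at two point}), and Example \ref{exem:proposition 21 and mu integrable} satisfies the B-property while $i(\widetilde{F};\widetilde{z}_0,\widetilde{z}_1,z_k)=2^k+1/2$ is unbounded. The differentiability hypothesis must be used precisely where the $U$-dependence enters, namely to bound the winding of the ``whole orbit in $\widetilde{V}^c$'' pieces (equivalently the integer $l$ above), which is exactly what your first argument, and the paper's contradiction argument, extract from Lemma \ref{lem:diffeomorphism on a and b}. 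Keep the first route and drop the shortcut.
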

\begin{proof}
We make a proof by contradiction. If it is not true, without loss of
generality, we suppose that there is a sequence
$\{z_k\}_{k\geq1}\subset \mathrm{Rec}^+(F)$ such that
$\lim\limits_{k\rightarrow+\infty}i(\widetilde{F};\widetilde{a},\widetilde{b},z_k)=+\infty$.
By the proof of Lemma \ref{lem:alphazn is e, Ln divide taun is less
then kU} and the conclusion (1) of Lemma \ref{lem:boundedness of the
rotation number disk in some condition}, we have $\alpha_{z_k,n}=e$
for every $n\geq1$ when $k$ is large enough. Hence
$\widetilde{z}_k\in
\mathrm{Rec}^+(\widetilde{F})\setminus\mathrm{Fix}(\widetilde{F})$
when $k$ is large enough where $\widetilde{z}_k\in\pi^{-1}(z_k)$. By
the proof of Lemma \ref{lem:alphazn is e, Ln divide taun is less
then kU} and the conclusion (2) of Lemma \ref{lem:boundedness of the
rotation number disk in some condition}, we only need consider the
lifts $\widetilde{z}_k$ of $z_k$ whose whole orbit is in
$\widetilde{V}^c$
 when $k$ is large enough. However, such lifts are
finite (at most $K_{\widetilde{a},\widetilde{b}}$). This implies
that there exists a sequence $\{\widetilde{z}_k\}_{k\geq1}$ with
$\widetilde{z}_k\in\pi^{-1}(z_k)$ such that
$\lim\limits_{k\rightarrow+\infty}\rho_{A_{\widetilde{a},\widetilde{b}},\widehat{F}
_{\widetilde{a},\widetilde{b}}}(\widetilde{z}_k)=+\infty$, which
conflicts with Lemma \ref{lem:diffeomorphism on a and b}.
\end{proof}

In Example \ref{exem:proposition 21 and mu integrable} of Appendix,
we will construct an identity isotopy $I$ of a closed surface such
that $I$ satisfies the B-property but its time-one map is not a
diffeomorphism and there are two different fixed points
$\widetilde{z}_0$ and $\widetilde{z}_1$ of $\widetilde{F}$ such that
the linking number
$i(\widetilde{F};\widetilde{z}_0,\widetilde{z}_1,z)$ is not
uniformly bounded for
$z\in\mathrm{Rec}^+(F)\setminus\pi(\{\widetilde{z}_0,\widetilde{z}_1\})$.\smallskip

In order to study the boundedness and continuity of the generalize
action in the next section, we need the following proposition:

\begin{prop}\label{prop:C1 diffeomorphism has a bounded
linkingnumber} Let $F\in\mathrm{Diff}^1(M)$ be the time-one map of
$I$ and $\widetilde{P}\subset \widetilde{M}$ be a connected compact
set. There exists $N_{\widetilde{P}}\geq0$ such that, for every two
distinct fixed points $\widetilde{a}$ and $\widetilde{b}$ of
$\widetilde{F}$ in $\widetilde{P}$, and $z\in
\mathrm{Rec}^+(F)\setminus\pi(\{\widetilde{a},\widetilde{b}\})$, we
have $|i(\widetilde{F};\widetilde{a},\widetilde{b},z)|\leq
N_{\widetilde{P}}$ when
$i(\widetilde{F};\widetilde{a},\widetilde{b},z)$  exists.
\end{prop}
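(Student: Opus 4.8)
The plan is to combine the local analysis near the two fixed points (the blowing-up / differentiability argument behind Lemma \ref{lem:diffeomorphism on a and b}) with the compactness of $\widetilde{P}$, so as to obtain bounds $N_{\widetilde{a},\widetilde{b}}$ that depend only on $\widetilde{P}$, not on the particular pair $(\widetilde{a},\widetilde{b})$. Recall from Proposition \ref{prop:l(a,b,x)divid tau(x)is boundedonU} that, once $I$ satisfies the WB-property at $\widetilde{a}$ and $\widetilde{b}$, one has $|i(\widetilde{F};\widetilde{a},\widetilde{b},z)|\le K_{\widetilde{a},\widetilde{b}}K_U$ when $z$ is not a fixed point, and $|i(\widetilde{F};\widetilde{a},\widetilde{b},z)|< K_{\widetilde{a},\widetilde{b}}(N_{\widetilde{a},\widetilde{b}}+1)$ when $z$ is a fixed point, where $N_{\widetilde{a},\widetilde{b}}$ controls $\mathrm{Rot}_{\mathrm{Fix}(\widetilde{F}_{\widetilde{a},\widetilde{b}})}(\widehat{F}_{\widetilde{a},\widetilde{b}})$ and $K_{\widetilde{a},\widetilde{b}}$ bounds $\sharp X_z$. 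The content of the present proposition is that for $F\in\mathrm{Diff}^1(M)$ these constants can be chosen uniformly over all pairs in $\widetilde{P}$. So the first step is to revisit Lemma \ref{lem:diffeomorphism on a and b}: its proof produces an integer $N$ bounding $p_1(\widehat{F}_{\widetilde{a},\widetilde{b}}(\widehat z))-p_1(\widehat z)$, coming from the sup of $\widetilde d(\widetilde F_t(\widetilde z),\widetilde F_{t'}(\widetilde z))$ over $[0,1]^2$ together with the displacement of $f$ on the blown-up circles $S_{\widetilde a}$, $S_{\widetilde b}$. The latter is governed by the operator norms and conorms of $D\widetilde F(\widetilde a)$, $D\widetilde F(\widetilde b)$; since $F\in\mathrm{Diff}^1(M)$ and $M$ is compact, $z\mapsto \|DF(z)^{\pm1}\|$ is continuous hence bounded on $M$, so $N$ may be taken to depend only on a global bound for $DF^{\pm1}$ — in particular it is independent of $\widetilde a,\widetilde b$. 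This gives a uniform $N_{\widetilde a,\widetilde b}\le N_0$.

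Next I would make $K_{\widetilde a,\widetilde b}$ uniform over $\widetilde P$. Here the dependence enters through the choice of the free disk $\widetilde V$ around $\infty$ and the bound $\sharp X_z\le K_{\widetilde a,\widetilde b}$; one should instead argue directly, as indicated in the remark after Lemma \ref{lem:diffeomorphism on a and b} (``the proof of Lemma \ref{lem:diffeomorphism on a and b} gives information about how rotate \dots every point in $A_{\widetilde a,\widetilde b}$''). Concretely, the blown-up map $f$ on $\bar A_{\widetilde a,\widetilde b}$ has a lift $\widehat f$ with $|p_1(\widehat f(\widehat u))-p_1(\widehat u)|\le N$ for \emph{all} $u$, where $N$ is the uniform constant just obtained; hence for every $\widehat z$ and every $n\ge 1$, $|p_1\widehat F_{\widetilde a,\widetilde b}^n(\widehat z)-p_1(\widehat z)|\le nN$, so every point of $A_{\widetilde a,\widetilde b}$ that has a rotation number has it in $[-N,N]$. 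Applying this to the recurrent points and using the Birkhoff-sum description $L_n(\widetilde F;\widetilde a,\widetilde b,z)=\widetilde\gamma\wedge\widetilde\Gamma^n_{\widetilde I_1,z}$ of Section \ref{sec:define a new linking number}, together with the first-return decomposition, the key point is that for each return the associated displacement in the annular cover is at most $N$ times the elapsed time; summing over the (at most $\tau_n(z)$, cut into finitely many pieces as in Lemma \ref{lem:the boundedness of the case multi-path}) pieces and dividing by $\tau_n(z)$ gives $|i(\widetilde F;\widetilde a,\widetilde b,z)|\le C\cdot N$ for an absolute combinatorial constant $C$ depending only on how many lifts can fail to stay near $\infty$; but that count is itself controlled by the $C^1$-geometry near $\widetilde a,\widetilde b$, hence uniformly over $\widetilde P$ by compactness. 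I would set $N_{\widetilde P}$ to be this uniform bound.

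The main obstacle will be the last point: making the combinatorial factor $K_{\widetilde a,\widetilde b}$ (the number of lifts $\widetilde z$ of a point whose $\widetilde F_{\widetilde a,\widetilde b}$-orbit does not get trapped in the neighborhood $\widetilde V$ of $\infty$) uniform over all pairs in $\widetilde P$, because a priori the neighborhood $\widetilde V$ — chosen so that $\widehat F_{\widetilde a,\widetilde b}(\widehat V)\subset\widehat W$ — shrinks as $\widetilde a,\widetilde b$ vary, and the preimage structure $\pi^{-1}(\{z\})\cap(\widetilde V\cap\widetilde F_{\widetilde a,\widetilde b}^{-1}\widetilde V)^c$ could swell. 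The way around this is to note that $\widetilde P$ is compact and the isotopy $\widetilde I$ is continuous, so there is a single neighborhood $\widetilde W$ of $\infty$ and a single $\widetilde V\subset\widetilde W$ that work simultaneously for every pair $\widetilde a,\widetilde b\in\widetilde P\cap\mathrm{Fix}(\widetilde F)$ (using that these pairs all lie in the fixed compact region and that $\infty$ is fixed by every $\widetilde F_{\widetilde a,\widetilde b}$, with the displacement estimate above being uniform); then $\sharp X_z$ is bounded by a constant depending only on $\widetilde W$ and the metric geometry, i.e.\ only on $\widetilde P$. With a uniform $\widetilde V$ and a uniform $N_0$ in hand, Proposition \ref{prop:l(a,b,x)divid tau(x)is boundedonU} applied verbatim yields $|i(\widetilde F;\widetilde a,\widetilde b,z)|\le N_{\widetilde P}:=K_{\widetilde P}(N_0+1)$ whenever the linking number exists, finishing the proof.
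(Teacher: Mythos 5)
There is a genuine gap, and it sits exactly where the real difficulty of the proposition lies: the behaviour of the constants as the pair $(\widetilde{a},\widetilde{b})$ degenerates to the diagonal inside $\widetilde{P}$. Your plan is to rerun Proposition \ref{prop:l(a,b,x)divid tau(x)is boundedonU} with constants $N_{\widetilde{a},\widetilde{b}}$, $K_{\widetilde{a},\widetilde{b}}$ made uniform over $\widetilde{P}$, and the uniformity of $N_{\widetilde{a},\widetilde{b}}$ is asserted on the grounds that the displacement bound $N$ in Lemma \ref{lem:diffeomorphism on a and b} is ``governed by the operator norms of $D\widetilde{F}(\widetilde{a})^{\pm1}$, $D\widetilde{F}(\widetilde{b})^{\pm1}$'' together with the sup of $\widetilde{d}(\widetilde{F}_t(\widetilde{z}),\widetilde{F}_{t'}(\widetilde{z}))$, hence uniform since $\|DF^{\pm1}\|$ is bounded on the compact surface. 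But that is not what the proof of Lemma \ref{lem:diffeomorphism on a and b} gives: the bound there comes from continuity of $\widehat{u}\mapsto p_1(\widehat{f}(\widehat{u}))-p_1(\widehat{u})$ on the compact blown-up annulus $\bar{A}_{\widetilde{a},\widetilde{b}}$ for one fixed pair, a purely qualitative statement with no quantitative dependence on $DF$ at all. The displacement in the annular coordinates is a winding quantity relative to the pair, not a metric quantity, and nothing soft prevents it from blowing up as $\widetilde{b}\to\widetilde{a}$ even when $\sup_M\|DF^{\pm1}\|<\infty$; this is precisely why the paper proves the B-property for $C^1$ maps (Proposition \ref{rem:any isotopy of M satisfies condition B}) not by a norm bound but by a contradiction argument combining Franks-type free disk chains with Lemma \ref{lem: c1 diffeomorphism on sphere}, where continuity of the differential (not just its boundedness) is used, and where the conclusion is the sharper statement that rotation numbers in the collapsing annuli $A_{\widetilde{a},\widetilde{a}_n}$ tend to $0$. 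Your proposal never engages with this collapsing-pair regime, which is the only nontrivial case (for pairs bounded away from the diagonal a compactness argument is comparatively easy), so the key step is unproved.

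For comparison, the paper's proof does not uniformize the constants of Proposition \ref{prop:l(a,b,x)divid tau(x)is boundedonU} at all. It argues by contradiction: if $i(\widetilde{F};\widetilde{a}_k,\widetilde{b}_k,z_k)\to+\infty$ with the pairs in $\widetilde{P}$, extract limits $\widetilde{a}_k\to\widetilde{a}$, $\widetilde{b}_k\to\widetilde{b}$, use explicit M\"obius-corrected isotopies (uniformly controlled because $\widetilde{P}$ is compact and away from $\infty$) to get one pair of disks $\widetilde{V}\subset\widetilde{W}$ valid for all $k$ — this is the part your third paragraph correctly anticipates — and then invoke the finite-time cocycle identity of Proposition \ref{lem:i is 3coboundary for point} to write
$L_{n_l}(\widetilde{F};\widetilde{a},\widetilde{a}_k,z_k)+L_{n_l}(\widetilde{F};\widetilde{a}_k,\widetilde{b}_k,z_k)+L_{n_l}(\widetilde{F};\widetilde{b}_k,\widetilde{b},z_k)$ over $\tau_{n_l}$ as $0$. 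The middle term is large by assumption, while the two terms attached to the collapsing pairs $(\widetilde{a},\widetilde{a}_k)$ and $(\widetilde{b}_k,\widetilde{b})$ are uniformly bounded by the estimate extracted from the proof of Proposition \ref{rem:any isotopy of M satisfies condition B} (together with the bookkeeping of Lemma \ref{lem:alphazn is e, Ln divide taun is less then kU} and Lemma \ref{lem:boundedness of the rotation number disk in some condition}); this is where the $C^1$ hypothesis genuinely enters, and it is the ingredient missing from your argument. To repair your proposal you would either have to prove your uniform displacement claim (which, as far as I can see, is not easier than the proposition itself), or switch to the triangle-inequality/contradiction scheme above.
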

\begin{proof}
We make a proof by contradiction. If it is not true, without loss of
generality, we suppose that there is a sequence
$\{(\widetilde{a}_k,\widetilde{b}_k)\}_{k\geq1}\subset\mathrm{Fix}
(\widetilde{F})\times\mathrm{Fix}(\widetilde{F})\setminus\widetilde{\Delta}$
and a sequence $\{z_k\}_{k\geq1}$ where $z_k\in
\mathrm{Rec}^+(F)\setminus\pi(\{\widetilde{a}_k,\widetilde{b}_k\})$
such that
$\lim\limits_{k\rightarrow+\infty}i(\widetilde{F};\widetilde{a}_k,\widetilde{b}_k,z_k)=+\infty$.
As $\widetilde{P}$ is compact, by extracting subsequences, we can
suppose that there exist two fixed points $\widetilde{a}$ and
$\widetilde{b}$ of $\widetilde{F}$ in $\widetilde{P}$ such that
$\lim\limits_{k\rightarrow+\infty}\widetilde{a}_k=\widetilde{a}$ and
$\lim\limits_{k\rightarrow+\infty}\widetilde{b}_k=\widetilde{b}$.

We identify the sphere $\mathbf{S}$ as the Riemann sphere
$\mathbb{C}\cup\{\infty\}$. Recall that
$\widetilde{I}=(\widetilde{F}_t)_{t\in[0,1]}$ is the lifted identity
isotopy of $I$ on the universal cover $\widetilde{M}$ of $M$.
Replacing $v_i$, $w_i$ ($i=1,2,3$) by
$v_1(k,t)=\widetilde{F}_t(\widetilde{a}_k)$,
$v_2(k,t)=\widetilde{F}_t(\widetilde{b}_k)$,
$w_1(k,t)=\widetilde{a}_k$, $w_2(k,t)=\widetilde{b}_k$,
$v_3(k,t)=w_3(k,t)=\infty$ ($t\in[0,1]$) in the matrices in the
proof of Lemma \ref{rem:identity isotopies fix three points on
sphere}, we get the matrix functions $a_t(k)$, $b_t(k)$, $c_t(k)$
and $d_t(k)$.

Let $$\mathcal {M}_{k}(t,z)=\frac{a_t(k)z+b_t(k)}{c_t(k)z+d_t(k)}$$
and
$$\widetilde{I}_{k}(z)(t)=\mathcal {M}_{k}(t,F_t(z)).$$
By the construction, $\widetilde{I}_k$ is an isotopy on
$\widetilde{M}$ from $\mathrm{Id}_{\widetilde{M}}$ to
$\widetilde{F}$ that fixes $\widetilde{a}_k$ and $\widetilde{b}_k$.

Similarly, we can construct $$\mathcal
{M}'_{k}(t,z)=\frac{a'_t(k)z+b'_t(k)}{c'_t(k)z+d'_t(k)},\quad\mathcal
{M}''_{k}(t,z)=\frac{a''_t(k)z+b''_t(k)}{c''_t(k)z+d''_t(k)}$$ and
$$\widetilde{I}\,'_{k}(z)(t)=\mathcal {M}'_{k}(t,F_t(z)),\quad \widetilde{I}\,''_{k}(z)(t)=\mathcal {M}''_{k}(t,F_t(z))$$
 such that $\widetilde{I}\,'_k$ (resp. $\widetilde{I}\,''_k$) is an isotopy on $\widetilde{M}$ from
$\mathrm{Id}_{\widetilde{M}}$ to $\widetilde{F}$ that fixes
$\widetilde{a}$ (resp. $\widetilde{b}$) and $\widetilde{a}_k$ (resp.
$\widetilde{b}_k$). As $\widetilde{I}_k$ (resp.
$\widetilde{I}\,'_k$, $\widetilde{I}\,''_k$) fixes $\infty$, we have
$c_t(k)=0$ (resp. $c'_t(k)=0$, $c''_t(k)=0$) and
$a_t(k)\,d_t(k)\neq0$ (resp. $a'_t(k)\,d'_t(k)\neq0$,
$a''_t(k)\,d''_t(k)\neq0$) for all $t\in[0,1]$ and $k\geq1$.
\smallskip

Fix an open disk $\widetilde{W}=\{z\in\mathbb{C}\mid |z|>p\}$ that
contains $\infty$ and is disjoint from $\widetilde{P}$.  As
$\lim\limits_{k\rightarrow+\infty}\widetilde{a}_k=\widetilde{a}$\,
$\lim\limits_{k\rightarrow+\infty}\widetilde{b}_k=\widetilde{b}$ and
all of the matrix functions above are continuous on
$\widetilde{P}\times[0,1]$, it is easy to see that the norms of
these functions have a maximal value $p_{\mathrm{max}}>0$ and a
minimal value $p_{\mathrm{min}}>0$. Define the open disk
$$\widetilde{V}=\left\{z\in\mathbb{C}\,\Big\vert
|z|>\frac{(p+1)p_{\mathrm{max}}}{p_{\mathrm{min}}}\right\}.$$
Obviously, $\widetilde{V}\subset \widetilde{W}$ containing $\infty$
and for every $\widetilde{z}\in \widetilde{V}$, we have
$\widetilde{I}_k(\widetilde{z})\subset \widetilde{W}$,
$\widetilde{I}\,'_k(\widetilde{z})\subset \widetilde{W}$ and
$\widetilde{I}\,''_k(\widetilde{z})\subset \widetilde{W}$ for all
$k\geq1$. Let $\widetilde{a}\,'$ and $\widetilde{b}\,'$ be two
distinct fixed points of $\widetilde{F}$ in $\widetilde{P}$. As the
linking number
$i(\widetilde{F};\widetilde{a}\,',\widetilde{b}\,',z)$ does not
depend on the choice of $\widetilde{\gamma}$ that joins
$\widetilde{a}\,'$ and $\widetilde{b}\,'$ (see \ref{subsec:the
definition of a new linking number}), we can suppose
$\widetilde{\gamma}\subset\widetilde{P}$ in this proof when we talk
of the linking number
$i(\widetilde{F};\widetilde{a}\,',\widetilde{b}\,',z)$. For
$\widetilde{W}$ and $\widetilde{V}$ here, Lemma \ref{lem:alphazn is
e, Ln divide taun is less then kU} and Lemma \ref{lem:boundedness of
the rotation number disk in some condition} are still valid.
\smallskip

As $F\in\mathrm{Diff}^1(M)$, by Proposition \ref{rem:any isotopy of
M satisfies condition B}, $I$ satisfies the B-property. Consider the
annulus $A_{\widetilde{a}_k,\widetilde{b}_k}$. Similarly to the
proof of Proposition \ref{prop:diffeomorphism has a bounded linking
number for every two distinct fixed points}, we have
$\widetilde{z}_k\in
\mathrm{Rec}^+(\widetilde{F})\setminus\mathrm{Fix}(\widetilde{F})$
 where $\widetilde{z}_k\in\pi^{-1}(z_k)$ when
$k$ is large enough.

For every $k$, we choose an open disk $U_k$ containing $z_k$. Let
$\Phi_k(z)$ be the first return map of $z\in U_k$ and $\tau(k,z)$ be
the first return time of $z$ in this proof. Recall that
$\tau_n(k,z)=\sum_{i=0}^{n-1}\tau(k,\Phi_k(z))$.

In the proof of Proposition \ref{lem:i is 3coboundary for point}, we
have proved that, for every $k$ and any subsequence
$\{\Phi_k^{n_{l}}(z_k)\}_{l\geq 1}$ which converges to $z_k$, we
have
\begin{eqnarray}\label{eq: C1 diffeomorphism the linking number of a
compact}
  \frac{L_{n_l}(\widetilde{F};\widetilde{a},\widetilde{a}_k,z_k)+L_{n_l}(\widetilde{F};\widetilde{a}_k,
\widetilde{b}_k,z_k)+L_{n_l}(\widetilde{F};\widetilde{b}_k,\widetilde{b},z_k)}{\tau_{n_l}(k,z_k)}=
0.
\end{eqnarray}

On one hand, by the definition of linking number, we have
$$i(\widetilde{F};\widetilde{a}_k,\widetilde{b}_k,z_k)=
\lim_{l\rightarrow+\infty}\frac{L_{n_l}(\widetilde{F};
\widetilde{a}_k,\widetilde{b}_k,z_k)}{\tau_{n_l}(k,z_k)}.$$ As
$\lim\limits_{k\rightarrow+\infty}i(\widetilde{F};\widetilde{a}_k,\widetilde{b}_k,z_k)=+\infty$,
we have that, for any $N>0$, there is $K_N\in \mathbb{N}$ such that
when $l,k\geq K_N$,
\begin{equation}\label{eq: C1 diffeomorphism the linking number of a
compact 1}
    \frac{L_{n_l}(\widetilde{F};
\widetilde{a}_k,\widetilde{b}_k,z_k)}{\tau_{n_l}(k,z_k)}>N.
\end{equation}

On the other hand, let us study
$\frac{L_{n_l}(\widetilde{F};\widetilde{a},\widetilde{a}_k,z_k)}{\tau_{n_l}(k,z_k)}$
and
$\frac{L_{n_l}(\widetilde{F};\widetilde{b}_k,\widetilde{b},z_k)}{\tau_{n_l}(k,z_k)}$
when $l$ and $k$ are large enough. By the proof of Lemma
\ref{lem:alphazn is e, Ln divide taun is less then kU} and the
conclusion (2) of Lemma \ref{lem:boundedness of the rotation number
disk in some condition}, we only need consider the lift
$\widetilde{z}_k$ of $z_k$ whose whole orbit is in $\widetilde{V}^c$
when $k$ is large enough. Note that such lifts are finite. Observing
 the proof of Proposition \ref{rem:any isotopy
of M satisfies condition B}, there exists $N'\geq0$ such that
\begin{equation*}\label{eq: C1 diffeomorphism the linking number of a
compact 2}
    \left|\frac{L_{n_l}(\widetilde{F};\widetilde{a},\widetilde{a}_k,z_k)}{\tau_{n_l}(k,z_k)}\right|\leq
N'\quad
\mathrm{and}\quad\left|\frac{L_{n_l}(\widetilde{F};\widetilde{b}_k,\widetilde{b},z_k)}{\tau_{n_l}(k,z_k)}\right|\leq
N'
\end{equation*}
when $k$ and $l$ are large enough, which conflicts with Equation
\ref{eq: C1 diffeomorphism the linking number of a compact} and
Inequation \ref{eq: C1 diffeomorphism the linking number of a
compact 1}. We have completed the proof.
\end{proof}

In Example \ref{ex:the action of non C1-diffeo isnot bounded and
continuous} of Appendix, we will construct an identity isotopy $I$
whose time-one map is a diffeomorphism but not a
$C^1$-diffeomorphism, that does not satisfy the conclusion of
Proposition \ref{prop:C1 diffeomorphism has a bounded
linkingnumber}.\bigskip

\subsection{Existence and Boundedness in the conservative case}\label{subsec:the proof of theorem 1}

\begin{prop}\label{prop:measure and linking number exsiting}
Suppose that $I$ satisfies the WB-property at $\widetilde{a}$ and
$\widetilde{b}$. If $\mu\in \mathcal{M}(F)$ satisfies
$\mu(\pi(\widetilde{a}))=\mu(\pi(\widetilde{b}))=0$, then
$\mu$-almost every point $z\in \mathrm{Rec}^+(F)$ has a rotation
vector $\rho_{M,I}(z)\in H_1(M,\mathbb{R})$ and has a linking number
$i(\widetilde{F};\widetilde{a},\widetilde{b},z)\in \mathbb{R}$.
There exists $C > 0$ such that, for every point $z$ such that
$\rho_{M,I}(z)$ exists and is not equal to zero, one has
$|i(\widetilde{F};\widetilde{a},\widetilde{b},z)|\leq C$ if this
linking number exists.
\end{prop}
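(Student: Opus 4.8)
The plan is to treat the two assertions separately. The uniform bound is essentially already available: since $I$ satisfies the WB-property at $\widetilde a$ and $\widetilde b$, Proposition \ref{prop:roM(z)not0 linking number uniformly bounded} gives $|i(\widetilde F;\widetilde a,\widetilde b,z)|\le K_{\widetilde a,\widetilde b}(N_{\widetilde a,\widetilde b}+1)$ for every point $z$ at which both $i(\widetilde F;\widetilde a,\widetilde b,z)$ and $\rho_{M,I}(z)$ exist and $\rho_{M,I}(z)\neq 0$, so it suffices to set $C=K_{\widetilde a,\widetilde b}(N_{\widetilde a,\widetilde b}+1)$.

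For the existence part, the rotation vector $\rho_{M,I}(z)$ exists for $\mu$-almost every $z\in\mathrm{Rec}^+(F)$ by the construction recalled in Section \ref{sec:the existion of rotation vector in the copact case}, so only the linking number needs work. I would distinguish two kinds of points. On $\mathrm{Fix}(F)$ nothing is to be done: since $\mu(\pi(\widetilde a))=\mu(\pi(\widetilde b))=0$, $\mu$-almost every fixed point lies in $\mathrm{Fix}(F)\setminus\pi(\{\widetilde a,\widetilde b\})$, and there $i(\widetilde F;\widetilde a,\widetilde b,z)=L_1(\widetilde F;\widetilde a,\widetilde b,z)$ is always defined (the remark following Definition \ref{def:Intersection number density}). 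For $\mathrm{Rec}^+(F)\setminus\mathrm{Fix}(F)$, observe that $\pi(\{\widetilde a,\widetilde b\})\subset\mathrm{Fix}(F)$ and that every non-fixed point of $F$ admits a free open disk neighborhood, so $M\setminus\mathrm{Fix}(F)$ is covered by countably many free open disks $U_m\subset M\setminus\pi(\{\widetilde a,\widetilde b\})$. Since the linking number does not depend on the chosen disk, it is enough to prove that $i(\widetilde F;\widetilde a,\widetilde b,\cdot)$ is defined $\mu$-almost everywhere on each $\mathrm{Rec}^+(F)\cap U_m$ and then take the union.

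Fix $U=U_m$, with first return map $\Phi$ and first return time $\tau$, both defined $\mu$-almost everywhere on $U$ by the Poincar\'e Recurrence Theorem, with $\Phi$ preserving the finite measure $\mu|_U$. By Kac's lemma $\tau\in L^1(U,\mathbb R,\mu)$, as recalled in Section \ref{sec:the existion of rotation vector in the copact case}. The extra ingredient I need is that $z\mapsto L_1(\widetilde F;\widetilde a,\widetilde b,z)$ belongs to $L^1(U,\mathbb R,\mu)$: this follows from the estimates obtained inside the proofs of Lemma \ref{lem:the boundedness of the case multi-path} and Lemma \ref{lem:alphazn is e, Ln divide taun is less then kU}, which, specialized to $n=1$ and combined over the two cases $\alpha_{z,1}\neq e$ and $\alpha_{z,1}=e$, give $|L_1(\widetilde F;\widetilde a,\widetilde b,z)|\le\tau(z)\,K_{\widetilde a,\widetilde b}K_U$ for every $z\in\mathrm{Rec}^+(F)\cap U$. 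Now apply the Birkhoff Ergodic Theorem to $\Phi$ just as in Section \ref{sec:the existion of rotation vector in the copact case}, replacing the homology-valued function used there by $L_1(\widetilde F;\widetilde a,\widetilde b,\cdot)$: for $\mu$-almost every $z\in\mathrm{Rec}^+(F)\cap U$ the averages $\tau_n(z)/n$ converge to some $\tau^*(z)\ge1$ and, by Equation \ref{eq: Ln Birkhoff sum}, $L_n(\widetilde F;\widetilde a,\widetilde b,z)/n$ converges to some $L^*(z)$, whence $L_n(\widetilde F;\widetilde a,\widetilde b,z)/\tau_n(z)\to L^*(z)/\tau^*(z)$ along the full sequence and therefore along every subsequence $\{\Phi^{n_k}(z)\}_k$, in particular those converging to $z$. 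By Definition \ref{def:Intersection number density} the linking number $i(\widetilde F;\widetilde a,\widetilde b,z)=L^*(z)/\tau^*(z)$ is thus defined $\mu$-almost everywhere on $U$; the countable union over $m$ together with the fixed-point case completes the first assertion.

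The main point requiring care, and the only one that is not a direct citation, is this $L^1$-estimate on $L_1$: the statements of Proposition \ref{prop:l(a,b,x)divid tau(x)is boundedonU} and of the lemmas in Section \ref{sec:boundedness} bound the limit $i$ and not the integer $L_1$ itself, so one must reenter those proofs (where $|L_n|$ is bounded by $\tau_n$ times a constant depending only on $U$) to extract the integrability needed before the Birkhoff theorem can be applied. Everything else --- the Poincar\'e and Kac inputs, the choice of the free disks compatible with $U\subset M\setminus\pi(\{\widetilde a,\widetilde b\})$, and the reduction of the uniform bound to Proposition \ref{prop:roM(z)not0 linking number uniformly bounded} --- is routine.
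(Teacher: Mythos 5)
Your proposal is correct and follows essentially the same route as the paper: fixed points are handled directly, the non-fixed case reduces to a free disk $U\subset M\setminus\pi(\{\widetilde a,\widetilde b\})$ where $|L_1(\widetilde F;\widetilde a,\widetilde b,z)|\leq\tau(z)K_{\widetilde a,\widetilde b}K_U$ gives $L_1\in L^1(U,\mathbb{R},\mu)$, Birkhoff's theorem applied to the first return map yields $i=L^*/\tau^*$ $\mu$-a.e., and the uniform bound for $\rho_{M,I}(z)\neq0$ comes from Proposition \ref{prop:roM(z)not0 linking number uniformly bounded}. One small correction to your closing remark: the statements of Lemma \ref{lem:the boundedness of the case multi-path} and Lemma \ref{lem:alphazn is e, Ln divide taun is less then kU} already bound $L_n$ itself (not merely the limit), so the $n=1$ estimate is a direct citation and no re-entry into those proofs is needed, which is exactly how the paper argues.
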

\begin{proof}According to Poincar\'{e} Recurrence Theorem, we have $\mu(\mathrm{Rec}^+(F))=\mu(M)$.

When $z\in
\mathrm{Fix}(F)\setminus\pi(\{\widetilde{a},\widetilde{b}\})$, by
Section \ref{sec:the existion of rotation vector in the copact case}
and \ref{sec:the fixed point case}, $\rho_{M,I}(z)$ and
$i(\widetilde{F};\widetilde{a},\widetilde{b},z)$ exist and are
bounded. Thus we only need to consider the non-fixed point case.

Fix a free open disk $U\subset
M\setminus\pi(\{\widetilde{a},\widetilde{b}\})$ with $\mu(U)>0$. For
any $z\in \mathrm{Rec}^+(F)\cap U$, by Lemma \ref{lem:the
boundedness of the case multi-path} and Lemma \ref{lem:alphazn is e,
Ln divide taun is less then kU}, we have
$|L_1(\widetilde{F};\widetilde{a},\widetilde{b},z)|\leq
\tau(z)K_{\widetilde{a},\widetilde{b}}
(N_{\widetilde{a},\widetilde{b}}+1)$ if $\alpha_{z,1}\neq e$ and
$|L_{1}(\widetilde{F};\widetilde{a},\widetilde{b},z)|\leq
\tau(z)K_{\widetilde{a},\widetilde{b}}K_U$ if $\alpha_{z,1}=e$. This
implies that $L_{1}(\widetilde{F};\widetilde{a},\widetilde{b},z)\in
L^1(U,\mathbb{R},\mu)$. By 
Birkhoff Ergodic Theorem, we deduce that the sequence
$\{L_{n}(\widetilde{F};\widetilde{a},\widetilde{b},z)/n\}_{n=1}^{+\infty}$
converges to a real number
$L^*(\widetilde{F};\widetilde{a},\widetilde{b},z)$ for $\mu$-almost
every point on $\mathrm{Rec}^+(F)\cap U$. Recall that, for
$\mu$-almost every point on $\mathrm{Rec}^+(F)\cap U$, the sequence
$\{\tau_n(z)/n\}_{n=1}^{+\infty}$ converges to a real number
$\tau^*(z)$ (see \ref{sec:the existion of rotation vector in the
copact case}).

We can define the linking number on $U$ as follows (modulo sets of
measure zero):

\begin{equation}\label{eq: linking number with a measure}
    i(\widetilde{F};\widetilde{a},\widetilde{b},z)=\lim_{n\rightarrow+\infty}
\frac{L_{n}(\widetilde{F};\widetilde{a},\widetilde{b},z)}{\tau_n(z)}=
\frac{L^*(\widetilde{F};\widetilde{a},\widetilde{b},z)}{\tau^*(z)}.
\end{equation}
By Proposition \ref{prop:l(a,b,x)divid tau(x)is boundedonU}, the
linking number $i(\widetilde{F};\widetilde{a},\widetilde{b},z)$ has
a bound $K_U$ for $\mu$-almost every point $z\in
\mathrm{Rec}^+(F)\cap U$. As $U$ is arbitrarily chosen, this implies
that we can define the function
$i(\widetilde{F};\widetilde{a},\widetilde{b},z)$ for $\mu$-almost
every point $z\in M\setminus\pi(\{\widetilde{a},\widetilde{b}\})$.

Finally, by Proposition \ref{prop:roM(z)not0 linking number
uniformly bounded}, we can uniformly bound
$i(\widetilde{F};\widetilde{a},\widetilde{b},z)$ if
$\rho_{M,I}(z)\neq 0$.
\end{proof}
Remark here that, under the hypothesis of Proposition
\ref{prop:measure and linking number exsiting},
$i(\widetilde{F};\widetilde{a},\widetilde{b},z)$ is bounded on $U$,
but does not necessarily possess a uniform bound on
$M\setminus\pi(\{\widetilde{a},\widetilde{b}\})$ (see Example
\ref{exem:proposition 21 and mu integrable} in Appendix). However,
when $F$ is a diffeomorphism of $M$ (see Proposition
\ref{prop:diffeomorphism has a bounded linking number for every two
distinct fixed points}), we can get a uniform bound. Moreover, we
can get a uniform bound in the case where the support of the measure
is the whole space, as stated in the following proposition.
\begin{prop}\label{prop:the support of mu is M}
With the same hypotheses as Proposition \ref{prop:measure and
linking number exsiting} and if furthermore $\mu\in\mathcal {M}(F)$
has total support, we have
$|i(\widetilde{F};\widetilde{a},\widetilde{b},z)|\leq
K_{\widetilde{a},\widetilde{b}}(N_{\widetilde{a},\widetilde{b}}+1)$
if it exists.
\end{prop}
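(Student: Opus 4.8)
The plan is to re-run the proof of Proposition~\ref{prop:measure and linking number exsiting}, replacing the disk-dependent bound $K_U$ coming from Lemma~\ref{lem:alphazn is e, Ln divide taun is less then kU} by the uniform constant $N_{\widetilde a,\widetilde b}+1$. Recall that, in the decomposition of $L_n(\widetilde F;\widetilde a,\widetilde b,z)$ into sub-paths $\widetilde\Gamma^n_i$, every piece carried by a ``partial orbit'' already contributes at most $N_{\widetilde a,\widetilde b}+1$ per step, so the only dangerous pieces are those carried by a lift $\widetilde z_i$ whose whole forward $\widetilde F_{\widetilde a,\widetilde b}$-orbit stays in $\widetilde V^c$; for such a piece the contribution per step is $k_n^i/m_n^i=p_n(\widetilde z_i)/\tau_n(z)\in\mathrm{Rot}_{\widetilde U_{\widetilde z_i}}(\widehat F_{\widetilde a,\widetilde b})$, a set which Lemma~\ref{lem:boundedness of the rotation number disk in some condition} controls only as $\,]l,l+1[\,$ for some possibly very large integer $l$. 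Once one shows that under the total-support hypothesis these runaway disks ($|l|\ge N_{\widetilde a,\widetilde b}+1$) cannot occur, or rather that the relevant rotation numbers stay in $[-(N_{\widetilde a,\widetilde b}+1),N_{\widetilde a,\widetilde b}+1]$, the estimate of Lemma~\ref{lem:alphazn is e, Ln divide taun is less then kU} together with $\sum_i m_n^i\le\tau_n(z)K_{\widetilde a,\widetilde b}$ yields $|i(\widetilde F;\widetilde a,\widetilde b,z)|\le K_{\widetilde a,\widetilde b}(N_{\widetilde a,\widetilde b}+1)$.

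The essential new input is the total support of $\mu$. First I would observe that the pull-back $\widetilde\mu$ of $\mu$ to the annulus $A_{\widetilde a,\widetilde b}=\mathbf S\setminus\{\widetilde a,\widetilde b\}$ is a $\sigma$-finite, locally finite, $\widetilde F_{\widetilde a,\widetilde b}$-invariant Borel measure of total support, so by Poincar\'e recurrence every nonempty open subset of $A_{\widetilde a,\widetilde b}$ intersects one of its positive iterates under $\widetilde F_{\widetilde a,\widetilde b}$; thus $\widetilde F_{\widetilde a,\widetilde b}$ is non-wandering. A non-wandering homeomorphism of the open annulus isotopic to the identity satisfies the intersection property required by Theorem~\ref{thm:FP}: if an essential simple closed curve $\gamma$ were disjoint from $\widetilde F_{\widetilde a,\widetilde b}(\gamma)$, then the open region trapped between $\gamma$ and $\widetilde F_{\widetilde a,\widetilde b}(\gamma)$ would be a wandering set. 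Hence Theorem~\ref{thm:FP} is available for $\widetilde F_{\widetilde a,\widetilde b}$ with the lift $\widehat F_{\widetilde a,\widetilde b}$.

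Next I would prove that no positively recurrent point of $\widetilde F_{\widetilde a,\widetilde b}$ can have a rotation number $\rho$ for $\widehat F_{\widetilde a,\widetilde b}$ with $|\rho|>N_{\widetilde a,\widetilde b}+1$. Indeed, the lift $\widehat\infty$ of $\infty$ is a fixed point of $\widehat F_{\widetilde a,\widetilde b}$ of rotation number $0$; applying Theorem~\ref{thm:FP} with $\{\nu^-,\nu^+\}=\{0,\rho\}$, the integer of absolute value $N_{\widetilde a,\widetilde b}+1$ lies strictly between $0$ and $\rho$, producing a fixed point of $\widetilde F_{\widetilde a,\widetilde b}$ with rotation number $\pm(N_{\widetilde a,\widetilde b}+1)$ for $\widehat F_{\widetilde a,\widetilde b}$, which contradicts $\mathrm{Rot}_{\mathrm{Fix}(\widetilde F_{\widetilde a,\widetilde b})}(\widehat F_{\widetilde a,\widetilde b})\subset[-N_{\widetilde a,\widetilde b},N_{\widetilde a,\widetilde b}]$ (Lemma~\ref{lem:rotation number and weak boundess at two point}, which holds because $I$ satisfies the WB-property at $\widetilde a$ and $\widetilde b$). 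To feed this back into the linking-number estimate I would use that, for a runaway disk $\widetilde U$, Lemma~\ref{lem:boundedness of the rotation number disk in some condition} confines the whole forward orbit of every recurrent point of $\widetilde U$ to the compact set $\widetilde V^c$; then Kac's lemma and the uniform bound on the one-step $T_{\widetilde a,\widetilde b}$-displacement over $\widetilde V^c$ make the return-time and the first-return rotation cocycle $\widetilde\mu$-integrable on $\widetilde U$, so the Birkhoff ergodic theorem gives a genuine rotation number to $\widetilde\mu$-almost every recurrent point of $\widetilde U$, necessarily in $[-(N_{\widetilde a,\widetilde b}+1),N_{\widetilde a,\widetilde b}+1]$; combined with the previous paragraph this forces $|l|=N_{\widetilde a,\widetilde b}+1$ and lets one bound the limiting value of $|p_n(\widetilde z_i)/\tau_n(z)|$ by $N_{\widetilde a,\widetilde b}+1$.

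The step I expect to be the main obstacle is this last one: carefully identifying the contribution $p_n(\widetilde z_i)/\tau_n(z)$ of a confined sub-path — a priori only a subsequential limit along the subsequence that realizes $i(\widetilde F;\widetilde a,\widetilde b,z)$ — with a genuine rotation number to which Theorem~\ref{thm:FP} applies, and dealing with the degenerate boundary case $|l|=N_{\widetilde a,\widetilde b}+1$, in which Theorem~\ref{thm:FP} together with $\widehat\infty$ does not by itself give a contradiction. That boundary case is nonetheless harmless, because the inequality to be proved is not strict: it suffices that $|\rho|\le N_{\widetilde a,\widetilde b}+1$, not $|\rho|<N_{\widetilde a,\widetilde b}+1$. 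The remainder is the same bookkeeping as in the proof of Proposition~\ref{prop:l(a,b,x)divid tau(x)is boundedonU}.
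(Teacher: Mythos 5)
Your overall route is the paper's: use the total support of $\mu$ to get the intersection property for $\widetilde{F}_{\widetilde{a},\widetilde{b}}$ on $A_{\widetilde{a},\widetilde{b}}$, then combine Theorem \ref{thm:FP} with the fixed point $\infty$ of rotation number $0$ and the WB-bound $\mathrm{Rot}_{\mathrm{Fix}(\widetilde{F}_{\widetilde{a},\widetilde{b}})}(\widehat{F}_{\widetilde{a},\widetilde{b}})\subset[-N_{\widetilde{a},\widetilde{b}},N_{\widetilde{a},\widetilde{b}}]$ to cap the relevant rotation data, and finish with the Section 5.1 bookkeeping. However, your derivation of the intersection property is a step that fails as written. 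Write $N=N_{\widetilde{a},\widetilde{b}}$, $K=K_{\widetilde{a},\widetilde{b}}$. The lifted measure $\widetilde{\mu}$ on $A_{\widetilde{a},\widetilde{b}}$ is infinite (and not even locally finite at $\infty$), and Poincar\'e recurrence does not hold for infinite invariant measures: an infinite-measure-preserving homeomorphism with total support can be totally wandering (a vertical translation of the open annulus preserving Lebesgue measure already shows that ``total support $+$ invariance $\Rightarrow$ non-wandering'' is false), and a wandering open region between $\gamma$ and its image is not by itself a contradiction when the total measure is infinite. What saves the argument — and what the paper actually uses — is that the two ends of $A_{\widetilde{a},\widetilde{b}}$ are the punctures $\widetilde{a},\widetilde{b}$, whose neighborhoods have finite $\widetilde{\mu}$-measure: an essential simple closed curve through $\infty$ meets its image because $\infty$ is fixed; an essential curve avoiding $\infty$ bounds in $\mathbf{S}$ a disk containing exactly one of $\widetilde{a},\widetilde{b}$ and not $\infty$, hence of finite $\widetilde{\mu}$-measure, and if the curve missed its image this disk would be strictly nested with its image, contradicting invariance of a finite-measure region with total support. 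You need this finite-measure nesting argument (or an equivalent one); it is not a consequence of a general recurrence principle.

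Your endgame also does not quite close. The Kac--Birkhoff patch produces genuine rotation numbers only $\widetilde{\mu}$-almost everywhere, while the statement concerns every $z$ at which the linking number exists; and in the boundary case Lemma \ref{lem:boundedness of the rotation number disk in some condition} only confines the rotation set of a runaway disk to $]N+1,N+2[$ (with $l=N+1$), so the per-return ratios $p_n(\widetilde{z}_i)/\tau_n(z)$ of a confined lift of your given $z$ may all exceed $N+1$; the remark that ``the inequality to be proved is not strict'' does not force their limit down to $N+1$, and as written you would only obtain $K(N+2)$. The paper closes this at the specific point by an $\epsilon$-argument: if $|i(\widetilde{F};\widetilde{a},\widetilde{b},z)|>(N+1+\epsilon)K$, then, since the partial-orbit pieces contribute less than $N+1$ per step and there are at most $K$ confined lifts, some recurrent confined lift has average displacement greater than $N+1+\epsilon$ for all large $n$, hence rotation number at least $N+1+\epsilon>N+1$; Theorem \ref{thm:FP}, applied with $\rho_{A_{\widetilde{a},\widetilde{b}},\widehat{F}_{\widetilde{a},\widetilde{b}}}(\infty)=0$, then yields a fixed point of rotation number exactly $N+1$, contradicting the WB-bound, and letting $\epsilon\to0$ gives the non-strict estimate $|i(\widetilde{F};\widetilde{a},\widetilde{b},z)|\leq K(N+1)$. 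So keep your Theorem \ref{thm:FP} paragraph, but replace the non-wandering step by the finite-disk argument and the almost-everywhere patch by this $\epsilon$-extraction at the given point.
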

\begin{proof}
The measure $\mu$ may naturally be lifted to a (non finite) measure
$\widetilde{\mu}$ on $\widetilde{M}$. Since $\mu$ does not charge
$\pi(\widetilde{a})$ and $\pi(\widetilde{b})$, $\widetilde{\mu}$ can
be seen as a measure on $A_{\widetilde{a},\widetilde{b}}$ invariant
by $\widetilde{F}_{\widetilde{a},\widetilde{b}}$ satisfying
$\widetilde{\mu}(A_{\widetilde{a},\widetilde{b}})=+\infty$. As the
support of $\widetilde{\mu}$ is $\widetilde{M}$ and
$\widetilde{F}_{{\widetilde{a}},{\widetilde{b}}}$ preserves the
measure $\widetilde{\mu}$, the homeomorphism
$\widetilde{F}_{{\widetilde{a}},{\widetilde{b}}}$ satisfies the
intersection property, that is, any simple closed curve of
$A_{\widetilde{a},\widetilde{b}}$ which is not null-homotopic meets
its image by $\widetilde{F}_{{\widetilde{a}},{\widetilde{b}}}$.
Indeed, any closed curve which goes through $\infty$ will meet its
image by $\widetilde{F}_{{\widetilde{a}},{\widetilde{b}}}$ since
$\widetilde{F}_{{\widetilde{a}},{\widetilde{b}}}$ fixes the point
$\infty$. If the closed curve does not pass through $\infty$, we may
go back to $\widetilde{M}$ and consider a component enclosed by the
closed curve which contains $\widetilde{a}$ or $\widetilde{b}$ and
which has finite measure, then it will meet its image since
$\widetilde{F}$ preserves the measure $\widetilde{\mu}$.

In the case where $z\in \mathrm{Fix}(F)$, it is obvious that
$i(\widetilde{F};\widetilde{a},\widetilde{b},z)$ is uniformly
bounded.

Choose any free open disk $U\subset M\setminus\mathrm{Fix}(F)$,
according to Lemma \ref{lem:the boundedness of the case multi-path},
we only need to consider the points $z\in \mathrm{Rec}^+(F)\cap U$
such that $\alpha_{z,n}=e$ for $n$ large enough. We suppose that $z$
is a such point and $i(\widetilde{F};\widetilde{a},\widetilde{b},z)$
exists. We go to the annulus $A_{\widetilde{a},\widetilde{b}}$, for
any lift $\widetilde{z}$ of $z$, then we have
$\rho_{A_{{\widetilde{a}},{\widetilde{b}}},\widehat{F}_{{\widetilde{a}},{\widetilde{b}}}}
(\widetilde{z})=
\lim\limits_{n\rightarrow+\infty}\frac{\widetilde{\gamma}\wedge\widetilde
{\Gamma}^n_{\widetilde{I}_1,\widetilde{z}}}{\tau_n(z)}$. 

We claim that, for any $\epsilon>0$,
$|i(\widetilde{F};\widetilde{a},\widetilde{b},z)|\leq
(N_{\widetilde{a},\widetilde{b}}+1+\epsilon)K_{\widetilde{a},\widetilde{b}}$.
Otherwise, without loss of generality, we can suppose that
$i(\widetilde{F};\widetilde{a},\widetilde{b},z)>
(N_{\widetilde{a},\widetilde{b}}+1+\epsilon)K_{\widetilde{a},\widetilde{b}}$.
Then there exists a number $N$ large enough such that for every
$n\geq N$, there is a lift $\widetilde{z}_n$ of $z$ in
$\widetilde{V}^c$ satisfying
$\frac{\widetilde{\gamma}\wedge\widetilde
{\Gamma}^n_{\widetilde{I}_1,\widetilde{z}_n}}{\tau_n(z)}>N_{\widetilde{a},\widetilde{b}}+1+\epsilon$.
This implies that there exists a lift $\widetilde{z}$ of $z$ in
$\widetilde{V}^c$ such that
$\rho_{A_{{\widetilde{a}},{\widetilde{b}}},\widehat{F}_{{\widetilde{a}},{\widetilde{b}}}}
(\widetilde{z})\geq
N_{\widetilde{a},\widetilde{b}}+1+\epsilon>N_{\widetilde{a},\widetilde{b}}+1$.
By the fact
$\rho_{A_{{\widetilde{a}},{\widetilde{b}}},\widehat{F}_{{\widetilde{a}},{\widetilde{b}}}}
(\infty)=0$ and according to Theorem \ref{thm:FP},
$\widetilde{F}_{\widetilde{a},\widetilde{b}}$ has a fixed point
whose rotation number is $N_{\widetilde{a},\widetilde{b}}+1$, which
is a contradiction. This proves the claim.

As $\epsilon$ is arbitrarily chosen, we get
$|i(\widetilde{F};\widetilde{a},\widetilde{b},z)|\leq
K_{\widetilde{a},\widetilde{b}}(N_{\widetilde{a},\widetilde{b}}+1)$.
\end{proof}

The function $i(\widetilde{F};\widetilde{a},\widetilde{b},z)$ is not
necessarily $\mu$-integrable (see Example \ref{exem:proposition 21
and mu integrable} in Appendix). But in some cases, as we have
stated above, where the time-one map $F$ is a diffeomorphism of $M$,
or $I$ satisfies the WB-property at $\widetilde{a}$ and
$\widetilde{b}$, and $\mu$ is ergodic (because it is constant
$\mu$-a.e.) or the support of $\mu$ is the whole space, the function
$i_{\mu}(\widetilde{F};\widetilde{a},\widetilde{b},z)$ is
$\mu$-integrable.

Suppose now the function
$i(\widetilde{F};\widetilde{a},\widetilde{b},z)$ is
$\mu$-integrable. We can define a function as follows
\begin{equation}\label{eq:imu}
    i_{\mu}(\widetilde{F};\widetilde{a},\widetilde{b})= \int_{M\setminus\pi(\{\widetilde{a},\widetilde{b}\})}
i(\widetilde{F};\widetilde{a},\widetilde{b},z)\,\mathrm{d}\mu.
\end{equation}

From Propositions \ref{prop:i(Fnabx)=ni(abx)} and \ref{prop:alpha-
one exists then
 other exists}, we get the following corollaries immediately:
\begin{cor}\label{cor:imu(Fq,a,b)=qimu(F,a,b)}
We have
$i_{\mu}(\widetilde{F}^q;\widetilde{a},\widetilde{b})=q\,i_{\mu}(\widetilde{F};\widetilde{a},\widetilde{b})$
for all $q\geq1$.
\end{cor}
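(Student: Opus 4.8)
The plan is to establish the identity pointwise $\mu$-almost everywhere and then integrate. First I would record the elementary bookkeeping: since $\mu\in\mathcal{M}(F)$ it is automatically invariant by $F^q$, so $\mu\in\mathcal{M}(F^q)$; it still satisfies $\mu(\pi(\widetilde{a}))=\mu(\pi(\widetilde{b}))=0$; and by Lemma \ref{subsec:positively recurrent} together with the Poincar\'{e} Recurrence Theorem, $\mu$-almost every point of $M$ lies in $\mathrm{Rec}^+(F)\subseteq\mathrm{Rec}^+(F^q)$. Since the standing hypothesis for \eqref{eq:imu} is precisely that $z\mapsto i(\widetilde{F};\widetilde{a},\widetilde{b},z)$ is $\mu$-integrable, this function is in particular defined at $\mu$-almost every $z\in M\setminus\pi(\{\widetilde{a},\widetilde{b}\})$, so Proposition \ref{prop:i(Fnabx)=ni(abx)} applies at each such point and yields $i(\widetilde{F}^q;\widetilde{a},\widetilde{b},z)=q\,i(\widetilde{F};\widetilde{a},\widetilde{b},z)$ there.

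The remaining step is purely formal. The function $z\mapsto i(\widetilde{F}^q;\widetilde{a},\widetilde{b},z)$ coincides $\mu$-a.e.\ with a constant multiple of a $\mu$-integrable function, hence is itself $\mu$-integrable; in particular the right-hand side $i_{\mu}(\widetilde{F}^q;\widetilde{a},\widetilde{b})$ of \eqref{eq:imu} is well defined, so its well-definedness is not an additional assumption but a consequence of the hypothesis on $\widetilde{F}$. Integrating the pointwise identity against $\mu$ over $M\setminus\pi(\{\widetilde{a},\widetilde{b}\})$ and using linearity of the integral gives
\[
i_{\mu}(\widetilde{F}^q;\widetilde{a},\widetilde{b})=\int_{M\setminus\pi(\{\widetilde{a},\widetilde{b}\})} q\,i(\widetilde{F};\widetilde{a},\widetilde{b},z)\,\mathrm{d}\mu = q\,i_{\mu}(\widetilde{F};\widetilde{a},\widetilde{b}),
\]
which is the assertion.

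I expect no genuine obstacle; the one point deserving a remark is that the concrete situations in which $i_\mu$ is actually known to be defined (namely $F\in\mathrm{Diff}(M)$, or $I$ satisfying the WB-property at $\widetilde{a},\widetilde{b}$ with $\mu$ of total support) transfer verbatim to the extended isotopy $I^q$: $F^q$ is again a diffeomorphism when $F$ is, $\mu$ retains total support, and if $I$ has the WB-property at $\widetilde{a},\widetilde{b}$ then so does $I^q$, because by Lemma \ref{lem:rotation number and weak boundess at two point} that property is a uniform bound on the rotation numbers of the fixed points in $A_{\widetilde{a},\widetilde{b}}$ for $\widehat{F}_{\widetilde{a},\widetilde{b}}$, and passing to $\widehat{F}_{\widetilde{a},\widetilde{b}}^{\,q}$ merely multiplies those rotation numbers by $q$. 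An analogous corollary expressing the invariance of $i_\mu$ under covering transformations would be proved in exactly the same way from Proposition \ref{prop:alpha- one exists then other exists}.
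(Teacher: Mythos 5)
Your proof is correct and takes essentially the same route as the paper, which obtains the corollary ``immediately'' from Proposition \ref{prop:i(Fnabx)=ni(abx)} by integrating the $\mu$-a.e.\ pointwise identity $i(\widetilde{F}^q;\widetilde{a},\widetilde{b},z)=q\,i(\widetilde{F};\widetilde{a},\widetilde{b},z)$ against $\mu$, exactly as you do. Your closing remark about transferring the WB-property to $I^q$ is tangential (and slightly glosses over the new fixed points of $\widetilde{F}^q$ coming from genuine periodic points), but it is not needed, since the corollary is stated under the standing assumption that $i(\widetilde{F};\widetilde{a},\widetilde{b},\cdot)$ is $\mu$-integrable, from which integrability for $\widetilde{F}^q$ follows as you observe.
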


\begin{cor}\label{cor:imu(F,alphaa,alphab)=imu(F,a,b)}
We have
$i_{\mu}(\widetilde{F};\alpha(\widetilde{a}),\alpha(\widetilde{b}))=i_{\mu}(\widetilde{F};\widetilde{a},\widetilde{b})$
for any $\alpha\in G$.
\end{cor}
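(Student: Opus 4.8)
The plan is to reduce the identity to the already established pointwise statement Proposition \ref{prop:alpha- one exists then other exists}, after checking that the two integrals in \eqref{eq:imu} are taken over the same domain. First I would observe that since $\alpha\in G$ is a covering transformation, $\pi(\alpha(\widetilde{a}))=\pi(\widetilde{a})$ and $\pi(\alpha(\widetilde{b}))=\pi(\widetilde{b})$; hence
$$M\setminus\pi(\{\alpha(\widetilde{a}),\alpha(\widetilde{b})\})=M\setminus\pi(\{\widetilde{a},\widetilde{b}\}),$$
so both $i_{\mu}(\widetilde{F};\alpha(\widetilde{a}),\alpha(\widetilde{b}))$ and $i_{\mu}(\widetilde{F};\widetilde{a},\widetilde{b})$ are integrals of functions defined on one and the same set, and $\mu$-integrability of one integrand is equivalent to that of the other (again by Proposition \ref{prop:alpha- one exists then other exists}), so the hypothesis guaranteeing that $i_\mu$ is defined is symmetric in the two cases.

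Next I would invoke Proposition \ref{prop:alpha- one exists then other exists}: for every $z\in M\setminus\pi(\{\widetilde{a},\widetilde{b}\})$ for which $i(\widetilde{F};\widetilde{a},\widetilde{b},z)$ exists, the linking number $i(\widetilde{F};\alpha(\widetilde{a}),\alpha(\widetilde{b}),z)$ also exists and
$$i(\widetilde{F};\alpha(\widetilde{a}),\alpha(\widetilde{b}),z)=i(\widetilde{F};\widetilde{a},\widetilde{b},z).$$
By Proposition \ref{prop:measure and linking number exsiting}, the set of $z$ where $i(\widetilde{F};\widetilde{a},\widetilde{b},z)$ exists has full $\mu$-measure in $\mathrm{Rec}^+(F)$, hence in $M$ by Poincar\'e recurrence. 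Therefore the two integrands in \eqref{eq:imu} agree $\mu$-almost everywhere, and integrating over $M\setminus\pi(\{\widetilde{a},\widetilde{b}\})$ gives
$$i_{\mu}(\widetilde{F};\alpha(\widetilde{a}),\alpha(\widetilde{b}))=\int_{M\setminus\pi(\{\widetilde{a},\widetilde{b}\})}i(\widetilde{F};\alpha(\widetilde{a}),\alpha(\widetilde{b}),z)\,\mathrm{d}\mu=\int_{M\setminus\pi(\{\widetilde{a},\widetilde{b}\})}i(\widetilde{F};\widetilde{a},\widetilde{b},z)\,\mathrm{d}\mu=i_{\mu}(\widetilde{F};\widetilde{a},\widetilde{b}).$$

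There is no serious obstacle here: the corollary is an immediate consequence of the pointwise invariance already proved in Proposition \ref{prop:alpha- one exists then other exists}, together with the observation that passing from $(\widetilde a,\widetilde b)$ to $(\alpha(\widetilde a),\alpha(\widetilde b))$ changes neither the domain of integration nor (up to a $\mu$-null set) the integrand. The only point that deserves a sentence of care is the remark that the integrability hypothesis under which $i_\mu$ is meaningful is preserved under the action of $\alpha$, which again follows from the pointwise equality of the two linking-number functions.
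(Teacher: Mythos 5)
Your proposal is correct and follows exactly the route the paper intends: the corollary is stated as an immediate consequence of Proposition \ref{prop:alpha- one exists then
 other exists}, i.e.\ the pointwise equality of the linking numbers, integrated over the common domain $M\setminus\pi(\{\widetilde{a},\widetilde{b}\})$ (which is unchanged since $\pi\circ\alpha=\pi$). Your extra sentence checking that integrability is preserved under $\alpha$ is a reasonable bit of care but adds nothing beyond the paper's argument.
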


Let $H$ be an orientation preserving homeomorphism of $M$ and
$\widetilde{H}$ be a lift of $H$
to $\widetilde{M}$. 
From Proposition \ref{prop:linking number and conjugation}, we get
the following corollary.

\begin{cor}\label{cor:imu(HFH-1,Ha,Hb)=imu(F,a,b)}
We have $i_{H_*(\mu)}(\widetilde{H}\circ\widetilde{F}\circ
\widetilde{H}^{\,-1};\widetilde{H}(\widetilde{a}),\widetilde{H}(\widetilde{b}))
=i_\mu(\widetilde{F};\widetilde{a},\widetilde{b})$.
\end{cor}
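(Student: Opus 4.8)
The plan is to deduce this corollary from Proposition \ref{prop:linking number and conjugation} exactly as Corollaries \ref{cor:imu(Fq,a,b)=qimu(F,a,b)} and \ref{cor:imu(F,alphaa,alphab)=imu(F,a,b)} are deduced from Propositions \ref{prop:i(Fnabx)=ni(abx)} and \ref{prop:alpha- one exists then other exists}: unwind the definition (\ref{eq:imu}) of $i_\mu$ and perform the change of variables $w=H(z)$ in the integral. First I would record the elementary compatibility $\pi\circ\widetilde{H}=H\circ\pi$ (because $\widetilde{H}$ is a lift of $H$), which gives $\pi(\{\widetilde{H}(\widetilde{a}),\widetilde{H}(\widetilde{b})\})=H(\pi(\{\widetilde{a},\widetilde{b}\}))$; since $H$ is a homeomorphism, the domain of integration $M\setminus\pi(\{\widetilde{H}(\widetilde{a}),\widetilde{H}(\widetilde{b})\})$ for $i_{H_*(\mu)}$ is precisely the image under $H$ of the domain $M\setminus\pi(\{\widetilde{a},\widetilde{b}\})$ for $i_\mu$. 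Likewise $H$ conjugates the dynamics of $F$ to that of $H\circ F\circ H^{-1}$, so $\mathrm{Rec}^+(H\circ F\circ H^{-1})=H(\mathrm{Rec}^+(F))$, and $H_*(\mu)\in\mathcal{M}(H\circ F\circ H^{-1})$ with $H_*(\mu)(\pi(\widetilde{H}(\widetilde{a})))=\mu(\pi(\widetilde{a}))=0$ (and similarly for $\widetilde{b}$), so the object $i_{H_*(\mu)}(\widetilde{H}\circ\widetilde{F}\circ\widetilde{H}^{\,-1};\widetilde{H}(\widetilde{a}),\widetilde{H}(\widetilde{b}))$ makes sense under the same hypotheses that make $i_\mu(\widetilde{F};\widetilde{a},\widetilde{b})$ make sense.

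Next, assuming as in (\ref{eq:imu}) that $z\mapsto i(\widetilde{F};\widetilde{a},\widetilde{b},z)$ is $\mu$-integrable, I would invoke Proposition \ref{prop:linking number and conjugation}: for $\mu$-almost every $z\in\mathrm{Rec}^+(F)\setminus\pi(\{\widetilde{a},\widetilde{b}\})$ the linking number $i(\widetilde{F};\widetilde{a},\widetilde{b},z)$ exists, hence so does $i(\widetilde{H}\circ\widetilde{F}\circ\widetilde{H}^{\,-1};\widetilde{H}(\widetilde{a}),\widetilde{H}(\widetilde{b}),H(z))$, and the two are equal. Transporting this through $H$ and using the pushforward identity $\int g\,\mathrm{d}(H_*(\mu))=\int (g\circ H)\,\mathrm{d}\mu$ (valid for any nonnegative or integrable $g$), one sees that $w\mapsto i(\widetilde{H}\circ\widetilde{F}\circ\widetilde{H}^{\,-1};\widetilde{H}(\widetilde{a}),\widetilde{H}(\widetilde{b}),w)$ is $H_*(\mu)$-integrable and
\[
i_{H_*(\mu)}(\widetilde{H}\circ\widetilde{F}\circ\widetilde{H}^{\,-1};\widetilde{H}(\widetilde{a}),\widetilde{H}(\widetilde{b}))
=\int_{M\setminus\pi(\{\widetilde{a},\widetilde{b}\})} i(\widetilde{H}\circ\widetilde{F}\circ\widetilde{H}^{\,-1};\widetilde{H}(\widetilde{a}),\widetilde{H}(\widetilde{b}),H(z))\,\mathrm{d}\mu
=\int_{M\setminus\pi(\{\widetilde{a},\widetilde{b}\})} i(\widetilde{F};\widetilde{a},\widetilde{b},z)\,\mathrm{d}\mu,
\]
and the right-hand side is $i_\mu(\widetilde{F};\widetilde{a},\widetilde{b})$ by definition (\ref{eq:imu}).

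There is no genuine obstacle here; the statement is a formal consequence of Proposition \ref{prop:linking number and conjugation} together with the change-of-variables formula for pushforward measures. The only points demanding mild care are: checking $\pi\circ\widetilde{H}=H\circ\pi$ so the two integration domains correspond under $w=H(z)$; verifying that the exceptional $\mu$-null set on which the pointwise linking number fails to exist is carried, under $z\mapsto H(z)$, to an $H_*(\mu)$-null set (immediate, since $H_*(\mu)(A)=\mu(H^{-1}(A))$); and noting that, $H$ being an orientation-preserving homeomorphism, the identity isotopy $I'=H\circ I\circ H^{-1}$ and its lift $\widetilde{I}^{\,'}$ with time-one map $\widetilde{H}\circ\widetilde{F}\circ\widetilde{H}^{\,-1}$ are exactly the data to which Proposition \ref{prop:linking number and conjugation} applies.
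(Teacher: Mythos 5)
Your proposal is correct and follows exactly the route the paper intends: the paper states this corollary as an immediate consequence of Proposition \ref{prop:linking number and conjugation}, and your argument simply makes explicit the change of variables $w=H(z)$ in the defining integral \ref{eq:imu} together with the pushforward identity, which is all that is needed.
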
\bigskip

In the end of this section, we will give the integral \ref{eq:imu} a
geometric description when $F$ and $F^{-1}$ are differentiable at
$\pi(\widetilde{a})$ and $\pi(\widetilde{b})$. Before that, let us
introduce a definition.\bigskip

Let $\mathbb{A}=\mathbf{T}^1\times[0,1]$ be a closed annulus and let
$T$ be the generator of the covering transformation group $\pi:
\widetilde{\mathbb{A}}\rightarrow \mathbb{A}$ where
$\widetilde{\mathbb{A}}=\mathbb{R}\times[0,1]$. Suppose that
$J=(h_t)_{t\in[0,1]}$ is an isotopy of $\mathbb{A}$ from
$\mathrm{Id}_{\mathbb{A}}$ to $h$, $\nu$ is a Borel measure ($\nu$
is admitted to be an infinite measure here) invariant by $h$ on
$\mathbb{A}$. Let $\gamma:[0,1]\rightarrow \mathbb{A}$ be a simple
oriented path which satisfies $\gamma(0)\in
\mathbf{T}^1\times\{0\}$, $\gamma(1)\in \mathbf{T}^1\times\{1\}$ and
$\mathrm{Int}(\gamma)\subset \mathrm{Int}(\mathbb{A})$. Denote by
$\Sigma': [0,1]\times[0,1]\rightarrow \mathbb{A}$ the 2-chain
$\Sigma'(s,t)=h_s^{-1}(\gamma(t))$ and by
$|\Sigma'|=\{z\in\mathbb{A}\mid z=h_s^{-1}(\gamma(t)), (s,t)\in
[0,1]\times[0,1]\}$ the support of $\Sigma'$. When $\nu(\gamma)=0$,
the intersection number $\gamma\wedge J(z)$ is well defined for
$\nu$-almost every $z$ on $\mathbb{A}$. Define \emph{the algebraic
area of the 2-chain $\Sigma'$ in $\mathbb{A}$}, that is, the
algebraic area (for $\nu$) ``swept out'' by $\bigcup_{s\in
[0,1]}h_s^{-1}(\gamma)$, as follows
\begin{equation}\label{eq:algebric area of 2 chain}
\int_{\Sigma'}\, \mathrm{d}\nu=\int_{\mathbb{A}}\gamma\wedge
J(z)\,\mathrm{d}\nu\nonumber.
\end{equation}
When $\nu(|\Sigma'|)<+\infty$, the integral is well defined. Indeed,
there exist a number $N\geq0$ such that $|\gamma\wedge J(z)|\leq N$
since $\mathbb{A}$ is compact. Obviously, $\gamma\cap
J(z)=\emptyset$ if $z\notin \bigcup_{s\in[0,1]}
h_s^{-1}(\gamma(t))$. Therefore,
$$\left|\int_{\Sigma'}\, \mathrm{d}\nu\right|\leq\int_{\mathbb{A}}|\gamma\wedge
J(z)|\,\mathrm{d}\nu\leq \nu(|\Sigma'|)N<+\infty.$$\smallskip

Let $H$ be the lift of $h$ that is the time-one map of the lifted
identity isotopy $\widetilde{J}$ of $J$, $\widetilde{\gamma}$ be a
connected component of $\gamma$ in $\widetilde{\mathbb{A}}$ and
$\widetilde{\nu}$ be the lift of $\nu$ to $\widetilde{\mathbb{A}}$.
Let $\widetilde{D}'$ be the closed region between
$H^{-1}(\widetilde{\gamma})$ and $T(H^{-1}(\widetilde{\gamma}))$
which is a fundamental domain of $T$. We have
\begin{eqnarray}\label{eq:stocks' theorem}
  \int_{\Sigma'}\, \mathrm{d}\nu=\int_{\mathbb{A}}\gamma\wedge
J(z)\,\mathrm{d}\nu = \int_{\widetilde{D}'}\widetilde{\gamma}\wedge
\widetilde{J}(\widetilde{z})\,\mathrm{d}\widetilde{\nu}
\end{eqnarray}
which does not depend on the choice of $\widetilde{\gamma}$.

Denote by $\Sigma=h\ast\Sigma': [0,1]\times[0,1]\rightarrow
\mathbb{A}$ the 2-chain $\Sigma(s,t)=h_{s}^{-1}(h(\gamma(t)))$ and
suppose that $\nu(|\Sigma|)<+\infty$. Let
$\widetilde{D}=H(\widetilde{D}')$ be the closed region between
$\widetilde{\gamma}$ and $T(\widetilde{\gamma})$ which is also a
fundamental domain of $T$.  By Equation \ref{eq:stocks' theorem}, we
have
\begin{eqnarray}\label{eq: stocks' theorem 1}
  \int_{\Sigma}\, \mathrm{d}\nu=\int_{\mathbb{A}}h(\gamma)\wedge J(z)\,\mathrm{d}\nu =
\int_{\widetilde{D}}H(\widetilde{\gamma})\wedge
\widetilde{J}(\widetilde{z})\,\mathrm{d}\widetilde{\nu}.\end{eqnarray}
The equation \ref{eq: stocks' theorem 1} tell us that the value
$\int_{\Sigma}\, \mathrm{d}\nu$ is equal to the algebraic area (for
$\widetilde{\nu}$) of the region of $\widetilde{\mathbb{A}}$
situated between $\widetilde{\gamma}$ and its image
$H(\widetilde{\gamma})$. Furthermore, if we suppose that $J$ fixes a
point $\infty$ in $\mathbb{A}$, we have
\begin{eqnarray}\label{eq: stocks' theorem 2}
  \int_{\Sigma}\, \mathrm{d}\nu
   &=& \int_{\mathbb{A}}h(\gamma)\wedge
J(z)\,\mathrm{d}\nu\\
   &=& \int_{\mathbb{A}}\gamma\wedge
\left(h^{-1}\circ J\right)(z)\,\mathrm{d}\nu\nonumber\\
   &=& \int_{\mathbb{A}}\gamma\wedge
\left(h^{-1}\circ J\circ h\right)(z)\,\mathrm{d}\nu\nonumber\\
&=&\int_{\mathbb{A}}\gamma\wedge J(z)\,\mathrm{d}\nu.
\nonumber\end{eqnarray} Indeed, write the isotopy $J'=h^{-1}\circ
J\circ h=\left(h^{-1}\circ h_{t}\circ h\right)_{0\leq t\leq 1}$. The
third equation holds because $h$ is a homeomorphism of $\mathbb{A}$
and preserves the measure $\nu$. Observing that the isotopy
$J^{-1}J'$ is a loop (whose base point is
$\mathrm{Id_{\mathbb{A}}}$) in $\mathrm{Homeo}_*(\mathbb{A})$ and
fixes the point $\infty$, recall that
$\pi_1(\mathrm{Homeo}_*(\mathbb{A}))=\bigcup_{k\in\,\mathbb{Z}}\mathscr{C}_k$
(see the proof of Proposition \ref{lem:i is 3coboundary for point}),
we get $[J^{-1}J']_1\in\mathscr{C}_0$ . Hence, we get the last
equation. It is easy to prove that, by induction and Equation
\ref{eq: stocks' theorem 2}, $\int_{\Sigma}\, \mathrm{d}\nu$ is
equal to $\int_{h^k\ast\Sigma'}\, \mathrm{d}\nu$ for every
$k\in\mathbb{Z}$.

\smallskip

Remark that we can also define the algebraic area of the 2-chain
$\Sigma$ when $\gamma$ is not simple if we consider the oriented
domain enclosed by $\widetilde{\gamma}$, $H(\widetilde{\gamma})$ and
$\partial\widetilde{\mathbb{A}}$ in $\widetilde{\mathbb{A}}$.
However, to prove Theorem \ref{thm:PW} in the next section, it is
enough to merely consider the case of a simple oriented path.

Suppose now the measure $\nu$ is defined by a symplectic form
$\omega$, that is, $\nu(A)=\int_{A}\omega$ for all measurable sets
$A\subset \mathbb{A}$. Observe that $\widetilde{\omega}$ is exact in
$\widetilde{\mathbb{A}}$ where $\widetilde{\omega}$ is the lift of
$\omega$ to $\widetilde{\mathbb{A}}$. The equation \ref{eq: stocks'
theorem 1} and Stokes' theorem imply that $\int_{\Sigma}\omega$
(defined by the integrals of differential 2-form on 2-chain) is
nothing else but the algebraic area of the 2-chain $\Sigma$ in
$\mathbb{A}$, $\int_{\Sigma}\, \mathrm{d}\nu$ (defined by Equation
\ref{eq: stocks' theorem 1}).
\bigskip

We now suppose that the time-one map $F$ of $I$ and its inverse
$F^{-1}$ are differentiable at $\pi(\widetilde{a})$ and
$\pi(\widetilde{b})$. Let
$\widetilde{I}_1=(\widetilde{F}'_t)_{t\in[0,1]}$ be an isotopy from
$\mathrm{Id}_{\widetilde{M}}$ to $\widetilde{F}$ that fixes
$\widetilde{a}$ and $\widetilde{b}$, and $\widetilde{\mu}$ be the
lift of $\mu$ to $\widetilde{M}$. Let
$\widetilde{\gamma}:[0,1]\rightarrow \widetilde{M}$ be a simple
oriented path from $\widetilde{a}$ to $\widetilde{b}$ with
$\widetilde{\gamma}(0)=\widetilde{a}$ and
$\widetilde{\gamma}(1)=\widetilde{b}$. Consider the annulus
$A_{\widetilde{a},\widetilde{b}}$ and the annulus map
$\widetilde{F}_{\widetilde{a},\widetilde{b}}$. Recall that, in the
proof of Lemma \ref{lem:diffeomorphism on a and b},
$\bar{A}_{\widetilde{a},\widetilde{b}}=S_{\widetilde{a}}\sqcup
A_{\widetilde{a},\widetilde{b}}\sqcup S_{\widetilde{b}}$ is the
natural compactification of $A_{\widetilde{a},\widetilde{b}}$ where
$S_{\widetilde{a}}$ and $S_{\widetilde{b}}$ are the tangent unit
circles at $\widetilde{a}$ and $\widetilde{b}$. We can identify
$\widetilde{\gamma}$ as an oriented path in
$\bar{A}_{\widetilde{a},\widetilde{b}}$ and $\widetilde{I}_1$ as an
identity isotopy of $\bar{A}_{\widetilde{a},\widetilde{b}}$. As the
measure $\widetilde{\mu}$ is invariant by $\widetilde{F}$ and
$\widetilde{\mu}(\widetilde{a})=\widetilde{\mu}(\widetilde{b})=0$,
it naturally induces a measure on
$\bar{A}_{\widetilde{a},\widetilde{b}}$, denoted still by
$\widetilde{\mu}$.

Suppose that $\widetilde{\Sigma}$ is the 2-chain
$\widetilde{\Sigma}:[0,1]\times[0,1]\rightarrow\widetilde{M}$
defined by
$\widetilde{\Sigma}(s,t)=\widetilde{F}'^{-1}_s(\widetilde{F}(\widetilde{\gamma}(t)))$
whose boundary is
$\widetilde{F}(\widetilde{\gamma})\widetilde{\gamma}^{-1}$ with the
boundary of the square $[0,1]\times[0,1]$ oriented
counter-clockwise.  As $\widetilde{I}_1$ fixes $\infty$, the
intersection number $\widetilde{\gamma}
\wedge\widetilde{I}_1(\widetilde{z})$ is zero when $\widetilde z$
belongs to a neighborhood of $\infty$. Therefore, if
$\widetilde{\mu}(\widetilde{\gamma})=0$, we can define \emph{the
algebraic area of the 2-chain $\widetilde{\Sigma}$ in
$\widetilde{M}\setminus\{\widetilde{a},\widetilde{b}\}$} as follows
$$\int_{\widetilde{\Sigma}}\,
\mathrm{d}\widetilde{\mu}=\int_{\widetilde{M}\setminus\{\widetilde{a},\widetilde{b}\}}\widetilde{\gamma}
\wedge\widetilde{I}_1(\widetilde{z})
 \,\mathrm{d}\widetilde{\mu}=\int_{\bar{A}_{\widetilde{a},\widetilde{b}}}\widetilde{\gamma}
\wedge\widetilde{I}_1(\widetilde{z})
 \,\mathrm{d}\widetilde{\mu}.$$

Remark here that if the measure $\mu$ is defined by a symplectic
form $\omega$, then $\int_{\widetilde{\Sigma}}\widetilde{\omega}$
(see Equation \ref{eq:action difference} and Equation \ref{eq:action
difference of cohomology}) is nothing else but
$\int_{\widetilde{\Sigma}}\,
\mathrm{d}\widetilde{\mu}$ 
where $\widetilde{\omega}$ is the lift of $\omega$ to
$\widetilde{M}$. Moreover, we have the following result which is a
key step to prove that our generalized action function defined in
next section is a generalization of the classical one.

\begin{lem}\label{lem:mu gamma is zero}
If $\widetilde{\mu}(\widetilde{\gamma})=0$, then we have
\begin{equation*}\label{equ: lemma of main theorem}
i_\mu(\widetilde{F},\widetilde{a},\widetilde{b})=\int_{\widetilde{\Sigma}}\,
\mathrm{d}\widetilde{\mu}.
\end{equation*}
\end{lem}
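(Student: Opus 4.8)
The plan is to express both sides of the claimed identity as integrals over a single fundamental domain in the compactified annulus $\bar A_{\widetilde a,\widetilde b}$, and to match the integrands pointwise (almost everywhere) by unwinding the definitions. First I would recall that, by Proposition~\ref{prop:diffeomorphism has a bounded linking number for every two distinct fixed points} together with Lemma~\ref{lem:diffeomorphism on a and b}, the differentiability of $F$ and $F^{-1}$ at $\pi(\widetilde a)$ and $\pi(\widetilde b)$ guarantees that the linking number $i(\widetilde F;\widetilde a,\widetilde b,z)$ is uniformly bounded wherever it exists, hence $\mu$-integrable, so that $i_\mu(\widetilde F;\widetilde a,\widetilde b)$ in \eqref{eq:imu} is well defined; and since $\widetilde\mu(\widetilde\gamma)=0$, the algebraic area $\int_{\widetilde\Sigma}\,\mathrm d\widetilde\mu$ is also well defined via the intersection-number integral over $\bar A_{\widetilde a,\widetilde b}$.

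Next I would set up the first-return machinery of Section~\ref{subsec:the definition of a new linking number}: cover $M\setminus\pi(\{\widetilde a,\widetilde b\})$ by finitely many free open disks $U$ with $\mu(U)>0$, work on one such $U$ at a time, and use Kac's lemma (as in \ref{sec:the existion of rotation vector in the copact case}) to write $\int_{M\setminus\pi(\{\widetilde a,\widetilde b\})} i(\widetilde F;\widetilde a,\widetilde b,z)\,\mathrm d\mu$ as a sum of contributions $\int_U L^*(\widetilde F;\widetilde a,\widetilde b,z)\,\mathrm d\mu = \int_U \tau(z)\, i(\widetilde F;\widetilde a,\widetilde b,z)\,\mathrm d\mu$ divided appropriately, invoking \eqref{eq: linking number with a measure}. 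The key identity to exploit is \eqref{eq: Ln Birkhoff sum}, $L_n = \sum_{j=0}^{n-1} L_1(\widetilde F;\widetilde a,\widetilde b,\Phi^j(z))$, so that by Birkhoff's ergodic theorem applied on each $U$, the integral of $L^*$ equals $\int_U L_1(\widetilde F;\widetilde a,\widetilde b,z)\,\mathrm d\mu$, and $L_1(\widetilde F;\widetilde a,\widetilde b,z) = \widetilde\gamma \wedge \widetilde\Gamma^1_{\widetilde I_1,z} = \sum_{\pi(\widetilde z)=z}\widetilde\gamma\wedge\widetilde I_1(\widetilde z)$. Summing these local contributions over a measurable partition of $M\setminus\pi(\{\widetilde a,\widetilde b\})$ and lifting to $\widetilde M$ (equivalently to $\bar A_{\widetilde a,\widetilde b}$, using $\widetilde\mu(\widetilde a)=\widetilde\mu(\widetilde b)=0$ so $\widetilde\mu$ descends to the compactified annulus), I would identify $i_\mu(\widetilde F;\widetilde a,\widetilde b)$ with $\int_{\bar A_{\widetilde a,\widetilde b}} \widetilde\gamma\wedge\widetilde I_1(\widetilde z)\,\mathrm d\widetilde\mu$, which is precisely $\int_{\widetilde\Sigma}\,\mathrm d\widetilde\mu$ by the definition given just above the lemma. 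Here the discussion around Equations \eqref{eq:stocks' theorem}--\eqref{eq: stocks' theorem 2} is used to see that this area does not depend on the choice of $\widetilde\gamma$ or of the isotopy $\widetilde I_1$ fixing $\widetilde a,\widetilde b$, and that passing between $\widetilde\gamma$ and $\widetilde F(\widetilde\gamma)$ (i.e.\ between the two natural fundamental domains) changes nothing because $\widetilde I_1$ fixes $\infty$.

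The main obstacle I anticipate is the careful bookkeeping of the lift: the linking number $i(\widetilde F;\widetilde a,\widetilde b,z)$ is defined as a sum over the fiber $\pi^{-1}(z)$, and one must verify that, after applying Kac's lemma disk-by-disk and reassembling, the resulting integral over $\widetilde M\setminus\{\widetilde a,\widetilde b\}$ indeed counts each point of $\widetilde M$ exactly once with the correct weight $\widetilde\mu$ — in other words, that $\int_M \big(\sum_{\pi(\widetilde z)=z}\widetilde\gamma\wedge\widetilde I_1(\widetilde z)\big)\,\mathrm d\mu$ unfolds cleanly to $\int_{\widetilde M\setminus\{\widetilde a,\widetilde b\}}\widetilde\gamma\wedge\widetilde I_1(\widetilde z)\,\mathrm d\widetilde\mu$, with the tail at $\infty$ contributing zero because $\widetilde\gamma\wedge\widetilde I_1(\widetilde z)=0$ for $\widetilde z$ near $\infty$ (here again differentiability / the boundedness from Lemma~\ref{lem:diffeomorphism on a and b} controls the finitely many lifts whose $\widetilde I_1$-trajectory leaves the chosen neighborhood of $\infty$). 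A secondary subtlety is justifying the interchange of the Birkhoff limit with the integral, which follows from the uniform bound $|L_1(\widetilde F;\widetilde a,\widetilde b,z)|\le \tau(z)\,K_{\widetilde a,\widetilde b}K_U$ established in Lemmas~\ref{lem:the boundedness of the case multi-path} and \ref{lem:alphazn is e, Ln divide taun is less then kU} together with $\tau\in L^1(U,\mathbb R,\mu)$, so the dominated convergence theorem applies. Once these measure-theoretic points are nailed down, the equality $i_\mu(\widetilde F,\widetilde a,\widetilde b)=\int_{\widetilde\Sigma}\,\mathrm d\widetilde\mu$ drops out.
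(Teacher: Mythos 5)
There is a genuine gap in your central computational step. You assert that $L_1(\widetilde{F};\widetilde{a},\widetilde{b},z)=\widetilde{\gamma}\wedge\widetilde{\Gamma}^1_{\widetilde{I}_1,z}=\sum_{\pi(\widetilde{z})=z}\widetilde{\gamma}\wedge\widetilde{I}_1(\widetilde{z})$, but this last equality only holds when $z$ is a fixed point (or $\tau(z)=1$): by definition $\widetilde{\Gamma}^1_{\widetilde{I}_1,\widetilde{z}}$ is the trajectory of $\widetilde{z}$ under the isotopy for the whole first-return time $\tau(z)$, closed by an arc in $\widetilde{U}$, so with $U$ disjoint from $\pi(\widetilde{\gamma})$ the correct identity is $L_1(\widetilde{F};\widetilde{a},\widetilde{b},z)=\sum_{j=0}^{\tau(z)-1}G_1(\widetilde{F};\widetilde{a},\widetilde{b},F^j(z))$, where $G_1(\widetilde{F};\widetilde{a},\widetilde{b},z)=\widetilde{\gamma}\wedge\prod_{\pi(\widetilde{z})=z}\widetilde{I}_1(\widetilde{z})$. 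Because of this, $\int_U i\,\mathrm{d}\mu$ is not $\int_U L_1\,\mathrm{d}\mu$; rather $\int_U \tau\, i\,\mathrm{d}\mu=\int_U L_1\,\mathrm{d}\mu=\int_{\bigcup_{k\geq0}F^k(U)}G_1\,\mathrm{d}\mu$, i.e.\ each disk accounts for its whole Kakutani tower with weight $\tau$, and the towers of the finitely many covering disks overlap. The reassembly you flag as ``the main obstacle'' is therefore not a routine verification: as set up (a cover by free disks, contributions summed per disk), the bookkeeping either double-counts or drops the weight $\tau$, and you do not resolve it.

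The paper's proof avoids the first-return machinery at this stage altogether, and this is the repair available to you: for $z$ outside the null set $Z=\bigcup_{k\geq0}F^{-k}(\pi(\widetilde{\gamma}))$ (null because $\widetilde{\mu}(\widetilde{\gamma})=0$) one defines $G_n(\widetilde{F};\widetilde{a},\widetilde{b},z)=\widetilde{\gamma}\wedge\prod_{\pi(\widetilde{z})=z}\widetilde{I}_1^{\,n}(\widetilde{z})$, checks $G_1\in L^1$ by the displacement bound from the proof of Lemma \ref{lem:diffeomorphism on a and b} (as you correctly anticipate), and notes that $G_n=\sum_{j=0}^{n-1}G_1\circ F^j$ is a genuine Birkhoff sum for $F$ itself. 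Choosing $U$ disjoint from $\pi(\widetilde{\gamma})$ makes $\{L_n/\tau_n\}_{n\geq1}$ a subsequence of $\{G_n/n\}_{n\geq1}$, hence $i(\widetilde{F};\widetilde{a},\widetilde{b},z)=\lim_n G_n/n$ $\mu$-a.e., and a single application of the Birkhoff ergodic theorem gives $i_\mu(\widetilde{F};\widetilde{a},\widetilde{b})=\int G_1\,\mathrm{d}\mu$, which unfolds over the fibers of $\pi$ to $\int_{\widetilde{M}\setminus\{\widetilde{a},\widetilde{b}\}}\widetilde{\gamma}\wedge\widetilde{I}_1(\widetilde{z})\,\mathrm{d}\widetilde{\mu}=\int_{\widetilde{\Sigma}}\mathrm{d}\widetilde{\mu}$. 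Your remaining points (integrability from differentiability, vanishing of the intersection number near $\infty$, domination for the limit interchange) are sound, but without replacing the erroneous identity and the disk-by-disk reassembly by an argument of this kind, the proof as proposed does not go through.
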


\begin{proof}From Proposition \ref{prop:diffeomorphism has a bounded
linking number for every two distinct fixed points}, we know that
$i_\mu(\widetilde{F},\widetilde{a},\widetilde{b})$ is well defined.
Let
$$Z=\bigcup\limits_{k=0}^{+\infty}(F^{-k}(\pi(\widetilde{\gamma}))).$$
Observe that $\mu(\mathrm{Rec}^+(F)\setminus Z)=\mu(M)$. For every
$z\in \mathrm{Rec}^+(F)\setminus Z$ and every $n\geq1$, consider the
following infinite family of paths in $\widetilde{M}$:
$$\widetilde{\Gamma}'^{\,n}_{\widetilde{I}_1,z}=\prod_{\pi(\widetilde{z})=z}\widetilde{I}_1^{\,n}(\widetilde{z})\,.$$
Define the function
$$G_{n}(\widetilde{F};\widetilde{a},\widetilde{b},z)=\widetilde{\gamma}\wedge
\widetilde{\Gamma}'^{\,n}_{\widetilde{I}_1,z}.
$$

Let us verify that this is well defined. Consider the annulus
$A_{\widetilde{a},\widetilde{b}}$ and the annulus map
$\widetilde{F}_{\widetilde{a},\widetilde{b}}$. For any $z\in
\mathrm{Rec}^+(F)\setminus Z$, let
 $\widetilde{z}$ be any lift of $z$ to $\widetilde{M}$ (we also write
 $\widetilde{z}$ in $A_{\widetilde{a},\widetilde{b}}$), and
$\widehat{z}$ be any lift of $\widetilde{z}$ to
$\widehat{A}_{\widetilde{a},\widetilde{b}}$. In the proof of Lemma
\ref{lem:diffeomorphism on a and b}, we have proved that
$|p_1(\widehat{F}_{{\widetilde{a}},{\widetilde{b}}}(\widehat{z}))-
p_1(\widehat{z})|$ is uniformly bounded for any
$\widehat{z}\in\widehat{A}_{\widetilde{a},\widetilde{b}}$, say $N$
as a bound, and depends on the isotopy $I$ but not on the choice of
$\widehat{z}$. Fix an open disk $\widetilde{W}$ containing $\infty$
and disjoint from $\widetilde{\gamma}$. As
$\widetilde{I}_1(\infty)=\infty$, for every $n\geq1$, we can choose
an open disk $\widetilde{V}_n\subset \widetilde{W}$ containing
$\infty$ such that for every $\widetilde{z}\in \widetilde{V}_n$, we
have $\widetilde{I}^n_1(\widetilde{z})\in \widetilde{W}$. Write
$X'^n_z=\pi^{-1}(\{z\})\cap \widetilde{V}_n^c$. We deduce that there
is a positive integer $K'_n$ such that $\sharp X'^n_z\leq K'_n$ and
$$\left|G_n(\widetilde{F};\widetilde{a},\widetilde{b},z)\right|=\left|\widetilde{\gamma}\wedge
\widetilde{\Gamma}'^{\,n}_{\widetilde{I}_1,z}\right|
=\left|\sum_{\widetilde{z}\in X'^n_z}\widetilde{\gamma}\wedge
\widetilde{I}^{\,n}_1(\widetilde{z})\right|\leq K'_nN.$$ Hence we
complete the claim. As a consequence,
$G_1(\widetilde{F};\widetilde{a},\widetilde{b},z)\in
L^1(M\setminus\pi(\{\widetilde{a},\widetilde{b}\}),\mathbb{R},\mu)$.

Moreover, we can write
$G_{n}(\widetilde{F};\widetilde{a},\widetilde{b},z)$ as a Birkhoff
sum:
$$G_{n}(\widetilde{F};\widetilde{a},\widetilde{b},z)=\widetilde{\gamma}\wedge
\widetilde{\Gamma}'^{\,n}_{\widetilde{I}_1,z}=\widetilde{\gamma}\wedge
\prod_{i=0}^{n-1}\widetilde{\Gamma}'^{\,1}_{\widetilde{I}_1,F^i(z)}=\sum_{j=0}^{n-1}
G_1(\widetilde{F};\widetilde{a},\widetilde{b},F^j(z)).
$$According to Birkhoff Ergodic theorem, the limit
$$\lim_{n\rightarrow+\infty}\frac{G_{n}(\widetilde{F};\widetilde{a},\widetilde{b},z)}{n}
=\lim\limits_{n\rightarrow+\infty}\frac{1}{n}\sum_{j=0}^{n-1}G_1(\widetilde{F};\widetilde{a},\widetilde{b},F^j(z))$$
exists for $\mu$-almost everywhere on
$M\setminus\pi(\{\widetilde{a},\widetilde{b}\})$. We know that
$$i(\widetilde{F};\widetilde{a},\widetilde{b},z)=\lim\limits_{n\rightarrow+\infty}
\frac{L_{n}(\widetilde{F};\widetilde{a},\widetilde{b},z)}{\tau_n(z)}=
\frac{L^*(\widetilde{F};\widetilde{a},\widetilde{b},z)}{\tau^*(z)}$$
for $\mu$-almost every point $z\in
M\setminus\pi(\{\widetilde{a},\widetilde{b}\})$ exists \,(see
Proposition \ref{prop:measure and linking number exsiting}). As
$i(\widetilde{F};\widetilde{a},\widetilde{b},z)$ does not depend on
the choice of $U$ (see Definition \ref{def:Intersection number
density}), when $z\notin\pi(\widetilde{\gamma})$, we can suppose
that the disk $U$ is small enough such that
$U\cap\pi(\widetilde{\gamma})=\emptyset$. Therefore,
$\{L_{n}(\widetilde{F};\widetilde{a},\widetilde{b},z)/\tau_n(z)\}_{n\geq1}$
 is a subsequence of
$\{G_{n}(\widetilde{F};\widetilde{a},\widetilde{b},z)/n\}_{n\geq1}$.
We get
$$i(\widetilde{F};\widetilde{a},\widetilde{b},z)=\lim\limits_{n\rightarrow+\infty}
\frac{G_{n}(\widetilde{F};\widetilde{a},\widetilde{b},z)}{n}$$ for
$\mu$-almost everywhere on
$M\setminus\pi(\{\widetilde{a},\widetilde{b}\})$.
\smallskip

 By Birkhoff Ergodic theorem, we have
\begin{eqnarray}i_\mu(\widetilde{F};\widetilde{a},\widetilde{b})&=&
\int_{M\setminus\pi(\{\widetilde{a},\widetilde{b}\})}i(\widetilde{F};\widetilde{a},\widetilde{b},z)\,\mathrm{d}\mu\nonumber\\
&=&
\int_{M\setminus\pi(\{\widetilde{a},\widetilde{b}\})}G_1(\widetilde{F};\widetilde{a},\widetilde{b},z)\,\mathrm{d}\mu\nonumber\\
&=&\int_{\widetilde{M}\setminus\pi^{-1}(\pi(\{\widetilde{a},\widetilde{b}\}))}\widetilde{\gamma}
\wedge\widetilde{I}_1(\widetilde{z})
 \,\mathrm{d}\widetilde{\mu}\nonumber\\
&=&\int_{\widetilde{\Sigma}}\, \mathrm{d}\widetilde{\mu},\nonumber
\label{equ:scan}
\end{eqnarray}
We have completed the proof.\end{proof}\bigskip

\section{Action Function}\label{sec:proof of the main
theorem}

This section will be divided into three parts. In the first part, we
will define the action function and prove Theorem \ref{thm:PW}. In
the second part, we will study some properties of the action. In the
third part, we will define the action spectrum and prove that the
action is not constant in the case where the contractible fixed
points set is finite.\smallskip

Firstly, we state some results we can already get immediately. Let
$F\in \mathrm{Homeo}_*(M)$ be the time-one map of an identity
isotopy $I=(F_t)_{t\in[0,1]}$ of $M$. As we have proved in the last
section, we know that the function
$i(\widetilde{F};\widetilde{a},\widetilde{b},z)$ is $\mu$-integrable
for every pair
$(\widetilde{a},\widetilde{b})\in(\mathrm{Fix}(\widetilde{F})\times
\mathrm{Fix}(\widetilde{F}))\setminus\widetilde{\Delta}$ in each of
the following cases:
\begin{itemize}
\item $F\in\mathrm{Diff}(M)$, and $\mu\in\mathcal{M}(F)$ has no atoms on $\mathrm{Fix}_{\mathrm{Cont},I}(F)$
 (Proposition \ref{prop:diffeomorphism has a bounded
linking number for every two distinct fixed points});
\item $I$ satisfies the
WB-property, and $\mu\in \mathcal{M}(F)$ has total support but no
atoms on $\mathrm{Fix}_{\mathrm{Cont},I}(F)$ (Proposition
\ref{prop:the support of mu is M});
\item $I$ satisfies the
WB-property, $\mu\in\mathcal {M}(F)$ has no atoms on
$\mathrm{Fix}_{\mathrm{Cont},I}(F)$ and $\mu$ is ergodic
(Proposition \ref{prop:measure and linking number exsiting} and the
Birkhoff Ergodic theorem).
\end{itemize}

\subsection{Definition of the action function}\label{sec:definition of action function}
In this subsection, we
suppose that the function
$i(\widetilde{F};\widetilde{a},\widetilde{b},z)$ is $\mu$-integrable
for every two distinct fixed points $\widetilde{a}$ and
$\widetilde{b}$ of $\widetilde{F}$.
\smallskip

We define the \emph{action difference} as follows:
\begin{eqnarray*}
  i_{\mu}: (\mathrm{Fix}(\widetilde{F})\times
\mathrm{Fix}(\widetilde{F}))\setminus\widetilde{\Delta}
&\rightarrow&\mathbb{R} \\
  (\widetilde{a},\widetilde{b}) &\mapsto&
  i_{\mu}(\widetilde{F};\widetilde{a},\widetilde{b}).
\end{eqnarray*}

From Proposition \ref{lem:i is 3coboundary for point}, we have the
following corollary immediately:
\begin{cor}\label{clm:I is coboundary}
For any distinct fixed points $\widetilde{a}$,
 $\widetilde{b}$ and $\widetilde{c}$ of $\widetilde{F}$, we have
$$i_{\mu}(\widetilde{F};\widetilde{a},\widetilde{b})+i_{\mu}(\widetilde{F};\widetilde{b},\widetilde{c})+i_{\mu}(\widetilde{F};\widetilde{c},\widetilde{a})=0.$$
That is, $i_{\mu}$ is a coboundary on $\mathrm{Fix}(\widetilde{F})$.
So there is a function $l_{\mu}:
\mathrm{Fix}(\widetilde{F})\rightarrow\mathbb{R}$, defined up to an
additive constant, such that
\begin{equation}\label{eq:imu and lmu}
    i_{\mu}(\widetilde{F};\widetilde{a},\widetilde{b})=l_{\mu}(\widetilde{F};\widetilde{b})-l_{\mu}(\widetilde{F};\widetilde{a}).
\end{equation}
\end{cor}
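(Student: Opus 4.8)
The plan is to deduce the cocycle identity for $i_\mu$ by integrating the pointwise identity of Proposition~\ref{lem:i is 3coboundary for point} against $\mu$, and then to read off the coboundary statement from the elementary fact that an antisymmetric $2$-cocycle on a set is a coboundary. First I would fix three pairwise distinct fixed points $\widetilde{a},\widetilde{b},\widetilde{c}$ of $\widetilde{F}$. Under the standing hypothesis of this subsection the three functions $z\mapsto i(\widetilde{F};\widetilde{a},\widetilde{b},z)$, $z\mapsto i(\widetilde{F};\widetilde{b},\widetilde{c},z)$ and $z\mapsto i(\widetilde{F};\widetilde{c},\widetilde{a},z)$ are $\mu$-integrable, hence in particular defined for $\mu$-almost every $z$. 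Since $\mu$ has no atoms on $\mathrm{Fix}_{\mathrm{Cont},I}(F)$ and $\pi(\widetilde{a}),\pi(\widetilde{b}),\pi(\widetilde{c})$ lie in $\mathrm{Fix}_{\mathrm{Cont},I}(F)$, the set $\pi(\{\widetilde{a},\widetilde{b},\widetilde{c}\})$ is $\mu$-negligible, so Proposition~\ref{lem:i is 3coboundary for point} yields
$$i(\widetilde{F};\widetilde{a},\widetilde{b},z)+i(\widetilde{F};\widetilde{b},\widetilde{c},z)+i(\widetilde{F};\widetilde{c},\widetilde{a},z)=0\qquad\text{for $\mu$-almost every }z.$$
Integrating this equality over $M$ with respect to $\mu$, and noting that removing the $\mu$-null set $\pi(\{\widetilde{a},\widetilde{b},\widetilde{c}\})$ does not change any of the integrals in the defining formula~\eqref{eq:imu} of $i_\mu$, I obtain
$$i_{\mu}(\widetilde{F};\widetilde{a},\widetilde{b})+i_{\mu}(\widetilde{F};\widetilde{b},\widetilde{c})+i_{\mu}(\widetilde{F};\widetilde{c},\widetilde{a})=0,$$
which is the asserted cocycle relation.

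Next I would record antisymmetry: by the definition of the extended linking number in Section~\ref{sec:linking number of a and b,c} and Definition~\ref{def:Intersection number density}, reversing the oriented path $\widetilde{\gamma}$ joining $\widetilde{a}$ to $\widetilde{b}$ reverses its orientation in every intersection pairing $L_n$, so $i(\widetilde{F};\widetilde{b},\widetilde{a},z)=-i(\widetilde{F};\widetilde{a},\widetilde{b},z)$ whenever either side is defined; integrating gives $i_\mu(\widetilde{F};\widetilde{b},\widetilde{a})=-i_\mu(\widetilde{F};\widetilde{a},\widetilde{b})$. If $\sharp\,\mathrm{Fix}(\widetilde{F})\le 1$ there is nothing to prove, so assume one may choose a basepoint $\widetilde{a}_0\in\mathrm{Fix}(\widetilde{F})$ and set $l_\mu(\widetilde{F};\widetilde{a}_0)=0$ and $l_\mu(\widetilde{F};\widetilde{x})=i_\mu(\widetilde{F};\widetilde{a}_0,\widetilde{x})$ for $\widetilde{x}\in\mathrm{Fix}(\widetilde{F})\setminus\{\widetilde{a}_0\}$. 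For distinct $\widetilde{a},\widetilde{b}\in\mathrm{Fix}(\widetilde{F})\setminus\{\widetilde{a}_0\}$, applying the cocycle relation to the triple $(\widetilde{a}_0,\widetilde{a},\widetilde{b})$ together with antisymmetry gives
$$i_\mu(\widetilde{F};\widetilde{a},\widetilde{b})=-i_\mu(\widetilde{F};\widetilde{a}_0,\widetilde{a})-i_\mu(\widetilde{F};\widetilde{b},\widetilde{a}_0)=i_\mu(\widetilde{F};\widetilde{a}_0,\widetilde{b})-i_\mu(\widetilde{F};\widetilde{a}_0,\widetilde{a})=l_\mu(\widetilde{F};\widetilde{b})-l_\mu(\widetilde{F};\widetilde{a}),$$
and the cases where $\widetilde{a}$ or $\widetilde{b}$ equals $\widetilde{a}_0$ follow at once from antisymmetry and the definition of $l_\mu$. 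This establishes~\eqref{eq:imu and lmu}, the additive-constant indeterminacy reflecting the freedom in the choice of basepoint.

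Since all the genuinely analytic content — the pointwise $3$-cocycle identity of Proposition~\ref{lem:i is 3coboundary for point} and the $\mu$-integrability of $i(\widetilde{F};\widetilde{a},\widetilde{b},z)$ in each of the three cases of Theorem~\ref{thm:PW} — has already been established, the present argument is essentially bookkeeping. The one point deserving explicit care, and where I would be most careful, is the passage from the pointwise identity to the identity for $i_\mu$: one must verify that the three integrals $\int_{M\setminus\pi(\{\cdot,\cdot\})}$ appearing in~\eqref{eq:imu} compute the same quantities when all restricted to the common domain $M\setminus\pi(\{\widetilde{a},\widetilde{b},\widetilde{c}\})$, which is precisely where the hypothesis that $\mu$ has no atoms on $\mathrm{Fix}_{\mathrm{Cont},I}(F)$ enters. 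No further obstacle is anticipated.
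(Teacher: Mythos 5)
Your argument is correct and coincides with the paper's: the corollary is obtained exactly by integrating the pointwise cocycle identity of Proposition \ref{lem:i is 3coboundary for point} against $\mu$ (the paper treats this as immediate), and your bookkeeping — discarding the $\mu$-null set $\pi(\{\widetilde{a},\widetilde{b},\widetilde{c}\})$, antisymmetry of $i(\widetilde{F};\cdot,\cdot,z)$, and the basepoint construction of $l_\mu$ — is precisely the routine content left implicit there.
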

We call the function $l_\mu$ the \emph{action} on
$\mathrm{Fix}(\widetilde{F})$ defined by the measure $\mu$.
\smallskip

As a consequence, if $F$ is a diffeomorphism of $M$ and the measure
$\mu\in\mathcal {M}(F)$ has no atoms on
$\mathrm{Fix}_{\mathrm{Cont},I}(F)$, or the isotopy $I$ satisfies
the WB-property and the measure $\mu\in\mathcal {M}(F)$ has total
support but no atoms on $\mathrm{Fix}_{\mathrm{Cont},I}(F)$, then
the action function is well defined on
$\mathrm{Fix}(\widetilde{F})$, but the action can be unbounded. In
Example \ref{ex:the action of non C1-diffeo isnot bounded and
continuous} of Appendix, we will construct an isotopy $I$ and a
measure $\mu\in\mathcal {M}(F)$ such that the time-one map $F$ is a
diffeomorphism (hence $I$ satisfies the WB-property), and the
measure $\mu$ has total support but no atoms on
$\mathrm{Fix}_{\mathrm{Cont},I}(F)$, while the action is unbounded.

\smallskip

\begin{prop}\label{clm:L is well defined}
If $\rho_{M,I}(\mu)=0$, then
$i_{\mu}(\widetilde{F};\widetilde{a},\alpha(\widetilde{a})) = 0$ for
every $\widetilde{a}\in \mathrm{Fix}(\widetilde{F})$ and every
$\alpha\in G^{\,*}$. As a consequence, there exists a function
$L_{\mu}$ defined on $\mathrm{Fix}_{\mathrm{Cont},I}(F)$ such that
for every two distinct fixed points $\widetilde{a}$ and
$\widetilde{b}$ of $\widetilde{F}$, we have
$$i_{\mu}(\widetilde{F};\widetilde{a},\widetilde{b})=
L_{\mu}(\widetilde{F};\pi(\widetilde{b}))-L_{\mu}(\widetilde{F};\pi(\widetilde{a}))
.$$
\end{prop}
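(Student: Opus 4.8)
The plan is to first establish the vanishing statement $i_\mu(\widetilde F;\widetilde a,\alpha(\widetilde a))=0$ for every $\widetilde a\in\mathrm{Fix}(\widetilde F)$ and every $\alpha\in G^{\,*}$, and then deduce the existence of $L_\mu$ from it together with Corollary \ref{clm:I is coboundary}. For the vanishing statement, the key idea is to unwind the definition $i_\mu(\widetilde F;\widetilde a,\widetilde b)=\int i(\widetilde F;\widetilde a,\widetilde b,z)\,\mathrm d\mu$ and relate the integrand to the rotation vector $\rho_{M,I}(z)$ used to define $\rho_{M,I}(\mu)$. First I would fix $\widetilde a\in\mathrm{Fix}(\widetilde F)$, put $b=\pi(\widetilde a)$, and choose (via Remark \ref{rem: contractible fixed point and isotopy}) an isotopy $I'$ homotopic to $I$ with fixed endpoints that fixes $b$; its lift $\widetilde I'$ fixes every point of $\pi^{-1}(b)$, in particular $\widetilde a$ and $\alpha(\widetilde a)$. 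Working with $\widetilde I'$ makes $i(\widetilde F;\widetilde a,\alpha(\widetilde a),z)$ computable as an intersection number with a path $\widetilde\gamma$ joining $\widetilde a$ to $\alpha(\widetilde a)$; projecting down, $\pi(\widetilde\gamma)$ is a loop $\Gamma$ in $M$ based at $b$, and the class $[\Gamma]_M\in H_1(M,\mathbb Z)$ is exactly the deck element $\alpha$ under the identification $G\simeq\pi_1(M,b)\to H_1(M,\mathbb Z)$ (up to the Hurewicz map).

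The heart of the argument is then the identity
\[
i(\widetilde F;\widetilde a,\alpha(\widetilde a),z)=\Gamma\wedge\rho_{M,I}(z)
\]
for $\mu$-almost every positively recurrent $z$ (where the right side is the algebraic intersection number of the loop $\Gamma$ with the rotation vector, in the sense of Section \ref{subsec:rotation vector}). To see this I would compare the Birkhoff-type sum $L_n(\widetilde F;\widetilde a,\alpha(\widetilde a),z)=\widetilde\gamma\wedge\widetilde\Gamma^n_{\widetilde I'_1,z}$ defining the linking number with the sum $\sum_{i=0}^{n-1}[\Gamma^1_{\Phi^i(z)}]_M$ defining the rotation vector in \ref{sec:the existion of rotation vector in the copact case}: both are built from the trajectory arcs $I(F^i(z))$ closed up inside a small disk $U$, and the intersection of the lifted trajectory multi-loop with $\widetilde\gamma$ counts precisely how the projected loop $I^{\tau_n(z)}(z)\gamma_{\Phi^n(z),z}$ links around $b$ relative to $\alpha$, i.e. its algebraic intersection with $\Gamma$. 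Dividing by $\tau_n(z)$ and letting $n\to\infty$ along a recurrent subsequence gives $i(\widetilde F;\widetilde a,\alpha(\widetilde a),z)=\Gamma\wedge\rho_{M,I}(z)$. Integrating against $\mu$ and using that intersection with a fixed loop is linear in homology, $\int_M \Gamma\wedge\rho_{M,I}(z)\,\mathrm d\mu=\Gamma\wedge\left(\int_M\rho_{M,I}(z)\,\mathrm d\mu\right)=\Gamma\wedge\rho_{M,I}(\mu)=0$ by hypothesis; hence $i_\mu(\widetilde F;\widetilde a,\alpha(\widetilde a))=0$.

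For the consequence: by Corollary \ref{clm:I is coboundary} we already have a function $l_\mu$ on $\mathrm{Fix}(\widetilde F)$ with $i_\mu(\widetilde F;\widetilde a,\widetilde b)=l_\mu(\widetilde b)-l_\mu(\widetilde a)$. The vanishing $i_\mu(\widetilde F;\widetilde a,\alpha(\widetilde a))=0$ says $l_\mu(\alpha(\widetilde a))=l_\mu(\widetilde a)$ for all $\alpha\in G$, so $l_\mu$ is $G$-invariant and descends to a function $L_\mu$ on $\pi(\mathrm{Fix}(\widetilde F))=\mathrm{Fix}_{\mathrm{Cont},I}(F)$; the cocycle identity then reads $i_\mu(\widetilde F;\widetilde a,\widetilde b)=L_\mu(\pi(\widetilde b))-L_\mu(\pi(\widetilde a))$, as claimed. (One should also note that $\mu$ having no atoms on $\mathrm{Fix}_{\mathrm{Cont},I}(F)$ guarantees $\mu(\pi(\widetilde a))=0$, so that $i_\mu(\widetilde F;\widetilde a,\widetilde b)$ is defined for the pairs in question.) I expect the main obstacle to be the identification $i(\widetilde F;\widetilde a,\alpha(\widetilde a),z)=\Gamma\wedge\rho_{M,I}(z)$: making rigorous that the intersection number of the lifted trajectory multi-loop with $\widetilde\gamma$ equals the homological intersection of the projected recurrence loop with $\Gamma$ requires care with the covering-space bookkeeping (choice of lifts $\widetilde z_n$, the components $\widetilde U_{\widetilde\Phi^n(\widetilde z)}$, and the fact that the sum over $\pi^{-1}(z)$ has only finitely many nonzero terms), essentially a Poincaré-duality-type computation on the surface; everything else is formal manipulation of the already-established linking number properties.
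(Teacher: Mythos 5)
Your proposal is correct and follows essentially the same route as the paper: choose an isotopy $I'$ homotopic to $I$ fixing $\pi(\widetilde{a})$, lift it so it fixes $\widetilde{a}$ and $\alpha(\widetilde{a})$, identify $\widetilde{\gamma}\wedge\widetilde{\Gamma}^n_{\widetilde{I}\,',z}$ with the intersection of $\pi(\widetilde{\gamma})$ with the recurrence loop $I'^{\,\tau_n(z)}(z)\gamma_{\Phi^n(z),z}$, pass to the limit and integrate to get $\pi(\widetilde{\gamma})\wedge\rho_{M,I'}(\mu)=0$, and then let the coboundary $l_\mu$ of Corollary \ref{clm:I is coboundary} descend to $L_\mu$ by $G$-invariance. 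The only cosmetic difference is that you state the pointwise identity $i(\widetilde{F};\widetilde{a},\alpha(\widetilde{a}),z)=\Gamma\wedge\rho_{M,I}(z)$ before integrating, whereas the paper integrates directly, which is the same computation.
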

\begin{proof}
There exists an isotopy $I'$ homotopic to $I$ that fixes
$\pi(\widetilde{a})$. It is lifted to an isotopy $\widetilde{I}\,'$
that fixes $\widetilde{a}$ and $\alpha(\widetilde{a})$. Observe that
if $\widetilde{\gamma}$ is an oriented path from $\widetilde{a}$ to
$\alpha(\widetilde{a})$, then the intersection number
$\widetilde{\gamma}\wedge\widetilde{\Gamma}^n_{\widetilde{I}\,',z}$
(see \ref{subsec:the definition of a new linking number}) is equal
to the intersection between the loop $\pi(\widetilde{\gamma})$ and
the loop $I'\,^{\tau_n(z)}(z)\gamma_{\Phi^n(z), z}$ (see
\ref{sec:the existion of rotation vector in the copact case}). As
$\rho_{M,I}(\mu)=\rho_{M,I'}(\mu)=0$ and $\pi(\widetilde{a})\in
\mathrm{Fix}_{\mathrm{Cont},I}(F)$ (or $\mu(\pi(\widetilde{a}))=0$),
we have
\begin{eqnarray*}i_\mu(\widetilde{F};\widetilde{a},\alpha(\widetilde{a}))&=&
\int_{M\setminus\{\pi(\widetilde{a})\}}i(\widetilde{F};\widetilde{a},\alpha(\widetilde{a}),z)\,\mathrm{d}\mu\\
&=&
\int_{M\setminus\{\pi(\widetilde{a})\}}\lim_{n\rightarrow+\infty}\frac{L_n(\widetilde{F};
\widetilde{a},\alpha(\widetilde{a}),z)}{\tau_n(z)}\,\mathrm{d}\mu\\
&=&\int_{M\setminus\{\pi(\widetilde{a})\}}\lim_{n\rightarrow+\infty}\frac{\widetilde{\gamma}
\wedge\widetilde{\Gamma}^n_{\widetilde{I}\,',z}}{\tau_n(z)}\,\mathrm{d}\mu\\
&=&\pi(\widetilde{\gamma})\wedge\rho_{M,I'}(\mu)\\
&=&0
\end{eqnarray*}

The second conclusion follows from Corollary \ref{clm:I is
coboundary}. We have completed the proof.
\end{proof}

We call the function $L_\mu$ the \emph{action function} or
\emph{action} on $\mathrm{Fix}_{\mathrm{Cont},I}(F)$ defined by the
measure $\mu$.
\smallskip

\begin{proof}[Proof of Theorem \ref{thm:PW}]From Corollary \ref{clm:I is coboundary} and Proposition
\ref{clm:L is well defined}, we define the \emph{action difference}
$I_{\mu}:
(\mathrm{Fix}_{\mathrm{Cont},I}(F)\times\mathrm{Fix}_{\mathrm{Cont},I}(F))\setminus\Delta\rightarrow
\mathbb{R}$ and the action
$L_\mu:\mathrm{Fix}_{\mathrm{Cont},I}(F)\rightarrow \mathbb{R}$ as
follows
\begin{equation}\label{eq:Imu and Lmu}
I_{\mu}(\widetilde{F};a,b)=i_{\mu}(\widetilde{F};\widetilde{a},\widetilde{b})
=L_{\mu}(\widetilde{F};b)-L_{\mu}(\widetilde{F};a),
\end{equation}
where $\widetilde{a}$ and $\widetilde{b}$ are any lifts of $a$ and
$b$. We only need to prove that the function $L_{\mu}$ defined in
this section is a generalization of the action difference in
\ref{sec:the classical action}.

Observe that, in the classical case,
$I=(F_t)_{t\in[0,1]}\subset\mathrm{Diff}_*(M)$ where
$\mathrm{Diff}_*(M)$ is the set of diffeomorphisms that are isotopic
to the identity. The measure $\mu$ is defined by a symplectic form
$\omega$. Therefore, $\mu$ is non-atomic. Comparing the Equation
\ref{eq:action difference of cohomology} with Equation \ref{eq:Imu
and Lmu}, it sufficient to prove that
$I_{\mu}(\widetilde{F};a,b)=i_{\mu}(\widetilde{F};\widetilde{a},\widetilde{b})
=\delta(\widetilde{F},\widetilde{a},\widetilde{b})$.

Let $\widetilde{\gamma}$ be any oriented path from $\widetilde{a}$
to $\widetilde{b}$. By Lemma \ref{lem:mu gamma is zero}, we have
\begin{equation*}
i_\mu(\widetilde{F},\widetilde{a},\widetilde{b})=\int_{\widetilde{\Sigma}}\,
\mathrm{d}\widetilde{\mu}
\end{equation*}
where $\widetilde{\Sigma}$ is the 2-chain whose boundary is
$\widetilde{F}(\widetilde{\gamma})-\widetilde{\gamma}$ (that is,
identify $\widetilde{F}(\widetilde{\gamma})\widetilde{\gamma}^{-1}$
as a 1-chain) as defined in Lemma \ref{lem:mu gamma is zero}. As
$\delta(\widetilde{F},\widetilde{a},\widetilde{b})$ does not depend
on the choices of $\widetilde{\gamma}$ and $\widetilde{\Sigma}$ (see
\ref{sec:the classical action}), we have
$$i_{\mu}(\widetilde{F};\widetilde{a},\widetilde{b})
=\delta(\widetilde{F},\widetilde{a},\widetilde{b}).$$ We have
completed the proof.
\end{proof}\smallskip

\subsection{The properties of the action function} In this section, we will give some properties of the
action function that we have defined in \ref{sec:definition of
action function}.\smallskip

From Theorem \ref{thm:PW} and Corollary
\ref{cor:imu(Fq,a,b)=qimu(F,a,b)}, we get the following corollary
immediately:

\begin{cor}\label{cor:Imu(Fq,a,b)=qImu(F,a,b)}
Under the same hypotheses as Theorem \ref{thm:PW}, for every two
distinct contractible fixed points $a$ and $b$ of $F$, we have
$I_{\mu}(\widetilde{F}^q;a,b)=qI_{\mu}(\widetilde{F};a,b)$ for all
$q\geq1$.
\end{cor}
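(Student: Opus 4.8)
The plan is to derive Corollary~\ref{cor:Imu(Fq,a,b)=qImu(F,a,b)} by combining the already-established identity for the action difference on the universal cover with the relation between $I_\mu$ and $i_\mu$ fixed in Equation~\ref{eq:Imu and Lmu}. First I would recall that, under the hypotheses of Theorem~\ref{thm:PW}, the function $i(\widetilde{F};\widetilde{a},\widetilde{b},z)$ is $\mu$-integrable for every pair of distinct fixed points of $\widetilde{F}$, so that $i_\mu(\widetilde{F};\widetilde{a},\widetilde{b})$ is well defined; the same holds for $\widetilde{F}^q$ since $F^q$ is the time-one map of $I^q$, a positively recurrent point of $F$ is also one of $F^q$ (Lemma~\ref{subsec:positively recurrent}), and the relevant boundedness/integrability propositions apply verbatim to $I^q$ (note $\rho_{M,I^q}(\mu)=q\,\rho_{M,I}(\mu)=0$ and $\mu$ still has no atoms on $\mathrm{Fix}_{\mathrm{Cont},I^q}(F^q)\supset\mathrm{Fix}_{\mathrm{Cont},I}(F)$, so Theorem~\ref{thm:PW} indeed furnishes an action function for $F^q$ too). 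Hence $I_\mu(\widetilde{F}^q;a,b)$ makes sense for distinct contractible fixed points $a,b$ of $F$, because such points are also contractible fixed points of $F^q$.

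Next I would pick lifts $\widetilde{a}\in\pi^{-1}(a)$ and $\widetilde{b}\in\pi^{-1}(b)$. By the definition in Equation~\ref{eq:Imu and Lmu} applied to the isotopy $I^q$ (equivalently, to $\widetilde{F}^q$), we have
\begin{equation*}
I_\mu(\widetilde{F}^q;a,b)=i_\mu(\widetilde{F}^q;\widetilde{a},\widetilde{b}),
\qquad
I_\mu(\widetilde{F};a,b)=i_\mu(\widetilde{F};\widetilde{a},\widetilde{b}).
\end{equation*}
Then Corollary~\ref{cor:imu(Fq,a,b)=qimu(F,a,b)}, which states $i_\mu(\widetilde{F}^q;\widetilde{a},\widetilde{b})=q\,i_\mu(\widetilde{F};\widetilde{a},\widetilde{b})$, immediately gives
\begin{equation*}
I_\mu(\widetilde{F}^q;a,b)=q\,I_\mu(\widetilde{F};a,b),
\end{equation*}
which is the claim. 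So the proof is essentially a one-line deduction chaining Equation~\ref{eq:Imu and Lmu} with Corollary~\ref{cor:imu(Fq,a,b)=qimu(F,a,b)}.

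The only genuinely substantive point to be careful about — and what I would treat as the main (minor) obstacle — is making sure that all hypotheses of Theorem~\ref{thm:PW} transfer from $I$ to $I^q$, so that the action functions $L_\mu(\widetilde{F};\cdot)$ and $L_\mu(\widetilde{F}^q;\cdot)$ (and hence $I_\mu(\widetilde{F};\cdot,\cdot)$, $I_\mu(\widetilde{F}^q;\cdot,\cdot)$) are actually defined in each of the three cases. In the diffeomorphism case this is clear since $F^q\in\mathrm{Diff}(M)$ whenever $F\in\mathrm{Diff}(M)$; in the WB-property cases one notes that the WB-property for $I$ at $\widetilde{a}$ implies, via property~2 of $\mathrm{Rot}$ (that $\mathrm{Rot}(\widetilde{F}^q)=q\,\mathrm{Rot}(\widetilde{F})$) together with Lemma~\ref{lem:rotation number and weak boundess at two point}, the WB-property for $I^q$; ergodicity of $\mu$ for $F$ does not in general pass to $F^q$, but Proposition~\ref{prop:measure and linking number exsiting} together with the Birkhoff Ergodic theorem still yields $\mu$-integrability of $i(\widetilde{F}^q;\widetilde{a},\widetilde{b},\cdot)$ exactly as in the third bullet preceding Section~\ref{sec:definition of action function}, and total support of $\mu$ for $F$ is total support for $F^q$. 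Once this bookkeeping is dispatched, the identity $I_\mu(\widetilde{F}^q;a,b)=q\,I_\mu(\widetilde{F};a,b)$ follows at once.
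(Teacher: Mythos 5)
Your proof is correct and follows essentially the same route as the paper, which obtains the corollary immediately by combining the definition $I_{\mu}(\widetilde{F};a,b)=i_{\mu}(\widetilde{F};\widetilde{a},\widetilde{b})$ from the proof of Theorem \ref{thm:PW} (Equation \ref{eq:Imu and Lmu}) with Corollary \ref{cor:imu(Fq,a,b)=qimu(F,a,b)}. Your additional bookkeeping on transferring the hypotheses from $I$ to $I^q$ is sound and merely makes explicit what the paper leaves implicit.
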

\smallskip
Let us study the continuity and boundedness of the actions $l_\mu$
and $L_\mu$.

\begin{lem}\label{lem:In Vn nW}Let $I=(F_t)_{t\in[0,1]}$ be an identity isotopy of
$\mathbf{S}^2$ and $\bar{z},\bar{\bar{z}}\in\mathbf{S}^2$ be two
fixed points of $F_1$. If $\{z_n\}_{n\geq1}\subset
\mathrm{Fix}(F_1)\setminus\{\bar{z},\bar{\bar{z}}\}$ satisfies
$z_n\rightarrow \bar{z}$ as $n\rightarrow+\infty$, then for any open
neighborhood $W$ of $\bar{z}$, there exists a positive integer $n_W$
such that for every $n\geq n_W$, there exists an isotopy $I_n$ from
$\mathrm{Id}_{\mathbf{S}^2}$ to $F_1$ that fixes
$\bar{z},\bar{\bar{z}}$ and $z_n$, and there exists an open
neighborhood $V_n$ of $\bar{z}$ containing $z_n$, such that
$I_n(z)\cap V_n=\emptyset$ when $z\notin W$.
\end{lem}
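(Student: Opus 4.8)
The plan is to reduce the statement to the existence result of Lemma~\ref{rem:identity isotopies fix three points on sphere} together with an elementary compactness argument; essentially no new analytic input is needed, and the original isotopy $I$ enters only through $F_1$ and as the input of that lemma.

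First I would fix the threshold. Since $z_n\to\bar z$ and $W$ is open, there is a positive integer $n_W$ with $z_n\in W$ for all $n\ge n_W$; fix such an $n$. The points $\bar z,\bar{\bar z},z_n$ are pairwise distinct (by hypothesis $z_n\notin\{\bar z,\bar{\bar z}\}$, and $\bar z\ne\bar{\bar z}$), so Lemma~\ref{rem:identity isotopies fix three points on sphere} applied to $I$ and this triple produces an identity isotopy $I_n=(F_{n,t})_{t\in[0,1]}$ from $\mathrm{Id}_{\mathbf{S}^2}$ to $F_1$ that fixes $\bar z,\bar{\bar z},z_n$ \emph{for every} $t\in[0,1]$ (not merely at the endpoints); it is precisely this ``all--$t$'' fixing that the remainder of the argument exploits, so one must be careful to invoke the lemma in that form.

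Next I would set $C_n=\{\,F_{n,t}(z)\mid t\in[0,1],\ z\in\mathbf{S}^2\setminus W\,\}$. As $(t,z)\mapsto F_{n,t}(z)$ is continuous and $[0,1]\times(\mathbf{S}^2\setminus W)$ is compact, $C_n$ is a closed subset of $\mathbf{S}^2$. Moreover $\bar z\notin C_n$: if $F_{n,t}(z)=\bar z$ for some $t$, then applying $F_{n,t}^{-1}$ and using that $F_{n,t}$ fixes $\bar z$ gives $z=\bar z\in W$, so no such $z$ lies in $\mathbf{S}^2\setminus W$. The identical computation with $z_n$ in place of $\bar z$, using $z_n\in W$ (this is where $n\ge n_W$ is used), shows $z_n\notin C_n$. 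Hence $V_n:=\mathbf{S}^2\setminus C_n$ is an open neighbourhood of $\bar z$ which contains $z_n$, and for every $z\notin W$ the trajectory $I_n(z)=\{\,F_{n,t}(z)\mid t\in[0,1]\,\}$ lies in $C_n$, hence is disjoint from $V_n$, which is the desired conclusion.

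I do not expect a genuine obstacle. The two points that merit care are: (a) using Lemma~\ref{rem:identity isotopies fix three points on sphere} in the version that keeps all three points fixed throughout the isotopy, since that is exactly what yields $F_{n,t}^{-1}(\bar z)=\bar z$ and hence $\bar z\notin C_n$; and (b) the threshold $n_W$, which is what forces $z_n\notin C_n$. Finally, if a smaller or connected $V_n$ were preferred, one may replace $\mathbf{S}^2\setminus C_n$ by any open set containing $\bar z$ and $z_n$ and disjoint from the closed set $C_n$ — for instance the union of small balls about $\bar z$ and $z_n$, or the union of the connected components of $\mathbf{S}^2\setminus C_n$ through those two points.
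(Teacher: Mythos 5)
Your argument is correct as a proof of the lemma as stated, but it takes a genuinely different route from the paper's. The paper does not apply Lemma \ref{rem:identity isotopies fix three points on sphere} separately for each $n$; after a M\"obius normalization $\bar z=0$, $\bar{\bar z}=1$ it fixes one isotopy $I_1=(F'_t)$ fixing $(0,1,z_1)$ and builds every $I_n$ explicitly as $I_n(z)(t)=\mathcal{M}_n(t,F'_t(z))$, where $\mathcal{M}_n(t,\cdot)$ is the M\"obius map sending $(0,1,F'_t(z_n))$ back to $(0,1,z_n)$; a quantitative estimate with $m(W)=\inf_{[0,1]\times W^c}|F'_t|>0$ and $M(z_n)=\sup_t|F'_t(z_n)|\to 0$ then gives $|I_n(z)(t)|>2|z_n|$ for $z\notin W$, so $V_n$ can be taken to be the round ball of radius $2|z_n|$ about $\bar z$. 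Your route --- invoke the three-point lemma per $n$, let $C_n$ be the compact union of the trajectories of $\mathbf{S}^2\setminus W$, observe $\bar z, z_n\notin C_n$ because each $F_{n,t}$ fixes them and both lie in $W$, and set $V_n=\mathbf{S}^2\setminus C_n$ --- is shorter, computation-free, and delivers exactly what the statement asks. What the paper's construction buys is that $V_n$ is a ball, hence connected and containing an arc from $z_n$ to $\bar z$; this is precisely what is used downstream, since in the proof of Lemma \ref{lem:L1anaz} one chooses a path inside $\widetilde V_n$ joining $\widetilde a_n$ to $\widetilde a$. Your $V_n$ need not be connected (a trajectory issued outside $W$ may re-enter $W$ and separate $\bar z$ from $z_n$ in the complement), and your suggested repairs (two small balls, or the union of the two relevant components) do not restore such a path; obtaining a connected $V_n$ requires uniform control of the isotopies $I_n$ near $\bar z$, which is exactly what the explicit M\"obius construction provides and the soft compactness argument does not.
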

\begin{proof}We identify the sphere $\mathbf{S}^2$ to the Riemann sphere
$\mathbb{C}\cup\{\infty\}$. 

For simplicity, up to conjugacy by a M\"{o}bius transformation (see
the proof of Lemma \ref{rem:identity isotopies fix three points on
sphere}) that maps the triple $(\bar{z},\bar{\bar{z}},z_1)$ to the
triple $(0,1,z_1)$, we can suppose that $\bar{z}=0$ and
$\bar{\bar{z}}=1$. We choose an isotopy $I_1=(F'_t)_{0\leq t\leq1}$
fixing the triple $(0,1,z_1)$ (using Lemma \ref{rem:identity
isotopies fix three points on sphere}).

Let $$\mathcal{M}_n(t,z)=\frac{z_n(F'_t(z_n)-1)z
}{(F'_t(z_n)-z_n)z+F'_t(z_n)(z_n-1)}$$ and
$$I_n(z)(t)=\mathcal{M}_n(t,F'_t(z)).$$

By construction,  $I_n$ is an isotopy from
$\mathrm{Id}_{\mathbf{S}^2}$ to $F_1$ that fixes the triple
$(0,1,z_n)$.

Let $W$ be any open neighborhood of $0$ and $V_n$ be the ball whose
center is at $0$ and radius is $2|z_n|$. Write
$$m(W)=\inf_{(t,z)\in [0,1]\times W^c}|F'_t(z)| \quad\mathrm{and}\quad M(z)=\sup_{t\in[0,1]}|F'_t(z)|.$$ As $I_1$ fixes
$0$, we have $m(W)>0$ and $M(z_n)\rightarrow0$ as
$n\rightarrow+\infty$. Therefore, there exists a positive number
$n_W$ such that when $n\geq n_W$,
$$M(z_n)<\min\{\frac{1}{2},\frac{m(W)}{8m(W)+4}\}.$$

For any $z\notin W$, every $n\geq n_W$ and $t\in[0,1]$, we have
\begin{eqnarray*}
|I_n(z)(t)|&=&\frac{|z_n(F'_t(z_n)-1)F_t'(z)|}{|(F'_t(z_n)-z_n)F_t'(z)+F'_t(z_n)(z_n-1)|}\\
&=&\frac{|z_n(F'_t(z_n)-1)|}{|(F'_t(z_n)-z_n)+\frac{F'_t(z_n)(z_n-1)}{F_t'(z)}|}\\
&\geq&\frac{|(F'_t(z_n)-1)|}{|(F'_t(z_n)-z_n)|+\left|\frac{F'_t(z_n)(z_n-1)}{m(W)}\right|}|z_n|\\
&>&\frac{1/2}{2M(z_n)+M(z_n)/m(W)}|z_n|\\
&>&2|z_n|.
\end{eqnarray*}
Hence $I_n(z)\cap V_n=\emptyset$. We have completed the proof.
\end{proof}\bigskip

\begin{lem}\label{lem:L1anaz}We suppose that
$\widetilde{a}\in\mathrm{Fix}(\widetilde{F})\setminus\{\infty\}$ and
$\{\widetilde{a}_n\}_{n\geq1}\subset
\mathrm{Fix}(\widetilde{F})\setminus\{\widetilde{a},\infty\}$
satisfying $\widetilde{a}_n\rightarrow\widetilde{a}$ as
$n\rightarrow+\infty$. Then
$$\lim_{n\rightarrow+\infty}i(\widetilde{F};\widetilde{a}_n,\widetilde{a},z)
=0$$ when $z\in \mathrm{Fix}(F)\setminus\{\pi(\widetilde{a})\}$,
while
$$\lim_{n\rightarrow+\infty}L_1(\widetilde{F};\widetilde{a}_n,\widetilde{a},z)=0$$
when $z\in \mathrm{Rec}^+(F)\cap U$ where $U$
 is a disk of $M\setminus\{\pi(\widetilde{a})\}$.
\end{lem}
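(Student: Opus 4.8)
The plan is to reduce everything to one picture on the sphere $\mathbf S=\widetilde M\sqcup\{\infty\}$ (which is a $2$-sphere, cf.\ \ref{subsec:boundedness}) and then feed it into Lemma~\ref{lem:In Vn nW}. Extend $\widetilde F$ and the lifted isotopy $\widetilde I$ to $\mathbf S$ by fixing $\infty$, and apply Lemma~\ref{lem:In Vn nW} with $\bar z=\widetilde a$, $\bar{\bar z}=\infty$, $z_n=\widetilde a_n$: for a prescribed neighbourhood $W$ of $\widetilde a$ it yields, for all large $n$, an identity isotopy $\widetilde I_n$ of $\mathbf S$ from $\mathrm{Id}$ to $\widetilde F$ fixing $\{\widetilde a,\infty,\widetilde a_n\}$ and a neighbourhood $V_n\ni\widetilde a_n$ of $\widetilde a$ such that every trajectory $\widetilde I_n(w)$ with $w\notin W$ misses $V_n$. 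Since $V_n$ shrinks to $\widetilde a$ we may assume $V_n\subset W$; since $V_n$ is connected we fix an oriented path $\widetilde\gamma_n\subset V_n$ from $\widetilde a_n$ to $\widetilde a$. As $L_1$ and $i$ are independent of the chosen isotopy fixing the two base points and $\infty$ (see \ref{subsec:the definition of a new linking number} and Remark~\ref{rem:some result of of sphere delete three points}), we may compute $L_1(\widetilde F;\widetilde a_n,\widetilde a,z)=\widetilde\gamma_n\wedge\widetilde\Gamma^1_{\widetilde I_n,z}$ using $\widetilde I_n|_{\widetilde M}$ and $\widetilde\gamma_n$.

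For the fixed point case, take $z\in\mathrm{Fix}(F)\setminus\{\pi(\widetilde a)\}$ and choose $W$ so small that $\pi(W)$ is an evenly covered embedded disk with $z\notin\pi(W)$. Then no lift $\widetilde z$ of $z$ lies in $W$, so by Lemma~\ref{lem:In Vn nW} every trajectory in $\widetilde\Gamma^1_{\widetilde I_n,z}=\prod_{\pi(\widetilde z)=z}\widetilde I_n(\widetilde z)$ is disjoint from $V_n\supseteq\widetilde\gamma_n$; hence $L_1(\widetilde F;\widetilde a_n,\widetilde a,z)=0$ for all large $n$, and since $z$ is a fixed point this equals $i(\widetilde F;\widetilde a_n,\widetilde a,z)$, so the first sequence is eventually zero.

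For the recurrent case, take $z\in\mathrm{Rec}^+(F)\cap U$; here $\widetilde\Gamma^1_{\widetilde I_1,\widetilde z}=\widetilde I_1^{\,\tau(z)}(\widetilde z)\,\widetilde\gamma_{\widetilde\Phi(\widetilde z),\widetilde z_1}$, and the key observation is that the finite orbit $\{z,F(z),\dots,F^{\tau(z)-1}(z),\Phi(z)\}$ consists of points all distinct from the fixed point $\pi(\widetilde a)$. Choose $W$ so small that $\pi(W)$ is an evenly covered disk disjoint from $\overline U$ and from this finite orbit. Writing $\widetilde I_n^{\,\tau(z)}(\widetilde z)=\prod_{j=0}^{\tau(z)-1}\widetilde I_n(\widetilde F^j(\widetilde z))$, each factor starts at a lift of $F^j(z)\notin\pi(W)$, hence misses $V_n$ by Lemma~\ref{lem:In Vn nW}, while each connecting path $\widetilde\gamma_{\widetilde\Phi(\widetilde z),\widetilde z_1}$ lies in a component of $\pi^{-1}(U)$, which is disjoint from $W\supseteq V_n$. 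Thus $\widetilde\gamma_n$ misses $\widetilde\Gamma^1_{\widetilde I_n,z}$, so $L_1(\widetilde F;\widetilde a_n,\widetilde a,z)=0$ for all large $n$.

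The only genuine difficulty is the non-uniformity in $n$: an arbitrary isotopy fixing $\widetilde a_n$ and $\widetilde a$ can have trajectories that remain large even as $\widetilde a_n\to\widetilde a$, so a naive continuity argument fails. Lemma~\ref{lem:In Vn nW} is precisely the device that removes this obstacle, giving isotopies whose trajectories of points away from $\widetilde a$ avoid a neighbourhood of $\widetilde a$ shrinking at least as fast as $\widetilde a_n$. The remaining checks — that $V_n\subset W$ and $\widetilde a_n\in W$ eventually (whence also $z\notin\pi(\{\widetilde a_n,\widetilde a\})$ so that the linking numbers are defined), that the finitely many orbit points can be avoided by $\pi(W)$, and that $\widetilde I_n$ is an admissible isotopy for computing $L_1$ — are routine bookkeeping.
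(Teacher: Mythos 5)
Your argument is essentially the paper's own proof: apply Lemma \ref{lem:In Vn nW} on $\mathbf{S}=\widetilde M\sqcup\{\infty\}$ with the triple $(\widetilde a,\infty,\widetilde a_n)$ to get isotopies fixing $\widetilde a_n,\widetilde a,\infty$ and shrinking neighbourhoods $\widetilde V_n$ avoided by the trajectories of the (lifts of the) finitely many relevant orbit points, then use the independence of $i$ and $L_1$ on the choice of isotopy and of the path from $\widetilde a_n$ to $\widetilde a$, take the path inside $\widetilde V_n$, and conclude the intersection numbers vanish for large $n$. Your extra care with the connecting path (choosing $\pi(W)$ disjoint from $\overline U$) is a slightly more explicit version of the same bookkeeping, not a different route, so the proposal is correct and matches the paper.
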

\begin{proof}When $z\in U$, recall that the
first return map is $\tau(z)$. For convenience, we write $\tau(z)=1$
if $z\in\mathrm{Fix}(F)$. For any given $z\in
\mathrm{Rec}^+(F)\setminus\pi(\{\widetilde{a}\})$, let
$\widetilde{W}$ be any open neighborhood of $\widetilde{a}$
satisfying
$\widetilde{W}\cap\pi^{-1}(\{z,F(z),\cdots,F^{\tau(z)-1}(z)\})=\emptyset$.
By Lemma \ref{lem:In Vn nW}, there exist a number
$n_{\widetilde{W}}$, a family of isotopies
$\{\widetilde{I}_n\}_{n\geq n_{\widetilde{W}}}$ with
$\widetilde{I}_n$ fixing the points $\widetilde{a}$ ,$\infty$ and
$\widetilde{a}_n$, and a family of neighborhoods
$\{\widetilde{V}_n\}_{n\geq n_{\widetilde{W}}}$ of $\widetilde{a}$
with $\widetilde{V}_n$ containing $\widetilde{a}_n$, such that
$\widetilde{I}_n(\widetilde{z})\cap \widetilde{V}_n=\emptyset$ for
any
$\widetilde{z}\in\pi^{-1}(\{z,F(z),\cdots,F^{\tau(z)-1}(z)\})$.\smallskip

The functions $i(\widetilde{F};\widetilde{a}_n,\widetilde{a},z)$
when $z$ is a fixed point of $F$ that is disjoint from
$\pi(\widetilde{a})$ and $\pi(\widetilde{a}_n)$, and
$L_1(\widetilde{F};\widetilde{a}_n,\widetilde{a},z)$ when $z\in
\mathrm{Rec}^+(F)\cap U$ and $n$ is large enough, depend neither on
the choice of the isotopy $\widetilde{I}$ that fixes the points
$\widetilde{a}$, $\widetilde{a}_n$ and $\infty$, nor on the path
from $\widetilde{a}_n$ to $\widetilde{a}$ (see \ref{subsec:the
definition of a new linking number}). Therefore, for every $n\geq
n_{\widetilde{W}}$, we can choose the isotopy $\widetilde{I}_n$ as
above and a path in $\widetilde{V}_n$ from $\widetilde{a}_n$ to
$\widetilde{a}$. As a consequence, we have
$$\lim_{n\rightarrow+\infty}i(\widetilde{F};\widetilde{a}_n,\widetilde{a},z)
=0$$ in the case where $z\in
\mathrm{Fix}(F)\setminus\{\pi(\widetilde{a})\}$, and
$$\lim_{n\rightarrow+\infty}L_1(\widetilde{F};\widetilde{a}_n,\widetilde{a},z)=0$$
in the case where $z\in \mathrm{Rec}^+(F)\cap U$.
\end{proof}

\begin{lem}\label{lem:B property with a ergodic measure}
Suppose that an isotopy $I$ satisfies the B-property,
$\mu\in\mathcal{M}(F)$ is ergodic and no atoms on
$\mathrm{Fix}_{\mathrm{Cont},I}(F)$ where $F$ is the time-one map of
$I$. Let $\widetilde{P}\subset \widetilde{M}$ be a connected compact
set. There exists $N_{\widetilde{P}}\geq0$ such that
$|i(\widetilde{F};\widetilde{a},\widetilde{b}, z)|\leq
N_{\widetilde{P}}$ for all two distinct fixed points $\widetilde{a}$
and $\widetilde{b}$ of $\widetilde{F}$ in $\widetilde{P}$ and
$\mu$-a.e. $z$.
\end{lem}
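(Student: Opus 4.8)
The plan is to leverage ergodicity to turn the function $z\mapsto i(\widetilde F;\widetilde a,\widetilde b,z)$ into a single constant, then to make the constants of Section~5 uniform over $\widetilde P$, and finally to split according to the common rotation vector. First I would note that, since $I$ has the B-property (hence the WB-property at every pair) and $\mu$ is ergodic, the function $z\mapsto i(\widetilde F;\widetilde a,\widetilde b,z)$ is $\mu$-integrable by the list opening Section~6; being a Birkhoff average (Definition \ref{def:Intersection number density} and Equation \ref{eq: Ln Birkhoff sum}) it is invariant along orbits, hence $\mu$-a.e.\ equal to a constant $c(\widetilde a,\widetilde b)$, and likewise the rotation vector $\rho_{M,I}(z)$ exists $\mu$-a.e.\ (Proposition \ref{prop:measure and linking number exsiting}) and equals a constant $v$. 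It therefore suffices to bound $|c(\widetilde a,\widetilde b)|$ uniformly for $\widetilde a,\widetilde b\in\widetilde P$.

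Next I would produce uniform constants. From the B-property and Lemma \ref{lem:rotation number and weak boundess at two point} there is a single $N\ge 0$ with $\mathrm{Rot}_{\mathrm{Fix}(\widetilde F_{\widetilde a,\widetilde b})}(\widehat F_{\widetilde a,\widetilde b})\subset[-N,N]$ for all pairs. Exactly as in the proof of Proposition \ref{prop:C1 diffeomorphism has a bounded linkingnumber}, the M\"obius construction of Lemma \ref{rem:identity isotopies fix three points on sphere} gives, simultaneously for all $\widetilde a,\widetilde b\in\widetilde P$, an isotopy $\widetilde I_1$ fixing $\widetilde a,\widetilde b,\infty$ and open disks $\widetilde V\subset\widetilde W$ containing $\infty$, \emph{independent of the pair}, with $\widetilde I_1(\widetilde V)\subset\widetilde W$; this yields a fixed compact disk $\widetilde D_{\widetilde P}=\mathbf S\setminus\widetilde V$ and a uniform integer $K_{\widetilde P}$ bounding the cardinalities $\sharp X_z$ occurring in Section~5, and Lemmas \ref{lem:the boundedness of the case multi-path} and \ref{lem:alphazn is e, Ln divide taun is less then kU} hold with these data.

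If $v\neq 0$ the argument is immediate: $\rho_{M,I}(z)\neq 0$ for $\mu$-a.e.\ $z$, so Proposition \ref{prop:roM(z)not0 linking number uniformly bounded} gives $|i(\widetilde F;\widetilde a,\widetilde b,z)|\le K_{\widetilde a,\widetilde b}(N_{\widetilde a,\widetilde b}+1)\le K_{\widetilde P}(N+1)$ $\mu$-a.e., and one sets $N_{\widetilde P}=K_{\widetilde P}(N+1)$. If $v=0$, let $B\subset M$ be the $F$-invariant set of points $z$ admitting a lift $\widetilde z$ with $\widetilde F^n(\widetilde z)\in\widetilde D_{\widetilde P}$ for all $n\ge 0$; by ergodicity $\mu(B)\in\{0,\mu(M)\}$. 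When $\mu(B)=0$, for $\mu$-a.e.\ $z$ no lift of $z$ has a forward orbit trapped in $\widetilde D_{\widetilde P}$, so in the dichotomy in the proof of Lemma \ref{lem:alphazn is e, Ln divide taun is less then kU} the index set $P'_n$ is eventually empty, every $|k^i_n|<m^i_n(N+1)$, hence $|L_n(\widetilde F;\widetilde a,\widetilde b,z)|<\tau_n(z)K_{\widetilde P}(N+1)$ for $n$ large and $|c(\widetilde a,\widetilde b)|\le K_{\widetilde P}(N+1)$.

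The hard part will be the remaining case $\mu(B)=\mu(M)$: then $\mu$-a.e.\ orbit lifts to a trajectory permanently trapped in the fixed compact set $\widetilde D_{\widetilde P}$, and $c(\widetilde a,\widetilde b)$ is essentially the winding rate of such an orbit around the punctures $\widetilde a,\widetilde b$ of $A_{\widetilde a,\widetilde b}$. Here I would restrict the lifted measure $\widetilde\mu$ to $\widetilde B'=\bigcup_{n\ge 0}\widetilde F^{-n}\widetilde B$, which has finite positive mass, is essentially $\widetilde F$-invariant, and projects to a positive multiple of $\mu$; pushing it to $A_{\widetilde a,\widetilde b}$ and using that $v=0$ kills the $H_1(M)$-part of the trajectory loops, one identifies $c(\widetilde a,\widetilde b)$ with the $\widetilde\mu$-averaged rotation number of $\widehat F_{\widetilde a,\widetilde b}$ on $\widetilde B'$. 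When $\pi(\widetilde a)$ or $\pi(\widetilde b)$ lies in $\mathrm{supp}\,\mu$, small loops around that puncture have positive $\widetilde\mu$-measure, so $\widetilde F_{\widetilde a,\widetilde b}$ satisfies the intersection property there and Theorem \ref{thm:FP}, together with the fixed point $\infty$ (rotation number $0$) and $\mathrm{Rot}_{\mathrm{Fix}}\subset[-N,N]$, forbids rotation numbers outside $]-(N+1),N+1[$ exactly as in Proposition \ref{prop:the support of mu is M}; when neither puncture meets $\mathrm{supp}\,\mu$, the generic orbit stays bounded away from $\pi(\widetilde a),\pi(\widetilde b)$ and a direct angle-variation estimate bounds $c(\widetilde a,\widetilde b)$. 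The genuine obstacle is carrying out this last case \emph{uniformly} over $\widetilde P$ — in particular handling pairs whose projections approach $\mathrm{supp}\,\mu$ — which is precisely why the pair-independent choice of $\widetilde D_{\widetilde P}$, $\widetilde I_1$, $N$ and $K_{\widetilde P}$ from the second step has to be set up first.
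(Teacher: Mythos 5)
There is a genuine gap, on two counts. First, your whole strategy hinges on making the Section~5 constants ($\widetilde{V}$, $\widetilde{W}$, $K_{\widetilde{a},\widetilde{b}}$, $N_{\widetilde{a},\widetilde{b}}$) uniform over all pairs in $\widetilde{P}\times\widetilde{P}$ ``exactly as in the proof of Proposition \ref{prop:C1 diffeomorphism has a bounded linkingnumber}''. But that proposition is stated and used for $F\in\mathrm{Diff}^1(M)$, and its M\"obius normalization is only controlled along a \emph{convergent sequence} of pairs; here $F$ is merely a homeomorphism with the B-property, and as a pair $(\widetilde{a},\widetilde{b})$ approaches the diagonal the isotopy fixing $\widetilde{a},\widetilde{b},\infty$ degenerates (the affine rescaling by $\widetilde{F}_t(\widetilde{b})-\widetilde{F}_t(\widetilde{a})$ is not controlled without differentiability), so the free disk $\widetilde{V}$ at infinity and the bound $K_{\widetilde{a},\widetilde{b}}$ cannot be chosen uniformly by compactness of $\widetilde{P}$ alone. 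Second, the case you yourself flag as open ($v=0$ with a full-measure set of orbits whose lifts stay in the fixed compact set) is exactly the hard case, and the tools you propose do not close it: the intersection-property argument of Proposition \ref{prop:the support of mu is M} needs the measure to have total support (here only ergodicity is assumed), and there is no ``angle-variation estimate'' available for a homeomorphism — that control is precisely what differentiability buys in Lemma \ref{lem:diffeomorphism on a and b}; moreover a trapped orbit may accumulate on the punctures $\widetilde{a},\widetilde{b}$, so staying in a compact set does not bound the winding (compare Example \ref{exem:proposition 21 and mu integrable}, where the B-property holds but the linking number of recurrent points is unbounded).

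The paper's proof exploits ergodicity in a different and decisive way: fixing one disk $U$ with $\mu(U)>0$, it shows the induced measure is ergodic for the first-return map and deduces from Birkhoff's theorem the identity $i(\widetilde{F};\widetilde{a},\widetilde{b},z)=\frac{1}{\mu(M)}\int_U L_1(\widetilde{F};\widetilde{a},\widetilde{b},\cdot)\,\mathrm{d}\mu$ for $\mu$-a.e.\ $z$ and every pair in $\widetilde{P}$. This reduces the lemma to bounding one integral over a single disk, and uniformity over $\widetilde{P}$ is then obtained by contradiction: if pairs $(\widetilde{a}_n,\widetilde{b}_n)$ gave unbounded values, one extracts limits $\widetilde{a}_n\to\widetilde{a}$, $\widetilde{b}_n\to\widetilde{b}$, uses the cocycle relation of Proposition \ref{lem:i is 3coboundary for point} together with Lemma \ref{lem:L1anaz} to get pointwise convergence of $L_1(\widetilde{F};\widetilde{a}_n,\widetilde{b}_n,z)$ (to $0$ or to $L_1(\widetilde{F};\widetilde{a},\widetilde{b},z)$), and the B-property plus the bounds of Section \ref{sec:boundedness} to dominate by a multiple of $\tau\in L^1(U,\mathbb{R},\mu)$, so dominated convergence forces the integrals to stay bounded — a contradiction. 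I would redo your argument along these lines rather than trying to bound the a.e.\ constant by a direct dynamical analysis of trapped orbits.
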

\begin{proof}Take a disk $U$ such that $\mu(U)>0$. Recall that
$\Phi$ and $\tau$ are respectively the first return map and the
first return time. Denote by $\mu_{U}$ the measure on $U$ induced by
$\mu$ and $\mathrm{Orb}^+(z)$ the positive orbit of $z$, that is,
$\mathrm{Orb}^+(z)=\{F^n(z)|n\geq0\}$. Then $\mu_{U}\in\mathcal
{M}(\Phi)$ and $\mu_{U}$ is ergodic with regard to $\Phi$. Indeed,
if $h$ is a measurable function on $U$ and satisfies $h\circ\Phi=h$.
Observe that $\mu(\cup_{n\geq0}F^n(U))=\mu(M)$ since $\mu$ is
ergodic. We may extend $h$ on $M$ in the following way:
\begin{equation*}h'(z')=
\begin{cases}h(z)& \textrm{if} \quad z\in \mathrm{Rec}^+(F)\cap U \,\,\mathrm{and}\,\, z'\in\mathrm{Orb}^+(z);
\\0& \textrm{if} \quad
z\notin \mathrm{Rec}^+(F)\cap U\,\,\mathrm{and}\,\,
z'\in\mathrm{Orb}^+(z).\end{cases}
\end{equation*}
By the construction of $h'$, we have $h'\circ F=h'$. Hence, $h'$ is
constant $\mu$-a.e. on $M$. This implies that $h$ is constant
$\mu_U$-a.e. on $U$ and therefore $\mu_U$ is ergodic with regard to
$\Phi$. Observe that
$\int_U\tau(z)d\mu=\mu(\cup_{k\geq0}F^k(U))=\mu(M)$. By Birkhoff
Ergodic theorem and Equation \ref{eq: linking number with a
measure}, for all two distinct fixed points $\widetilde{a}$ and
$\widetilde{b}$ of $\widetilde{F}$ in $\widetilde{P}$ and $\mu$-a.e.
$z$, we have

\begin{eqnarray*}
i(\widetilde{F};\widetilde{a},\widetilde{b},z)&=&\lim_{n\rightarrow+\infty}
\frac{L_{n}(\widetilde{F};\widetilde{a},\widetilde{b},z)}{\tau_n(z)}
=\lim_{n\rightarrow+\infty}
\frac{\frac{L_{n}(\widetilde{F};\widetilde{a},\widetilde{b},z)}{n}}{\frac{\tau_n(z)}{n}}\\
&=&\frac{\int_UL_1(\widetilde{F};\widetilde{a},\widetilde{b},z)d\mu}{\int_U\tau(z)d\mu}\\
&=&\frac{1}{\mu(M)}\int_UL_1(\widetilde{F};\widetilde{a},\widetilde{b},z)d\mu.\\
\end{eqnarray*}

If the lemma is not true, then we can find
$\{(\widetilde{a}_n,\widetilde{b}_n)\}_{n\geq1}\subset
\mathrm{Fix}(\widetilde{F})\times\mathrm{Fix}(\widetilde{F})\setminus\widetilde{\Delta}\cap
\widetilde{P}$ and $z\in U$ such that
$|i(\widetilde{F};\widetilde{a}_n,\widetilde{b}_n,z)|\geq n$. That
is
\begin{equation}\label{eq:L1anbnz}
    \lim_{n\rightarrow+\infty}\left|\int_UL_1(\widetilde{F};\widetilde{a}_n,\widetilde{b}_n,z)d\mu\right|=
+\infty.
\end{equation}

We can suppose that there are two fixed points $\widetilde{a}$ and
$\widetilde{b}$ of $\widetilde{F}$ such that
$\lim\limits_{n\rightarrow+\infty}\widetilde{a}_n=\widetilde{a}$ and
$\lim\limits_{n\rightarrow+\infty}\widetilde{b}_n=\widetilde{b}$ by
taking subsequences if necessary.

When $\widetilde{a}=\widetilde{b}$, by Proposition \ref{lem:i is
3coboundary for point}, we have
$$L_1(\widetilde{F};\widetilde{a}_n,\widetilde{b}_n,z)+L_1(\widetilde{F};
\widetilde{b}_n,\widetilde{a},z)+L_1(\widetilde{F};\widetilde{a},\widetilde{a}_n,z)=0.$$
By Lemma \ref{lem:L1anaz}, we have
$L_1(\widetilde{F};\widetilde{a}_n,\widetilde{b}_n,z)\rightarrow 0$
as $n\rightarrow+\infty.$

When $\widetilde{a}\neq\widetilde{b}$, we have
$$L_1(\widetilde{F};\widetilde{a}_n,\widetilde{b}_n,z)+L_1(\widetilde{F};
\widetilde{b}_n,\widetilde{b},z)+L_1(\widetilde{F};\widetilde{b},\widetilde{a},z)+
L_1(\widetilde{F};\widetilde{a},\widetilde{a}_n,z)=0.$$ By Lemma
\ref{lem:L1anaz} again, we have
$L_1(\widetilde{F};\widetilde{a}_n,\widetilde{b}_n,z)\rightarrow
L_1(\widetilde{F};\widetilde{a},\widetilde{b},z)$ as
$n\rightarrow+\infty.$ By the definition B-property, Section
\ref{sec:boundedness} and Lebesgue's dominating convergence theorem,
we have
$$\lim_{n\rightarrow+\infty}\left|\int_UL_1(\widetilde{F};\widetilde{a}_n,\widetilde{b}_n,z)d\mu\right|=
0$$ when $\widetilde{a}=\widetilde{b}$, and
$$\lim_{n\rightarrow+\infty}\left|\int_UL_1(\widetilde{F};\widetilde{a}_n,\widetilde{b}_n,z)d\mu\right|=
\left|\int_UL_1(\widetilde{F};\widetilde{a},\widetilde{b},z)d\mu\right|<+\infty$$
when $\widetilde{a}\neq\widetilde{b}$, which conflicts with the
limit \ref{eq:L1anbnz}.
\end{proof}

From the definition of the B-property, Proposition \ref{prop:C1
diffeomorphism has a bounded linkingnumber}, Proposition
\ref{prop:the support of mu is M}, Proposition \ref{clm:L is well
defined} and Lemma \ref{lem:B property with a ergodic measure}, we
get the following proposition.

\begin{prop}\label{prop:boundedness}Suppose that $F$ is the time-one map of an identity isotopy
$I$ on $M$, the measure $\mu\in\mathcal{M}(F)$ has no atoms on
$\mathrm{Fix}_{\mathrm{Cont},I}(F)$ and $\rho_{M,I}(\mu)=0$. If one
of the following three cases is satisfied\begin{itemize}
           \item $I$ satisfies the
           B-property and $F\in\mathrm{Diff}(M)$ (in particular $F\in\mathrm{Diff}^1(M)$);
           \item $I$ satisfies the B-property, the measure $\mu\in\mathcal{M}(F)$ has total
           support;
           \item $I$ satisfies the B-property, the measure $\mu\in\mathcal{M}(F)$
           is ergodic,
         \end{itemize}
then the action $L_\mu$ is uniformly bounded on
$\mathrm{Fix}_{\mathrm{Cont},I}(F)$.
\end{prop}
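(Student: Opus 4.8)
The plan is to reduce the uniform boundedness of $L_\mu$ to a uniform pointwise bound on the linking numbers $i(\widetilde F;\widetilde a,\widetilde b,z)$ with $\widetilde a,\widetilde b$ ranging over a fixed compact fundamental domain, and then to quote, case by case, the boundedness results of Section \ref{sec:i(F;a,b,z) is bounded}.

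First I would record the reduction. By Equation \ref{eq:Imu and Lmu} the action $L_\mu$ is defined on $\mathrm{Fix}_{\mathrm{Cont},I}(F)$, up to an additive constant, by $i_\mu(\widetilde F;\widetilde a,\widetilde b)=L_\mu(\widetilde F;\pi(\widetilde b))-L_\mu(\widetilde F;\pi(\widetilde a))$, and Proposition \ref{clm:L is well defined} (which uses $\rho_{M,I}(\mu)=0$) guarantees that $i_\mu(\widetilde F;\widetilde a,\widetilde b)$ depends only on $\pi(\widetilde a),\pi(\widetilde b)$, so this is consistent. Hence $L_\mu$ is uniformly bounded if and only if $\sup|I_\mu(\widetilde F;a,b)|=\sup|i_\mu(\widetilde F;\widetilde a,\widetilde b)|<+\infty$ over all distinct $a,b\in\mathrm{Fix}_{\mathrm{Cont},I}(F)$. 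Since $M$ is compact, fix a compact connected set $\widetilde P\subset\widetilde M$ with $\pi(\widetilde P)=M$. Given $a,b$, I would choose lifts $\widetilde a,\widetilde b\in\widetilde P$; since $\widetilde F$ commutes with the deck group $G$, these lie in $\mathrm{Fix}(\widetilde F)$, and by Corollary \ref{cor:imu(F,alphaa,alphab)=imu(F,a,b)} together with Proposition \ref{clm:L is well defined}, $I_\mu(\widetilde F;a,b)=i_\mu(\widetilde F;\widetilde a,\widetilde b)=\int_{M\setminus\pi(\{\widetilde a,\widetilde b\})}i(\widetilde F;\widetilde a,\widetilde b,z)\,\mathrm d\mu$. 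Thus it suffices to produce, in each of the three cases, a constant $N_{\widetilde P}$ with $|i(\widetilde F;\widetilde a,\widetilde b,z)|\leq N_{\widetilde P}$ for all distinct $\widetilde a,\widetilde b\in\widetilde P\cap\mathrm{Fix}(\widetilde F)$ and $\mu$-almost every $z$; integrating then gives $|I_\mu(\widetilde F;a,b)|\leq N_{\widetilde P}\mu(M)$.

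Next, the three cases. In the ergodic case, the required uniform bound over $\widetilde P$ is exactly the content of Lemma \ref{lem:B property with a ergodic measure}, so nothing further is needed. In the differentiable case (B-property and $F\in\mathrm{Diff}(M)$, in particular $F\in\mathrm{Diff}^1(M)$, for which the B-property is automatic by Proposition \ref{rem:any isotopy of M satisfies condition B}), the uniform bound over $\widetilde P$ is Proposition \ref{prop:C1 diffeomorphism has a bounded linkingnumber}. In the total-support case, I would start from the pointwise estimate $|i(\widetilde F;\widetilde a,\widetilde b,z)|\leq K_{\widetilde a,\widetilde b}(N_{\widetilde a,\widetilde b}+1)$ of Proposition \ref{prop:the support of mu is M}; the B-property forces $N_{\widetilde a,\widetilde b}$ to be bounded uniformly in $(\widetilde a,\widetilde b)$ (via Lemma \ref{lem:P} and Lemma \ref{lem:rotation number and weak boundess at two point}: $|\rho_{A_{\widetilde a,\widetilde b},\widehat F_{\widetilde a,\widetilde b}}(\widetilde c)|=|i(\widetilde F;\widetilde a,\widetilde c)-i(\widetilde F;\widetilde b,\widetilde c)|\leq 2N_0$, where $N_0$ is the B-property constant), and running the construction of the auxiliary disks $\widetilde W\supset\widetilde V\ni\infty$ uniformly over the compact $\widetilde P$ — by means of the M\"obius-transformation isotopies fixing $\widetilde a,\widetilde b,\infty$, exactly as in the proof of Proposition \ref{prop:C1 diffeomorphism has a bounded linkingnumber} — yields a uniform bound $K_{\widetilde a,\widetilde b}\leq K_{\widetilde P}$ as well, so that $N_{\widetilde P}:=K_{\widetilde P}(2N_0+1)$ works.

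Finally, combining: in all three cases $|L_\mu(b)-L_\mu(a)|=|I_\mu(\widetilde F;a,b)|\leq N_{\widetilde P}\mu(M)$ for every pair of contractible fixed points, so normalizing $L_\mu$ to vanish at one point makes $|L_\mu|\leq N_{\widetilde P}\mu(M)$ throughout $\mathrm{Fix}_{\mathrm{Cont},I}(F)$. The main obstacle is precisely this uniformity over $\widetilde P$: the estimates of Propositions \ref{prop:the support of mu is M} and \ref{prop:diffeomorphism has a bounded linking number for every two distinct fixed points} are a priori pair-dependent, and promoting them to a bound uniform on a compact set is where one genuinely uses the B-property (to control the annular rotation numbers uniformly) together with a compactness argument for the combinatorial constants $K$; the ergodic case sidesteps this only because Lemma \ref{lem:B property with a ergodic measure} is already phrased with a uniform constant $N_{\widetilde P}$.
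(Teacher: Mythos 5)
Your proposal is correct and follows essentially the same route as the paper: reduce via Proposition \ref{clm:L is well defined} to pairs of lifts lying in a compact set $\widetilde P$ containing a fundamental domain, then invoke Proposition \ref{prop:C1 diffeomorphism has a bounded linkingnumber}, Proposition \ref{prop:the support of mu is M} combined with the B-property, and Lemma \ref{lem:B property with a ergodic measure}, and integrate against $\mu$. The only difference is that you make explicit the uniformity over $\widetilde P$ of the constant $K_{\widetilde a,\widetilde b}$ in the total-support case (via the M\"obius-transformation construction borrowed from the proof of Proposition \ref{prop:C1 diffeomorphism has a bounded linkingnumber}), a point the paper's one-line proof leaves implicit.
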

\begin{proof}
By Proposition \ref{clm:L is well defined}, we only need consider a
compact set $\widetilde{P}$ of $\widetilde{M}$ such that
$\widetilde{P}$ contains a fundamental domain of the covering
transformation group $G$ (see \ref{sec:linking number in the special
case}).
\end{proof}
We now study the continuity of the actions $l_\mu$ and $L_\mu$. In
Example \ref{ex:the action of non C1-diffeo isnot bounded and
continuous} of Appendix, we will construct an isotopy $I$ and a
measure $\mu\in\mathcal{M}(F)$ such that the time-one map $F$ is a
diffeomorphism (hence satisfies the WB-property) but not a
$C^1$-diffeomorphism and the measure $\mu$ has total support and no
atoms on $\mathrm{Fix}_{\mathrm{Cont},I}(F)$, while the action is
not continuous. However, we have the following results.

\begin{prop}\label{prop:the continuity of lmu}
Suppose that $F$ is the time-one map of an isotopy $I$ on $M$ and
the measure $\mu\in\mathcal{M}(F)$ has no atoms on
$\mathrm{Fix}_{\mathrm{Cont},I}(F)$. If one of the following three
cases is satisfied\begin{itemize}
           \item $I$ satisfies the
           B-property and $F\in\mathrm{Diff}(M)$ (in particular $F\in\mathrm{Diff}^1(M)$);
           \item $I$ satisfies the B-property, the measure $\mu\in\mathcal{M}(F)$ has total
           support;
           \item $I$ satisfies the B-property, the measure $\mu\in\mathcal{M}(F)$
           is ergodic,
         \end{itemize} then the action $l_\mu$ is
continuous on $\mathrm{Fix}(\widetilde{F})$. As a consequence, if
$\rho_{M,I}(\mu)=0$, the action $L_\mu$ is continuous on
$\mathrm{Fix}_{\mathrm{Cont},I}(F)$.
\end{prop}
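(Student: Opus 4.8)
The plan is to prove that $l_\mu$ is continuous on $\mathrm{Fix}(\widetilde{F})$ and then transport this to $L_\mu$. Since $l_\mu$ is determined only up to an additive constant and satisfies $i_\mu(\widetilde{F};\widetilde{a},\widetilde{b})=l_\mu(\widetilde{F};\widetilde{b})-l_\mu(\widetilde{F};\widetilde{a})$ (Corollary \ref{clm:I is coboundary}), for a sequence $\widetilde{a}_n\to\widetilde{a}$ in $\mathrm{Fix}(\widetilde{F})$ and any fixed reference point one has $l_\mu(\widetilde{F};\widetilde{a}_n)-l_\mu(\widetilde{F};\widetilde{a})=i_\mu(\widetilde{F};\widetilde{a},\widetilde{a}_n)$ by the cocycle relation, so it suffices to show $i_\mu(\widetilde{F};\widetilde{a},\widetilde{a}_n)\to 0$. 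We may assume the $\widetilde{a}_n$ are all distinct from $\widetilde{a}$ and, passing to a subsequence and applying a covering transformation, that $\widetilde{a}$ and all the $\widetilde{a}_n$ lie in a connected compact set $\widetilde{P}\subset\widetilde{M}$ with $\infty\notin\widetilde{P}$, which is the setting of Lemma \ref{lem:L1anaz}.

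The argument rests on two facts. \emph{Domination:} in each of the three cases there is a constant $N_{\widetilde{P}}\ge 0$ with $|i(\widetilde{F};\widetilde{a},\widetilde{a}_n,z)|\le N_{\widetilde{P}}$ for every $n$ and $\mu$-almost every $z$; this is Proposition \ref{prop:C1 diffeomorphism has a bounded linkingnumber} in the diffeomorphism case (its proof uses only differentiability of $F$, $F^{-1}$ at the relevant fixed points together with the B-property, here taken as a hypothesis), it follows from Proposition \ref{prop:the support of mu is M} together with the B-property in the total support case (the B-property bounds $N_{\widetilde{a},\widetilde{a}_n}$ and lets $K_{\widetilde{a},\widetilde{a}_n}$ be chosen uniformly over $\widetilde{P}$, exactly as in the proof of Proposition \ref{prop:C1 diffeomorphism has a bounded linkingnumber}), and it is Lemma \ref{lem:B property with a ergodic measure} in the ergodic case. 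As $\mu$ is finite, $N_{\widetilde{P}}$ is a $\mu$-integrable dominating function. \emph{Convergence of the integrand:} for $z\in\mathrm{Fix}(F)$ one has $i(\widetilde{F};\widetilde{a},\widetilde{a}_n,z)=L_1(\widetilde{F};\widetilde{a},\widetilde{a}_n,z)\to 0$ directly by Lemma \ref{lem:L1anaz}, so by domination $i(\widetilde{F};\widetilde{a},\widetilde{a}_n,\cdot)\to 0$ in $L^1(\mu|_{\mathrm{Fix}(F)})$. For a free disk $U\subset M\setminus\mathrm{Fix}(F)$, Lemma \ref{lem:L1anaz} gives $L_1(\widetilde{F};\widetilde{a},\widetilde{a}_n,\cdot)\to 0$ pointwise on $\mathrm{Rec}^+(F)\cap U$, hence in $L^1(\mu|_U)$ by domination; then by $\widetilde{F}$-invariance of $\mu$ and Fatou the Birkhoff average $L^{*}(\widetilde{F};\widetilde{a},\widetilde{a}_n,\cdot)$ over the first return map $\Phi$ satisfies $\int_U|L^{*}(\widetilde{F};\widetilde{a},\widetilde{a}_n,z)|\,\mathrm{d}\mu\le\int_U|L_1(\widetilde{F};\widetilde{a},\widetilde{a}_n,z)|\,\mathrm{d}\mu\to 0$, and since $i(\widetilde{F};\widetilde{a},\widetilde{a}_n,z)=L^{*}(\widetilde{F};\widetilde{a},\widetilde{a}_n,z)/\tau^{*}(z)$ for $\mu$-a.e. $z\in U$ (Proposition \ref{prop:measure and linking number exsiting}, the hypotheses there being met because $\pi(\widetilde{a}),\pi(\widetilde{a}_n)\in\mathrm{Fix}_{\mathrm{Cont},I}(F)$ and $\mu$ has no atoms there) with $\tau^{*}\ge 1$, also $i(\widetilde{F};\widetilde{a},\widetilde{a}_n,\cdot)\to 0$ in $L^1(\mu|_U)$.

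To conclude, cover $M\setminus\mathrm{Fix}(F)$ by countably many free disks; given $\varepsilon>0$, finitely many of them, say $U_1,\dots,U_k$, cover $M\setminus\mathrm{Fix}(F)$ up to a set of $\mu$-measure $<\varepsilon$. Splitting $M$ into $\mathrm{Fix}(F)$, pieces contained in the $U_j$, and a remainder of measure $<\varepsilon$, and applying the $L^1$-convergences above on the first $1+k$ pieces and the bound $N_{\widetilde{P}}$ on the last, one gets $\limsup_{n\to\infty}\bigl|i_\mu(\widetilde{F};\widetilde{a},\widetilde{a}_n)\bigr|\le N_{\widetilde{P}}\,\varepsilon$; letting $\varepsilon\to 0$ gives $i_\mu(\widetilde{F};\widetilde{a},\widetilde{a}_n)\to 0$, so $l_\mu$ is continuous. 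Finally, if $\rho_{M,I}(\mu)=0$ then $L_\mu\circ\pi$ and $l_\mu$ differ by a constant on $\mathrm{Fix}(\widetilde{F})$ (Proposition \ref{clm:L is well defined}); given $a_n\to a$ in $\mathrm{Fix}_{\mathrm{Cont},I}(F)$, lift $a$ to $\widetilde{a}\in\mathrm{Fix}(\widetilde{F})$ and, using that $\pi$ maps a neighbourhood of $\widetilde{a}$ homeomorphically onto a neighbourhood of $a$, lift $a_n$ for large $n$ to $\widetilde{a}_n\to\widetilde{a}$; as $I(a_n)$ is a contractible loop its lift based at $\widetilde{a}_n$ is a loop, so $\widetilde{a}_n\in\mathrm{Fix}(\widetilde{F})$, and continuity of $l_\mu$ gives $L_\mu(a_n)\to L_\mu(a)$. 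The main obstacle is the second fact: interchanging the limit $n\to\infty$ with the ergodic/Birkhoff averaging defining the linking number at non-fixed recurrent points, which is exactly where the uniform dominating bound furnished by the B-property (via Propositions \ref{prop:C1 diffeomorphism has a bounded linkingnumber}, \ref{prop:the support of mu is M}, or Lemma \ref{lem:B property with a ergodic measure}) is indispensable; the passage from $l_\mu$ to $L_\mu$ is then routine.
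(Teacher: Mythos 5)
Your proposal is correct and takes essentially the same route as the paper's own proof: Lemma \ref{lem:L1anaz} for the pointwise convergence of the linking numbers, the uniform bound over a compact set of lifts furnished by Proposition \ref{prop:C1 diffeomorphism has a bounded linkingnumber}, Proposition \ref{prop:the support of mu is M} (with the B-property) or Lemma \ref{lem:B property with a ergodic measure} as the dominating function, a decomposition of $M$ into $\mathrm{Fix}(F)$, finitely many free disks and a remainder of small measure, the relation $i=L^{*}/\tau^{*}$ on each disk, and the local lifting of a converging sequence in $\mathrm{Fix}_{\mathrm{Cont},I}(F)$ to pass from $l_\mu$ to $L_\mu$. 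The only cosmetic deviations (a countable cover of free disks instead of the triangulation with $\mu$-null boundaries, and the shortcut $\tau^{*}\ge 1$ in place of the paper's chain $\int_{U}\tau|i|\,\mathrm{d}\mu=\int_{U}\tau^{*}|i|\,\mathrm{d}\mu$, with the integrable dominating function $N\tau\in L^{1}(U,\mathbb{R},\mu)$ for the $L_{1}$-convergence left implicit) do not change the argument.
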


\begin{proof}
We suppose that
$\widetilde{a}\in\mathrm{Fix}(\widetilde{F})\setminus\{\infty\}$ and
$\{\widetilde{a}_n\}_{n\geq1}\subset
\mathrm{Fix}(\widetilde{F})\setminus\{\widetilde{a},\infty\}$
satisfying $\widetilde{a}_n\rightarrow\widetilde{a}$ as
$n\rightarrow+\infty$. We consider the value
$i_\mu(\widetilde{F};\widetilde{a}_n,\widetilde{a})$. There exists a
triangulation $\{\mathrm{Cl}(U_i)\}_{i=1}^{+\infty}$ of
$M\setminus\mathrm{Fix}(F)$ such that, for every $i$, the interior
$U_i$ of $\mathrm{Cl}(U_i)$ is an open free disk for $F$ and
satisfies $\mu(\partial U_i)=0$. 
\smallskip

%
By Lemma \ref{lem:L1anaz}, we have
$\lim\limits_{n\rightarrow+\infty}i(\widetilde{F};\widetilde{a}_n,\widetilde{a},z)=0$
in the case where $z\in
\mathrm{Fix}(F)\setminus\{\pi(\widetilde{a})\}$, and
$\lim\limits_{n\rightarrow+\infty}L_1(\widetilde{F};\widetilde{a}_n,\widetilde{a},z)=0$
in the case where $z\in \mathrm{Rec}^+(F)\cap U_i$, for every $i$.
\smallskip

Choose a compact set $\widetilde{P}\subset\widetilde{M}$ such that
$\widetilde{a}\in\widetilde{P}$ and
$\{\widetilde{a}_n\}_{n\geq1}\subset\widetilde{P}$. As before, when
$\widetilde{a}\,'$ and $\widetilde{b}\,'$ are two distinct fixed
points of $\widetilde{F}$ in $\widetilde{P}$, we can always suppose
that the path $\widetilde{\gamma}$ that joins $\widetilde{a}\,'$ and
$\widetilde{b}\,'$ is in $\widetilde{P}$ in this proof when we talk
of the linking number
$i(\widetilde{F};\widetilde{a}\,',\widetilde{b}\,',z)$. By the
definition of B-property, Proposition \ref{prop:C1 diffeomorphism
has a bounded linkingnumber}, \ref{prop:the support of mu is M} and
Lemma \ref{lem:B property with a ergodic measure} , we can suppose
that there exists a number $N\geq0$ such that
$$N=\sup_{n\geq1}\left\{\,|i(\widetilde{F};\widetilde{a}_n,\widetilde{a},z)|\,\big\vert z\in
\mathrm{Rec}^+(F)\setminus\pi(\{\widetilde{a}_n,\widetilde{a}\})\right\}.$$

By Lebesgue's dominating convergence theorem (the dominated function
is $N$), we get
\begin{equation*}\label{eq:the estimation of the fixed point case}
    \lim_{n\rightarrow+\infty}\int_{\mathrm{Fix}(F)}
  \left|(i(\widetilde{F};\widetilde{a}_n,\widetilde{a},z)\right|\,\mathrm{d}\mu=0.
\end{equation*}
\smallskip

It is then sufficient to prove that
\begin{equation*}\label{eq:the estimation of non fixed point case}
    \lim_{n\rightarrow+\infty}\int_{M\setminus\mathrm{Fix}(F)}
  \left|(i(\widetilde{F};\widetilde{a}_n,\widetilde{a},z)\right|\,\mathrm{d}\mu=0.
\end{equation*}

Fix any $\epsilon>0$. Since
$\mu(\bigcup_{i=1}^{+\infty}U_i)=\mu(M\setminus\mathrm{Fix}(F))<+\infty$,
there exists a positive integer $N'$ such that
$$\mu(\bigcup_{N'+1}^{+\infty}U_i)<\frac{\epsilon}{2N}.$$

For every pair $(\widetilde{a},\widetilde{b})\in
(\mathrm{Fix}(\widetilde{F})\times\mathrm{Fix}(\widetilde{F}))
\setminus\widetilde{\Delta}$ and each $i$, by Birkhoff Ergodic
theorem, we have $\tau^*(\Phi(z))=\tau^*(z)$ where $\tau^*(z)$ is
the limit of the sequence $\{\tau_n(z)/n\}_{n\geq1}$ and $\Phi$ is
the first return map (see \ref{sec:the existion of rotation vector
in the copact case}), and
$L^*(\widetilde{F};\widetilde{a},\widetilde{b},\Phi(z))=L^*(\widetilde{F};\widetilde{a},\widetilde{b},z)$
. Hence,
$i(\widetilde{F};\widetilde{a},\widetilde{b},\Phi(z))=i(\widetilde{F};\widetilde{a},\widetilde{b},z)$
for $\mu$-almost every point $z\in U_i$. Obviously,
$|i(\widetilde{F};\widetilde{a}_n,\widetilde{a},z)|\tau(z)$ $\in
L^1(U_i,\mathbb{R},\mu)$. Therefore, for $\mu$-almost every point
$z\in U_i$, we have
\begin{eqnarray*}
&&\lim_{m\rightarrow+\infty}\frac{1}{m}\sum_{j=0}^{m-1}\left(\tau(\Phi^j(z))
\left|i(\widetilde{F};\widetilde{a}_n,\widetilde{a},\Phi^j(z))\right|\right)\\
 &=&\lim_{m\rightarrow+\infty}
\left(\frac{1}{m}\sum_{j=0}^{m-1}\tau(\Phi^j(z))\right)\cdot\left|i(\widetilde{F};\widetilde{a}_n,\widetilde{a},z)\right|\\
 &=&\tau^*(z)\left|i(\widetilde{F};\widetilde{a}_n,\widetilde{a},z)\right|.
\end{eqnarray*}
This implies that
\begin{equation}\label{eq:an ergodic result of pruduction}
    \int_{U_i}\tau(z)\left|i(\widetilde{F};\widetilde{a}_n,\widetilde{a},z)
\right|\,\mathrm{d}\mu=\int_{U_i}\tau^*(z)\left|i(\widetilde{F};\widetilde{a}_n,\widetilde{a},z)\right|\,\mathrm{d}\mu
\end{equation}
for every $i$ and every $n$.

\smallskip

Recall that, for every pair $(\widetilde{a},\widetilde{b})\in
(\mathrm{Fix}(\widetilde{F})\times\mathrm{Fix}(\widetilde{F}))
\setminus\widetilde{\Delta}$ and every $i$,
$$L^{*}(\widetilde{F};\widetilde{a},\widetilde{b},z)=\lim\limits_{m\rightarrow+\infty}
\frac{1}{m}\sum_{j=1}^{m-1}L_1(\widetilde{F};\widetilde{a},\widetilde{b},\Phi^j(z))$$
exists for $\mu$-almost every point $z\in U_i$. From Proposition
\ref{prop:C1 diffeomorphism has a bounded linkingnumber} and
Proposition \ref{prop:the support of mu is M}, we have
$\left|L_1(\widetilde{F};\widetilde{a}_n,\widetilde{a},z)\right|\leq
  N\tau(z)$, which implies that $L_1(\widetilde{F};\widetilde{a}_n,\widetilde{a},z)\in
  L^1(U_i,\mathbb{R},\mu)$ for every $i$.
Therefore, we have the following inequality (modulo subsets of
measure zero of $U_i$)
\begin{eqnarray}\label{ineq:two birkhoff ineqs}
\left|L^{*}(\widetilde{F};\widetilde{a}_n,\widetilde{a},z)\right|
&=& \lim_{m\rightarrow +\infty} \frac{1}{m}\left|\sum_{j=0}^{m-1}
   (L_1(\widetilde{F};\widetilde{a}_n,\widetilde{a},\Phi^j(z))\right|\\
  &\leq& \lim_{m\rightarrow +\infty} \frac{1}{m}\sum_{j=0}^{m-1}\left
  |L_1(\widetilde{F};\widetilde{a}_n,\widetilde{a},\Phi^j(z))\right|\nonumber \\
   &\stackrel{\triangle}{=}& \left|L_1(\widetilde{F};\widetilde{a}_n,\widetilde{a},z)\right|^{*}.\nonumber
\end{eqnarray}
The last definition and equation hold due to Birkhoff Ergodic
theorem.
\smallskip

Moreover, we have
\begin{eqnarray*}
\int_{\bigcup\limits_{i=1}^{N'}U_i}
  \left|i(\widetilde{F};\widetilde{a}_n,\widetilde{a},z)\right|\,\mathrm{d}\mu&\leq&
\sum_{i=1}^{N'}\int_{\bigcup_{k\geq0}F^k(U_i)}\left|i(\widetilde{F};\widetilde{a}_n,\widetilde{a},z)\right|\,\mathrm{d}\mu \\
  &=& \sum_{i=1}^{N'}\int_{U_i}\tau(z)\left|i(\widetilde{F};\widetilde{a}_n,\widetilde{a},z)\right|\,\mathrm{d}\mu \\
   &=& \sum_{i=1}^{N'}\int_{U_i}\tau^{*}(z)\left|i(\widetilde{F};\widetilde{a}_n,\widetilde{a},z)\right|\,\mathrm{d}\mu \\
   &=& \sum_{i=1}^{N'}\int_{U_i}\left|L^{*}(\widetilde{F};\widetilde{a}_n,\widetilde{a},z)\right|\,\mathrm{d}\mu  \\
   &\leq& \sum_{i=1}^{N'}\int_{U_i}\left|L_1(\widetilde{F};\widetilde{a}_n,\widetilde{a},z)\right|^{*}\,\mathrm{d}\mu \\
   &=& \sum_{i=1}^{N'}\int_{U_i}\left|L_1(\widetilde{F};\widetilde{a}_n,\widetilde{a},z)\right|\,\mathrm{d}\mu \\
   &\rightarrow& 0 \qquad(n \rightarrow +\infty).
\end{eqnarray*}
The second equation holds since $F$ preserve the measure $\mu$ and
$i(\widetilde{F};\widetilde{a},\widetilde{b},z)$ is the action of
$F$. The third equation holds by Equation \ref{eq:an ergodic result
of pruduction}. The forth equation is true because
$i(\widetilde{F};\widetilde{a},\widetilde{b},z)=L^{*}(\widetilde{F};\widetilde{a},\widetilde{b},z)/\tau^{*}(z)$.
The fifth inequality holds by the Inequality \ref{ineq:two birkhoff
ineqs}. The sixth equation holds due to Birkhoff Ergodic theorem.
The last limit holds due to Lebesgue's dominating convergence
theorem (the dominated function is $N\tau(z)$) and Lemma
\ref{lem:L1anaz}.
\smallskip

Therefore, there exists a positive number $N''$ such that when
$n\geq N''$, $$\int_{\bigcup\limits_{i=1}^{\,N'}U_i}
  \left|i(\widetilde{F};\widetilde{a}_n,\widetilde{a},z)\right|\,\mathrm{d}\mu\\
< \frac{\epsilon}{2}.$$ Finally, when $n\geq N''$, we have
\begin{eqnarray*}
 \int_{M\setminus\mathrm{Fix}(F)}
  \left|(i(\widetilde{F};\widetilde{a}_n,\widetilde{a},z)\right|\,\mathrm{d}\mu
   &=& \int_{\bigcup\limits_{i=1}^{N'}U_i}\left|(i(\widetilde{F};\widetilde{a}_n,\widetilde{a},z)\right|\,\mathrm{d}\mu+\int_{\bigcup\limits_{N'+1}^{+\infty}U_i}
\left|(i(\widetilde{F};\widetilde{a}_n,\widetilde{a},z)\right|\,\mathrm{d}\mu\\
   &<& \frac{\epsilon}{2}+\frac{\epsilon}{2N}\cdot
   N\\
   &=&\epsilon.
\end{eqnarray*}
We have completed the first statement.\bigskip

Now we turn to prove the second statement. Let
$a\in\mathrm{Fix}_{\mathrm{Cont},I}(F)$ and $\{a_n\}_{n\geq1}\subset
\mathrm{Fix}_{\mathrm{Cont},I}(F)\setminus\{a\}$ satisfying
$a_n\rightarrow a$ as $n\rightarrow+\infty$. By Proposition
\ref{clm:L is well defined}, we only need to consider a lift
$\widetilde{a}$ of $a$ and a lift sequence
$\{\widetilde{a}_n\}_{n\geq1}\subset \mathrm{Fix}(\widetilde{F})$ of
$\{a_n\}_{n\geq1}$ satisfying $\widetilde{a}_n\rightarrow
\widetilde{a}$ as $n\rightarrow+\infty$. Then it follows from the
first statement.
\end{proof}\bigskip


\subsection{Action spectrum}\label{sec:action spectrum}
In this section, we suppose that the action $l_\mu$ is well defined.
Write $\widetilde{F}$ as the lift of $F$ obtained by lifting $I$ to
an isotopy $\widetilde{I}$ to $\widetilde{M}$ starting
$\mathrm{Id}_{\widetilde{M}}$.\smallskip

Define the \emph{action spectrum of $I$} as follows (up to an
additive constant):
\begin{equation*}
\sigma(\widetilde{F})=\{l_\mu(\widetilde{F};\widetilde{z})\mid z\in
\mathrm{Fix}(\widetilde{F})\}\subset\mathbb{R}\cup\{\pm\infty\}.
\end{equation*}

By Corollary \ref{cor:imu(HFH-1,Ha,Hb)=imu(F,a,b)}, the action
spectrum of $I$ is invariant under conjugation by an orientation
preserving and measure preserving homeomorphism of $M$.\smallskip

Define the \emph{action width of $I$} as follows:
\begin{equation*}
    \mathrm{width}(\widetilde{F})=\sup_{x,y\in\sigma(\widetilde{F})}|x-y|=
    \sup_{\widetilde{z}_1,\widetilde{z}_2\in\mathrm{Fix}(\widetilde{F})}i_\mu
    (\widetilde{F};\widetilde{z}_1,\widetilde{z}_2).
\end{equation*}\smallskip

Moreover, if $\rho_{M,I}(\mu)=0$, we can write the action spectrum
of $I$ as (up to an additive constant):
\begin{equation*}
\sigma(\widetilde{F})=\{L_\mu(\widetilde{F};z)\mid z\in
\mathrm{Fix}_{\mathrm{Cont},I}(F)\}\subset\mathbb{R}\cup\{\pm\infty\}.
\end{equation*}

Furthermore, if $L_\mu$ is continuous (see Proposition
\ref{prop:boundedness} and Proposition \ref{prop:the continuity of
lmu}), $\sigma(\widetilde{F})$ is a compact set of $\mathbb{R}$ and
we can write the action width of $I$ as:
$$\mathrm{width}(\widetilde{F})=\max\limits_{z_1,z_2\in\mathrm{Fix}
_{\mathrm{Cont},I}(F)}I_{\mu}(\widetilde{F},z_1,z_2).$$\bigskip

The following Theorem is the Arnold conjecture for surface
homeomorphisms that is due to Matsumoto \cite{Ma} (see also
\cite{P1}):
\begin{thm}\label{thm:hamiltonian map has at least 3 contricible fixed
points} Let $M$ be a compact surface with genus $g\geq1$ and $F$ be
the time-one map of an identity isotopy $I$ on $M$. We suppose that
$\mu\in\mathcal{M}(F)$ has total support and $\rho_{M,I}(\mu)=0$.
Then there exist at least three contractible fixed points of $F$.
\end{thm}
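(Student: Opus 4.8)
The statement is the Arnold conjecture for surface homeomorphisms, and since the excerpt explicitly attributes it to Matsumoto \cite{Ma} (with an alternative in \cite{P1}), the plan is to reduce the general case to the known one rather than to reprove it from scratch. First I would recall the hypotheses: $F$ is the time-one map of an identity isotopy $I$ on a closed surface $M$ of genus $g\geq 1$, and $\mu\in\mathcal{M}(F)$ has total support with $\rho_{M,I}(\mu)=0$. The key structural input is the relation developed in Section \ref{subsec:symplectic and hamiltonian} and Section \ref{sec:the existion of rotation vector in the copact case}: vanishing of the rotation vector of an invariant measure of full support forces the dynamics to behave like a Hamiltonian system in the relevant sense — in particular every essential simple closed curve meets its image (the intersection property), exactly as exploited in the proof of Proposition \ref{prop:the support of mu is M}. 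So the first step is to verify that $(F,I,\mu)$ falls under the hypotheses of Matsumoto's theorem / Le Calvez's framework in \cite{P1}.

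The main argument I would then carry out: pass to the universal cover $\widetilde{M}$ (homeomorphic to $\mathbb{C}$ when $g=1$, to the disk when $g>1$) with the lifted isotopy $\widetilde{I}$ and time-one map $\widetilde{F}$. Since $\widetilde{M}$ is an open disk and $\widetilde{F}$ extends to the sphere $\mathbf{S}=\widetilde{M}\sqcup\{\infty\}$ fixing $\infty$, one can apply the annulus/plane machinery of Section \ref{sec:preliminaries}. The idea is: if $F$ had at most two contractible fixed points, say lifting to $\widetilde{a},\widetilde{b}\in\mathrm{Fix}(\widetilde{F})$ (or fewer), then on the annulus $A_{\widetilde{a},\widetilde{b}}=\mathbf{S}\setminus\{\widetilde{a},\widetilde{b}\}$ the lifted homeomorphism $\widetilde{F}_{\widetilde{a},\widetilde{b}}$ preserves the lifted infinite measure $\widetilde{\mu}$ of full support, hence satisfies the intersection property (as shown inside the proof of Proposition \ref{prop:the support of mu is M}). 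Then Theorem \ref{thm:FP}, combined with the fact that $\rho_{M,I}(\mu)=0$ forces the rotation set of $\widetilde{F}_{\widetilde{a},\widetilde{b}}$ to contain $0$ in its interior — or to be $\{0\}$, in which case a Franks-type fixed point argument (Corollary \ref{lem:Franks'lemma1}, Proposition \ref{prop:Franks' Lemma}) applies directly — produces a new periodic, in fact fixed, point inside $A_{\widetilde{a},\widetilde{b}}$ projecting to a contractible fixed point of $F$ distinct from the ones already accounted for. Iterating (peel off one fixed point at a time) yields the third contractible fixed point; the count of three is exactly what the sphere/annulus bookkeeping gives, matching the classical three-fixed-point phenomenon on surfaces of positive genus.

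The hard part will be making the rotation-number dichotomy rigorous: showing that $\rho_{M,I}(\mu)=0$ with full support genuinely forces either an interior-zero in the rotation set of the annular map (so Theorem \ref{thm:FP} applies and gives a periodic point of rotation number $0$, i.e.\ a genuine fixed point of the correct lift) or else the degenerate case handled by Franks' Lemma. This requires relating the homological rotation vector $\rho_{M,I}(\mu)\in H_1(M,\mathbb{R})$ to the rotation numbers $\rho_{A_{\widetilde{a},\widetilde{b}},\widehat{F}_{\widetilde{a},\widetilde{b}}}$ of recurrent points in the annulus via Lemma \ref{lem:P} and the Birkhoff-type averaging of Section \ref{sec:the existion of rotation vector in the copact case}, and then checking that a recurrent point of nonzero rotation number and one of zero rotation number both exist $\mu$-almost everywhere unless the whole picture is degenerate. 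Since this is stated as a citable known result, however, I would keep this verification brief and mostly defer to \cite{Ma} and \cite{P1}, citing them for the precise statement and only spelling out the reduction (pass to cover, use full support $\Rightarrow$ intersection property, apply their theorem) in detail.
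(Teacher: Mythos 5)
The paper offers no proof of this statement at all: it is quoted as a known theorem of Matsumoto \cite{Ma} (see also Le Calvez \cite{P1}), so your plan of stating it and deferring to those references is exactly the paper's approach. Note that no reduction is actually needed, since the hypotheses here coincide verbatim with the cited result; the annulus/Franks sketch you outline is therefore superfluous (and, as written, only heuristic --- e.g.\ it does not guarantee that the new fixed point projects to a contractible fixed point distinct from the first two), but since you ultimately cite \cite{Ma} and \cite{P1}, the proposal matches the paper.
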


On a closed surface, based on Theorem \ref{thm:hamiltonian map has
at least 3 contricible fixed points}, we can get the following
result that is a generalization of Lemma 2.8 that is proved in
\cite{SM} by using Floer homology.\bigskip

\newenvironment{prop02}{\noindent\textbf{Proposition \ref{prop:F is not
constant if the contractible fixed points is
finite}}~\itshape}{\par}
\begin{prop02}Let $F$ be the
time-one map of an identity isotopy $I$ on a closed oriented surface
$M$ with $g\geq 1$. If $I$ satisfies the WB-property and $F\in
\mathrm{Homeo}_*(M)\setminus\{\,\mathrm{Id}_{M}\}$, $\mu\in\mathcal
{M}(F)$ has total support, no atoms on
$\mathrm{Fix}_{\mathrm{Cont},I}(F)$ and $\rho_{M,I}(\mu)=0$, then
$\sharp\sigma(\widetilde{F})\geq2$, that is, the action function
$L_\mu$ is not constant.
\end{prop02}\bigskip

The proof of Proposition \ref{prop:F is not constant if the
contractible fixed points is finite} will be divided two cases: the
set $\mathrm{Fix}_{\mathrm{Cont},I}(F)$ is finite and the set
$\mathrm{Fix}_{\mathrm{Cont},I}(F)$ is infinite. Firstly, let us
prove the case where the set $\mathrm{Fix}_{\mathrm{Cont},I}(F)$ is
finite which is an easer case.

\begin{proof}[Proof of the case of Proposition \ref{prop:F is not
constant if the contractible fixed points is finite} where the set
$\mathrm{Fix}_{\mathrm{Cont},I}(F)$ is finite]\qquad\par\smallskip

We say that $X\subseteq \mathrm{Fix}_{\mathrm{Cont},I}(F)$ is
\emph{unlinked}
 if there exists an isotopy $I'=(F'_t)_{t\in [0,1]}$ homotopic
to $I$ which fixes every point of $X$. Moreover, we say that $X$ is
a \emph{maximal unlinked set}, if any set $X'\subseteq
\mathrm{Fix}_{\mathrm{Cont},I}(F)$ which strictly contains $X$ is
not unlinked.\smallskip

In the proof of Theorem \ref{thm:hamiltonian map has at least 3
contricible fixed points} (\cite[Theorem 10.1]{P1}), Le Calvez
proved that there exists a maximal unlinked set $X\subseteq
\mathrm{Fix}_{\mathrm{Cont},I}(F)$ with $\sharp X\geq 3$ if
$\sharp\mathrm{Fix}_{\mathrm{Cont},I}(F)<+\infty$.\smallskip

There exists an oriented topological foliation $\mathcal{F}$ on
$M\setminus X$ (or, equivalently, a singular oriented foliation
$\mathcal{F}$ on $M$ with $X$ equal to the singular set) such that,
for all $z\in M\setminus X$, the trajectory $I(z)$ is homotopic to
an arc $\gamma$ joining $z$ and $F(z)$ in $M\setminus X$ which is
positively transverse to $\mathcal{F}$. That means that for every
$t_0\in[0,1]$ there exists an open neighborhood $V\subset M\setminus
X$ of $\gamma(t_0)$ and an orientation preserving homeomorphism
$h:V\rightarrow(-1,1)^2$ which sends the foliation $\mathcal{F}$ on
the horizontal foliation (oriented with $x_1$ increasing) such that
the map $t\mapsto p_2(h(\gamma(t)))$ defined in a neighborhood of
$t_0$ is strictly increasing where $p_2(x_1,x_2)=x_2$.\smallskip

We can choose a point $z\in
\mathrm{Rec}^+(F)\setminus\mathrm{Fix}(F)$ and a leaf $\lambda$
containing $z$. Proposition 10.4 in \cite{P1} states that the
$\omega$-limit set $\omega(\lambda)\in X$, the $\alpha$-limit set
$\alpha(\lambda)\in X$ and $\omega(\lambda)\neq\alpha(\lambda)$. Fix
an isotopy $I'$ homotopic to $I$ that fixes $\omega(\lambda)$ and
$\alpha(\lambda)$ and a lift $\widetilde{\lambda}$ of $\lambda$ that
joins $\widetilde{\omega(\lambda)}$ and
$\widetilde{\alpha(\lambda)}$. Let us now study the linking number
$i(\widetilde{F};\widetilde{\omega(\lambda)},\widetilde{\alpha(\lambda)},z')$
for $z'\in \mathrm{Rec}^+(F)\setminus X$ if it exists. Observe that
for all $z'\in M\setminus X$, the trajectory $I'(z')$ is still
homotopic to an arc that is positively transverse to $\mathcal{F}$.
Hence, for all $z'\in \mathrm{Rec}^+(F)\setminus X$ and disk $U$
containing $z'$ (here, we suppose that $U\cap \lambda=\emptyset$ by
shrinking $U$ and perturbing $\lambda$ if necessary), we have
\begin{equation*}\label{eq: Ln is not negtive}
    L_n(\widetilde{F};\widetilde{\omega(\lambda)},\widetilde{\alpha(\lambda)},z')
=\widetilde{\lambda}\wedge\widetilde{\Gamma}^n_{\widetilde{I}\,',z'}=
\lambda\wedge\Gamma^n_{I\,',z'}\geq0
\end{equation*}
for every $n\geq1$, where $\widetilde{I}\,'$ is the lift of $I'$ to
$\widetilde{M}$ (refer to Section \ref{subsec:the definition of a
new linking number}). Finally, we have
$$i(\widetilde{F};\widetilde{\omega(\lambda)},\widetilde{\alpha(\lambda)},z')\geq0$$
for $\mu$-almost every point
$z'\in\mathrm{Rec}^+(F)\setminus\{\omega(\lambda),\alpha(\lambda)\}$
(refer to Definition \ref{def:Intersection number density}).

By the continuity of $I'$ and the hypothesis on $\mu$, there exists
an open free disk $U$ containing $z$ such that $\mu(U)>0$ and
$L_1(\widetilde{F};\widetilde{\omega(\lambda)},\widetilde{\alpha(\lambda)},z')>0$
when $z'\in U\cap\mathrm{Rec}^+(F)$.

Similarly to the proof of Proposition \ref{prop:the continuity of
lmu}, we have
\begin{eqnarray*}
I_\mu(\widetilde{F};\omega(\lambda),\alpha(\lambda))&\geq&
\int_{\bigcup_{k\geq0}F^k(U)}i(\widetilde{F};\widetilde{\omega(\lambda)},\widetilde{\alpha(\lambda)},z)\,\mathrm{d}\mu \\
  &=& \int_{U}\tau(z)i(\widetilde{F};\widetilde{\omega(\lambda)},\widetilde{\alpha(\lambda)},z)\,\mathrm{d}\mu \\
   &=&\int_{U}\tau^{*}(z)i(\widetilde{F};\widetilde{\omega(\lambda)},\widetilde{\alpha(\lambda)},z)\,\mathrm{d}\mu \\
   &=&\int_{U}L^{*}(\widetilde{F};\widetilde{\omega(\lambda)},\widetilde{\alpha(\lambda)},z)\,\mathrm{d}\mu  \\
   &=&\int_{U}L_1(\widetilde{F};\widetilde{\omega(\lambda)},\widetilde{\alpha(\lambda)},z)\,\mathrm{d}\mu \\
   &>& 0.
\end{eqnarray*}
\end{proof}

Before proving the case where the set
$\mathrm{Fix}_{\mathrm{Cont},I}(F)$ is infinite., let us state a
recent result due to Jaulent \cite{J}:
\smallskip

\begin{thm}[Jaulent]\label{thm:O.Jaulent}
Let $M$ be an oriented surface and $F$ be the time-one map of an
identity isotopy $I$ on $M$. There exists a closed subset $X\subset
\mathrm{Fix}(F)$ and an isotopy $I'$ joining
$\mathrm{Id}_{M\setminus X}$ to $F|_{M\setminus X}$ in
$\mathrm{Homeo(M\setminus X)}$ such that
\begin{enumerate}
  \item For all $z\in X$, the loop $I(z)$ is homotopic to zero in
  $M$.
  \item For all $z\in\mathrm{Fix(F)}\setminus X$, the loop $I'(z)$
  is not homotopic to zero in $M\setminus X$.
  \item For all $z\in M\setminus X$, the trajectories $I(z)$ and
  $I'(z)$ are homotopic with fixed endpoints in $M$.
  \item There exists an oriented topological foliation $\mathcal{F}$
  on $M\setminus X$ such that, for all $z\in M\setminus X$, the trajectory
  $I'(z)$ is homotopic to an arc $\gamma$ joining $z$ and $F(z)$ in
  $M\setminus X$
  which is positively to $\mathcal{F}$.
\end{enumerate}
Moreover, the isotopy $I'$ satisfies the following property:

\begin{enumerate}
\item[(5)] For all finite $Y\subset X$, there exists an isotopy $I_Y'$
joining $\mathrm{Id}_M$ and $F$ in $\mathrm{Homeo}(M)$ which fixes
$Y$ such that, if $z\in M\setminus X$, the arc $I'(z)$ and $I_Y'(z)$
are homotopic in $M\setminus Y$. And if $z\in X\setminus Y$, the
loop $I_Y'(z)$ is contractible in $M\setminus Y$.
\end{enumerate}
\end{thm}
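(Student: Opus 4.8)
The statement is Jaulent's theorem \cite{J}, and the plan is to reconstruct its proof via the theory of \emph{maximal isotopies} combined with Le Calvez's equivariant foliated version of the Brouwer plane translation theorem. First I would fix the right partial order. Call a closed subset $X\subseteq\mathrm{Fix}(F)$ \emph{admissible} if there is an isotopy $I_X$ from $\mathrm{Id}_{M\setminus X}$ to $F|_{M\setminus X}$ in $\mathrm{Homeo}(M\setminus X)$ such that, for every $z\in M\setminus X$, the trajectory $I_X(z)$ is homotopic with fixed endpoints in $M$ to $I(z)$, and, for every $z\in X$, the loop $I(z)$ is contractible in $M$. The empty set is admissible (take $I_\emptyset=I$), so the family of admissible sets, ordered by inclusion, is nonempty. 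The goal is to apply Zorn's lemma to obtain a maximal admissible set $X$ together with a witnessing isotopy $I'$; once this is done, properties (1) and (3) hold by the very definition of admissibility.

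The key point, and the main obstacle, is to show that every chain $(X_\beta)_{\beta\in B}$ of admissible sets has an upper bound. The natural candidate is $X=\mathrm{Cl}\big(\bigcup_\beta X_\beta\big)$, and one must build $I_X$ out of the isotopies $I_{X_\beta}$. This is the technical heart of Jaulent's argument: the isotopies are glued and passed to a limit in the compact-open topology, the successive modifications being supported in shrinking neighborhoods of the newly frozen points so that the limit exists and remains an isotopy of $M\setminus X$, while one checks along the way that trajectory homotopy classes rel endpoints in $M$ are preserved. For a limit point $z\in X\setminus\bigcup_\beta X_\beta$, write $z=\lim z_n$ with $z_n\in X_{\beta_n}$; since $I(z_n)$ is contractible in $M$, $I(z_n)\to I(z)$ uniformly, and the contractible loops form a closed subset of the loop space, $I(z)$ is contractible in $M$ as well. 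Hence $X$ is admissible, and Zorn's lemma yields a maximal admissible $X$ with isotopy $I'$.

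Property (2) then follows from maximality: if some $z\in\mathrm{Fix}(F)\setminus X$ had $I'(z)$ null-homotopic in $M\setminus X$, one could, using the standard device for absorbing a contractible fixed point into an isotopy (compare Remark \ref{rem: contractible fixed point and isotopy} and \cite[Proposition 4.1]{J}), replace $I'$ by a homotopic isotopy of $M\setminus(X\cup\{z\})$ witnessing the admissibility of $X\cup\{z\}$ (which is closed, since $z\notin X=\mathrm{Cl}(X)$), contradicting maximality. Property (4) is obtained by feeding $F|_{M\setminus X}$ and the isotopy $I'$ into Le Calvez's equivariant foliated version of the Brouwer plane translation theorem: the hypothesis required there — that the isotopy be maximal, equivalently that no fixed point of $F|_{M\setminus X}$ have a null-homotopic trajectory loop in $M\setminus X$ — is exactly property (2), and the conclusion produces the oriented topological foliation $\mathcal{F}$ on $M\setminus X$ with $I'(z)$ homotopic to an arc positively transverse to $\mathcal{F}$ for every $z\in M\setminus X$.

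Finally, for property (5): given a finite $Y\subset X$, each point of $Y$ has a contractible trajectory $I(y)$, so one applies the absorption lemma finitely many times, successively, to obtain an isotopy $I'_Y$ from $\mathrm{Id}_M$ to $F$ in $\mathrm{Homeo}(M)$ fixing $Y$ and homotopic rel endpoints to $I$; choosing the supports of these modifications to be small neighborhoods of the points of $Y$ disjoint from $X\setminus Y$, one arranges that $I'_Y(z)$ and $I'(z)$ are homotopic in $M\setminus Y$ for $z\in M\setminus X$ (using property (3)) and that $I'_Y(y')$ is contractible in $M\setminus Y$ for $y'\in X\setminus Y$. Throughout, I expect the genuinely hard steps to be the Zorn upper-bound construction of $I_X$ and the verification that Le Calvez's theorem applies verbatim to the possibly wild closed set $X$ rather than to a finite singular set; both are carried out in detail in \cite{J}, building on \cite{P1}.
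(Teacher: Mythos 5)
The paper does not prove this statement at all: it is imported verbatim as Jaulent's theorem from the reference \cite{J} (cited as ``in preparation''), so there is no internal argument to compare yours against. Judged on its own terms, your outline does reproduce the known strategy behind \cite{J} — a partial order on pairs (closed set of fixed points, witnessing isotopy), Zorn's lemma, maximality forcing the absence of contractible fixed points of the restricted isotopy, and then Le Calvez's equivariant foliated Brouwer theorem \cite{P1} applied to $M\setminus X$ to produce the transverse foliation. The small verifications you do carry out (closedness of the union of a chain inside $\mathrm{Fix}(F)$, contractibility of $I(z)$ at a limit point via uniform convergence of the loops $I(z_n)$) are correct.

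However, as a proof it has a genuine gap, which you yourself flag: every step that actually requires work is deferred to \cite{J}. Concretely, (i) the upper-bound step of Zorn's lemma — constructing a limit isotopy of $M\setminus\mathrm{Cl}\bigl(\bigcup_\beta X_\beta\bigr)$ from the chain $(I_{X_\beta})$ while keeping control of the homotopy class rel endpoints in $M$ of \emph{every} trajectory — is only described in one sentence (``glued and passed to a limit in the compact-open topology''), and this is precisely the technical heart of Jaulent's paper; (ii) the absorption argument giving property (2) needs a version of \cite[Proposition 4.1]{J} valid on the open, possibly infinitely punctured surface $M\setminus X$, together with the bookkeeping that the modified isotopy still witnesses admissibility (trajectories homotopic in $M$, not merely in $M\setminus X$); (iii) property (5) is not automatic from ``choosing the supports small'': since $X$ may accumulate on the finite set $Y$, the compatibility in $M\setminus Y$ between $I'_Y$ and $I'$ for all $z\in M\setminus X$, and the contractibility in $M\setminus Y$ of $I'_Y(z)$ for $z\in X\setminus Y$, require a separate argument, which in \cite{J} is built into the construction of $I'$ rather than added afterwards. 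So what you have is an accurate roadmap of the cited proof, not a self-contained one; since the paper itself treats the theorem as a black box, that is consistent with how it is used here, but it should not be presented as a proof.
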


\begin{proof}[Proof of the case of Proposition \ref{prop:F is not
constant if the contractible fixed points is finite} where the set
$\mathrm{Fix}_{\mathrm{Cont},I}(F)$ is infinite]\qquad\par\smallskip

Suppose that $X$, $I'$ and $\mathcal {F}$ are respectively the
closed contractible
 fixed points set, the isotopy and foliation as stated in Theorem
 \ref{thm:O.Jaulent}. Obviously, $X\neq\emptyset$ and $\mu(M\setminus X)>0$. Assume that $X'$ is the union of the connected
 components of $X$ that separates $M$. Write $M\setminus X'=\cup_{i=1}^nS_i$ where $n\geq1$ and $S_i$ are the $F$-invariant
 subsurfaces of $M$. By the definitions of $S_i$ and
 $I'$, we have the following properties
\begin{description}
   \item[(A1)] if $S_i$ is a disk, then we have $(X\setminus \partial S_i)\cap
   S_i\neq\emptyset$ (by Proposition \ref{prop:Franks' Lemma} and item (2) of Theorem \ref{thm:O.Jaulent});
   \item[(A2)] $\rho_{S_i,I'}(\mu)=0\in H_1(S_i,\mathbb{R})$ for every $i$ (by the item (1) and (3) of Theorem \ref{thm:O.Jaulent}).
 \end{description}
It implies that the sum of the number of the connected component of
$\partial S_i$ and the number of the connected component of $X\cap
S_i$ is greater than 2. Indeed, it is sufficient to prove the case
when $S_i$ is not a subsurface of sphere by A1. Identifying every
connected component of $\partial S_i$ as one point, we get a closed
surface $S'_i$ and an identity isotopy induced by $I'$, written
still $I'$, which satisfy $\rho_{S'_i,I'}(\mu)=0\in
H_1(S'_i,\mathbb{R})$ by A2. Using Theorem \ref{thm:hamiltonian map
has at least 3 contricible fixed points}, we prove the claim.

Fix one subsurface $S_i$. Similarly to Proposition \ref{prop:F is
not constant if the contractible fixed points is finite}, we choose
a point $z\in (\mathrm{Rec}^+(F)\setminus\mathrm{Fix}(F))\cap S_i$
and a leaf $\lambda\in\mathcal {F}$ containing $z$. The proofs of
Proposition 4.1 (page 150, when $S_i$ is a disk or an annulus) and
Proposition 6.1 (page 166, when $S_i$ is not a subsurface of the
sphere) in \cite{P3} say that  $\omega(\lambda)$ (resp.,
$\alpha(\lambda)$) is connected and is contained in a connected
component of $\partial S_i$ or a connected component of $X\cap S_i$,
written $X_+(\lambda)$ (resp. $X_-(\lambda)$). Moreover,
$X_-(\lambda)\neq X_+(\lambda)$. Choose a lift $\widetilde{\lambda}$
of $\lambda$. We have to consider the following four cases: the set
$\omega(\widetilde{\lambda})$ or $\alpha(\widetilde{\lambda})$
contains $\infty$ or not.

 Take two points $a\in\alpha(\lambda)$ and
$b\in\omega(\lambda)$. Let $Y=\{a,b\}$ and $I'_Y$ be the isotopy as
stated in Theorem \ref{thm:O.Jaulent}. Suppose that
$\widetilde{I}\,'_Y$ is the identity lift of $I'_Y$ to
$\widetilde{M}$. Notice that
\begin{description}
  \item[(B1)] if $z\in M\setminus X$, the arcs $I'(z)$
and $I_Y'(z)$ are homotopic in $M\setminus Y$ by item (5) of Theorem
\ref{thm:O.Jaulent}, and by item (4) of Theorem \ref{thm:O.Jaulent},
$I_Y'(z)$ is homotopic to an arc $\gamma$ joining $z$ and $F(z)$ in
$M\setminus Y$ which is positively transverse to $\mathcal{F}$;
  \item[(B2)] if $z\in X\setminus Y$, then $\gamma\wedge I_Y'(z)=0$ by
the item (5) of Theorem \ref{thm:O.Jaulent} where $\gamma$ is any
path from $a$ to $b$.
\end{description}

If both $\alpha(\widetilde{\lambda})$ and
$\omega(\widetilde{\lambda})$ do not contain $\infty$, replacing $a$
by $\alpha(\lambda)$, $b$ by $\omega(\lambda)$ and $I'$ by $I'_Y$ in
the proof of Proposition \ref{prop:F is not constant if the
contractible fixed points is finite}, then we can get
$I_\mu(\widetilde{F};a,b)>0$.

We suppose now that at least one of $\alpha(\widetilde{\lambda})$
and $\omega(\widetilde{\lambda})$ contains $\infty$. Recall that
$\widetilde{d}$ is the distance of $\widetilde{M}$ induced by
 a distance $d$ of $M$ which is induced by a Riemannian metric on $M$. Define
$\mathrm{dist}(\widetilde{z},\widetilde{C})=\inf\limits_{\widetilde{c}\in\widetilde{C}}
       \widetilde{d}(\widetilde{z},\widetilde{C})$ if
       $\widetilde{z}\in \widetilde{M}$ and $\widetilde{C}\subset
       \widetilde{M}$. Take a sequence
$\{(\widetilde{a}_m,\widetilde{b}_m)\}_{m\geq1}$ such that
\begin{itemize}
       \item $\pi(\widetilde{a}_m)=a$ and $\pi(\widetilde{b}_m)=b$\,\,, if $\alpha(\widetilde{\lambda})$
        (resp. $\omega(\widetilde{\lambda})$) does not contain $\infty$, we set $\widetilde{a}_m=\widetilde{a}$
         (resp. $\widetilde{b}_m=\widetilde{b})$ for every $m$ where $\widetilde{a}\in\pi^{-1}(a)\cap\alpha(\widetilde{\lambda})$
         (resp. $\widetilde{b}\in\pi^{-1}(b)\cap\omega(\widetilde{\lambda})$);
       \item
       $\lim\limits_{n\rightarrow+\infty}\mathrm{dist}(\widetilde{a}_m,\widetilde{\lambda})=0$
       and
       $\lim\limits_{n\rightarrow+\infty}\mathrm{dist}(\widetilde{b}_m,\widetilde{\lambda})=0$.
     \end{itemize}
For every $m$, suppose that $\widetilde{c}_m$ (resp.
$\widetilde{c}\,'_m$) is a point of $\widetilde{\lambda}$ such that
$\widetilde{d}(\widetilde{a}_m,\widetilde{c}_m)=\mathrm{dist}(\widetilde{a}_m,\widetilde{\lambda})$
(resp.
$\widetilde{d}(\widetilde{b}_m,\widetilde{c}\,'_m)=\mathrm{dist}(\widetilde{b}_m,\widetilde{\lambda})$).
Obviously, if $\alpha(\widetilde{\lambda})$ (resp.
$\omega(\widetilde{\lambda})$) does not contain $\infty$, then
$\widetilde{c}_m=\widetilde{a}_m=\widetilde{a}$ (resp.
$\widetilde{c}\,'_m=\widetilde{b}_m=\widetilde{b}$) and
$\mathrm{dist}(\widetilde{a}_m,\widetilde{\lambda})=0$ (resp.
$\mathrm{dist}(\widetilde{b}_m,\widetilde{\lambda})=0$). Choose a
simple path $\widetilde{l}_m$ (resp. $\widetilde{l}\,'_m$) from
$\widetilde{a}_m$ (resp. $\widetilde{c}\,'_m$) to $\widetilde{c}_m$
(resp. $\widetilde{b}_m$) such that the length of $\widetilde{l}_m$
(resp. $\widetilde{l}\,'_m$) is
$\mathrm{dist}(\widetilde{a}_m,\widetilde{\lambda})$ (resp.
$\mathrm{dist}(\widetilde{b}_m,\widetilde{\lambda})$). Here, we set
the simple path is empty set if its length is $0$. Let
$\widetilde{\gamma}_m=\widetilde{l}_m\widetilde{\lambda}_m\widetilde{l}\,'_m$
where $\widetilde{\lambda}_m$ is the sub-path of
$\widetilde{\lambda}$ from $\widetilde{c}_m$ to
$\widetilde{c}\,'_m$. Then $\widetilde{\gamma}_m$ is a path from
$\widetilde{a}_m$ to
$\widetilde{b}_m$. 

We know that, for every $m\geq1$, the linking number
$i(\widetilde{F};\widetilde{a}_m,\widetilde{b}_m,z')$ exists for
$\mu$-almost every $z'\in M\setminus\{a,b\}$. Hence, the linking
number $i(\widetilde{F};\widetilde{a}_m,\widetilde{b}_m,z')$ exists
on a full measure subset of $M\setminus\{a,b\}$ for all $m$. By the
property B2 above, we have
$i(\widetilde{F};\widetilde{a}_m,\widetilde{b}_m,z')=0$ if $z'\in
X\setminus\{a,b\}$. We now claim that
$\liminf\limits_{m\rightarrow+\infty}
i(\widetilde{F};\widetilde{a}_m,\widetilde{b}_m,z')\geq0$ for
$\mu$-almost every $z'\in \mathrm{Rec}^+(F)\setminus X$.

Fix one point $z'\in \mathrm{Rec}^+(F)\setminus X$ and choose a disk
$U$ containing $z'$ (here again, we suppose that $U\cap
\lambda=\emptyset$). By the property B1 and the construction of
$\widetilde{\gamma}_m$, for every $n\geq1$, there exists
$m(z',n)\in\mathbb{N}$ such that when $m\geq m(z',n)$, the value
$$L_n(\widetilde{F};\widetilde{a}_m,\widetilde{b}_m,z')=
\widetilde{\gamma}_m\wedge\widetilde{\Gamma}^n_{\widetilde{I}\,'_Y,z'}
=\pi(\widetilde{\gamma}_m)\wedge\Gamma^n_{I\,'_Y,z'}$$ is constant
with regard to $m$ and

\begin{equation}\label{Ln geq 0}
    L_n(\widetilde{F};\widetilde{a}_m,\widetilde{b}_m,z')\geq0.
\end{equation}

We now suppose that $$\mu\{z'\in \mathrm{Rec}^+(F)\setminus X\mid
\liminf\limits_{m\rightarrow+\infty}
i(\widetilde{F};\widetilde{a}_m,\widetilde{b}_m,z')<0\}>0.$$

There exists a small number $c>0$ such that
\begin{equation}\label{mu(E)>c}
    \mu\{z'\in
\mathrm{Rec}^+(F)\setminus X\mid
\liminf\limits_{m\rightarrow+\infty}
i(\widetilde{F};\widetilde{a}_m,\widetilde{b}_m,z')<-c\}>c.
\end{equation}\smallskip

Write $E=\{z'\in \mathrm{Rec}^+(F)\setminus X\mid
\liminf\limits_{m\rightarrow+\infty}
i(\widetilde{F};\widetilde{a}_m,\widetilde{b}_m,z')<-c\}$. Fix a
point $z'\in E$ and a disk $U$ containing $z'$ as before. By taking
subsequence if necessary, we may suppose that
$$-\infty\leq\lim\limits_{m\rightarrow+\infty}
i(\widetilde{F};\widetilde{a}_m,\widetilde{b}_m,z')<-c.$$ Then there
exists $N(z')\in \mathbb{N}$ such that when $m\geq N(z')$, we have
$$
i(\widetilde{F};\widetilde{a}_m,\widetilde{b}_m,z')=\lim_{n\rightarrow+\infty}
\frac{L_n(\widetilde{F};\widetilde{a}_m,\widetilde{b}_m,z')}{\tau_n(z')}<-c.$$
Fix $m_0\geq N(z')$. There exists $n(z',m_0)\in\mathbb{N}$ such that
when $n\geq n(z',m_0)$, we have
$$\frac{L_n(\widetilde{F};\widetilde{a}_{m_0},\widetilde{b}_{m_0},z')}{\tau_n(z')}<-c.$$
Fix $n_0\geq n(z',m_0)$. Then, we have
$$L_{n_0}(\widetilde{F};\widetilde{a}_{m_0},\widetilde{b}_{m_0},z')<-c\tau_{n_0}(z').$$

By the inequality \ref{Ln geq 0}, there exists $m(z',n_0)>m_0$ such
that, when $m\geq m(z',n_0)$, we have
$$L_{n_0}(\widetilde{F};\widetilde{a}_m,\widetilde{b}_m,z')\geq0.$$
Fix $m_1\geq m(z',n_0)$. There exists $n(z',m_1)>n_1$ such that when
$n\geq n(z',m_1)$, we have
$$\frac{L_n(\widetilde{F};\widetilde{a}_{m_1},\widetilde{b}_{m_1},z')}{\tau_n(z')}<-c.$$
Fix $n_1\geq n(z',m_1)$. Then, we have
$$L_{n_1}(\widetilde{F};\widetilde{a}_{m_1},\widetilde{b}_{m_1},z')<-c\tau_{n_1}(z').$$

By induction, we can construct a sequence
$\{(m_i,n_i)\}_{i\geq0}\subset \mathbb{N}\times\mathbb{N}$ such that
\begin{description}
  \item[(C1)]\, $\{m_i\}_{i\geq0}$ and $\{n_i\}_{i\geq0}$ are strictly increasing
sequences;
  \item[(C2)]\, for every $i\geq0$,
  $$L_{n_i}(\widetilde{F};\widetilde{a}_{m_i},\widetilde{b}_{m_i},z')<-c\tau_{n_i}(z')\quad
\mathrm{and} \quad
L_{n_i}(\widetilde{F};\widetilde{a}_{m_{i+1}},\widetilde{b}_{m_{i+1}},z')\geq0.$$
\end{description}

As the positively transverse property of $\mathcal {F}$, it is easy
to see that the negative part of
$L_{n_i}(\widetilde{F};\widetilde{a}_{m_i},\widetilde{b}_{m_i},z')$
only comes from the intersection $l_{m_i}$ or $l'_{m_i}$, or both of
them with the curve $\widetilde{\Gamma}^n_{\widetilde{I}\,'_Y,z'}$
in the case where $\alpha(\widetilde{\lambda})$ or
$\omega(\widetilde{\lambda})$ contains $\infty$, or both of them
contain $\infty$.

We deal with the case where both $\alpha(\widetilde{\lambda})$ and
$\omega(\widetilde{\lambda})$ contain $\infty$, and other cases
follow similarly. By the item (5) of Theorem \ref{thm:O.Jaulent}, it
is easy to see that there is a positive integer $N$ such that the
number of times that $I_Y'(x)$ rotates around $a$ (resp. $b$) is
less than $N$ when $x$ is close to $a$ (resp. $b$). Since $I_Y'$
fixes $a$ and $b$, the measure $\mu$ has no atoms on
$\mathrm{Fix}_{\mathrm{Cont},I}(F)$, the construction of
$\widetilde{\lambda}_m$, and the property C2, there must be an open
disks sequence $\{U^a_i\}_{i\geq0}$ that contains the set
$(I_Y')^{-1}(\pi(\widetilde{l}_{m_i}))=\cup_{y\in\pi(\widetilde{l}_{m_i})}(I_Y')^{-1}(y)$
and an open disks sequence $\{U^b_i\}_{i\geq0}$ that contains the
set
$(I_Y')^{-1}(\pi(\widetilde{l}\,'_{m_i}))=\cup_{y\in\pi(\widetilde{l}\,'_{m_i})}(I_Y')^{-1}(y)$
satisfying
\begin{description}
  \item[(D1)] $U^a_{i+1}\subset U^a_{i}$ (resp. $U^b_{i+1}\subset U^b_{i}$)
 and $\mu(U^a_i)\rightarrow0$ (resp. $\mu(U^b_i)\rightarrow0$) as
 $i\rightarrow+\infty$;
  \item[(D2)] for every $i\geq0$,
 $$\frac{1}{\tau_{n_i}(z')}\sum_{j=0}^{\tau_{n_i}(z')-1}\chi_{U^a_i}\circ
 F^j(z')>\frac{c}{2N}\quad \mathrm{or} \quad\frac{1}{\tau_{n_i}(z')}\sum_{j=0}^{\tau_{n_i}(z')-1}\chi_{U^b_i}\circ
 F^j(z')>\frac{c}{2N},$$ where $\chi_U$ is the characteristic function
 of $U\subset M$.
\end{description}

Denote by $\chi_{U}^*(x)$ the limit function of
$\frac{1}{n}\sum\limits_{j=0}^{n-1}\chi_{U}\circ
 F^j(x)$ as $n\rightarrow+\infty$ for $\mu$-almost every $x\in M$ (by Birkhoff Ergodic theorem). By the property D2 and the inequality \ref{mu(E)>c}, we have
 $$\mu(\{x\in \mathrm{Rec}^+(F)\setminus X\mid\chi_{U^a_i}^*(x)\geq\frac{c}{2N}\quad\mathrm{or}
 \quad\chi_{U^b_i}^*(x)\geq\frac{c}{2N}\})>c$$ for every $i$. This implies that
 $\int_M(\chi_{U^a_i}^*(x)+\chi_{U^b_i}^*(x))d\mu\geq\frac{c^2}{2N}>0$ for every $i$. On the
 other hand, by Birkhoff Ergodic theorem and the property D1, we have $$\int_M(\chi_{U^a_i}^*(x)+\chi_{U^b_i}^*(x))d\mu
 =\int_M(\chi_{U^a_i}(x)+\chi_{U^b_i}(x))d\mu=\mu(U^a_i)+\mu(U^b_i)\rightarrow0$$
 as $i\rightarrow+\infty$, which is impossible.\smallskip

Finally, we get
\begin{equation}\label{liminf i(F,am,bm,z)geq 0}
    \liminf\limits_{m\rightarrow+\infty}i(\widetilde{F};\widetilde{a}_m,\widetilde{b}_m,z')\geq0
\end{equation}
for $\mu$-almost every point
$z'\in\mathrm{Rec}^+(F)\setminus\{a,b\}$.\smallskip

By the continuity of $I_Y'$ and the hypothesis on $\mu$, there
exists an open free disk $U$ containing $z$ such that $\mu(U)>0$ and
\begin{equation}\label{L1 >0}
    \lim\limits_{m\rightarrow+\infty}L_1(\widetilde{F};\widetilde{a}_m,\widetilde{b}_m,z')>0
\end{equation}
when $z'\in U\cap\mathrm{Rec}^+(F)$.

 As the rotation vector of $\mu$
vanishes, by Proposition \ref{clm:L is well defined}, the
inequalities \ref{liminf i(F,am,bm,z)geq 0}, \ref{L1 >0} and Fatou
Lemma,
 we have
\begin{eqnarray*}
I_\mu(\widetilde{F};a,b)&=&\lim_{m\rightarrow+\infty}i_\mu(\widetilde{F};\widetilde{a}_m,\widetilde{b}_m)\\
&=&\lim_{m\rightarrow+\infty}\int_{M\setminus\{a,b\}}i(\widetilde{F};\widetilde{a}_m,\widetilde{b}_m,z)\,\mathrm{d}\mu\\&\geq&
\int_{M\setminus\{a,b\}}\liminf_{m\rightarrow+\infty}i(\widetilde{F};\widetilde{a}_m,\widetilde{b}_m,z)\,\mathrm{d}\mu\\&\geq&
\int_{\bigcup_{k\geq0}F^k(U)}\liminf_{m\rightarrow+\infty}i(\widetilde{F};\widetilde{a}_m,\widetilde{b}_m,z)\,\mathrm{d}\mu \\
  &=& \int_{U}\liminf_{m\rightarrow+\infty}\tau(z)i(\widetilde{F};\widetilde{a}_m,\widetilde{b}_m,z)\,\mathrm{d}\mu \\
   &=&\int_{U}\liminf_{m\rightarrow+\infty}\tau^{*}(z)i(\widetilde{F};\widetilde{a}_m,\widetilde{b}_m,z)\,\mathrm{d}\mu \\
   &=&\int_{U}\liminf_{m\rightarrow+\infty}L^{*}(\widetilde{F};\widetilde{a}_m,\widetilde{b}_m,z)\,\mathrm{d}\mu  \\
   &=&\int_{U}\liminf_{m\rightarrow+\infty}L_1(\widetilde{F};\widetilde{a}_m,\widetilde{b}_m,z)\,\mathrm{d}\mu \\
   &>& 0.
\end{eqnarray*}
\end{proof}

As a immediate consequence of Proposition \ref{prop:F is not
constant if the contractible fixed points is finite}, we have the
following corollary.\smallskip

\begin{cor}\label{prop:F is not constant if F is not Id}Suppose that $F$
is the time-one map of an identity isotopy $I$ on a closed oriented
surface $M$ with $g\geq 1$, $\mu\in\mathcal {M}(F)$ has total
support, no atoms on $\mathrm{Fix}_{\mathrm{Cont},I}(F)$ and
$\rho_{M,I}(\mu)=0$. If 
$F\in \mathrm{Diff}_*(M)\setminus\{\,\mathrm{Id}_{M}\}$, then
$\sharp\sigma(\widetilde{F})\geq2$.
\end{cor}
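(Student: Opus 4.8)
The plan is to obtain this corollary as a direct consequence of Proposition \ref{prop:F is not constant if the contractible fixed points is finite}, so essentially all that has to be done is to verify that the hypotheses of that proposition are satisfied in the present setting. First I would note the inclusions $\mathrm{Diff}_*(M)\subset\mathrm{Diff}(M)$ and $\mathrm{Diff}_*(M)\subset\mathrm{Homeo}_*(M)$. From $F\in\mathrm{Diff}_*(M)\setminus\{\mathrm{Id}_M\}$ we thus get on the one hand $F\in\mathrm{Diff}(M)$, and on the other hand $F\in\mathrm{Homeo}_*(M)\setminus\{\mathrm{Id}_M\}$. By Proposition \ref{prop:F is differentiable on every point of M, WB}, the fact that $F\in\mathrm{Diff}(M)$ forces the isotopy $I$ to satisfy the WB-property.

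The remaining assumptions needed by Proposition \ref{prop:F is not constant if the contractible fixed points is finite} — namely that $\mu\in\mathcal{M}(F)$ has total support, has no atoms on $\mathrm{Fix}_{\mathrm{Cont},I}(F)$, and satisfies $\rho_{M,I}(\mu)=0$ — are assumed verbatim in the statement of the corollary. Hence Proposition \ref{prop:F is not constant if the contractible fixed points is finite} applies and yields that the action function $L_\mu$ on $\mathrm{Fix}_{\mathrm{Cont},I}(F)$ is not constant. By the definition of the action spectrum in Section \ref{sec:action spectrum}, $\sigma(\widetilde{F})$ is precisely the range of $L_\mu$ on $\mathrm{Fix}_{\mathrm{Cont},I}(F)$, so ``$L_\mu$ not constant'' is equivalent to $\sharp\sigma(\widetilde{F})\geq 2$, which is exactly the assertion.

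Since no new argument is involved, there is no real obstacle here; the only thing to be careful about is the bookkeeping of the two inclusions above together with the invocation of Proposition \ref{prop:F is differentiable on every point of M, WB}, which is what supplies the WB-property required as input to Proposition \ref{prop:F is not constant if the contractible fixed points is finite}. (One could additionally remark that the action function is moreover bounded and, in the $C^1$ case, continuous by Propositions \ref{prop:boundedness} and \ref{prop:the continuity of lmu}, but this is not needed for the stated conclusion and I would omit it from the proof.)
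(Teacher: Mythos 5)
Your proposal is correct and coincides with the paper's treatment: the paper states this corollary as an immediate consequence of Proposition \ref{prop:F is not constant if the contractible fixed points is finite}, the point being exactly that $F\in\mathrm{Diff}_*(M)\subset\mathrm{Diff}(M)$ gives the WB-property via Proposition \ref{prop:F is differentiable on every point of M, WB} while the other hypotheses carry over verbatim. Your final remark identifying $\sharp\sigma(\widetilde{F})\geq 2$ with non-constancy of $L_\mu$ (using the description of $\sigma(\widetilde{F})$ when $\rho_{M,I}(\mu)=0$) is also the intended reading.
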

\bigskip

From Proposition \ref{clm:L is well defined} and Proposition
\ref{prop:F is not constant if the contractible fixed points is
finite}, we can get the following generalization of Theorem 2.1.\,C
in \cite{P} on closed surface.\smallskip

\newenvironment{cor04}{\noindent\textbf{Corollary \ref{cor:the symplectic action when M with genus bigger
1}}~\itshape}{\par}
\begin{cor04}\label{cor:g is great than 1}Let $F$ be the time-one map of an identity isotopy $I$ on a closed oriented
surface $M$ with $g>1$. If $I$ satisfies the WB-property and $F\in
\mathrm{Homeo}_*(M)\setminus\{\,\mathrm{Id}_{M}\}$, $\mu\in\mathcal
{M}(F)$ has total support and no atoms on
$\mathrm{Fix}_{\mathrm{Cont},I}(F)$,
 then $\sharp\sigma(\widetilde{F})\geq2$.
\end{cor04}
\begin{proof}
If $\rho_{M,I}(\mu)=0$, by Proposition \ref{prop:F is not constant
if the contractible fixed points is finite} and Proposition
\ref{prop:F is not constant if F is not Id}, there exist two
distinct contractible fixed points $a$ and $b$ of $F$ such that
$I_{\mu}(\widetilde{F};a,b)\neq0$, thus for any their lifts
$\widetilde{a}$ and $\widetilde{b}$ we have
$i_{\mu}(\widetilde{F};\widetilde{a},\widetilde{b})=I_{\mu}(\widetilde{F};a,b)\neq
0$.\smallskip

If $\rho_{M,I}(\mu)\neq0$, by the proof of Proposition \ref{clm:L is
well defined}, there exists $\alpha\in G^{\,*}$ such that
$\varphi(\alpha)\wedge\rho_{M,I}(\mu) \neq0$ where $\varphi$ is the
Hurewitz homomorphism from $G$ to $H_1(M,\mathbb{Z})$. By
Lefschetz-Nielsen's formula, we know that
$\mathrm{Fix}_{\mathrm{Cont},I}(F)\neq \emptyset$. Choose $a\in
\mathrm{Fix}_{\mathrm{Cont},I}(F)$, and an isotopy $I'$ homotopic to
$I$ that fixes $a$. For any lifts $\widetilde{a}$ and
$\alpha(\widetilde{a})$ of $a$, we get that
$i_\mu(\widetilde{F};\widetilde{a},\alpha(\widetilde{a}))=\varphi(\alpha)\wedge\rho_{M,I}(\mu)
\neq0$. We have completed the proof.
\end{proof}\smallskip

Let us now give two examples to see what will happen when
$\mathrm{Supp}(\mu)\neq M$.

\begin{exem}\label{exem:T2 and supp(u)notM}
Consider the following smooth identity isotopy on $\mathbb{R}^2$:
$\widetilde{I}=(\widetilde{F}_t)_{t\in[0,1]}:
(x,y)\mapsto(x+\frac{t}{2\pi}\cos(2\pi y),y+\frac{t}{2\pi}\sin(2\pi
y))$. It induces an identity smooth isotopy $I=(F_t)_{t\in[0,1]}$ on
$\mathbb{T}^2$. Let $\mu$ have constant density on
$\{(x,y)\in\mathbb{T}^2\mid y=0\,\,\mathrm{or}\,\, y=\frac{1}{2}\}$
and vanish on elsewhere. Obviously, $\rho_{\mathbb{T}^2,I}(\mu)=0$
but $\mathrm{Fix}_{\mathrm{Cont},I}(F_1)=\emptyset$.
\end{exem}

The example \ref{exem:T2 and supp(u)notM} tells us that there is no
sense to talk about the action function when $g=1$ and
$\mathrm{Supp}(\mu)\neq M$. The following example belongs to Le
Calvez \cite[page 73]{P1} who mentioned me that this example implies
that Proposition \ref{prop:F is not constant if the contractible
fixed points is finite} is not true anymore in the case where $g>1$
and $\mathrm{Supp}(\mu)\neq M$. For convenience of readers, we
provide the example.

\begin{exem}\label{exem:M and supp(u)notM}
Let $M$ be the closed orientated  surface with $g=2$ and $L$ be the
morse function on $M$ which has six critical points $z_1,\ldots,z_6$
such that the points $z_i$ ($2\leq i\leq 5$) are saddle and the six
critical values $L(z_i)=a_i$ ($1\leq i\leq 6$) satisfy
$a_1<a_2<\cdots<a_6$ (see Figure \ref{fig:counterexample-measure}).
Fix $b_2\in]a_2,a_3[$ and $b_4\in]a_4,a_5[$. Denote by $C_1$,$C_2$
the two connected components of $L^{-1}(\{b_2\})$ and by $C_3$,$C_4$
the two connected components of $L^{-1}(\{b_4\})$. Fix
$b_2'\in]a_2,b_2[$ and $b_4'\in]b_4,a_5[$ and modify the Hamiltonian
vector field with regard to $L$ on the closed annulus
$L^{-1}([a_1,b_2'])$ and $L^{-1}([b_4',a_6])$ to construct two
components of Reeb and obtain a vector field $\xi$ on $M$ such that
the critical points $z_3$ and $z_4$ of $L$ are the only two singular
points of $\xi$.

\begin{figure}[ht]
\begin{center}\scalebox{1.0}[1.0]{\includegraphics{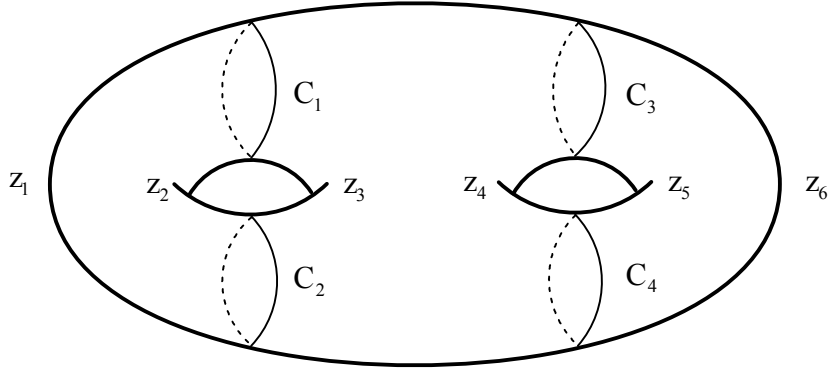}}\end{center}
\caption{The counterexample when $\mathrm{Supp}(\mu)\neq M$ and
$g=2$\label{fig:counterexample-measure}}
\end{figure}

The vector field $\xi$ induces a natural orientation on the circles
$C_i$. For two distinct points $z$ and $z'$ of $C_i$, denote by
$[z,z']_i$ the segment which joins $z$ and $z'$ on $C_i$ with the
orientation. Choose the points as follows (see Figure
\ref{fig:counterexample-measure-2})\begin{itemize}
\item $z_{1,3}$,$z_{3,1}$,$z_{1,4}$,$z_{4,1}$
on $C_1$ whose cyclic order is induced by the orientation of $C_1$;
\item $z_{2,4}$,$z_{4,2}$ on $C_2$;
\item $z_{1,3}'$,$z_{3,1}'$ on $C_3$;
\item $z_{1,4}'$,$z_{4,1}'$,$z_{4,2}'$,$z_{2,4}'$
on $C_4$ whose cyclic order is induced by the orientation of $C_4$.
\end{itemize}
We can construct the following disjoint segments in
$M\setminus\{z_3,z_4\}$
\begin{itemize}
  \item an oriented segment $\gamma_{1,3}$ form $z_{1,3}$ to
  $z_{1,3}'$;
  \item an oriented segment $\gamma_{3,1}$ form $z_{3,1}'$ to
  $z_{3,1}$;
  \item an oriented segment $\gamma_{1,4}$ form $z_{1,4}$ to
  $z_{1,4}'$;
  \item an oriented segment $\gamma_{4,1}$ form $z_{4,1}'$ to
  $z_{4,1}$;
  \item an oriented segment $\gamma_{2,4}$ form $z_{2,4}$ to
  $z_{2,4}'$;
  \item an oriented segment $\gamma_{4,2}$ form $z_{4,2}'$ to
  $z_{4,2}$.
\end{itemize}

\begin{figure}[ht]
\begin{center}\scalebox{1.0}[1.0]{\includegraphics{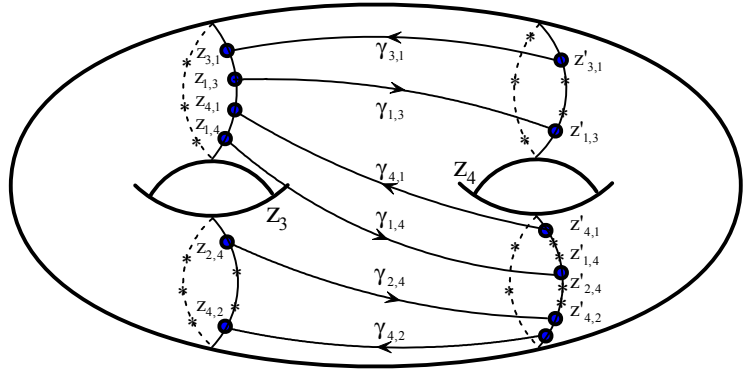}}\end{center}
\caption{The counterexample when $\mathrm{Supp}(\mu)\neq M$ and
$g=2$\label{fig:counterexample-measure-2}}
\end{figure}

The following three closed curves
$$\gamma_{1,3}\cup\gamma_{3,1}\cup[z_{1,3},z_{3,1}]_1\cup[z_{1,3}',z_{3,1}']_3,$$
$$\gamma_{1,4}\cup\gamma_{4,1}\cup[z_{1,4},z_{4,1}]_1\cup[z_{1,4}',z_{4,1}']_4,$$
$$\gamma_{2,4}\cup\gamma_{4,2}\cup[z_{4,2},z_{2,4}]_2\cup[z_{4,2}',z_{2,4}']_4$$
bound three disjoint closed disks in $L^{-1}([b_2,b_4])$. Up to
multiplying the vector field by a strict positive function, denote
by $(\theta_t)_{t\in \mathbb{R}}$ the flow with regard to $\xi$, we
can suppose that
\begin{itemize}
  \item the segments $\theta_t(\gamma_{i,j}),t\in[-1,1]$ are
  pairwise disjoint for all $(i,j)$;
  \item the segments $\theta_t(\gamma_{i,j})$ and
  $\theta_{t'}(\gamma_{i',j'})$ are
  pairwise disjoint for all $(t,t')\in[-1,1]^2$ if $(i,j)\neq(i',j')$;
  \item $z_{3,1}'=\theta_3(z_{1,3}')$,
  $z_{1,3}'=\theta_4(z_{3,1}')$;
  \item $z_{1,4}=\theta_4(z_{3,1})$,
  $z_{1,3}=\theta_4(z_{4,1})$;
  \item $z_{4,1}'=\theta_3(z_{1,4}')$,
  $z_{4,2}'=\theta_3(z_{4,1}')$, $z_{1,4}'=\theta_3(z_{2,4}')$;
  \item $z_{2,4}=\theta_3(z_{4,2})$,
  $z_{4,2}=\theta_4(z_{2,4})$.
\end{itemize}

We now fix neighborhoods $U_{i,j}$ of $\gamma_{i,j}$ such that the
six neighborhoods are pairwise disjoint and $$U_{i,j}\cap
L^{-1}([b_2,b_4])\subset \bigcup_{-1\leq
t\leq1}\theta_t(\gamma_{i,j}).$$

We consider an isotopy $(G_t)_{t\in[0,1]}$ whose support is in the
union of the six neighborhoods $U_{i,j}$ such that
\begin{itemize}
  \item the arc $t\mapsto G_t(z_{i,j})$ is the segment
  $\gamma_{i,j}$, if $i\in\{1,2\}$;
  \item the arc $t\mapsto G_t(z_{i,j}')$ is the segment
  $\gamma_{i,j}$, if $i\in\{3,4\}$.
\end{itemize}

After that, we define an isotopy $I=(F_t)_{t\in[0,1]}$ as follows
\begin{equation*}F_t=
\begin{cases}\theta_{4t}& \textrm{if} \quad t\in [0,1/2];
\\G_{2t-1}\circ\theta_{2}& \textrm{if} \quad
t\in [1/2,1].\end{cases}
\end{equation*}

By construction above, we get the point $z_{1,3}'$ is a periodic
point of $F_1$ with periodic 20 and the arc $\prod\limits_{0\leq
i\leq 19}I(F_1^i(z_{1,3}'))$ is homologic to the sum of circles
$C_i$ in $M\setminus\{z_3,z_4\}$ and hence homologic to zero. Let
the measure
$\mu=\frac{1}{20}\sum\limits_{i=0}^{19}\delta_{F_1^i(z_{1,3}')}$,
where $\delta_z$ is the Dirac measure. The points $z_3$ and $z_4$
are the only two contractible fixed points of $F_1$. Obviously,
$\rho_{M,I}(\mu)=0$ and hence the action function is constant.
\end{exem}

\smallskip

\section{Application to the group of conservative
diffeomorphisms}\label{sec:apply to the group} Fix a Borel finite
measure $\mu$ on $M$, and denote by $\mathrm{Homeo}_*(M,\mu)$ the
subgroup of $\mathrm{Homeo}_*(M)$ whose element preserves the
measure $\mu$. 
Denote by $\mathrm{Hameo}(M,\mu)$ the subset of
$\mathrm{Homeo}_*(M,\mu)$ whose elements satisfy furthermore that
$\rho_{M,I}(\mu)=0$. When $M$ is a compact surface, Franks (see
\cite{F1} for the details) has proved that $\mathrm{Hameo}(M,\mu)$
forms a group. In this section, we suppose that the support of $\mu$
is $M$.

Denote by $\mathrm{Ham}^1(M,\mu)$ the group
$\mathrm{Hameo}(M,\mu)\cap \mathrm{Diff}^1(M)$ and by
$\mathrm{Diff}^1_*(M,\mu)$ the group $\mathrm{Homeo}_*(M,\mu)\cap
\mathrm{Diff}^1(M)$. For convenience, we write $M_g$ the oriented
closed surface with the genus $g\geq1$.

%

In the first part of this section, we will discuss the torsion in
the group $\mathrm{Hameo}(\mathbb{T}^2,\mu)$ and
$\mathrm{Homeo}_*(M_g,\mu)$ with $g>1$. Moreover, by using a result
of Fathi \cite{Fathi}, we can get an indirect proof about periodic
homeomorphisms of surfaces, that is, the group
$\mathrm{Homeo}_*(M_g)$ with $g>1$ is torsion free. In the second
part, we will study the distortion in the group
$\mathrm{Ham}^1(\mathbb{T}^2,\mu)$ and the group
$\mathrm{Diff}^1_*(M_g,\mu)$ with $g>1$. The second part links to
Zimmer conjecture on closed oriented surfaces.\bigskip

\subsection{The absence of torsion in  $\mathrm{Hameo}(\mathbb{T}^2,\mu)$
and $\mathrm{Homeo}_*(M_g,\mu)$ with $g>1$}\label{subsec:application
to conservative diffeomorphism} From Corollary
\ref{cor:Imu(Fq,a,b)=qImu(F,a,b)} and Proposition \ref{prop:F is not
constant if the contractible fixed points is finite}, we have the
following proposition, which is a generalization of Proposition
2.6.\,A in \cite{P}.\smallskip

\newenvironment{prop04}{\noindent\textbf{Proposition \ref{prop:ggeq1 no torsion}}~\itshape}{\par}

\begin{prop04}Under the same hypotheses
as Proposition \ref{prop:F is not constant if the contractible fixed
points is finite}, there exists a constant $C>0$ such that
$\mathrm{width}(\widetilde{F}^n)\geq C\cdot n$ for every $n\geq1$.
\end{prop04}\smallskip

The proposition \ref{prop:ggeq1 no torsion} implies that the groups
$\mathrm{Hameo}(M_g,\mu)$ ($g\geq1$) is torsion free.
\bigskip

From Corollary \ref{cor:imu(Fq,a,b)=qimu(F,a,b)} and Corollary
\ref{cor:the symplectic action when M with genus bigger 1}, we then
get the following conclusion:\smallskip

\newenvironment{prop05}{\noindent\textbf{Proposition \ref{prop:g>1 no torsion}}~\itshape}{\par}

\begin{prop05}
Under the same hypotheses as Corollary \ref{cor:the symplectic
action when M with genus bigger 1}, there exists a constant $C>0$
such that $\mathrm{width}(\widetilde{F}^n)\geq C\cdot n$ for every
$n\geq1$.
\end{prop05}\smallskip

Denote by $\mathscr{I}\mathscr{S}_*(M)$ the group of all identity
isotopies $I=(F_t)_{t\in[0,1]}$ on $M$, where the composition is
given by Equation \ref{the product operate of the symplectic group
of isotopy}. Denote by $\mathscr{I}\mathscr{S}_*(M,\mu)$ the
subgroup of $\mathscr{I}\mathscr{S}_*(M)$ whose element
$(F_t)_{t\in[0,1]}\in\mathscr{I}\mathscr{S}_*(M,\mu)$ satisfies
$(F_t)_*\mu=\mu$ for all $t$. We say that two identity isotppies
$(F_t)_{t\in[0,1]}$
 and $(G_t)_{t\in[0,1]}$ are homotopic with fixed extremities if $F_1=G_1$
  and there exists a continuous map $[0,1]^2\rightarrow\mathrm{Homeo}(M)$, $(t,s)\mapsto
H_{t,s}$ such that $H_{0,s}=\mathrm{Id}_{M}$, $H_{1,s}=F_1=G_1$,
$H_{t,0}=F_t$ and $H_{t,1}=G_t$. The homotopic relation is an
equivalence relation on $\mathscr{I}\mathscr{S}_*(M)$ (resp.
$\mathscr{I}\mathscr{S}_*(M,\mu)$). We note the set of equivalence
classes by $\mathscr{H}_*(M)$ (resp. $\mathscr{H}_*(M,\mu)$). It is
not difficult to see that $\mathscr{H}_*(M)$ and
$\mathscr{H}_*(M,\mu)$ are groups. Indeed, $\mathscr{H}_*(M)$ is the
universal covering space of $\mathrm{Homeo}_*(M)$ (see
\cite{Fathi}).

We can divide the group $\mathscr{H}_*(M,\mu)$ into two sets by
whether an element $I\in\mathscr{H}_*(M,\mu)$ satisfying the
WB-property. In the subset of $\mathscr{H}_*(M,\mu)$ whose elements
satisfy the WB-property, denoted by $\mathscr{W}$, we can continue
to divide this set into two sets by whether an element
$I\in\mathscr{W}$ satisfying $\rho_{M,I}(\mu)=0$.

Given $I\in\mathscr{H}_*(M,\mu)$. If $I\notin\mathscr{W}$, by
Definition \ref{def:wb property and b property}, there exist two
fixed points $\widetilde{a}$ and $\widetilde{b}$ of $\widetilde{F}$
such that $i(\widetilde{F};\widetilde{a},\widetilde{b})\neq 0$. By
Equation \ref{eq:linking number for two fixed points}, we have
$i(\widetilde{F}^n;\widetilde{a},\widetilde{b})=n\cdot
i(\widetilde{F};\widetilde{a},\widetilde{b})\neq 0$ for every
$n\in\mathbb{N}$, that is,
$|i(\widetilde{F}^n;\widetilde{a},\widetilde{b})|\succeq n$. If
$\rho_{M,I}(\mu)\neq0$, by the morphism property of
$\rho_{M,\cdot}(\mu): \mathscr{H}_*(M,\mu)\rightarrow
H_1(M,\mathbb{R})$:
$\rho_{M,II'}(\mu)=\rho_{M,I}(\mu)+\rho_{M,I'}(\mu)$, we have
$\|\rho_{M,I^n}(\mu)\|_{H_1(M,\mathbb{R})}\succeq n$. From
Proposition \ref{prop:ggeq1 no torsion} and Proposition
\ref{prop:g>1 no torsion}, we get

\begin{cor}\label{cor:no torsion cor}
The groups $\mathscr{H}_*(M_g,\mu)$ ($g\geq1$) are torsion
free.\end{cor}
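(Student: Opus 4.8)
The plan is to show that no nontrivial element of $\mathscr{H}_*(M_g,\mu)$ has finite order. Suppose $I\in\mathscr{H}_*(M_g,\mu)$ satisfies $I^n=\mathrm{id}$ for some $n\geq1$ with $I\neq\mathrm{id}$, and let us derive a contradiction. Write $F=F_1$ for the time-one map of $I$ and $\widetilde{F}$ for the lift determined by $I$. Since $\mathscr{H}_*(M_g,\mu)$ sits inside $\mathscr{H}_*(M_g)$, the universal cover of $\mathrm{Homeo}_*(M_g)$, the equality $I^n=\mathrm{id}$ means precisely that $I^n=(F_t)_{t\in[0,n]}$ is homotopic with fixed extremities to the trivial isotopy; in particular $F^n=\mathrm{Id}_M$, the time-one map of the lifted isotopy of $I^n$ is $\mathrm{Id}_{\widetilde{M}}$, and its associated identity isotopy is trivial. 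I will then run through the partition of $\mathscr{H}_*(M_g,\mu)$ recalled just before the corollary: whether or not $I$ lies in $\mathscr{W}$ (the elements satisfying the WB-property), and, within $\mathscr{W}$, whether or not $\rho_{M,I}(\mu)=0$.

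If $I\notin\mathscr{W}$, then there are distinct fixed points $\widetilde{a},\widetilde{b}$ of $\widetilde{F}$ with $i(\widetilde{F};\widetilde{a},\widetilde{b})\neq0$, and, as recalled before the corollary, the lifted isotopy of $I^n$ has linking number $i(\widetilde{F}^n;\widetilde{a},\widetilde{b})=n\,i(\widetilde{F};\widetilde{a},\widetilde{b})\neq0$. But that lifted isotopy is homotopic with fixed extremities to the trivial isotopy of $\widetilde{M}$, hence has the same linking numbers, all of which are $0$ (see \ref{sec:linking number in the simple connected case}); this forces $i(\widetilde{F};\widetilde{a},\widetilde{b})=0$, a contradiction. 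If $I\in\mathscr{W}$ but $\rho_{M,I}(\mu)\neq0$, then the morphism property of $\rho_{M,\cdot}(\mu)\colon\mathscr{H}_*(M_g,\mu)\to H_1(M_g,\mathbb{R})$ gives $\rho_{M,I^n}(\mu)=n\,\rho_{M,I}(\mu)\neq0$, whereas the trivial isotopy has vanishing rotation vector; again a contradiction. (In particular a torsion element automatically satisfies $\rho_{M,I}(\mu)=0$.)

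There remains the case $I\in\mathscr{W}$ with $\rho_{M,I}(\mu)=0$. If $F=\mathrm{Id}_M$, then $I$ is a loop based at $\mathrm{Id}_M$ in $\mathrm{Homeo}_*(M_g)$, that is, a nonzero element of $\pi_1(\mathrm{Homeo}_*(M_g))$, which is $0$ when $g>1$ and $\mathbb{Z}^2$ when $g=1$ — torsion free in both cases — so $I^n\neq\mathrm{id}$, a contradiction. If $F\neq\mathrm{Id}_M$, I want to apply Proposition \ref{prop:ggeq1 no torsion} (or Proposition \ref{prop:g>1 no torsion} when $g>1$), and the only hypothesis not immediately in hand is that $\mu$ has no atoms on $\mathrm{Fix}_{\mathrm{Cont},I}(F)$. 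To secure it, replace $\mu$ by $\mu_0=\frac{1}{n}\sum_{k=0}^{n-1}(F^k)_*\nu$, where $\nu$ is any non-atomic Borel probability measure of full support on $M_g$: since $F^n=\mathrm{Id}_M$, the measure $\mu_0$ is $F$-invariant, non-atomic and of full support, and from $\rho_{M,I^n}(\mu_0)=n\,\rho_{M,I}(\mu_0)=0$ we get $\rho_{M,I}(\mu_0)=0$ as well. Hence $I$, $F$ and $\mu_0$ meet the hypotheses of Proposition \ref{prop:ggeq1 no torsion}, which supplies a constant $C>0$ with $\mathrm{width}(\widetilde{F}^n)\geq Cn$ for all $n\geq1$. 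But the isotopy $I^n$ is trivial, so $\widetilde{F}^n=\mathrm{Id}_{\widetilde{M}}$ has constant trajectories; all linking numbers, the action difference $i_{\mu_0}$, and hence the action $l_{\mu_0}$ are then constant, so $\mathrm{width}(\widetilde{F}^n)=0$. This contradicts $Cn>0$ and finishes the proof.

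The step I expect to be the real obstacle is exactly this last one: Propositions \ref{prop:ggeq1 no torsion} and \ref{prop:g>1 no torsion} inherit from Proposition \ref{prop:F is not constant if the contractible fixed points is finite} the requirements that $F\neq\mathrm{Id}_M$ and that the measure charge no point of $\mathrm{Fix}_{\mathrm{Cont},I}(F)$, neither of which is free for an arbitrary full-support $\mu$. Isolating the loop case $F=\mathrm{Id}_M$ (handled by torsion-freeness of $\pi_1(\mathrm{Homeo}_*(M_g))$) and replacing $\mu$ by the $\langle F\rangle$-averaged measure $\mu_0$ are what make the cited linear lower bound on $\mathrm{width}(\widetilde{F}^n)$ applicable; the other cases are routine once one knows that the linking number and the rotation vector are multiplicative under iteration.
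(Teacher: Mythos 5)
Your proof is correct and takes essentially the same route as the paper: the corollary is obtained there from exactly the same three-way split, namely linear growth of a nonzero linking number $i(\widetilde{F}^n;\widetilde{a},\widetilde{b})=n\,i(\widetilde{F};\widetilde{a},\widetilde{b})$ when $I\notin\mathscr{W}$, linear growth of $\rho_{M,I^n}(\mu)=n\,\rho_{M,I}(\mu)$ when the rotation vector is nonzero, and Propositions \ref{prop:ggeq1 no torsion} and \ref{prop:g>1 no torsion} (linear growth of $\mathrm{width}(\widetilde{F}^n)$) in the remaining case. Your two additional precautions --- treating the loop case $F=\mathrm{Id}_M$ via torsion-freeness of $\pi_1(\mathrm{Homeo}_*(M_g))$, and averaging a non-atomic full-support measure over the iterates of $F$ (legitimate since $F^n=\mathrm{Id}_M$) so that the no-atoms hypothesis of Proposition \ref{prop:ggeq1 no torsion} holds --- are sound refinements of hypotheses the paper leaves implicit, the loop case being in fact vacuous because a nontrivial loop has $\rho_{M,I}(\mu)=[I(z)]_M\,\mu(M)\neq0$ and is already caught by the rotation-vector case.
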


In \cite{Fathi}, Fathi proved the following result: if $M^n$ is a
compact connected manifold without boundary of dimension $n\geq1$,
and $\mu$ is a finite measure on $M^n$ without atoms and with total
support, then the inclusion
$$\mathrm{Homeo}(M^n,\mu)\hookrightarrow\mathrm{Homeo}(M^n)$$is a weak homotopy
equivalence, that is, it induces isomorphisms on all homotopy
groups.

We suppose now that the measure in Corollary \ref{cor:no torsion
cor} has no atoms on $M_g$. Observing that
$\pi_1(\mathrm{Homeo}_*(\mathbb{T}^2))\simeq\mathbb{Z}^2$
(see\cite{H1}) and $\pi_1(\mathrm{Homeo}_*(M_g))\simeq0$ ($g>1$)
(see \cite{H2}), we have\smallskip

\newenvironment{cor01}{\noindent\textbf{Corollary \ref{cor:no
torsion}}~\itshape}{\par}\smallskip

\begin{cor01}
The groups $\mathrm{Hameo}(\mathbb{T}^2,\mu)$,
$\mathrm{Homeo}_*(M_g)$ ($g>1$) are torsion free.
\end{cor01}\smallskip

%

\subsection{The absence of distortion in $\mathrm{Ham}^1(\mathbb{T}^2,\mu)$
 and $\mathrm{Diff}^1_*(M_g,\mu)$ with $g>1$.}\label{sec:distortion of group} In
2002, Polterovich \cite{P} showed us a Hamiltonian version of the
Zimmer program (see \cite{Zim,P2} for the detail) dealing with
actions of lattices. It is achieved by using the classical action
defined in symplectic geometry, the symplectic filling function (see
Section 1.2 in \cite{P}), and a result of Schwarz \cite{SM} about
the action function being non-constant which has been proved by
using Floer homology. In 2003, Franks and Handel \cite{F2} developed
the Thurston theory of normal forms for surface homeomorphisms with
finite fixed sets. In 2006, they \cite{F4} used the generalized
normal form to give a more general version (the map is a
$C^1$-diffeomorphism and the measure is a Borel finite measure)
 of the Zimmer program on the closed oriented
surfaces. We recommend the reader a survey by Fisher \cite{Fish} for
the recent progress of Zimmer program. We will give an alternative
proof of the $C^1$-version of the Zimmer conjecture on surfaces when
the measure is a Borel finite measure with full surport.\smallskip

Suppose that $F$ is a $C^1$-diffeomorphism of $M_g$ ($g\geq 1$)
which is the time-one map of an identity isotopy
$I=(F_t)_{t\in[0,1]}$ on $M_g$ and $\widetilde{F}$ is the time-one
map of the lifted identity isotopy
$\widetilde{I}=(\widetilde{F}_t)_{t\in[0,1]}$ on the universal cover
$\widetilde{M}$ of $M_g$.

\begin{lem}\label{lem:zimmer program}If there exist two distinct fixed points
$\widetilde{a}$ and $\widetilde{b}$ of $\widetilde{F}$, and a point
$z_*\in\mathrm{Rec}^+(F)\setminus\pi(\{\widetilde{a},\widetilde{b}\})$
such that $i(\widetilde{F};\widetilde{a},\widetilde{b},z_*)$ exists
and is not zero, then $$\|F^n\|_{\mathscr{G}}\succeq \sqrt{n}.$$
\end{lem}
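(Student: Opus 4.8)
The plan is to play a linear-in-$n$ lower bound for the linking number of $z_*$ under $\widetilde{F}^n$ against a polynomial-in-$\|F^n\|_{\mathscr G}$ upper bound. Fix a finite generating set $\{g_1,\dots,g_s\}$ of $\mathscr{G}$ containing $F$, and for each $n$ write $F^n=h_{i_1}\cdots h_{i_m}$ as a word of length $m=\|F^n\|_{\mathscr{G}}$ in the $g_j^{\pm1}$. On $\widetilde{M}$ fix lifts $\widetilde{g_j}$ of the generators; when $g>1$ the lift $\widetilde{F}^n$ coming from the isotopy $I$ coincides with $\widetilde{h}_{i_1}\cdots\widetilde{h}_{i_m}$ because $\pi_1(\mathrm{Homeo}_*(M_g))=0$, and when $g=1$ the same holds up to a deck transformation whose contribution is $n$-independent thanks to $\rho_{M,I}(\mu)=0$. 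By Proposition~\ref{prop:i(Fnabx)=ni(abx)} we have $i(\widetilde{F}^n;\widetilde{a},\widetilde{b},z_*)=n\,i(\widetilde{F};\widetilde{a},\widetilde{b},z_*)$, so with $c:=|i(\widetilde{F};\widetilde{a},\widetilde{b},z_*)|>0$ the linking number grows at least linearly: $|i(\widetilde{F}^n;\widetilde{a},\widetilde{b},z_*)|=cn$.

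The core of the argument is an upper bound $|i(\widetilde{F}^n;\widetilde{a},\widetilde{b},z_*)|\le C\,m^2$ with $C$ independent of $n$. By Definition~\ref{def:Intersection number density} and the Birkhoff-sum identity \ref{eq: Ln Birkhoff sum}, $i(\widetilde{F}^n;\widetilde{a},\widetilde{b},z_*)=\lim_k L_{N_k}(\widetilde{F}^n;\widetilde{a},\widetilde{b},z_*)/\tau'_{N_k}(z_*)$, where $\tau',\Phi'$ denote the first-return data of $F^n$ to a small free disk $U\ni z_*$ and $L_1(\widetilde{F}^n;\widetilde{a},\widetilde{b},z)=\widetilde{\gamma}\wedge\widetilde{\Gamma}^1_{\widetilde{J}_1,z}$ for a path $\widetilde{\gamma}$ from $\widetilde{a}$ to $\widetilde{b}$ and an isotopy $\widetilde{J}_1$ from $\mathrm{Id}_{\widetilde{M}}$ to $\widetilde{F}^n$ fixing $\widetilde{a},\widetilde{b}$. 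Thus it suffices to establish an estimate $|L_1(\widetilde{F}^n;\widetilde{a},\widetilde{b},z)|\le C\,\tau'(z)\,m^2$ for every $z\in\mathrm{Rec}^+(F^n)\cap U$ and then divide by $\tau'_{N_k}(z_*)=\sum_j\tau'(\Phi'^j z_*)$. Here $L_1(\widetilde{F}^n;\widetilde{a},\widetilde{b},z)$ is the algebraic intersection of $\widetilde{\gamma}$ with the lift of the $F^n$-trajectory of $z$ run for its first return time $\tau'(z)$ (closed up by a short arc in $U$); writing $F^n$ as a word of length $m$, this trajectory is a concatenation of $\tau'(z)\cdot m$ pieces, each a generator-isotopy trajectory of diameter at most $\ell$, where $\ell$ is the common bound on trajectory diameters of the finitely many generator isotopies in the lifted metric. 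I would split $\widetilde{\gamma}$ into a portion in a fixed small disk about $\widetilde{a}$, a portion in a fixed small disk about $\widetilde{b}$, and a middle portion bounded away from both: the middle portion is crossed $O(\tau'(z)\,m)$ times, since each bounded-diameter $C^1$ piece meets the fixed compact middle arc a bounded number of times, while the contributions near $\widetilde{a}$ and $\widetilde{b}$ are handled by the disk-chain machinery of Section~\ref{sec:i(F;a,b,z) is bounded} (Lemma~\ref{lem:disk chain of Franks lemma}, Proposition~\ref{prop:l(a,b,x)divid tau(x)is boundedonU}) applied to $A_{\widetilde{a},\widetilde{b}}$ and the natural lift $\widehat{F^n}_{\widetilde{a},\widetilde{b}}$, using that $\widetilde{a},\widetilde{b}$ are $C^1$-fixed points of $\widetilde{F}^n$ so that the trajectory cannot spiral into them; an isoperimetric-type count for the remaining winding (a loop assembled from $P$ bounded-diameter pieces winds $O(P^2)$ about any point, and $O(P)$ about points it keeps a definite distance from) then yields the desired bound after collecting the $n$-independent constants.

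Combining $|i(\widetilde{F}^n;\widetilde{a},\widetilde{b},z_*)|=cn$ with $|i(\widetilde{F}^n;\widetilde{a},\widetilde{b},z_*)|\le C m^2$ gives $cn\le C\,\|F^n\|_{\mathscr{G}}^2$, that is $\|F^n\|_{\mathscr{G}}\ge\sqrt{c/C}\,\sqrt{n}$, which is the assertion. The main obstacle is exactly the quadratic upper bound on $L_1$: one must control the winding of the lifted $F^n$-trajectory about $\widetilde{a}$ and $\widetilde{b}$ precisely when it comes close to them — this is where $C^1$-smoothness is indispensable, via the $\mathrm B$-property and the Franks-type disk-chain lemmas — and one must ensure the bound on $L_1$ is only \emph{linear} in the return time $\tau'(z)$ so that it survives division by $\tau'_{N_k}(z_*)$, and depends on $m$ through a fixed power (here $m^2$) with a constant that does not blow up with $n$. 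Managing these two requirements simultaneously is the delicate point; everything else is bookkeeping with the results already available.
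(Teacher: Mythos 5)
Your global strategy is the same as the paper's: combine $i(\widetilde{F}^{n};\widetilde{a},\widetilde{b},z_*)=n\,i(\widetilde{F};\widetilde{a},\widetilde{b},z_*)$ (Proposition \ref{prop:i(Fnabx)=ni(abx)}) with an upper bound of the form $\mathrm{const}\cdot N(n)^{2}$, where $N(n)=\|F^{n}\|_{\mathscr{G}}$, and you correctly isolate the two constraints (linearity in the return time, an $n$-independent constant times a fixed power of the word length). But the core of the upper bound is missing, and the two devices you invoke cannot supply it. First, the linking number is computed from an isotopy joining $\mathrm{Id}_{\widetilde{M}}$ to $\widetilde{F}^{n}$ that \emph{fixes} $\widetilde{a}$ and $\widetilde{b}$, whereas the word decomposition produces the isotopy $\widetilde{I}^{(n)}$ which moves them; your sketch counts crossings of a fixed arc $\widetilde{\gamma}$ by generator pieces of the word isotopy, which is not the defining quantity. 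The paper bridges exactly this gap by renormalizing the word isotopy (Equation \ref{eq:new isotopy}) so that it fixes $0$ and $1$, and by translating crossings of the fixed segment into crossings of the moving ``squares'' $A_k$ swept by the images of $\widetilde{a},\widetilde{b}$; the factor $N(n)^{2}$ then comes from (number of generator steps) $\times$ (size of $A_k$, itself of order $N(n)$, which controls how many lifts of each orbit point can interfere), each crossing being weighted by the uniform bound $C_1$ on the relative winding $\lambda_j$ of \emph{pairs of orbits} of a single generator isotopy — and it is here, via the blow-up compactification of the diagonal, that $C^1$-smoothness enters, not through the B-property or the disk chains.

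Second, your plan to control the winding near $\widetilde{a}$ and $\widetilde{b}$ by applying the disk-chain results (Lemma \ref{lem:disk chain of Franks lemma}, Proposition \ref{prop:l(a,b,x)divid tau(x)is boundedonU}) to $\widehat{F^{n}}_{\widetilde{a},\widetilde{b}}$ cannot work as stated: the constants in those statements are the rotation-number bound $N_{\widetilde{a},\widetilde{b}}$ and the count $K_{\widetilde{a},\widetilde{b}}$ \emph{for the map in question}, and for $F^{n}$ the rotation numbers of fixed points scale like $n$, so this route only yields $|i(\widetilde{F}^{n};\widetilde{a},\widetilde{b},z)|=O(n)$ — consistent with the linear lower bound and giving no information about $N(n)$. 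Likewise, the claim that ``each bounded-diameter $C^1$ piece meets the fixed compact middle arc a bounded number of times'' is unjustified: bounded diameter does not bound either geometric intersections or winding about points the piece approaches, and the isoperimetric heuristic ($P$ pieces $\Rightarrow O(P^{2})$ winding about any point) presupposes a per-piece winding bound, which is precisely the missing ingredient. To repair the argument you would have to reproduce, in some form, the paper's two steps: (i) build from the word an explicit isotopy fixing the two lifts (the normalization \ref{eq:new isotopy}), and (ii) bound the winding of each generator step relative to the \emph{moving} pair of marked points by a constant depending only on the generators (the $C_1$ bound via the compactified pair space), then do the lift-counting against the $O(N(n))$-sized squares $A_k$. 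Without these, the claimed inequality $|L_1(\widetilde{F}^{n};\widetilde{a},\widetilde{b},z)|\leq C\,\tau'(z)\,m^{2}$ is not established.
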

\begin{proof}
If
$z_*\in\mathrm{Rec}^+(F)\setminus\pi(\{\widetilde{a},\widetilde{b}\})$
and $i(\widetilde{F};\widetilde{a},\widetilde{b},z_*)$ exist, by
Lemma \ref{subsec:positively recurrent} and Proposition
\ref{prop:i(Fnabx)=ni(abx)}, we know that
$z_*\in\mathrm{Rec}^+(F^n)\setminus\pi(\{\widetilde{a},\widetilde{b}\})$
, that $i(\widetilde{F}^n;\widetilde{a},\widetilde{b},z_*)$ exists
and that
$i(\widetilde{F}^n;\widetilde{a},\widetilde{b},z_*)=ni(\widetilde{F};\widetilde{a},\widetilde{b},z_*)$
for all $n\geq1$.\smallskip

Let $N(n)=\|F^n\|_{\mathscr{G}}$. Assume that there exist identity
isotopies $I_i=(F_{i,t})_{t\in[0,1]}\subset \mathrm{Diff^1(M)}$
($1\leq i\leq s$) such that, for every $n\geq1$, we have the isotopy
$I^n$ is homotopic to the isotopy
$\prod\limits_{j=1}^{N(n)}I^{~\epsilon_j}_{i_j}=\left(F_t^{(n)}\right)_{0\leq
t\leq 1}\stackrel{\triangle}{=}I^{(n)}$, where
$i_j\in\{1,2,\cdots,s\}$, $\epsilon_j\in\{-1,1\}$
($j=1,2,\cdots,N(n)$) and
$$F_t^{(n)}(z)=F^{\epsilon_{i_k}}_{i_k,N(n)t-(k-1)}(F^{\epsilon_{i_{k-1}}}_{i_{k-1},1},\circ
\cdots\circ F^{\epsilon_{i_1}}_{i_1,1}(z)),\quad
\mathrm{if} \quad \frac{k-1}{N(n)}\leq t\leq \frac{k}{N(n)}.$$

Let $\widetilde{I}_i=(\widetilde{F}_{i,t})_{t\in[0,1]}$ ($1\leq
i\leq s$) and $\widetilde{I}^{(n)}=(\widetilde{F}_t^{(n)})_{0\leq
t\leq 1}$ be the lifts of $I_i$ ($1\leq i\leq s$) and
$(F_t^{(n)})_{0\leq t\leq 1}$ to $\widetilde{M}$ respectively.
Identify the sphere $\widetilde{M}\cup\{\infty\}$ as the Riemann
sphere $\mathbb{C}\cup\{\infty\}$. Again, for simplicity,
we can suppose that $\widetilde{a}=0$ and
$\widetilde{b}=1$.\smallskip

Fix $n\geq1$. Using the method of Lemma \ref{rem:identity isotopies
fix three points on sphere}, we can get the isotopy
$\widetilde{I}'^{(n)}=(\widetilde{F}\,'^{(n)}_t)_{0\leq t\leq 1}$
which fixes $0$, $1$ and is an isotopy on $\widetilde{M}$ from
$\mathrm{Id}_{\widetilde{M}}$ to $\widetilde{F}^n$, where

\begin{equation}\label{eq:new isotopy}
\widetilde{F}\,'^{(n)}_t(\widetilde{z})=\frac{\widetilde{F}_t^{(n)}(\widetilde{z})
-\widetilde{F}_t^{(n)}(0)}{\widetilde{F}_t^{(n)}(1)-\widetilde{F}_t^{(n)}(0)}\,.
\end{equation}

Let $\widetilde{\gamma}=\{0\leq r\leq 1\}$ be the straight line from
$0$ to $1$. If
$\widetilde{I}'^{(n)}(\widetilde{z})\cap\widetilde{\gamma}\neq\emptyset$
for some point $\widetilde{z}\in \widetilde{M}\setminus\{0,1\}$,
then there exist $t_0\in[0,1]$ and $r_0\in]0,1[$ such that
$\widetilde{F}\,'^{(n)}_{t_0}(\widetilde{z})=r_0$, that is
\begin{equation}\label{eq:zimmer}
    \widetilde{F}_{t_0}^{(n)}(\widetilde{z}) -\widetilde{F}_{t_0}^{(n)}(0)=r_0
(\widetilde{F}_{t_0}^{(n)}(1) -\widetilde{F}_{t_0}^{(n)}(0)).
\end{equation}

Let
$$C=\max_{i\in\{1,\cdots,s\}}\sup_{t\in[0,1]\,,\,\widetilde{z}\in\widetilde{M}}
\widetilde{d}(\widetilde{F}_{i,t}(\widetilde{z}),\widetilde{z}).$$
We have
\begin{equation}\label{ineq:Ak}
\left|\widetilde{F}_t^{(n)}(1)-\widetilde{F}_t^{(n)}(0)\right|\leq
2CN(n)+1
\end{equation}
and
$$\left|\widetilde{F}_t^{(n)}(\widetilde{z})-\widetilde{F}_t^{(n)}(0)\right|\geq
|\widetilde{z}|-2CN(n)$$ for all $t\in[0,1]$. Hence when
$|\widetilde{z}|\geq 5CN(n)$, we get
$\left|\widetilde{F}\,'^{(n)}_t(\widetilde{z})\right|>1$, i.e.,
$\widetilde{I}'^{(n)}(\widetilde{z})\cap\widetilde{\gamma}=\emptyset$.
Recall that the open disks $\widetilde{V}$ and $\widetilde{W}$ that
contain $\infty$ in  Section \ref{sec:boundedness}. Here, we set
$\widetilde{V}=\{\widetilde{z}\in\widetilde{M}\mid\widetilde{z}|>
5CN(n)\}$ and choose an open disk $\widetilde{W}$ containing
$\infty$ such that $\widetilde{\gamma}\cap \widetilde{W}=\emptyset$,
and for every $\widetilde{z}\in \widetilde{V}$, we have
$\widetilde{I}'^{(n)}(\widetilde{z})\subset \widetilde{W}$. Without
loss of generality, we can suppose that $z_*\notin
\pi(\widetilde{\gamma})$. Choose an open disk $U$ containing $z_*$
such that $U\cap \pi(\widetilde{\gamma})=\emptyset$. Write
respectively $\tau(n,z)$ and $\Phi_n(z)$ for the first return time
and the first return map of $F^n$ throughout this proof. For every
$m\geq1$, recall that
$\tau_m(n,z)=\sum\limits_{i=0}^{m-1}\tau(n,\Phi^i_n(z))$. Let us
consider the following value
$$L_m(\widetilde{F}^n;0,1,z_*)=\widetilde{\gamma}\wedge\widetilde{\Gamma}^m_{\widetilde{I}'^{(n)},z_*}.$$

By the same arguments with Lemma \ref{lem:the boundedness of the
case multi-path} and Lemma \ref{lem:alphazn is e, Ln divide taun is
less then kU}, we can find multi-paths $\widetilde{\Gamma}^m_i(z_*)$
($1\leq i \leq P_m(z_*)$) from $\widetilde{V}$ to $\widetilde{V}$
(see the equations \ref{eq:multi-path1}$-$\ref{eq:multi-path5} for
the details) such that
$$L_m(\widetilde{F}^n;0,1,z_*)=\widetilde{\gamma}\wedge\prod_{1\leq i\leq
P_m(z_*)}\widetilde{\Gamma}^m_i(z_*).$$
\smallskip

For every $j\in\{1,\cdots,s\}$ and
$(\widetilde{z},\widetilde{z}\,')\in\widetilde{M}\times\widetilde{M}\setminus\widetilde{\Delta}$,
there is a unique function $\theta_j: [0,1]\rightarrow \mathbb{R}$
such that $\theta_j(0)=0$ and
$$e^{2i\pi\theta_j(t)}=\frac{\widetilde{F}_{j,t}(\widetilde{z})-\widetilde{F}_{j,t}(\widetilde{z}\,')}
{\left|\widetilde{F}_{j,t}(\widetilde{z})-\widetilde{F}_{j,t}(\widetilde{z}\,')\right|}.$$
Let $\lambda_j(\widetilde{z},\widetilde{z}\,')=\theta_j(1)$. As
$\widetilde{I}_j\subset \mathrm{Diff}^1(\widetilde{M})$, there is a
natural compactification of
$\widetilde{M}\times\widetilde{M}\setminus\widetilde{\Delta}$
obtained by replacing the diagonal $\widetilde{\Delta}$ with the
unit tangent bundle such that the map $\lambda_j$ extends
continuously (see, for example, \cite[page 81]{CFGL}).

Let
$$C_1=\max_{i\in\{1,\cdots,s\}}\sup_{(\widetilde{z},\widetilde{z}\,')
\in\widetilde{M}\times\widetilde{M}\setminus\widetilde{\Delta}}
|\lambda_j(\widetilde{z},\widetilde{z}\,')|.$$

Suppose that $\widetilde{M}_0\subset\widetilde{M}$ is a closed
fundamental domain with regard to the transformation group $G$.
Denote by $\widetilde{I}^{\pm}_i(\widetilde{M}_0)$ the set
$\{\widetilde{F}_{i,\pm t}(\widetilde{z})\mid (\pm
t,\widetilde{z})\in [0,1]\times\widetilde{M}_0\}$ where
$\widetilde{F}_{i,-t}=\widetilde{F}_{i,1-t}\circ\widetilde{F}^{-1}_{i,1}$.
As $\widetilde{F}_{i,\pm
t}\circ\alpha=\alpha\circ\widetilde{F}_{i,\pm t}$ for all $\alpha\in
G$ and $t\in[0,1]$, we can suppose that
$$C_2=\max_{i\in\{1,\cdots,s\},\,z\in M}\sharp\{\widetilde{z}\in
\pi^{-1}(z)\mid \widetilde{I}_i(\widetilde{z})\cap
\widetilde{I}^{+}_i(\widetilde{M}_0)\cup
\widetilde{I}^{-1}_i(\widetilde{z})\cap
\widetilde{I}^{-}_i(\widetilde{M}_0)\neq\emptyset\},$$ which is
independent of $n$. \bigskip

For every $0\leq j\leq m-1$, $\tau_j(n,z_*)\leq l<\tau_{j+1}(n,z_*)$
and $1\leq k\leq N(n)$, let
$F^{\epsilon_{i_0}}_{i_{0},1}=\mathrm{Id}_{M}$,
$\widetilde{F}^{~\epsilon_{i_{0}}}_{i_{0},1}=\mathrm{Id}_{\widetilde{M}}$,

$$z_{j,l,k}=F^{~\epsilon_{i_{k-1}}}_{i_{k-1},1}(F^{~\epsilon_{i_{k-2}}}_{i_{k-2},1}\circ\cdots\circ
F^{~\epsilon_{i_{1}}}_{i_1,1}(F^{n(l-\tau_j(n,z_*))}(\Phi_n^j(z_*))))$$
and
$$\widetilde{z}^{\,0}_{k}=\widetilde{F}^{~\epsilon_{i_{k-1}}}_{i_{k-1},1}(\widetilde{F}^{~\epsilon_{i_{k-2}}}_{i_{k-2},1}\circ\cdots\circ \widetilde{F}^{~\epsilon_{i_{1}}}_{i_1,1}(0))
,\quad\widetilde{z}^1_{k}=\widetilde{F}^{~\epsilon_{i_{k-1}}}_{i_{k-1},1}(\widetilde{F}^{~\epsilon_{i_{k-2}}}_{i_{k-2},1}\circ\cdots\circ
\widetilde{F}^{~\epsilon_{i_{1}}}_{i_1,1}(1)).$$

When $\frac{k-1}{N(n)}\leq t\leq \frac{k}{N(n)}$, recall that
\begin{equation*}\widetilde{F}\,'^{(n)}_t(\widetilde{z})=\frac{\widetilde{F}^
{~\epsilon_{i_{k}}}_{i_k,N(n)t-(k-1)}(\widetilde{z})-\widetilde{F}^{~\epsilon_
{i_{k}}}_{i_k,N(n)t-(k-1)}(\widetilde{z}^{\,0}_{k})}
{\widetilde{F}^{~\epsilon_{i_{k}}}_{i_k,N(n)t-(k-1)}(\widetilde{z}^{\,1}_{k})-
\widetilde{F}^{~\epsilon_{i_{k}}}_{i_k,N(n)t-(k-1)}(\widetilde{z}^{\,0}_{k})}.\end{equation*}

For every $\widetilde{z}\in\widetilde{M}$, denote by
$\widetilde{J}_k(\widetilde{z})$ by the curve
$$\widetilde{J}_k(\widetilde{z})=\left(\widetilde{F}\,'^{(n)}_t(\widetilde{z})\right)_{\frac{k-1}{N(n)}\leq
t\leq \frac{k}{N(n)}}.$$

For every $k$, define the immersed square
\begin{eqnarray*}
 A_k: [0,1]^2 &\rightarrow& \widetilde{M}\nonumber\\
  (t,r)&\mapsto& \widetilde{F}^{~\epsilon_{i_{k}}}_{i_k,t}(~\widetilde{z}_k^{\,0})+r(\widetilde{F}^{~\epsilon_{i_{k}}}_{i_k,t}(\widetilde{z}_k^1)-\widetilde{F}^{~\epsilon_{i_{k}}}_{i_k,t}
(\widetilde{z}_k^{\,0})).
\end{eqnarray*}

 By Equality \ref{eq:zimmer}, we know that
$\widetilde{J}_k(\widetilde{z})\cap\widetilde{\gamma}\neq\emptyset$
implies $\widetilde{I}_{i_k}^{~\epsilon_{i_{k}}}(\widetilde{z})\cap
A_k\neq\emptyset$ (see Figure \ref{fig:Zimmer-program-1}). Remark
here that there are two universal covers in Figure
\ref{fig:Zimmer-program-1}, that the curves in $\widetilde{M}$ (the
big one) is generated by the isotopy $\widetilde{I}^{(n)}$,
 and that the curve in $\widetilde{M}$ (the small one) is generated by $\widetilde{J}_k$ (and hence by the
isotopy $\widetilde{I}'^{(n)}$ defined by Formular \ref{eq:new
isotopy}).

Let
$$C_{j,l,k}=\{\widetilde{z}_{j,l,k}\in\pi^{-1}(z_{j,l,k})\mid
\widetilde{I}_{i_k}^{~\epsilon_{i_{k}}}(\widetilde{z}_{j,l,k}) \cap
A_k\neq\emptyset\}.$$\smallskip

\begin{figure}[ht]
\begin{center}\scalebox{0.5}[0.5]{\includegraphics{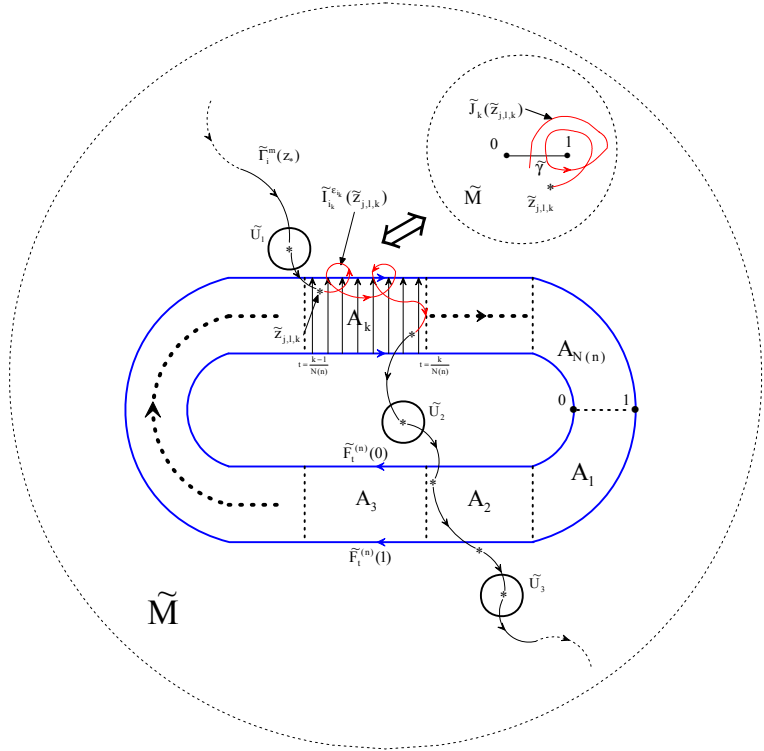}}\end{center}
\caption{The proof of Lemma \ref{lem:zimmer
program}\label{fig:Zimmer-program-1}}
\end{figure}

For every $k$, observing that
$\widetilde{d}(\widetilde{z}^{\,0}_{k},\widetilde{z}^{\,1}_{k})\leq
2C(k-1)+1$, there exists $C_3>0$ (depending only on $\widetilde{a}$
and $\widetilde{b}$) such that
\begin{equation}\label{ineq:Cjlk1}
\sharp\{\alpha\in G\mid A_k\cap
\alpha(\widetilde{M}_0)\neq\emptyset\}\leq C_3N(n).
\end{equation}
Therefore,
\begin{equation}\label{ineq:Cjlk2}
\sum_{j,l,k}\sharp C_{j,l,k}\leq C_2C_3N^2(n)\tau_m(n,z_*).
\end{equation}


We have
\begin{equation}\label{ineq:Cjlk3}
L_m(\widetilde{F}^n;0,1,z_*)=\widetilde{\gamma}\wedge\prod_{1\leq
i\leq
P_m(z_*)}\widetilde{\Gamma}^m_i(z_*)=\widetilde{\gamma}\wedge\prod_{j,l,k}\left(\prod_{\widetilde{z}
\in C_{j,l,k}}\widetilde{J}_{k}(\widetilde{z})\right).\end{equation}
We get
\begin{equation}\label{eq: key eq}
\left|L_m(\widetilde{F}^n;0,1,z_*)\right|\leq
c_0N^2(n)\tau_m(n,z_*),
\end{equation}
where $c_0=C_1C_2C_3$. Therefore,
\begin{equation*}
\left|ni(\widetilde{F};0,1,z_*)\right|=\left|i(\widetilde{F}^n;0,1,z_*)\right|\leq
c_0N^2(n).
\end{equation*}
This implies that, for every $n\geq1$,
\begin{equation*}\label{ineq:subline}
0<\left|i(\widetilde{F};0,1,z_*)\right|\leq
c_0\frac{N^2(n)}{n},\end{equation*} That is
$$\|F^n\|_{\mathscr{G}}\succeq \sqrt{n}.$$
We have completed this proof.
\end{proof}\bigskip

By Proposition \ref{lem:zimmer program} and Proposition \ref{prop:F
is not constant if the contractible fixed points is finite}, we can
get the following result which is a generalization of Theorem 1.6 B
in \cite{P} on the closed surface.

\begin{cor}\label{cor:sqrt n of growth}
If $F\in\mathrm{Diff}^1_*(M_g,\mu)\setminus\{\mathrm{Id}_{M_g}\}$
($g>1$) or
$F\in\mathrm{Ham}^1(\mathbb{T}^2,\mu)\setminus\{\mathrm{Id}_{\mathbb{T}^2}\}$,
then for any finitely generated subgroup
$F\in\mathscr{G}\subset\mathrm{Diff}^1_*(M_g,\mu)$ ($g\geq 1$),
$$\|F^n\|_{\mathscr{G}}\succeq \sqrt{n}.$$
\end{cor}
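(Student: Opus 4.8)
The plan is to reduce the statement to Proposition \ref{lem:zimmer program}, whose hypothesis is the existence of two distinct fixed points $\widetilde a,\widetilde b$ of $\widetilde F$ and a point $z_*\in\mathrm{Rec}^+(F)\setminus\pi(\{\widetilde a,\widetilde b\})$ for which the linking number $i(\widetilde F;\widetilde a,\widetilde b,z_*)$ exists and is nonzero; the whole task is therefore to exhibit such a triple. First I would record that, since $F\in\mathrm{Diff}^1(M_g)$, Proposition \ref{rem:any isotopy of M satisfies condition B} shows $I$ satisfies the B-property, in particular the WB-property at every pair of fixed points of $\widetilde F$, and that the isotopy $I$ together with generators of $\mathscr G$ in $\mathrm{Diff}^1(M_g)$ fits the setting of Proposition \ref{lem:zimmer program}. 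Moreover, by Proposition \ref{prop:diffeomorphism has a bounded linking number for every two distinct fixed points} (or Proposition \ref{prop:measure and linking number exsiting}), for every distinct fixed points $\widetilde a,\widetilde b$ of $\widetilde F$ the function $z\mapsto i(\widetilde F;\widetilde a,\widetilde b,z)$ is defined for $\mu$-almost every $z\in\mathrm{Rec}^+(F)$, is bounded, and $\mu$-integrable, with $i_\mu(\widetilde F;\widetilde a,\widetilde b)=\int_{M\setminus\pi(\{\widetilde a,\widetilde b\})}i(\widetilde F;\widetilde a,\widetilde b,z)\,\mathrm d\mu$. Hence it suffices to find distinct fixed points $\widetilde a,\widetilde b$ of $\widetilde F$ with $i_\mu(\widetilde F;\widetilde a,\widetilde b)\neq0$: the integrand is then not $\mu$-a.e.\ zero, and since $\mu(\mathrm{Rec}^+(F))=\mu(M)$ by the Poincar\'{e} recurrence theorem, some $z_*\in\mathrm{Rec}^+(F)\setminus\pi(\{\widetilde a,\widetilde b\})$ realizes $i(\widetilde F;\widetilde a,\widetilde b,z_*)\neq0$, and Proposition \ref{lem:zimmer program} then yields $\|F^n\|_{\mathscr G}\succeq\sqrt n$.

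To produce such a pair I would distinguish the two cases. If $g=1$, then $F\in\mathrm{Ham}^1(\mathbb T^2,\mu)\setminus\{\mathrm{Id}_{\mathbb T^2}\}$, so $\rho_{M,I}(\mu)=0$; together with the facts that $\mu$ has total support and no atoms on $\mathrm{Fix}_{\mathrm{Cont},I}(F)$ and that $I$ satisfies the WB-property, Proposition \ref{prop:F is not constant if the contractible fixed points is finite} gives distinct contractible fixed points $a,b$ of $F$ with $I_\mu(\widetilde F;a,b)\neq0$, whence $i_\mu(\widetilde F;\widetilde a,\widetilde b)=I_\mu(\widetilde F;a,b)\neq0$ for any lifts $\widetilde a,\widetilde b$ (Equation \ref{eq:Imu and Lmu}). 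If $g>1$, then $F\in\mathrm{Diff}^1_*(M_g,\mu)\setminus\{\mathrm{Id}_{M_g}\}$ and this is exactly the situation of Corollary \ref{cor:the symplectic action when M with genus bigger 1}, whose proof produces two distinct fixed points $\widetilde a,\widetilde b$ of $\widetilde F$ with $i_\mu(\widetilde F;\widetilde a,\widetilde b)\neq0$: when $\rho_{M,I}(\mu)=0$ this comes from the non-constancy of $L_\mu$, i.e.\ Proposition \ref{prop:F is not constant if the contractible fixed points is finite}, and when $\rho_{M,I}(\mu)\neq0$ one takes $\widetilde b=\alpha(\widetilde a)$ for a suitable $\alpha\in G^*$ and a contractible fixed point $a$ (nonempty by the Lefschetz--Nielsen formula), for which $i_\mu(\widetilde F;\widetilde a,\alpha(\widetilde a))=\varphi(\alpha)\wedge\rho_{M,I}(\mu)\neq0$ as in the proof of Proposition \ref{clm:L is well defined}. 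Combined with the reduction above, this completes the proof.

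I do not expect a genuine obstacle here: all the substantial work lies upstream. The quantitative $\sqrt n$ lower bound is Proposition \ref{lem:zimmer program}, proved by the universal-cover/Franks'-lemma machinery of Section \ref{sec:i(F;a,b,z) is bounded}, and the non-vanishing of the action difference is Proposition \ref{prop:F is not constant if the contractible fixed points is finite} and Corollary \ref{cor:the symplectic action when M with genus bigger 1}, proved using the transverse foliation theory. The only points requiring (minor) care are the passage from the non-vanishing of the \emph{integral} $i_\mu(\widetilde F;\widetilde a,\widetilde b)$ to the existence of a \emph{single} recurrent point $z_*$ with nonzero linking number --- immediate once one knows, via Proposition \ref{prop:diffeomorphism has a bounded linking number for every two distinct fixed points}, that the pointwise linking number exists $\mu$-a.e. --- and verifying that a $C^1$-diffeomorphism simultaneously satisfies the hypotheses of all the cited results.
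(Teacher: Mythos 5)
Your proposal is correct and follows essentially the same route as the paper, which states this corollary as an immediate consequence of Proposition \ref{lem:zimmer program} together with the non-vanishing of the action difference (Proposition \ref{prop:F is not constant if the contractible fixed points is finite} and, implicitly, Corollary \ref{cor:the symplectic action when M with genus bigger 1}); your reduction from $i_\mu(\widetilde F;\widetilde a,\widetilde b)\neq 0$ to a single recurrent point $z_*$ with nonzero linking number is exactly the intended glue. If anything, you are slightly more careful than the paper's one-line attribution, since for $g>1$ with $\rho_{M,I}(\mu)\neq 0$ one indeed needs the argument of Corollary \ref{cor:the symplectic action when M with genus bigger 1} (via $i_\mu(\widetilde F;\widetilde a,\alpha(\widetilde a))=\varphi(\alpha)\wedge\rho_{M,I}(\mu)$) rather than Proposition \ref{prop:F is not constant if the contractible fixed points is finite} alone.
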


Moreover, we can improve Corollary \ref{cor:sqrt n of growth}. The
following result is our main theorem in this subsection. \smallskip

\newenvironment{thm02}{\noindent\textbf{Theorem \ref{thm:Fn thicksim
n}}~\itshape}{\par}\smallskip

\begin{thm02}
Let $F\in\mathrm{Diff}^1_*(M_g,\mu)\setminus\{\mathrm{Id}_{M_g}\}$
($g>1$) (resp.
$F\in\mathrm{Ham}^1(\mathbb{T}^2,\mu)\setminus\{\mathrm{Id}_{\mathbb{T}^2}\}$),
and $\mathscr{G}\subset\mathrm{Diff}^1_*(M_g,\mu)$ ($g>1$) (resp.
$\mathscr{G}\subset\mathrm{Ham}^1(\mathbb{T}^2,\mu)$) be a finitely
generated subgroup containing $F$, then
$$\|F^n\|_{\mathscr{G}}\thicksim n.$$
As a consequence, the groups $\mathrm{Diff}^1_*(M_g,\mu)$ ($g>1$)
and $\mathrm{Ham}^1(\mathbb{T}^2,\mu)$ have no
distortions.\end{thm02}

The theorem \ref{thm:Fn thicksim n} can be obtained immediately from
the following two lemmas.

\begin{lem}\label{lem:zimmer1}
If $F\in\mathrm{Homeo}_*(M_g,\mu)\setminus\mathrm{Hameo}(M_g,\mu)$
($g>1$), then for any finitely generated subgroup
$F\in\mathscr{G}\subset\mathrm{Homeo}_*(M_g,\mu)$, we have
$$\|F^n\|_{\mathscr{G}}\thicksim n.$$
\end{lem}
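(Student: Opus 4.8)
<br>

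The plan is to reduce Lemma \ref{lem:zimmer1} to the growth estimate already built into the machinery of this paper, namely to Proposition \ref{lem:zimmer program} (whose proof uses the Franks-type disk-chain estimates of Section \ref{sec:i(F;a,b,z) is bounded}) together with the morphism properties of the linking number and the rotation vector. Since $F\in\mathrm{Homeo}_*(M_g,\mu)\setminus\mathrm{Hameo}(M_g,\mu)$, by definition of $\mathrm{Hameo}$ there is an identity isotopy $I$ from $\mathrm{Id}_{M_g}$ to $F$ whose class in $\mathscr{H}_*(M_g,\mu)$ fails at least one of the two conditions: either $I$ does not satisfy the WB-property, or it does but $\rho_{M,I}(\mu)\neq 0$. (Here one must be slightly careful: $\mathrm{Hameo}$ is defined via existence of an isotopy with $\rho=0$; but for $g>1$ the isotopy is unique up to homotopy by \ref{sec:linking number in the special case}, so $F\notin\mathrm{Hameo}(M_g,\mu)$ forces \emph{every} isotopy to violate one of the two conditions.) I would treat the two cases separately.

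In the first case, $I$ does not satisfy the WB-property, so by Definition \ref{def:wb property and b property} there are two distinct fixed points $\widetilde{a},\widetilde{b}$ of $\widetilde{F}$ with $i(\widetilde{F};\widetilde{a},\widetilde{b})\neq 0$. By Equation \ref{eq:linking number for two fixed points} and the linearity property P1--P2 of the linking number (concretely, $i(\widetilde{F}^n;\widetilde{a},\widetilde{b})=n\cdot i(\widetilde{F};\widetilde{a},\widetilde{b})$, which follows because the lift $\widetilde{F}^n$ is the time-one map of $I^n$ and linking numbers of fixed points add along concatenation), we get $|i(\widetilde{F}^n;\widetilde{a},\widetilde{b})|=n|i(\widetilde{F};\widetilde{a},\widetilde{b})|\succeq n$. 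On the other hand, running the disk-chain argument of the proof of Proposition \ref{lem:zimmer program} (with the constant $C$ replaced by the maximal $C^0$-displacement of the generators $I_1,\dots,I_s$ of $\mathscr{G}$, and the line $\widetilde\gamma$ joining $\widetilde a$ to $\widetilde b$), one bounds $|i(\widetilde{F}^n;\widetilde{a},\widetilde{b})|$ above by a quantity of order $N(n)^2$ where $N(n)=\|F^n\|_{\mathscr{G}}$; combined with the lower bound this already gives $\|F^n\|_{\mathscr{G}}\succeq\sqrt{n}$. To upgrade $\sqrt n$ to $n$ one uses that $i(\widetilde{F};\widetilde a,\widetilde b)$ is an \emph{integer}-valued, additive quantity of fixed points: the key point is that the intersection number $\widetilde\gamma\wedge\widetilde{I}'^{(n)}(\widetilde z)$ that appears in the proof of Proposition \ref{lem:zimmer program} is supported on the orbit pieces that wander away from $\infty$, and each generator contributes at most a bounded amount \emph{per application}, not per squared word length, once one exploits that $\widetilde a$ and $\widetilde b$ are genuine fixed points (so the loop $\widetilde{I}'^{(n)}(\widetilde a)$, $\widetilde{I}'^{(n)}(\widetilde b)$ degenerate); this linear-in-$N(n)$ bound for fixed-point linking numbers gives $n\preceq|i(\widetilde F^n;\widetilde a,\widetilde b)|\preceq\|F^n\|_{\mathscr G}$, whence $\|F^n\|_{\mathscr G}\succeq n$. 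The reverse inequality $\|F^n\|_{\mathscr G}\preceq n$ is trivial since $\|F^n\|_{\mathscr G}\leq n\|F\|_{\mathscr G}$.

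In the second case, $I$ satisfies the WB-property but $\rho_{M,I}(\mu)\neq 0$. Then I would invoke the morphism property of $\rho_{M,\cdot}(\mu):\mathscr{H}_*(M_g,\mu)\to H_1(M_g,\mathbb{R})$ recorded just before Corollary \ref{cor:no torsion cor}, namely $\rho_{M,I^n}(\mu)=n\,\rho_{M,I}(\mu)$, to conclude $\|\rho_{M,I^n}(\mu)\|_{H_1(M_g,\mathbb{R})}\succeq n$. It remains to bound this homological displacement above by $\|F^n\|_{\mathscr{G}}$: fixing a finite generating set of $H^1(M_g,\mathbb{R})$, for each cohomology class $\kappa$ represented by a closed $1$-form $\lambda$ the pairing $\langle\kappa,\rho_{M,I^n}(\mu)\rangle$ is computed as $\int_M(\int_{I^n(z)}\lambda)\,d\mu$, and since $I^n$ is a concatenation of $N(n)$ copies of the generating isotopies each of which moves points a bounded distance, $|\int_{I^n(z)}\lambda|\leq c\,N(n)$ for a constant $c$ depending only on $\lambda$ and the generators; integrating against the finite measure $\mu$ gives $\|\rho_{M,I^n}(\mu)\|\leq c'\|F^n\|_{\mathscr{G}}$. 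Combining, $\|F^n\|_{\mathscr{G}}\succeq n$, and again $\|F^n\|_{\mathscr G}\preceq n$ is immediate, so $\|F^n\|_{\mathscr{G}}\sim n$.

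The main obstacle I anticipate is the first case: squeezing a \emph{linear} lower bound (not merely $\sqrt n$) for $\|F^n\|_{\mathscr G}$ out of the fixed-point linking number. Proposition \ref{lem:zimmer program} as stated only yields $\sqrt n$, because its proof estimates $|L_m(\widetilde F^n;0,1,z_*)|\le c_0N^2(n)\tau_m(n,z_*)$ for a \emph{recurrent, non-fixed} point $z_*$, where the two factors of $N(n)$ come respectively from how far the M\"obius-renormalized isotopy can wind ($C_1$) and from how many fundamental domains the renormalization square $A_k$ meets ($C_3N(n)$ in Inequality \ref{ineq:Cjlk1}). For a genuine fixed point $\widetilde z$ of $\widetilde F$ the corresponding curve $\widetilde{I}'^{(n)}(\widetilde z)$ is a loop rather than a long path, the factor $\tau_m(n,z_*)$ disappears, and I expect the square $A_k$ argument to collapse to a single-domain estimate (since a fixed point does not travel), killing one factor of $N(n)$ and leaving the clean bound $|i(\widetilde F^n;\widetilde a,\widetilde b)|\le c\,N(n)$. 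Making this degeneration rigorous — essentially redoing the proof of Proposition \ref{lem:zimmer program} in the fixed-point case with the sharper bookkeeping — is the one genuinely new piece of work; everything else is an assembly of Equation \ref{eq:linking number for two fixed points}, the additivity of $\rho_{M,\cdot}(\mu)$, and the Kac/Birkhoff estimates of Section \ref{subsec:rotation vector of Le Calvez}.
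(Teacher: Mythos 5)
Your second case is, in substance, the paper's entire proof: for $g>1$ the identity isotopy is unique up to homotopy, the rotation vector $\rho_{M_g,\cdot}(\mu)$ is homotopy invariant and additive under concatenation, so $n\,\|\rho_{M_g,I}(\mu)\|=\|\rho_{M_g,I^n}(\mu)\|\leq\kappa\,N(n)$ where $\kappa$ is the maximum of the norms of the rotation vectors of the generating isotopies, whence $\|F^n\|_{\mathscr{G}}\succeq n$; your justification of the upper bound by pairing with closed $1$-forms is just a more pedestrian form of this additivity. The problem is your case split. In this paper $\mathrm{Hameo}(M_g,\mu)$ is defined solely by the condition $\rho_{M,I}(\mu)=0$; the WB-property plays no role in its definition. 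Hence $F\in\mathrm{Homeo}_*(M_g,\mu)\setminus\mathrm{Hameo}(M_g,\mu)$ means exactly $\rho_{M_g,I}(\mu)\neq0$, and your case 2 argument, which nowhere uses the WB-property, already covers every $F$ allowed by the hypothesis. Your case 1 appears to come from conflating the definition of $\mathrm{Hameo}$ with the discussion of $\mathscr{H}_*(M,\mu)$ preceding Corollary \ref{cor:no torsion cor}, where the WB dichotomy is used for torsion-freeness, not for distortion.

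As written, though, your proof routes the subcase in which the WB-property fails through case 1, and there the argument has a genuine gap that you yourself flag: Proposition \ref{lem:zimmer program} only yields $\|F^n\|_{\mathscr{G}}\succeq\sqrt{n}$, and the proposed upgrade to a linear bound by collapsing the $A_k$-square estimate at fixed points is not carried out. Worse, that proposition and its constants (in particular $C_1$, obtained via the blow-up compactification of $\widetilde{M}\times\widetilde{M}\setminus\widetilde{\Delta}$) require the generators to be $C^1$-diffeomorphisms, which is not available when $\mathscr{G}\subset\mathrm{Homeo}_*(M_g,\mu)$ consists of mere homeomorphisms; so even a correct sharpening of that proof would not apply here. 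The repair is immediate and is exactly the paper's route: delete case 1, note that the hypothesis forces $\rho_{M_g,I}(\mu)\neq0$ irrespective of the WB-property (uniqueness of the isotopy class for $g>1$), and run your case 2 argument for all such $F$; the reverse inequality $\|F^n\|_{\mathscr{G}}\preceq n$ is trivial, as you say.
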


\begin{lem}\label{lem:zimmer2}
If $F\in\mathrm{Ham}^1(M_g,\mu)\setminus\{\mathrm{Id}_{M_g}\}$
($g\geq1$), then for any finitely generated subgroup
$F\in\mathscr{G}\subset\mathrm{Diff}^1_*(M_g,\mu)$, we have
$$\|F^n\|_{\mathscr{G}}\thicksim n.$$
\end{lem}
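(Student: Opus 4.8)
The plan is to establish the two estimates $\|F^n\|_{\mathscr G}\preceq n$ and $\|F^n\|_{\mathscr G}\succeq n$ separately; the first is immediate. Writing $F$ as a word of length $\|F\|_{\mathscr G}$ in the generators of $\mathscr G$ and concatenating $n$ copies of it represents $F^n$, so $\|F^n\|_{\mathscr G}\le \|F\|_{\mathscr G}\cdot n$. Hence the whole content of the lemma is the lower bound.

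For the lower bound I would first extract a nonzero action difference. Since $F\in\mathrm{Ham}^1(M_g,\mu)\setminus\{\mathrm{Id}_{M_g}\}$ is a $C^1$-diffeomorphism, the isotopy $I$ satisfies $\rho_{M,I}(\mu)=0$, and $\mu$ has total support and (by the standing hypotheses of this section) no atoms, all the hypotheses of Corollary~\ref{prop:F is not constant if F is not Id} hold, so the action function $L_\mu$ on $\mathrm{Fix}_{\mathrm{Cont},I}(F)$ is not constant. I would pick two contractible fixed points $a\ne b$ of $F$ with $I_\mu(\widetilde F;a,b)=L_\mu(\widetilde F;b)-L_\mu(\widetilde F;a)\ne 0$ and lifts $\widetilde a,\widetilde b$; then $i_\mu(\widetilde F;\widetilde a,\widetilde b)\ne 0$, and by Corollary~\ref{cor:imu(Fq,a,b)=qimu(F,a,b)},
\[
i_\mu(\widetilde F^{\,n};\widetilde a,\widetilde b)=n\, i_\mu(\widetilde F;\widetilde a,\widetilde b).
\]
Thus it suffices to prove a \emph{linear} bound $|i_\mu(\widetilde F^{\,n};\widetilde a,\widetilde b)|\le c\,\|F^n\|_{\mathscr G}$ with $c$ independent of $n$: combining the two gives $\|F^n\|_{\mathscr G}\ge(|i_\mu(\widetilde F;\widetilde a,\widetilde b)|/c)\,n$, whence $\|F^n\|_{\mathscr G}\succeq n$.

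To obtain this linear bound I would refine the argument of Lemma~\ref{lem:zimmer program}, which already gives the quadratic bound $|i(\widetilde F^{\,n};\widetilde a,\widetilde b,z_*)|\le c_0\|F^n\|_{\mathscr G}^2$. Fix a word $I^n\simeq\prod_{j=1}^{N(n)}I_{i_j}^{\,\epsilon_j}$ of length $N(n)=\|F^n\|_{\mathscr G}$ in the chosen generating isotopies, and normalize it, exactly as in the proof of Lemma~\ref{lem:zimmer program} via the M\"obius procedure of Lemma~\ref{rem:identity isotopies fix three points on sphere}, into an isotopy $\widetilde I^{\,\prime(n)}$ from $\mathrm{Id}_{\widetilde M}$ to $\widetilde F^{\,n}$ fixing $\widetilde a,\widetilde b$ and $\infty$. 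Since $F\in\mathrm{Diff}^1(M)$, Lemma~\ref{lem:mu gamma is zero} applies and
\[
i_\mu(\widetilde F^{\,n};\widetilde a,\widetilde b)=\int_{\widetilde M\setminus\pi^{-1}(\pi(\{\widetilde a,\widetilde b\}))}\widetilde\gamma\wedge\widetilde I^{\,\prime(n)}(\widetilde z)\,\mathrm d\widetilde\mu,
\]
the algebraic $\widetilde\mu$-area swept out by an arc $\widetilde\gamma$ from $\widetilde a$ to $\widetilde b$ along $\widetilde I^{\,\prime(n)}$. The trajectory $\widetilde I^{\,\prime(n)}(\widetilde z)$ splits into $N(n)$ consecutive pieces $\widetilde J_k(\widetilde z)$, one per letter, so the integral becomes $\sum_{k=1}^{N(n)}\int_{\widetilde M}\widetilde\gamma\wedge\widetilde J_k(\widetilde z)\,\mathrm d\widetilde\mu$, and I would bound each summand by a constant independent of $k$ and $n$. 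For this I would reuse the disk-chain machinery (Lemma~\ref{lem:disk chain of Franks lemma}) for the in-annulus contribution, as in Lemma~\ref{lem:zimmer program}, but replace the pointwise/Birkhoff counting there by the $\widetilde\mu$-integral: because $\widetilde F$ preserves $\widetilde\mu$, the algebraic swept area of the $k$-th piece is a ``flux'' through the intermediate curve $\widetilde G_{k-1}(\widetilde\gamma)$ (the image of $\widetilde\gamma$ under the first $k-1$ letters) that depends only on its endpoints and not on its length, which removes the factor of $\|F^n\|_{\mathscr G}$ coming from the growth of the regions $A_k$ in Lemma~\ref{lem:zimmer program}; and the Hamiltonian hypothesis $\rho_{M,I}(\mu)=0$, i.e. the vanishing of the $\epsilon_j$-weighted sum of the $\rho_{M,I_{i_j}}(\mu)$, is then used to control the accumulation along the word of the ``boundary at infinity'' corrections produced by the successive normalizations.

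The main obstacle is exactly this last point: upgrading the quadratic estimate of Lemma~\ref{lem:zimmer program} to a linear one. The delicate part is the bookkeeping of the contribution of each letter-piece $\widetilde J_k$ to the $\widetilde\mu$-integral once the intermediate curves $\widetilde G_{k-1}(\widetilde\gamma)$ grow — a priori exponentially — with $k$: one must use measure-invariance to express that contribution through a flux potential that is bounded on a fundamental domain plus a deck-transformation correction, and then show, using $\rho_{M,I}(\mu)=0$, that these corrections sum to $O(\|F^n\|_{\mathscr G})$ rather than $O(\|F^n\|_{\mathscr G}^2)$; making this final cancellation precise is where the work lies. Granting the linear bound one gets $\|F^n\|_{\mathscr G}\succeq n$, and together with $\|F^n\|_{\mathscr G}\preceq n$ this yields $\|F^n\|_{\mathscr G}\thicksim n$.
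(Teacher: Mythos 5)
Your reduction is fine as far as it goes: the upper bound is trivial, and by Proposition \ref{prop:F is not constant if the contractible fixed points is finite} (or Corollary \ref{prop:F is not constant if F is not Id}) together with Corollary \ref{cor:imu(Fq,a,b)=qimu(F,a,b)} the lemma does reduce to a \emph{linear} estimate of the form $|i_\mu(\widetilde F^{\,n};\widetilde a,\widetilde b)|\le c\,N(n)$ with $N(n)=\|F^n\|_{\mathscr G}$. But that linear estimate is the entire content of the lemma beyond the $\sqrt n$ bound of Lemma \ref{lem:zimmer program}, and your proposal does not prove it: you sketch a ``flux through the intermediate curves $\widetilde G_{k-1}(\widetilde\gamma)$'' argument and yourself concede that the required cancellation (that the corrections sum to $O(N(n))$ rather than $O(N(n)^2)$) is left open. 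Measure invariance alone does not bound the per-letter contribution by a constant: the intermediate curves grow with $k$, the M\"obius normalization rotates with the orbits of $\widetilde a$ and $\widetilde b$ under the word, and the squares $A_k$ have diameter of order $k$ — this is exactly why the argument of Lemma \ref{lem:zimmer program} only yields $N(n)^2$. So there is a genuine gap at the decisive step.

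Moreover, the mechanism the paper actually uses to get linearity is different from what you propose, and your choice of $a,b$ loses the ingredient that makes it work. The paper does not take an arbitrary pair of contractible fixed points with nonzero action difference and integrate; it chooses $a,b$ inside the (Jaulent) singular set $X$ associated with a leaf $\lambda$ of the transverse foliation, works with a single recurrent point $z_*$ for which the pointwise linking number $i(\widetilde F;\widetilde a,\widetilde b,z_*)\neq 0$, and invokes items (3) and (5) of Theorem \ref{thm:O.Jaulent}: there is an isotopy $I'_Y$ fixing $a$ and $b$ such that $I^{(n)}(z)$ is homotopic to $(I'_Y)^{\,n}(z)$ in $M$. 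This gives the bound \ref{homotopy finiteness} of order $N(n)$ on the homology classes of the trajectory loops, hence the bound \ref{ineq:N(n)} that only $O(N(n))$ lifts of each orbit point can cross $\widetilde\gamma$, and finally the sharing estimate $\sum_{\widetilde z\in C'_{j,l}}V(\widetilde z)\le C_1'N(n)$, which together replace $N(n)^2$ by $N(n)$ in Inequality \ref{eq: key eq}. For a generic pair $\widetilde a,\widetilde b$ with $i_\mu\neq 0$, no isotopy of $M$ fixing both projected points need exist, and none of this control is available; so to complete your argument you would either have to re-introduce the Jaulent isotopy (i.e.\ essentially follow the paper's proof) or supply a genuinely new proof of the cancellation you postulate, which at present is missing.
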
\smallskip

\begin{proof}[Proof of Lemma \ref{lem:zimmer1}]
By the definition of $\mathrm{Hameo}(M_g,\mu)$, we know that
$\rho_{M_g,I}(\mu)\neq0$. Assume that $F\in\mathscr{G}=\langle
F_{1,1},\cdots,F_{s,1}\rangle\subset\mathrm{Homeo}_0(M_g,\mu)$ and
$I_i$ ($1\leq i\leq s$) are the identity isotopies corresponding to
$F_{i,1}$. Denote by $\|\cdot\|_{H_1(M_g,\mathbb{R})}$ the norm in
the space $H_1(M_g,\mathbb{R})$. Write
$$\kappa=\max_{i\in\{1,\cdots,s\}}
\left\{\left\|\rho_{M_g,I_i}(\mu)\right\|_{H_1(M_g,\mathbb{R})}\right\}.$$
As $\rho_{M_g,I}(\mu)\neq0$ and $F\in\mathscr{G}$, we have
$\kappa>0$.

For every $n\in\mathbb{N}$, if $I^n$ is homotopic to
$\prod\limits_{s=1}^{N(n)}I^{\;\epsilon_{i_{s}}}_{i_s}$, then we
have
$$n\cdot\|\rho_{M_g,I}(\mu)\|_{H_1(M_g,\mathbb{R})}=
\|\rho_{M_g,I^n}(\mu)\|_{H_1(M_g,\mathbb{R})}=
\left\|\sum_{s=1}^{N(n)}\rho_{M_g,I^{\;\epsilon_{i_{s}}}_{i_s}}(\mu)\right\|_{H_1(M_g,\mathbb{R})}\leq
\kappa\cdot N(n).$$ Hence $\|F^n\|_{\mathscr{G}}\succeq n$. On the
other hand, we have $\|F^n\|_{\mathscr{G}}\preceq n$, which
completes the proof.
\end{proof}\smallskip

\begin{proof}[Proof of Lemma \ref{lem:zimmer2}]
For simplicity, we write $M_g$ as $M$. It is sufficient to prove
that $\|F^n\|_{\mathscr{G}}\succeq n$. We use the notations in the
proofs of Proposition \ref{prop:F is not constant if the
contractible fixed points is finite} and Lemma \ref{lem:zimmer
program}.

If $\sharp\mathrm{Fix}_{\mathrm{Cont},I}(F)=+\infty$, assume that
$X\subset\mathrm{Fix}_{\mathrm{Cont},I}(F)$, $I'$, $Y=\{a,b\}\subset
X$,$I_Y'$ are the notations defined in the proof for the case
$\sharp\mathrm{Fix}_{\mathrm{Cont},I}(F)=+\infty$ of Proposition
\ref{prop:F is not constant if the contractible fixed points is
finite}. If $\sharp\mathrm{Fix}_{\mathrm{Cont},I}(F)<+\infty$, for
convenience, we require $a=\alpha(\lambda)$, $b=\omega(\lambda)$,
and $I_Y'=I'$ where $\alpha(\lambda)$, $\omega(\lambda)$ and $I'$
are the notions defined in the proof for the case
$\sharp\mathrm{Fix}_{\mathrm{Cont},I}(F)<+\infty$ of Proposition
\ref{prop:F is not constant if the contractible fixed points is
finite}. Suppose that $\widetilde{I}'$, $\widetilde{I}_Y'$ are
respectively the lifts of $I'$ and $I_Y'$ to $\widetilde{M}$. Choose
a lift $\widetilde{a}$ of $a$ and a lift $\widetilde{b}$ of $b$. We
know that $I_{\mu}(\widetilde{F};a,b)\neq0$. As
$F\neq\mathrm{Id}_{M}$ and $\mu$ has total support, by the property
(B2) in the proof of Proposition \ref{prop:F is not constant if the
contractible fixed points is finite}, we can choose $z_*\in
\mathrm{Rec}^+(F)\setminus X$, such that $\rho_{M,I}(z_*)$ and
$i(\widetilde{F};\widetilde{a},\widetilde{b},z_*)$ exist, and
$i(\widetilde{F};\widetilde{a},\widetilde{b},z_*)$ is not zero.

Suppose now that $z\in M\setminus X$. By the items (3) and (5) of
Theorem \ref{thm:O.Jaulent}, we know that $I(z)$ and $I_Y'(z)$ are
homotopic in $M$. Hence, for every $n\in\mathbb{N}$,
$I^{(n)}(z)=\prod\limits_{j=1}^{N(n)}I^{~\epsilon_{i_j}}_{i_j}(z)$
is homotopic to $(I_{Y}')^{n}(z)$ in $M$. If $\gamma_{F^n(z),z}$ is
a geodesic path from $F^n(z)$ and $z$ on $M$, similarly to the proof
of Formula \ref{eq:rotation vector}, then there exists $C'>0$ such
that
\begin{equation}\label{homotopy finiteness}
\left\|[I^{(n)}(z)\gamma_{F^n(z),z}]_M\right\|_{H_1(M,\mathbb{R})}=
\left\|[(I_{Y}')^{n}(z)\gamma_{F^n(z),z}]_M\right\|_{H_1(M,\mathbb{R})}\leq
C'N(n).
\end{equation}\smallskip

Assume that $C_{j,l,k}$, $\widetilde{\gamma}$,
$\widetilde{I}'^{(n)}$, $C_1$ and $C_2$ are the notations in the
proof of Lemma~\ref{lem:zimmer program}.

Write
$$C'_{j,l,k}=\{\widetilde{z}_{j,l,k}\in\pi^{-1}(z_{j,l,k})\mid\widetilde{I}_{i_k}^{\,\epsilon_{i_k}}(\widetilde{z}_{j,l,k})
\cap(A_{k}(\{r=0\}\cup \{r=1\})\neq\emptyset\}.$$ Obviously,
$$\sharp C'_{j,l,k}\leq 2 C_2,\quad \sum_{j,l,k}\sharp
C'_{j,l,k}\leq 2C_2N(n)\tau_m(n,z_*).$$

Observing that $(\widetilde{I}_{Y}')^{n}$ and $\widetilde{I}'^{(n)}$
are two isotopies from $\mathrm{Id}_{\widetilde{M}}$ to
$\widetilde{F}^n$ which fix $\widetilde{a}$ and $\widetilde{b}$, by
Remark \ref{rem:some result of of sphere delete three points},
$(\widetilde{I}_{Y}')^{n}$ is homotopic to $\widetilde{I}'^{(n)}$ in
$\widetilde{M}\setminus\{\widetilde{a},\widetilde{b}\}$.

Observe that $N(n)$ has the following simple properties:
\begin{itemize}
  \item For every two numbers
$n_1,n_2\geq 1$, we have $N(n_1+n_2)\leq N(n_1)+N(n_2)$;
  \item For every $1\leq k< n$, we have $N(k)-N(1)\leq
N(k+1)\leq N(k)+N(1)$.
\end{itemize}

By the properties of $N(n)$ listed above 
and Inequality \ref{homotopy finiteness}, we get for every $z\in
M\setminus\{a,b\}$, there exists $C'_2>0$ such that

\begin{equation}\label{ineq:N(n)}
\sharp\{\widetilde{z}\in\pi^{-1}(z)\mid\widetilde{I}'^{(n)}(\widetilde{z})
\wedge\widetilde{\gamma}\neq0\}\leq C'_2N(n).
\end{equation}

Write
\begin{equation*}
C\,'_{j,l}=\{\widetilde{z}\in\pi^{-1}(z_{j,l,N(n)})\mid\widetilde{I}'^{(n)}(\widetilde{z})
\wedge\widetilde{\gamma}\neq0\}.
\end{equation*}

Under the hypotheses of Lemma \ref{lem:zimmer2}, we want to improve
the value $N^2(n)$ to $N(n)$ in the inequality \ref{eq: key eq}.

Based on the analyses above, we have
$$L_m(\widetilde{F}^n;\widetilde{a},\widetilde{b},z_*)=\widetilde{\gamma}\wedge
\widetilde{\Gamma}^m_{\widetilde{I}'^{(n)},z_*}=
\widetilde{\gamma}\wedge\widetilde{\Gamma}^m_{\left(\widetilde{I}\,'_Y\right)^{\,n},z_*}
=\pi(\widetilde{\gamma})\wedge\Gamma^m_{\left(I\,'_Y\right)^{\,n},z_*}.$$

To estimate the value
$L_m(\widetilde{F}^n;\widetilde{a},\widetilde{b},z_*)$, we need to
consider the isotopy $\widetilde{I}'^{(n)}$ (If we use the isotopy
$\left(\widetilde{I}\,'_Y\right)^{\,n}$, the difficulty is we do not
know how the isotopy $\left(I\,'_Y\right)^{\,n}$ rotates around the
points $\pi(\widetilde{a})$ and $\pi(\widetilde{b})$). If the
immersed squares $A_k$ are uniformly bounded for $n$, then by the
inequalities \ref{ineq:Cjlk1}$-$\ref{eq: key eq}, we have done. We
explain now the case where the squares $A_k$ are not bounded for $n$
is also true. The inequality \ref{ineq:N(n)} shows that, for every
fixed $j$ and $l$, it is sufficient to consider at most $C_2'N(n)$
elements of $C'_{j,l}$. For every $\widetilde{z}\in C'_{j,l}$, we
consider the value
$V(\widetilde{z})=|\widetilde{\gamma}\wedge\widetilde{I}'^{(n)}(\widetilde{z})|$
(Maybe it is no sense since the ends of
$\widetilde{I}'^{(n)}(\widetilde{z})$ are possibly on
$\widetilde{\gamma}$. In this case, we let
$V(\widetilde{z})=|\widetilde{\gamma}\wedge\mathrm{Int}(\widetilde{I}'^{(n)}(\widetilde{z}))|+2$
). We can write $\widetilde{I}'^{(n)}(\widetilde{z})$ as the
concatenation of $N(n)$ sub-paths $\widetilde{J}_k(\widetilde{z})\,
(k=1,\cdots,N(n))$. Obviously, we have \begin{eqnarray*}
  \left|L_m(\widetilde{F}^n;\widetilde{a},\widetilde{b},z_*)
  \right|&\leq&\left|\widetilde{\gamma}\wedge\prod_{j,l,k}\left(\prod_{\widetilde{z}
\in
C'_{j,l,k}}\widetilde{J}_k(\widetilde{z})\right)\right|+\left|\widetilde{\gamma}\wedge
\prod_{j,l}\left(\prod_{\widetilde{z} \in
C'_{j,l}}\widetilde{I}'^{(n)}(\widetilde{z})\right)\right|.
\end{eqnarray*}

\begin{figure}[ht]
\begin{center}\scalebox{0.5}[0.5]{\includegraphics{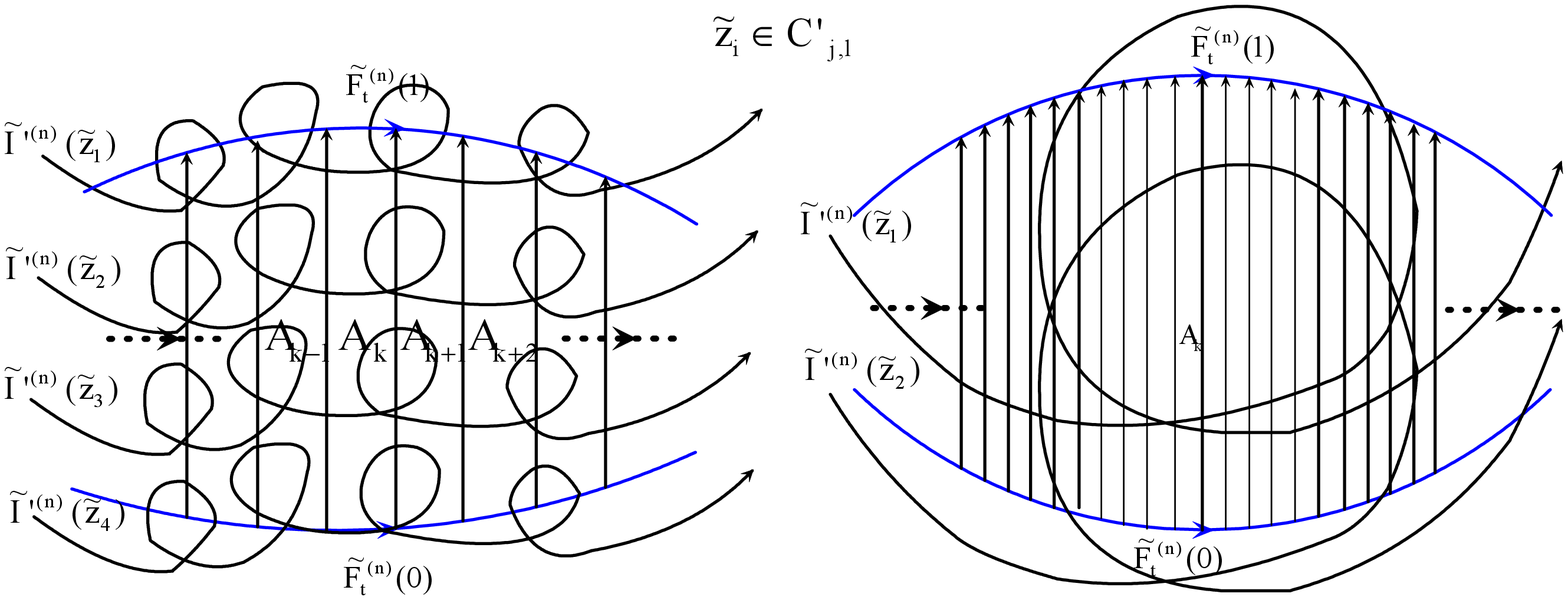}}\end{center}
\caption{The proof of Lemma
\ref{lem:zimmer2}\label{fig:Zimmer-conjecture}}
\end{figure}

We know that the value of the first part of the right hand of the
inequality above is less than $2C_1C_2N(n)\tau_m(n,z_*)$. Hence, to
explore the relation of the bound of
$\left|L_m(\widetilde{F}^n;\widetilde{a},\widetilde{b},z_*)
  \right|$ and the power of $N(n)$, we can suppose that the path
  $\widetilde{J}_k(\widetilde{z})$ never meets
  $A_k(\{r=0\}\cup\{r=1\})$ for every $k$ and $\widetilde{z}\in
  C'_{j,l}$. As the isotopies $\widetilde{I}_i$~($1\leq
i\leq s$) commutes with the transformations, we can observe that
(see Figure \ref{fig:Zimmer-conjecture} and refer to the proof of
Lemma \ref{lem:zimmer program}) if we demand the values
$V(\widetilde{z})$ for some $\widetilde{z}\in C'_{j,l}$ (in all of
the probabilities) as large as possible, then other values of
$V(\widetilde{z})$ will be as small as possible; if we demand most
of the values $V(\widetilde{z})$ increase, then the maximal of
$V(\widetilde{z})$ will decrease. It is easy to prove that
$\sum_{z\in C'_{j,l}}V(\widetilde{z})\leq C'_1 N(n)$ for some
$C'_1>0$. Hence,
\begin{equation*}
    \left|\widetilde{\gamma}\wedge
\left(\prod_{j,l}\prod_{\widetilde{z} \in
C'_{j,l}}\widetilde{I}'^{(n)}(\widetilde{z})\right)\right|=\left|\widetilde{\gamma}\wedge
\left(\prod_{j,l}\prod_{\widetilde{z} \in
C'_{j,l}}\prod_{k=1}^{N(n)}\widetilde{J}_k(\widetilde{z})\right)\right|\leq
C_1'N(n)\tau_m(n,z_*).
\end{equation*}

Therefore, there exists $C''_1>\{C_1,C'_1\}$ such that
$$\left|L_m(\widetilde{F}^n;\widetilde{a},\widetilde{b},z_*)
  \right|\leq C''_1(2C_2+C_2')N(n)\tau_m(n,z_*).$$

It implies that, for every $n\geq1$,
\begin{equation}
0<\left|i(\widetilde{F};\widetilde{a},\widetilde{b},z_*)\right|\leq
c'_0\frac{N(n)}{n},\end{equation}where $c_0'=C''_1(2C_2+C_2')$. This
implies that $$\|F^n\|_{\mathscr{G}}\succeq n.$$ Therefore,
$\|F^n\|_{\mathscr{G}}\thicksim n$, which completes the proof.
\end{proof}

As a consequence of Theorem \ref{thm:Fn thicksim n}, we have the
following theorem:

\begin{thm}\label{thm:zimmer1}
Let $\mathscr{G}$ be a finitely generated group with generators
$\{g_1,\ldots,g_s\}$ and $f\in\mathscr{G}$ be an element which is
distorted with respect to the word norm on $\mathscr{G}$. Then
$\phi(f)=\mathrm{Id}_{\mathbb{T}^2}$ (resp.
$\phi'(f)=\mathrm{Id}_{M_g}$ where $g>1$) for any homomorphism
$\phi: \mathscr{G}\rightarrow \mathrm{Ham}^1(\mathbb{T}^2,\mu)$
(resp. $\phi': \mathscr{G}\rightarrow \mathrm{Diff}^1_*(M_g,\mu)$
with $g>1$). In particular, if $\mathscr{G}$ is a finitely generated
subgroup of $\mathrm{Ham}^1(\mathbb{T}^2,\mu)$ (resp.
$\mathrm{Diff}^1_*(M_g,\mu)$ with $g>1$), every element of
$\mathscr{G}\setminus\{\mathrm{Id}_{M_g}\}$ ($g\geq1$) is
undistorted with respect to the word norm on $\mathscr{G}$.
\end{thm}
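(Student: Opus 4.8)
The plan is to deduce Theorem \ref{thm:zimmer1} directly from Theorem \ref{thm:Fn thicksim n} together with the elementary behaviour of word length under homomorphisms. First I would recall the key general fact: if $\phi:\mathscr{G}\rightarrow\mathscr{H}$ is a homomorphism of groups, $\mathscr{G}$ is finitely generated by $\{g_1,\dots,g_s\}$, and $\mathscr{H}_0=\langle\phi(g_1),\dots,\phi(g_s)\rangle$ is the (finitely generated) image, then for every $f\in\mathscr{G}$ one has $\|\phi(f)^n\|_{\mathscr{H}_0}\leq\|f^n\|_{\mathscr{G}}$, simply because any word of length $\ell$ in the $g_i$'s expressing $f^n$ maps to a word of length $\ell$ in the $\phi(g_i)$'s expressing $\phi(f)^n$. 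Consequently, if $f$ is distorted in $\mathscr{G}$, i.e. $\liminf_{n}\|f^n\|_{\mathscr{G}}/n=0$, then also $\liminf_{n}\|\phi(f)^n\|_{\mathscr{H}_0}/n=0$, so $\phi(f)$ is either of finite order or distorted in the finitely generated subgroup $\mathscr{H}_0\subset\mathscr{H}$.

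Next I would specialize $\mathscr{H}$ to $\mathrm{Ham}^1(\mathbb{T}^2,\mu)$ (resp. $\mathrm{Diff}^1_*(M_g,\mu)$ with $g>1$). By Theorem \ref{thm:Fn thicksim n}, every element $F$ of $\mathrm{Ham}^1(\mathbb{T}^2,\mu)\setminus\{\mathrm{Id}_{\mathbb{T}^2}\}$ (resp. $\mathrm{Diff}^1_*(M_g,\mu)\setminus\{\mathrm{Id}_{M_g}\}$) lying in a finitely generated subgroup $\mathscr{G}'$ of that group satisfies $\|F^n\|_{\mathscr{G}'}\thicksim n$, in particular $\|F^n\|_{\mathscr{G}'}\succeq n$, so $\liminf_n\|F^n\|_{\mathscr{G}'}/n>0$; thus no element of $\mathscr{H}$ other than the identity is distorted in a finitely generated subgroup of $\mathscr{H}$. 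Also, a nontrivial element of $\mathrm{Ham}^1$ or $\mathrm{Diff}^1_*$ cannot have finite order: if $F^q=\mathrm{Id}$ with $F\neq\mathrm{Id}$, then $\langle F\rangle$ is a finite (hence finitely generated) subgroup in which $\|F^n\|$ is bounded, contradicting $\|F^n\|\succeq n$; alternatively this is exactly the torsion-freeness recorded in Corollary \ref{cor:no torsion}. Combining: $\phi(f)$ is in $\mathscr{H}$, it is either finite order or distorted in some finitely generated subgroup of $\mathscr{H}$, and both possibilities force $\phi(f)=\mathrm{Id}$. This proves the first assertion for $\phi$ and, verbatim, for $\phi'$.

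Finally, for the ``in particular'' clause I would take $\mathscr{G}$ itself a finitely generated subgroup of $\mathrm{Ham}^1(\mathbb{T}^2,\mu)$ (resp. $\mathrm{Diff}^1_*(M_g,\mu)$, $g>1$) and apply the above with $\phi$ the inclusion $\mathscr{G}\hookrightarrow\mathrm{Ham}^1(\mathbb{T}^2,\mu)$ (resp. into $\mathrm{Diff}^1_*(M_g,\mu)$): if $f\in\mathscr{G}\setminus\{\mathrm{Id}\}$ were distorted in $\mathscr{G}$, then $\phi(f)=f$ would be the identity, a contradiction; hence every nontrivial element of $\mathscr{G}$ is undistorted. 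Since $\mathrm{Id}$ is by definition never a distortion element (distortion elements are required to have infinite order), this is precisely the claimed statement.

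I do not anticipate a serious obstacle here: the entire content is the monotonicity of word length under homomorphisms plus Theorem \ref{thm:Fn thicksim n}. The only points demanding a line of care are (i) checking that the image group $\mathscr{H}_0=\langle\phi(g_i)\rangle$ is the relevant finitely generated subgroup to which Theorem \ref{thm:Fn thicksim n} applies, and (ii) handling the finite-order alternative for $\phi(f)$, which is immediate from either the $\succeq n$ growth or from Corollary \ref{cor:no torsion}. If anything is slightly delicate it is making sure the hypotheses of Theorem \ref{thm:Fn thicksim n} — namely that the relevant element lies in $\mathrm{Ham}^1$ resp. $\mathrm{Diff}^1_*$ and is not the identity, and that it sits inside a finitely generated subgroup of the ambient group — are all verified for $\phi(f)$ and $\mathscr{H}_0$, which they are by construction.
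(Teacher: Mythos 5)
Your argument is correct and follows essentially the same route as the paper: pass to the finitely generated image $\mathscr{G}'=\langle\phi(g_1),\dots,\phi(g_s)\rangle$, use that word length does not increase under a homomorphism so distortion of $f$ gives $\liminf_n\|\phi(f)^n\|_{\mathscr{G}'}/n=0$, and conclude from Theorem \ref{thm:Fn thicksim n} that $\phi(f)=\mathrm{Id}$. Your extra discussion of the finite-order alternative is harmless but not needed, since Theorem \ref{thm:Fn thicksim n} already rules out any nontrivial element (of finite or infinite order) having sublinear word-length growth.
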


\begin{proof}We only prove the case where $\phi: \mathscr{G}\rightarrow
\mathrm{Ham}^1(\mathbb{T}^2,\mu)$ since other cases follow
similarly. Let $\mathscr{G}'$ be the finitely generated group
generated by $\{\phi(g_1),\ldots,\phi(g_s)\}$. As $f$ is a
distortion element of $\mathscr{G}$, there exists a subsequence
$\{n_i\}_{i\geq1}\subset\mathbb{N}$ such that
$$\lim_{i\rightarrow+\infty}\frac{\|\phi^{n_i}(f)\|_{\mathscr{G}'}}{n_i}=
\lim_{i\rightarrow+\infty}\frac{\|\phi(f^{n_i})\|_{\mathscr{G}'}}{n_i}=0
.$$ By Theorem \ref{thm:Fn thicksim n}, we have
$\phi(f)=\mathrm{Id}_{\mathbb{T}^2}$.
\end{proof}\smallskip


Let us recall some results about the irreducible lattice
$\mathrm{SL}(n,\mathbb{Z})$ with $n\geq3$. The lattice
$\mathrm{SL}(n,\mathbb{Z})$ and its any normal subgroup of finite
order have the following properties:
\begin{itemize}
 \item It contains a subgroup isomorphic to the group of upper
  triangular integer valued matrices of order $3$ with $1$ on the
  diagonal (the integer Heisenberg group),
  which tells us the existence of distortion element of every infinite norm subgroup
   of $\mathrm{SL}(n,\mathbb{Z})$
   (see \cite[Prop. 1.7]{P2});
   \item It is almost simple (every normal subgroup is finite or has a finite
  index) which is due to Margulis (Margulis finiteness theorem, see
  \cite{Mar}).
\end{itemize}

Applying these results above and Theorem \ref{thm:zimmer1}, we get
the following result:

\smallskip

\newenvironment{thm03}{\noindent\textbf{Theorem \ref{thm:zimmer2}}~\itshape}{\par}\smallskip

\begin{thm03}Every homomorphism from
$\mathrm{SL}(n,\mathbb{Z})$ ($n\geq3$) to
$\mathrm{Ham}^1(\mathbb{T}^2,\mu)$ or $\mathrm{Diff}^1_*(M_g,\mu)$
($g>1$) is trivial.
\end{thm03}

\begin{proof}Again, we only prove the case where $\phi: \mathscr{G}\rightarrow
\mathrm{Ham}^1(\mathbb{T}^2,\mu)$ since other cases follow
similarly. The following argument is due to Polterovich \cite[Proof
of Theorem 1.6]{P2}. By the first item of properties of
$\mathrm{SL}(n,\mathbb{Z})$, there is a distortion element $f$ in
$\mathrm{SL}(n,\mathbb{Z})$. Apply Theorem \ref{thm:zimmer1} to the
distortion element $f$ of infinite order of
$\mathrm{SL}(n,\mathbb{Z})$. We have that $f$ lies in the kernel of
$\phi$. Note that $\mathrm{Ker}(\phi)$ is an infinite normal
subgroup of $\mathrm{SL}(n,\mathbb{Z})$. By the second item of
properties of $\mathrm{SL}(n,\mathbb{Z})$, $\mathrm{Ker}(\phi)$ has
finite index in $\mathrm{SL}(n,\mathbb{Z})$. Hence the quotient
$\mathrm{SL}(n,\mathbb{Z})/\mathrm{Ker}(\phi)$ is finite. Therefore,
$\phi$ has finite images. Applying Corollary \ref{cor:no torsion},
we get $\phi$ is trivial.
\end{proof}

Finally, let us recall a classical result about the mapping class
group $\mathrm{Mod}(M)$, where
$\mathrm{Mod}(M)=\mathrm{Homeo}^+(M)/\mathrm{Homeo}_*(M)$ is the
isotopy classes of orientation-preserving homeomorphisms of $M$ (see
\cite{FM}): any homomorphism $\phi: \Gamma\rightarrow
\mathrm{Mod}(M)$ has finite images where $\Gamma$ is an irreducible
lattice in a semisimple lie group of $\mathbb{R}$-rank at least two.

Applying the result above and Theorem \ref{thm:zimmer2}, we get the
following general conjecture of Zimmer in the special case of
surfaces:

\smallskip

\newenvironment{thm04}{\noindent\textbf{Theorem \ref{thm:zimmer program}}~\itshape}{\par}\smallskip

\begin{thm04}Every  homomorphism from
$\mathrm{SL}(n,\mathbb{Z})$ ($n\geq3$) to $\mathrm{Diff}^1(M_g,\mu)$
($g>1$) has only finite images.
\end{thm04}

%

\bigskip

\section{Appendix}
\subsection*{Appendix A}
\begin{lem}\label{subsec:positively
recurrent}Let $(X,d)$ be a metric space and $f: X\rightarrow X$ be a
continuous map. A positively recurrent point of $f$ is also a
positively recurrent point of $f^q$ for all $q\in
\mathbb{N}$.\end{lem}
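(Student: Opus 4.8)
The plan is to reformulate positive recurrence in terms of $\omega$-limit sets and then exploit the commutation of powers of $f$. Recall that a point $z$ is positively recurrent for a continuous map $g$ precisely when $z$ lies in its $\omega$-limit set $\omega_g(z)=\bigcap_{N\geq 0}\overline{\{g^n(z):n\geq N\}}$, i.e. when there is a sequence $n_k\to+\infty$ with $g^{n_k}(z)\to z$. So I fix $q\geq 1$ (the case $q=0$ being trivial, as then $f^q=\mathrm{Id}_X$), set $g=f^q$, and aim to show $z\in\omega_g(z)$.

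First I would record two elementary facts about the relation ``$a\in\omega_g(b)$'', which I abbreviate $a\preceq b$. (i) \emph{Transitivity}: if $a\preceq b$ and $b\preceq c$ then $a\preceq c$; given $\epsilon>0$, fix $s$ with $d(g^s(b),a)<\epsilon/2$, use continuity of $g^s$ at $b$ together with $g^{l}(c)\to b$ to find arbitrarily large $l$ with $d(g^{s+l}(c),g^s(b))<\epsilon/2$, whence $d(g^{s+l}(c),a)<\epsilon$. (ii) \emph{Equivariance under commuting maps}: if $h:X\to X$ is continuous and $hg=gh$, then $a\preceq b$ implies $h(a)\preceq h(b)$, since $g^{m_k}(b)\to a$ gives $g^{m_k}(h(b))=h(g^{m_k}(b))\to h(a)$ by continuity of $h$. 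I also use the trivial identity $\omega_g(g^r(z))=\omega_g(z)$ for every $r\geq 0$, since discarding finitely many initial iterates does not change the $\omega$-limit set.

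Next, starting from a sequence $n_k\to+\infty$ with $f^{n_k}(z)\to z$, I would pass by the pigeonhole principle to a subsequence along which $n_k$ is constant modulo $q$, say $n_k=qm_k+r$ with $0\leq r<q$ and $m_k\to+\infty$. Then $g^{m_k}(f^r(z))=f^{qm_k+r}(z)=f^{n_k}(z)\to z$, i.e. $z\preceq f^r(z)$. Applying fact (ii) successively with the continuous maps $h=f^{jr}$, which commute with $g=f^q$, yields $f^{jr}(z)\preceq f^{(j+1)r}(z)$ for all $j\geq 0$, and fact (i) gives the chain $z\preceq f^r(z)\preceq f^{2r}(z)\preceq\cdots\preceq f^{qr}(z)$. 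Since $f^{qr}(z)=(f^q)^r(z)=g^r(z)$, this says $z\in\omega_g(g^r(z))=\omega_g(z)$, which is exactly positive recurrence of $z$ for $f^q$.

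The only step requiring care is the transitivity of $\preceq$ in fact (i): the power $s$ of $g$ used to approach $a$ from $b$ must be fixed \emph{before} invoking continuity of $g^s$, precisely because $X$ is only assumed metric (not compact) so no uniform modulus of continuity is available. Once that is arranged, the remainder is bookkeeping with the commutation relations $f^if^j=f^jf^i$ and should present no difficulty.
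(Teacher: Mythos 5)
Your proof is correct. It shares its combinatorial core with the paper's argument: both begin by applying the pigeonhole principle to the return times $n_k$ to extract a subsequence with a fixed residue $r$ modulo $q$, and both conclude by concatenating $q$ approximate returns whose exponents sum to a multiple of $q$. The difference is in how the concatenation is organized. The paper does it by hand, constructing by induction a nested chain of balls $O_{m_j}$ centered at $z$ with $f^{l_{k_j}q+p}(O_{m_{j+1}})\subset O_{m_j}$ and then composing $q$ of these maps so that the total exponent $q\bigl(p+\sum_j l_{k_j}\bigr)$ is divisible by $q$; you encapsulate the same mechanism in two abstract facts about the relation $a\in\omega_g(b)$ (transitivity, and equivariance under continuous maps commuting with $g$), which give the chain $z\preceq f^r(z)\preceq\cdots\preceq f^{qr}(z)=g^r(z)$, finished off by the tail-invariance $\omega_g(g^r(z))=\omega_g(z)$. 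Your packaging avoids the index bookkeeping of the nested-neighborhood induction, at the modest cost of setting up the auxiliary relation; the caveat you flag --- fixing the exponent $s$ before invoking continuity, since no uniform continuity is available --- is precisely the point the paper handles by choosing each $O_{m_{j+1}}$ after the corresponding return time $l_{k_j}$ is fixed. Both arguments use only pointwise continuity and no compactness, so neither is more general than the other; yours is simply a cleaner reorganization of the same idea.
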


\begin{proof}
If $z\in \mathrm{Rec}^+(f)$, let $O_i=\{z'\in X\mid
d(z,z')<\frac{1}{i}\}$ for $i\in \mathbb{N}\setminus\{0\}$. We
suppose that $f^{n_k}(z)\rightarrow z$ when $k\rightarrow +\infty$.
Write $n_k=l_kq+p_k$ where $0\leq p_k <q$. If there are infinitely
many $k$ such that $p_k=0$, we are done. Otherwise, there are
infinitely many $k$ such that $p_k=p$ where $0< p <q$. We can
suppose that $f^{l_kq+p}(z)\rightarrow z$ when $k\rightarrow
+\infty$ by considering subsequence if necessary. We suppose that
$f^{l_{k_1}q+p}(z)\in O_{m_1}$, then there exists $O_{m_2}$ such
that $f^{l_{k_1}q+p}(O_{m_2})\subset O_{m_1}$. Similarly, there
exists $l_{k_2}$ and $O_{m_3}$ such that
$f^{l_{k_1}q+p}(O_{m_3})\subset O_{m_2}$. By induction, there is a
subsequence $(l_{k_j})_{j\geq1}$ of $(l_{k})_{k\geq1}$ and a
subsequence $\{O_{m_j}\}_{j\geq1}$ of $\{O_m\}_{m\geq1}$ such that
$f^{l_{k_j}q+p}(O_{m_{j+1}})\subset O_{m_{j}}$. Consider the
subsequence $\{f^{q(p+\sum_{j=(t-1)q}^{tq-1}l_{k_j})}(z)\}_{t\geq
1}$, we are done.\end{proof}\smallskip

\subsection*{Appendix B}\par\quad\par\quad

\vspace{-2mm}

 We fix a closed surface $M$ of genus $g\geq 1$
 and a topological closed disk $D$ on $M$ all examples will coincide with
 the identity outside of $D$ including isotopies. Up to a diffeomorphism,
 we may suppose that $D$ is the closed unit Euclidean disk. We will
 construct an identity isotopy  $I=(F_t)_{t\in[0,1]}$, we will write
  $F=F_1$ and $\widetilde{F}=\widetilde{F}_1$ the time-one map of $\widetilde{I}=(\widetilde {F}_t)_{t\in[0,1]}$ that is the
  lifted identity isotopy of $I$ on the universal covering space $\pi:\widetilde{M}\rightarrow M$.

\begin{exem}\label{exem:proposition 21 and mu integrable}\;\;
We construct an isotopy $I$ of $M$ and a measure $\mu\in\mathcal
{M}(F)$ such that
\begin{itemize}
  \item $F\notin\mathrm{Diff}(M)$;
  \item  $I$ satisfies the B-property;
  \item there are two
different fixed points $\widetilde{z}_0$ and $\widetilde{z}_1$ of
$\widetilde{F}$ such that the linking number
$i(\widetilde{F};\widetilde{z}_0,\widetilde{z}_1,z)$ is not bounded;
  \item there are two
different fixed points $\widetilde{z}_0$ and $\widetilde{z}_1$ of
$\widetilde{F}$ such that the linking number
$i(\widetilde{F};\widetilde{z}_0,\widetilde{z}_1,z)$ is not
$\mu$-integrable.
\end{itemize}

Use the polar coordinate for $D$ with the center $z_0=(0,0)$ and
suppose $z_1=(4/5,0)$. Let $D_{p/q}=\{(r,\theta)\mid
r\in]0,p/q[\,\}$ where $p/q\in ]0,1[\cap \,\mathbb{Q}$.

Consider a smooth decreasing function $\alpha:[0,3/4]\rightarrow
\mathbb{R}$ such that $\alpha|_{[0,1/2]}\equiv1$ and $\alpha=0$ on
neighborhood of $3/4$.

Consider a $C^\infty$-diffeomorphism $\rho(r)$ of $]0,3/4[$ as
follows
\begin{itemize}
  \item $\rho(r)$ fixes the point $1/k$ for every $k>1$ and $\rho(r)=r$ when $r\in [1/2,3/4[$;
  \item
  $\rho^n(r)\rightarrow1/(k+1)$ when $n\rightarrow -\infty$ for every $k>1$ and $r\in]1/(k+1),1/k[$;
  \item
  $\rho^n(r)\rightarrow1/k$ when $n\rightarrow +\infty$ for every $k>1$ and $r\in]1/(k+1),1/k[$.
\end{itemize}

Consider the following homeomorphism $F$ of $D$ defined on $D$ by
the formula:

\begin{equation}\label{exem:unbounded}F(r\mathrm{e}^{2i\pi\theta})=
\begin{cases}\rho(r)\mathrm{e}^{2i\pi\left(\theta+\alpha(r)(2^{\frac{1}{r}}+\frac{1}{2})\right)}& \textrm{on} \quad D_{3/4};
\\\mathrm{Id}& \textrm{on} \quad
D\setminus D_{3/4}.\end{cases}
\end{equation}

We construct an isotopy $I=(F_t)_{t\in[0,1]}$ on $D$ by replacing
$\alpha(r)(2^{\frac{1}{r}}+\frac{1}{2})$ with
$t\alpha(r)(2^{\frac{1}{r}}+\frac{1}{2})$ and $\rho(r)$ with
$(1-t)r+t\rho(r)$ in Formula (\ref{exem:unbounded}). It is easy to
see that $F$ is not differentiable at $z_0$.

Consider a finite measure $\mu$ on $M$ that is invariant by $F$ as
follows
$$\mu=\sum_{k\geq2}2^{-(k-1)}\mu_k$$ where $\mu_k$ is the Lebesgue
probability measure on $C_k$.

Let $B_k=\{(r,\theta)\mid r\in]1/(k+1),1/k[\,\}$ and $C_k=\{z\in
D\mid|z|=1/k\}$ $(k\geq2)$. Fix one point $z_k\in C_k$ for every
$k\geq2$. Let $\widetilde{z}_k$ $(k\geq0)$ be any lift of $z_k$
contained in a connected component of $\pi^{-1}(D)$. For any point
$z\in B_k$,  the $\omega$-limit set of $z$ is included in $C_k$ and
the $\alpha$-limit set of $z$ is included in $C_{k+1}$. When $z\in
C_k$, the angle of the trajectory of $I(z)$ rotating around $z_0$ is
$(2^{k+1}+1)\pi$. Hence $F$ has no contractible fixed points on
$D_{1/2}$. When $z\in D_{3/4}\setminus D_{1/2}$, the angle of the
trajectory of $I(z)$ rotating around $z_0$ is uniformly bounded.
Therefore, $I$ satisfies the B-property. However,
$i(\widetilde{F};\widetilde{z}_0,\widetilde{z}_1,z_k)=2^k+1/2$ and
$i(\widetilde{F};\widetilde{z}_0,\widetilde{z}_1,z)$ is not
$\mu$-integrable. Remark that 
the support of $\mu$ is not the whole space.
\end{exem}\smallskip

\begin{exem}\label{ex:the action of non C1-diffeo isnot bounded and continuous}
We construct an isotopy $I$ of $M$ and a measure $\mu\in\mathcal
{M}(F)$ with total support and no atoms on
$\mathrm{Fix}_{\mathrm{Cont},I}(F)$ such that
\begin{itemize}
  \item $\rho_{M,I}(\mu)=0$;
  \item $F\in\mathrm{Diff}(M)$ (and hence $I$ satisfies the WB-property);
  \item $I$ does not satisfy the B-property (and hence $F\notin\mathrm{Diff}^1_*(M)$);
\item there is a compact set
$\widetilde{P}\subset \widetilde{M}$ and
$\{(\widetilde{z}_k,\widetilde{z}\,'_k)\}_{k\geq1}\subset
\mathrm{Fix}(\widetilde{F})\times\mathrm{Fix}(\widetilde{F})\setminus\widetilde{\Delta}$
in $\widetilde{P}\times\widetilde{P}$, such that the linking numbers
$i(\widetilde{F};\widetilde{z}_k,\widetilde{z}\,'_k,z)$ are not
uniformly bounded;
\item the action $L_\mu$ (see
\ref{sec:definition of action function}) is not bounded;
  \item the action $L_\mu$ and $l_\mu$ are not continuous.
\end{itemize}

Use the Cartesian $(x,y)$-coordinate system in $D$ and suppose
$z_0=(0,0)$. On the $x$-axis, we suppose that $B_k$ $(k\geq1)$ is a
ball whose center is on $z_k=1/(k+1)+1/(2k(k+1))$ and whose radius
is $r_k= 1/2(k+1)^2$.

Consider a family of smooth functions $\alpha_k: [0,r_k]\rightarrow
\mathbb{R}$ such that $\alpha_k=0$ on neighborhoods of $0$ and
$r_k$, $\alpha_k(r_k/2)=2(-1)^k(k+1)^5$ and
$$2\pi\int_0^{\,r_k}\alpha_k(r)r\,\mathrm{d}r=(-1)^kk.$$

Consider the following diffeomorphism $F$ of $D$ which is defined by
the formula:

\begin{equation}\label{exem:WB not B}F(z_k+r\mathrm{e}^{2i\pi\theta})=
\begin{cases}z_k+r\mathrm{e}^{2i\pi(\theta+\alpha_k(r))}& \textrm{on} \quad B_k;
\\\mathrm{Id}& \textrm{on} \quad
D\setminus\bigcup_{k\geq1}B_k.\end{cases}
\end{equation}

We construct an isotopy $I=(F_t)_{t\in[0,1]}$ on $D$ by replacing
$\alpha_k(r)$ with $t\alpha_k(r)$ in Formula (\ref{exem:WB not B}).

Obviously, $z_k$ and $z_k' =z_k+r_k/2$ are fixed points of $F$ and
we have
$$i(\widetilde{F};\widetilde{z}_k,z_k')=2(-1)^k(k+1)^5$$ and $$i(\widetilde{F};\widetilde{z}_0,\widetilde{z}_k,z_k')=\rho_{A_{{\widetilde{z}_0},
{\widetilde{z}_k}},\widehat{F}_{{\widetilde{z}_0},{\widetilde{z}_k}}}
(\widetilde{z}_k\,')=2(-1)^{k+1}(k+1)^5$$ where
$\widetilde{z}_0,\widetilde{z}_k$ and $\widetilde{z}_k\,'$ are
contained in a connected component $\widetilde{D}$ of $\pi^{-1}(D)$.
Hence $I$ does not satisfy the B-property and there is a compact set
$\mathrm{Cl}(\widetilde{D})$ and
$\{\widetilde{z}_k\}_{k\geq1}\subset
\mathrm{Fix}(F)\setminus\{\widetilde{z}_0\}$ in
$\mathrm{Cl}(\widetilde{D})$, such that the linking numbers
$i(\widetilde{F};\widetilde{z}_0,\widetilde{z}_k,z)$ are not
uniformly bounded.

It is easy to prove that $F$ is a diffeomorphism of $M$ but it is
not a $C^1$-diffeomorphism of $M$: its differential $DF$ is not
continuous at $z_0$.\smallskip

Consider a finite measure $\mu$ on $M$ satisfying that
\begin{itemize}
  \item $\mu$ has total support;
  \item $\mu$ is non-atomic;
  \item $\mu$ restricted on $B_k$ is the Lebesgue measure
with $\mu(B_k)=\pi r_k^2$ for every $k\geq1$.
\end{itemize}

Obviously, $\mu\in\mathcal {M}(F)$ and $\rho_{M,I}(\mu)=0$.
Furthermore, we have
$$I_{\mu}(\widetilde{F};z_{k+1},z_k)=i_{\mu}(\widetilde{F};\widetilde{z}_{k+1},\widetilde{z}_k)
=(-1)^{k+1}(2k+1)$$ and
$$I_{\mu}(\widetilde{F};z_0,z_k)=i_{\mu}(\widetilde{F};\widetilde{z}_0,\widetilde{z}_k)
=(-1)^{k+1}k.$$ Therefore, the action $L_\mu$ is not bounded.
Observe that $z_k\rightarrow z_0$ and $\widetilde{z}_k\rightarrow
\widetilde{z}_0$ as $k\rightarrow+\infty$, so that $L_\mu$ and
$l_\mu$ are not continuous (at $z_0$ and $\widetilde{z}_0$).
\end{exem}\bigskip

\end{document}